\documentclass[11pt, reqno]{amsart}

\usepackage{amssymb}
\usepackage{amsmath}
\usepackage{amsthm}
\usepackage{graphicx}
\usepackage{braket}
\usepackage{pdfpages}
\usepackage{graphicx}
\usepackage{caption}
\usepackage{subcaption}
\usepackage{mathrsfs} 
\usepackage{tikz-cd} 
\usepackage{bm}
\usepackage{enumitem}
\usepackage{mathtools}
\usepackage{pgfplots}
\usepackage{import}
\usepackage{xifthen}
\usepackage{transparent}
\usepackage{mathtools}

\usepackage[utf8]{inputenc}
\usepackage[english]{babel}

\everymath{\displaystyle}

\usepackage{xargs}                      
\usepackage[colorinlistoftodos,prependcaption,textsize=tiny]{todonotes}
\newcommandx{\unsure}[2][1=]{\todo[linecolor=red,backgroundcolor=red!25,bordercolor=red,#1]{#2}}
\newcommandx{\change}[2][1=]{\todo[linecolor=blue,backgroundcolor=blue!25,bordercolor=blue,#1]{#2}}
\newcommandx{\info}[2][1=]{\todo[linecolor=OliveGreen,backgroundcolor=OliveGreen!25,bordercolor=OliveGreen,#1]{#2}}
\newcommandx{\improvement}[2][1=]{\todo[linecolor=Plum,backgroundcolor=Plum!25,bordercolor=Plum,#1]{#2}}

\usepackage{hyperref}
\usepackage{cleveref}


\newcommand{%
    \def\svgwidth{\columnwidth}
    \import{./Figure/}{.pdf_tex}
}[1]{%
    \def\svgwidth{\columnwidth}
    \import{./Figure/}{#1.pdf_tex}
}

\usepackage{lmodern,babel,adjustbox,booktabs,multirow}

\makeatletter
\newcommand{\setword}[2]{
  \phantomsection
  #1\def\@currentlabel{\unexpanded{#1}}\label{#2}%
}

\crefformat{footnote}{#2\footnotemark[#1]#3}

\numberwithin{equation}{section}
\theoremstyle{plain}
\newtheorem{theorem}{Theorem}[section]

\theoremstyle{theorem}
\newtheorem{prop}[theorem]{Proposition}
\newtheorem{conj}[theorem]{Conjecture}
\newtheorem{lem}[theorem]{Lemma}
\newtheorem{cor}[theorem]{Corollary}

\newtheorem{example}[theorem]{Example}

\newtheorem*{question*}{Question}

\theoremstyle{plain}

\newtheorem{maintheorem}{Theorem}

\theoremstyle{definition}
\newtheorem{defn}[theorem]{Definition}
\newtheorem{rmk}[theorem]{Remark}

\newcommand{\full}{{\operatorname{full}}}

\newcommand{\R}{\mathbb{R}}
\newcommand{\C}{\mathbb{C}}
\newcommand{\B}{\mathbb{B}}

\newcommand{\Q}{\mathbb{Q}}
\newcommand{\Z}{\mathbb{Z}}
\newcommand{\Hyp}{\mathbb{H}}

\newcommand{\N}{\mathbb{N}}
\newcommand{\D}{\mathbb{D}}

\newcommand{\RT}{\mathscr{T}}
\newcommand{\RV}{\mathscr{V}}

\newcommand{\fm}{\text{fm}}

\DeclareMathOperator{\dist}{dist}
\DeclareMathOperator{\Rat}{Rat}

\DeclareMathOperator{\PSL}{PSL}
\DeclareMathOperator{\M}{\mathcal{M}}

\DeclareMathOperator{\Isom}{Isom}

\DeclareMathOperator{\CP}{CP}

\DeclareMathOperator{\Aut}{Aut}
\DeclareMathOperator{\Int}{Int}
\DeclareMathOperator{\chull}{Cvx\, Hull}

\DeclareMathOperator{\proj}{proj}
\DeclareMathOperator{\Mod}{mod}

\numberwithin{figure}{section}


\newcommand{\hide}[1]{}

\newcommand{\wC}{\widehat{\mathbb{C}}}

\newcommand{\upbullet}[1]{\overset{\bullet}{ #1}{}}

\renewcommand{\sp}{{\ \ }}

\DeclareFontFamily{U}{mathb}{\hyphenchar\font45}
\DeclareFontShape{U}{mathb}{m}{n}{ <5> <6> <7> <8> <9> <10> gen * mathb <10.95> mathb10 <12> <14.4> <17.28> <20.74> <24.88> mathb12 }{}
\DeclareSymbolFont{mathb}{U}{mathb}{m}{n}
\DeclareFontSubstitution{U}{mathb}{m}{n}
\DeclareMathSymbol{\selfmap}{3}{mathb}{"FD}

\newcommand{\Disk}{\mathbb{D}}

\newcommand{\Fam}{{\mathcal F}}
\newcommand{\Width}{{\mathcal W}}




\newcommand\ZZ{\mathcal{Z}}


\newcommand\RR{\mathcal{R}}

\newcommand\XX{\mathcal{X}}




\newcommand\intr{\operatorname{int}}

\newcommand{\bM}{{\mathbf M}}
\newcommand{\bbv}{{\mathbf v}}

\newcommand{\wZ}{{\widehat Z}}

\newcommand{\egm}{{\operatorname{egm}}}

\newcommand{\bK}{{\mathbf K}}

\newcommand{\bbm}{{\mathbf m}}

\newcommand{\bbw}{{\mathbf w}}

\newcommand{\qq}{\mathfrak{q}}

\newcommand{\length}{{\mathfrak l}}

\newcommand\inn{{\operatorname{inn}}}







\title[Sierpinski Hyperbolic Components of Disjoint Type are Bounded]{Sierpinski Carpet Hyperbolic Components of Disjoint Type are Bounded}
\begin{document}
\begin{author}[D.~Dudko]{Dzimitry Dudko}
\address{Institute for Mathematical Sciences, Stony Brook University, 100 Nicolls Rd, Stony Brook, NY 11794-3660, USA}
\email{dzmitry.dudko@stonybrook.edu}
\end{author}

\begin{author}[Y.~Luo]{Yusheng Luo}
\address{Department of Mathematics, Cornell University, 212 Garden Ave, Ithaca, NY 14853, USA}
\email{yusheng.s.luo@gmail.com}
\end{author}

\begin{abstract} We establish certain uniform \emph{a priori} bounds for hyperbolic components of disjoint type. As an application, we will prove that Sierpinski carpet hyperbolic components of disjoint type are bounded. Furthermore, we show that for each map $f$ on the closure of such a hyperbolic component, there exists a quadratic-like restriction around every non-repelling periodic point. Extensions of these results to non-Sierpinski configurations are underway. As a prototype example, we describe the post-critical set of any map on the boundary of the hyperbolic component of $z^2$.
\end{abstract}

\maketitle

\setcounter{tocdepth}{1}
\tableofcontents

\section{Introduction}
A rational map $f: \widehat\C \longrightarrow \widehat\C$ is called {\em hyperbolic} if every critical point of $f$ converges to an attracting periodic cycle under iteration.
For our purposes, it is convenient to mark all the fixed points, and consider the fixed point marked rational maps $\Rat_{d, \fm}$ and the corresponding moduli space $\M_{d,\fm} = \Rat_{d, \fm}/ \PSL_2(\C)$ (see \S \ref{subsec:mhc}).
The set of conjugacy classes of hyperbolic maps form an open and conjecturally dense subset of $\M_{d, \fm}$, and a connected component is called a {\em (marked) hyperbolic component}.

Let $\mathcal{H} \subseteq \M_{d, \fm}$ be a hyperbolic component. 
As $[f]$ varies in $\mathcal{H}$, the topological dynamics on the Julia set $J_f$ remains constant, but the geometry of $J_f$ varies.
We say $\mathcal{H}$ is a {\em Sierpinski carpet hyperbolic component} if the Julia set of any map $[f]\in \mathcal{H}$ is a Sierpinski carpet, and it is of {\em disjoint type} if for any map $[f]\in \mathcal{H}$, all critical points of $[f]$ are in pairwise different periodic cycles of Fatou components.
Equivalently, $\mathcal{H}$ is of disjoint type if any map $[f]\in\mathcal{H}$ has exactly $2d-2$ attracting periodic cycles.
For disjoint-type hyperbolic components, the multipliers of attracting cycles provide natural identification:
\begin{equation}
    \label{eq:H is D}
    \boldsymbol{\rho}: \ \mathcal{H}\ \overset{\simeq}{\longrightarrow}\  \Disk^{2d-2}
\end{equation}

Motivated by Thurston's compactness theorem for acylindrical hyperbolic 3-manifold, McMullen conjectured in the early 1990s (see \cite{McM95}) that
\begin{conj}\label{conj:main}
A Sierpinski carpet hyperbolic component $\mathcal{H}$ is bounded in $\M_{d, \fm}$.
\end{conj}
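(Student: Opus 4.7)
The plan is to argue by contradiction, focusing on the disjoint-type case where the multiplier parametrization $\boldsymbol{\rho}\colon \mathcal{H} \to \Disk^{2d-2}$ in (\ref{eq:H is D}) is available. Suppose that $\mathcal{H}$ is unbounded in $\M_{d,\fm}$, and pick a sequence $[f_n] \in \mathcal{H}$ that escapes every compact set. After extracting a subsequence, one may assume that $\boldsymbol{\rho}([f_n])$ converges to some $\lambda \in \overline{\Disk}^{2d-2}$, and that $[f_n]$ converges in a suitable compactification of $\M_{d,\fm}$, for instance via the DeMarco--Kiwi--Luo framework of rescaling limits. The aim is to show that both the interior case (all coordinates of $\lambda$ in $\Disk$) and the boundary case (some $\lambda_i \in \partial\Disk$) lead to contradictions.

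The key step is to establish the uniform \emph{a priori} bounds announced in the abstract: around each (possibly degenerating) non-repelling cycle of $f_n$, construct a quadratic-like restriction $g_n\colon U_n \to V_n$ of an appropriate iterate $f_n^k$, with a definite lower bound on $\operatorname{mod}(V_n \setminus U_n)$ independent of $n$. The Sierpinski carpet hypothesis is the essential input: because every pair of Fatou components has disjoint closures, the basin of each critical cycle is separated from the remaining $2d-3$ basins by annuli whose moduli can be controlled using the carpet's conformal rigidity. These separating annuli furnish the outer boundary of the quadratic-like pair, and the construction is analogous in spirit to the Kahn--Lyubich complex bounds for polynomial renormalization.

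Once these uniform moduli are secured, a spreading argument via forward iteration of $f_n$ should propagate local bounds around each non-repelling cycle to uniform separating-annulus moduli around every Fatou component, yielding bounded conformal geometry of $J_{f_n}$ and hence compactness in $\M_{d,\fm}$, contradicting the divergence of $[f_n]$. The main obstacle is the \emph{a priori} bound itself. As multipliers approach $\partial\Disk$, parabolic petals can collapse and Siegel disks can become arbitrarily thin; establishing uniformity requires a subtle interplay between the disjoint-type hypothesis (keeping critical orbits in separate cycles) and the Sierpinski carpet topology (providing the separating annuli at every dynamical scale). Extending this strategy to the full Conjecture~\ref{conj:main}, where critical points may share cycles, would demand a renormalization theory compatible with shared grand orbits and appears to require further tools.
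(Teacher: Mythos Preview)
Your proposal is a plausible high-level outline, but it is not a proof: the entire difficulty is concentrated in the step you label ``the key step'' and then yourself flag as ``the main obstacle,'' and you do not supply a mechanism for resolving it. The claim that the Sierpinski carpet topology furnishes separating annuli around each basin with moduli that remain bounded as $[f_n]$ degenerates is precisely the content of the conjecture; disjointness of Fatou-component closures is a topological fact that says nothing quantitative about moduli. Indeed, as multipliers approach $\partial\Disk$ the closures of Siegel disks can come arbitrarily close (in the Hausdorff sense) to other post-critical pieces, and there is no ``conformal rigidity of the carpet'' that automatically prevents this. Your proposal thus reduces to the assertion that uniform \emph{a priori} bounds exist, without an argument.

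Note also a logical inversion relative to the paper. You propose to obtain quadratic-like restrictions with definite modulus first and deduce compactness from them. In the paper the order is reversed: boundedness of $\mathcal{H}$ (Theorem~\ref{thm:A}) is proved first, and the uniform quadratic-like restrictions (Theorem~\ref{thm:B}) are derived as a \emph{consequence} of compactness. The paper's route to boundedness in the disjoint-type case is considerably more specific than your sketch: one restricts attention to a dense class of eventually-golden-mean boundary maps, builds pseudo-Siegel disks $\widehat Z^m$ using the neutral renormalization bounds of \cite{DL22,DLL25}, and controls the \emph{pseudo-core surface} $\widehat X_f$ through two separate mechanisms --- a pulled-off constant $N_{Siegel}([f])$ (finite precisely in the Sierpinski case, by a Levy-arc argument) that bounds the \emph{arc} degeneration via localization and calibration lemmas, and a limiting-tree/Thurston-obstruction argument that bounds the \emph{loop} degeneration. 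None of these ingredients appears in your outline, and the Sierpinski hypothesis enters not as a source of separating annuli but as the finiteness of $N([f_{pcf}])$.

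Finally, the statement you are addressing is a \emph{conjecture}; the paper proves only the disjoint-type case (Theorem~\ref{thm:A}), and your proposal likewise restricts to that case from the first sentence. The general case remains open.
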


Despite many attempts throughout the decades, the conjecture remains wide open beyond degree $2$ and has been one of the bottlenecks in developing the realization and rigidity theory for rational maps. In this paper, we will prove it in the disjoint type case:
\begin{maintheorem}[Parameter Space]\label{thm:A}
A Sierpinski carpet hyperbolic component $\mathcal{H} \subseteq \M_{d, \fm}$ of disjoint type is bounded in $\M_d$. Moreover,~\eqref{eq:H is D} naturally extends to
\begin{equation}
\label{eq:thm:A}
     \boldsymbol{\rho}: \ \overline{\mathcal{H}}\ \overset{\simeq}{\longrightarrow}\ \overline{\Disk^{2d-2}}.
\end{equation}
\noindent In particular, $\partial{\mathcal H}$ is locally connected. 
\end{maintheorem}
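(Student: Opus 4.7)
My plan is to show directly that $\boldsymbol{\rho}$ extends to a homeomorphism $\overline{\mathcal{H}} \to \overline{\Disk^{2d-2}}$, with the closure on the left taken in $\M_{d,\fm}$; the boundedness of $\mathcal{H}$ in $\M_d$ and the local connectivity of $\partial\mathcal{H}$ are then formal consequences (properness of $\M_{d,\fm}\to\M_d$ under marking, and local connectivity of the closed polydisc). The two tools announced in the abstract do the work: (i) the uniform a priori bounds on the geometry of Fatou components across $\overline{\mathcal{H}}$, and (ii) the existence of a quadratic-like restriction around every non-repelling periodic point of every $f\in\overline{\mathcal{H}}$.

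\textbf{Compactness.} Given a sequence $[f_n]\in\mathcal{H}$, I first show that $\{[f_n]\}$ is relatively compact in $\M_{d,\fm}$. If it escaped to infinity, a standard rescaling-limit (or Deligne--Mumford type) argument would produce a degeneration in which distinct periodic Fatou components collide. The Sierpinski-carpet hypothesis forces any two distinct periodic Fatou components of each $[f_n]$ to be separated, and (i) upgrades this to definite moduli of separating annuli uniform in $n$; this separation survives in any limit and rules out the collision. So a subsequence converges to some $[f_\infty]\in\M_{d,\fm}$. Persistence of attracting/indifferent cycles under holomorphic limits then identifies the $i$-th marked cycle of $[f_\infty]$ as a periodic cycle whose multiplier equals $\lambda_i := \lim \rho_i([f_n])$, neutral precisely when $|\lambda_i|=1$.

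\textbf{Rigidity.} The crux is that $[f_\infty]$ is uniquely determined by $\boldsymbol{\lambda}=(\lambda_1,\ldots,\lambda_{2d-2})\in\overline{\Disk^{2d-2}}$. For $\boldsymbol{\lambda}\in\Disk^{2d-2}$ this is the classical injectivity of $\boldsymbol{\rho}|_{\mathcal{H}}$. For boundary $\boldsymbol{\lambda}$, the quadratic-like restriction from (ii) realizes each neutral cycle as a hybrid copy of a quadratic polynomial whose multiplier is prescribed; by Douady--Hubbard straightening, the hybrid class of each such restriction is uniquely determined by $\lambda_i$. An invariant line-field argument then finishes the job: because the Julia set is a Sierpinski carpet, any invariant Beltrami differential must be supported on the Fatou components, whose dynamics is already rigid (attracting basins, or Siegel disks modelled on the quadratic-like restrictions); and because the quadratic-like restrictions are non-renormalizable in a sufficiently strong sense, they admit no invariant line fields either. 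Hence two maps in $\overline{\mathcal{H}}$ with the same $\boldsymbol{\lambda}$ are qc-conjugate through a map that is conformal on the Julia set, and therefore M\"obius-conjugate; this gives injectivity of the extension.

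\textbf{Conclusion and principal obstacle.} The three steps produce a continuous injective extension $\overline{\boldsymbol{\rho}}:\overline{\mathcal{H}}\to\overline{\Disk^{2d-2}}$; combined with the surjectivity of $\boldsymbol{\rho}$ on $\Disk^{2d-2}$ and a standard boundary-surjectivity argument (take any ray into $\overline{\Disk^{2d-2}}$ and apply Step 1), one obtains a continuous bijection from a compact Hausdorff space, automatically a homeomorphism. The hardest step is the rigidity at root-of-unity boundary parameters: there the quadratic-like germs in (ii) must be promoted to genuine Douady--Hubbard hybrid equivalences at parabolic parameters, and the $2d-2$ local hybrid conjugacies have to be glued into a single global qc-conjugacy on the whole sphere. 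This gluing consumes the full strength of the a priori bounds (i) to control the shapes of the parabolic petals uniformly and to force any candidate invariant line field on the Sierpinski carpet to vanish; this is where I expect the main technical effort to lie.
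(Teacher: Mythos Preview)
Your proposal diverges from the paper's argument and contains real gaps.

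\textbf{Circularity and unavailable inputs.} You invoke as input (ii) the quadratic-like restriction around each non-repelling cycle for every $f\in\overline{\mathcal H}$. In the paper this is Theorem~B, and it is deduced \emph{from} Theorem~A (via the semiconjugacy to the expanding model and the compactness of $\overline{\mathcal H}$). So you cannot use it here. Likewise, the uniform bounds (i) are Theorem~C, and they are proved only for \emph{eventually-golden-mean} maps on $\partial\mathcal H$; they are not available for an arbitrary sequence $[f_n]\in\mathcal H$ approaching the boundary. Your compactness step applies (i) to a generic sequence in $\mathcal H$, which is exactly where the machinery does not reach.

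\textbf{The rigidity step.} The paper does \emph{not} use a qc/line-field argument at all. Injectivity on $\partial\mathcal H$ comes from the Epstein--Levin transversality of multipliers (Proposition~2.4): near any regular boundary point the multipliers of the $2d-2$ non-repelling cycles give a local holomorphic chart on $\mathcal M_{d,\fm}$, so the extended $\boldsymbol\rho$ is an embedding on $\partial_{\operatorname{reg}}\mathcal H$ and $\boldsymbol\rho(\partial_{\operatorname{reg}}\mathcal H)\cap\boldsymbol\rho(\partial_{\operatorname{exc}}\mathcal H)=\emptyset$. Your alternative route would require (a) knowing that the Julia set of a boundary map is still a Sierpinski carpet (it need not be; the topology of $J_f$ can change drastically on $\partial\mathcal H$), (b) ruling out invariant line fields for straightenings with Cremer or Liouville--Siegel fixed points, which is open, and (c) gluing $2d-2$ local hybrid conjugacies into a single global qc conjugacy, a step you flag as hard but give no mechanism for.

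\textbf{How the paper actually closes.} Given any $\rho\in\partial\D^{2d-2}$, one picks eventually-golden-mean profiles $\rho_n\to_s\rho$ (dense by perturbing the realized rational boundary), realizes them by $[f_n]\in\partial_{\egm}\mathcal H$, applies Theorem~C to get uniformly bounded degeneration, and then the tree-of-spheres/Thurston-obstruction argument (Theorems~1.8 and~8.1) forces $[f_n]\to[f]\in\mathcal M_{d,\fm}$ with $2d-2$ non-repelling cycles. Hence $\boldsymbol\rho(\partial_{\operatorname{reg}}\mathcal H)=\partial\D^{2d-2}$, which together with Proposition~2.4 yields the homeomorphism~\eqref{eq:thm:A} and boundedness. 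The reduction to egm maps is the essential trick you are missing; it is what lets one use the pseudo-Siegel bounds at all.
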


\begin{figure}[ht]
  \centering
  \includegraphics[width=0.8\textwidth]{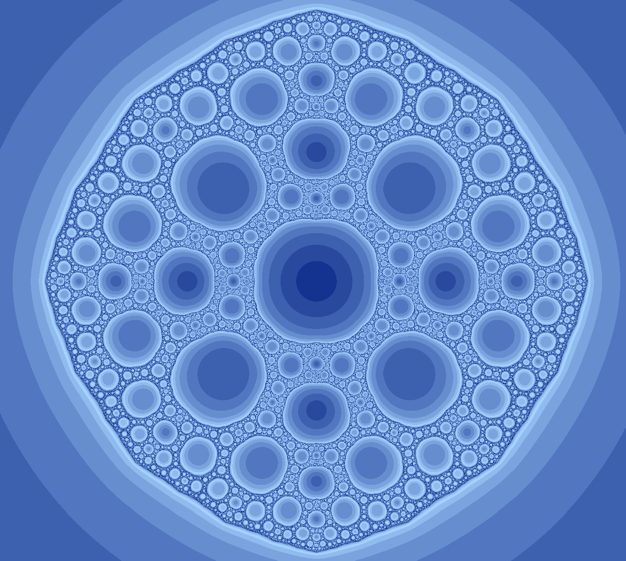}
  \caption{The Julia set of a Sierpinski carpet hyperbolic rational map.}
  \label{fig:SC}
\end{figure}

Our approach also gives uniform bound of the dynamics of maps on $\overline{\mathcal{H}}$:
\begin{maintheorem}[Dynamical Plane]\label{thm:B}
Let $\mathcal{H}$ be a Sierpinski hyperbolic component of disjoint type. 
There exists a constant $\varepsilon>0$ such that for any map $[f] \in \overline{\mathcal{H}}$ and any non-repelling periodic point $x$ of periodic $p$, there exists a quadratic-like restriction $f^p : U \longrightarrow V$, with $x \in U \subseteq V$ and $\Mod(V - U) \geq \varepsilon$. 

Applying the Douady-Hubbard straightening theorem to all quadratic-like restrictions around non-repelling cycles, we obtain the dynamically meaningful refinement of~\eqref{eq:thm:A}: 
\begin{equation}
\label{eq:thm:B}
     \mathcal R_{\operatorname{ql}}: \ \overline{\mathcal{H}}\ \ \overset{\simeq}{\longrightarrow}\ 
 \overline{\Delta}^{2d-2}\ \subset  {\operatorname{Mand}^{2d-2}},
\end{equation}
where $\Delta$ is the main hyperbolic component of the Mandelbrot set.
\end{maintheorem}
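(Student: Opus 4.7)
The plan is to leverage Theorem~A, which gives compactness of $\overline{\mathcal{H}}$ and identifies it with $\overline{\Disk}^{2d-2}$ via multipliers, together with the uniform \emph{a priori} bounds announced in the abstract. The strategy has two halves: produce a quadratic-like neighborhood of each non-repelling cycle with a definite modulus, and then straighten.

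For the interior case $[f]\in\mathcal{H}$, the disjoint-type hypothesis says each of the $2d-2$ attracting cycles of Fatou components contains exactly one critical point. Let $U_0$ be the immediate basin Fatou component of a critical cycle of period $p$. Then $f^p \colon U_0 \to U_0$ is a proper degree-$2$ map with an attracting fixed point, and thickening $U_0$ along a Böttcher (or Kœnigs) equipotential gives a quadratic-like pair $f^p\colon U \to V$. The existence is classical; the content of Theorem~B is that $\Mod(V\setminus U)$ can be chosen bounded below by a single $\varepsilon>0$ independent of $[f]\in\overline{\mathcal{H}}$ and of the chosen cycle.

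I would prove this uniform bound by contradiction. Suppose a sequence $[f_n]\to[f_\infty]\in\overline{\mathcal{H}}$ and cycles $\mathcal{O}_n$ provide quadratic-like restrictions whose optimal modulus tends to $0$. Compactness from Theorem~A permits passage to a subsequential limit, and the vanishing modulus yields a pinching annulus around the critical Fatou component. Such a pinch must propagate through the dynamics and break the Sierpinski carpet condition (merging peripheral Fatou components or allowing a critical orbit to leak into the wrong cycle), contradicting the uniform \emph{a priori} bounds provided by the earlier sections of the paper. This handles the attracting and parabolic regimes. \textbf{The main obstacle} is the irrationally indifferent (Siegel and especially Cremer) case on $\partial\mathcal{H}$: no local attractor is available, so the quadratic-like pair has to be manufactured by a puzzle/renormalization construction compatible with disjoint type, and one must show the puzzle pieces inherit definite annular moduli from the global a priori bounds. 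This is where the bulk of the technical work sits.

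Once the uniform $\varepsilon>0$ is established, the second half is standard. Applying the Douady–Hubbard straightening theorem to each quadratic-like pair $f^p\colon U\to V$ produces a unique quadratic polynomial $z\mapsto z^2+c$ hybrid-equivalent to $f^p|_U$; the non-repelling hypothesis on the original cycle forces the corresponding fixed point of $z^2+c$ to be non-repelling, so $c\in\overline{\Delta}$. Assembling one straightening per cycle gives
\[
\mathcal{R}_{\operatorname{ql}}\colon \overline{\mathcal{H}}\longrightarrow \overline{\Delta}^{2d-2}.
\]
Continuity of $\mathcal{R}_{\operatorname{ql}}$ follows from the classical continuity of straightening in holomorphically moving quadratic-like pairs with uniformly bounded modulus. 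Moreover, the multiplier of the straightened quadratic at its non-repelling fixed point equals the multiplier of the corresponding cycle of $f$, so $\mathcal{R}_{\operatorname{ql}}$ factors $\boldsymbol{\rho}$ through the coordinate-wise multiplier map $\overline{\Delta}\to\overline{\Disk}$. Since $\boldsymbol{\rho}$ is a homeomorphism by Theorem~A and the multiplier map $\overline{\Delta}\to\overline{\Disk}$ is a homeomorphism, $\mathcal{R}_{\operatorname{ql}}$ is a homeomorphism factor by factor, yielding~\eqref{eq:thm:B}.
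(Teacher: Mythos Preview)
Your proposal correctly identifies the overall architecture---existence of quadratic-like restrictions, uniform modulus via compactness, then straightening---and the straightening half is fine and matches the paper. The gap is exactly where you flag it: the Cremer (and general irrationally indifferent) case. Your suggested fix, ``a puzzle/renormalization construction compatible with disjoint type,'' is not a method but a hope; there is no puzzle partition available around a Cremer point of a rational map in this generality, and the a priori bounds of Theorem~C control pseudo-Siegel disks and core surfaces, not puzzle pieces around individual periodic points.

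The paper's route is conceptually different and sidesteps the difficulty entirely. It first builds, for every $[f]\in\overline{\mathcal H}$, a continuous semiconjugacy $h\colon\widehat\C\to S^2$ onto the topologically \emph{expanding} model $\bar f$ obtained from the post-critically finite center $f_{\mathrm{pcf}}$ by collapsing Fatou components (Theorems~\ref{thm:sctssm} and~\ref{thm:scem}). This is done first for eventually-golden-mean maps using near-Siegel renormalization theory (parabolic valuable flowers from \cite{DLS20}), then extended to all of $\overline{\mathcal H}$ by approximating with such maps and using the uniform bounds of Theorem~\ref{thm:cd} to take Hausdorff limits of pseudo-Siegel disks and valuable-attracting domains. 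Once $h$ exists, the quadratic-like restriction around \emph{any} non-repelling periodic point $x$---attracting, parabolic, Siegel, or Cremer alike---is simply $U=h^{-1}(\tilde U)$ for a small disk $\tilde U\ni h(x)$ with $\tilde U\Subset \bar f^p(\tilde U)$, which exists because $\bar f$ is expanding. The type of the non-repelling point never enters. Uniformity of the modulus then follows from compactness of $\overline{\mathcal H}$ as you argue.
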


 Theorems~\ref{thm:A} and~\ref{thm:B} are applications of Theorem~\ref{thm:cd} which provides uniform geometric \emph{a priori} bounds. These geometric bounds control the postcritical sets of maps on the boundaries of disjoint-type hyperbolic components $\mathcal H$ and depend only on a certain combinatorial ``pulled-off'' constant $N_{Siegel}([f])$. This constant is uniformly bounded in the Sierpinski case. In the follow-up paper~\cite{DLuo}, we will relate $N_{Siegel}([f])$ to the combinatorial distance between $f$ and the ``obstructed boundary'' $\partial^\infty_\Q \mathcal{H}$ (see \S\ref{subsec:boundaryHyerbolicDisjointType}); this would lead to Conjecture~\ref{conj:gdt}. The prototype example of Conjecture \ref{conj:gdt} is Theorem \ref{thm:hypz2} describing the postcritical sets of maps on the boundary of the hyperbolic component of $z\mapsto z^2$.

Upon extensions of pseudo-Siegel bounds beyond the unicritical regime, the methods of this paper might be applicable to all hyperbolic components in the space of rational maps.

The introduction below is organized as follows. In~\S\ref{ss:HistBackgr}, we recall the historical motivation of Conjecture~\ref{conj:main} as a counterpart to Thurston's boundedness theorem of $3$-manifolds. Conjectures~\ref{conj:main} and~\ref{conj:gdt} are also natural continuations of the realization program of topological branched coverings. We state Theorem~\ref{thm:cd} in \S\ref{ss:a priori bounds} after appropriate preparation; see in~\S\ref{sss:RenormTheory} for a brief historical background of the renormalization theory and~\S\ref{sss:EGM:Degen} for a precompactness condition. In~\S\ref{ss:A_B:from:C}, we will sketch the proof of Theorems \ref{thm:A} and \ref{thm:B} assuming Theorem~\ref{thm:cd}. In~\S\ref{subsec:boundaryHyerbolicDisjointType}, we formulate Conjecture~\ref{conj:gdt} describing the obstructed part of the boundary of a general disjoint type hyperbolic component. In~\S\ref{ss:intro:z^2}, we state and prove Theorem~\ref{thm:hypz2} describing the postcritical sets of bineutral quadratics -- this is a prototype and a motivating example for Conjecture~\ref{conj:gdt}. Finally, in~\S\ref{ss:prf:Thm:cd}, we outline the proof of Theorem~\ref{thm:cd}. In the proof, a linear relation is established between the global and local degenerations specified in Definition~\ref{dfn:bounds} and in \S\ref{subsec:sfc}.

\subsection{Historical background}\label{ss:HistBackgr}
Thurston's hyperbolization theorem is one of the most important development in the study of 3-manifolds.
The tools developed along the theorem has revolutionized the theory of Kleinian groups.
In the proof of the hyperbolization theorem,
two boundedness theorems, the {\em double limit theorem} and the {\em Thurston's compactness theorem for acylindrical manifolds}, 
are the key ingredients (see \cite{Kap10, Thu86}).
Based on the Sullivan's dictionary, these two boundedness theorems have natural analogues for rational maps. 
Since convex cocompact acylindrical Kleinian groups have Sierpinski carpet limit sets, Conjecture \ref{conj:main} is the analogue of the Thurston's compactness theorem.

For Kleinian groups, the proof for both boundedness theorems is by contradiction and can be break down into two steps:
\begin{enumerate}[label=\text{(\Alph*)},font=\normalfont,leftmargin=*]
\item \label{step:1}(Geometric part): constructing limiting isometric group actions on $\R$-trees with no global fixed point for any degenerating sequences of Kleinian groups (see \cite{MorganShalen84, Bestvina88, Paulin88});
\item \label{step:2} (Combinatorial/topological part): analyzing possible limiting group actions to get topological decompositions of the underlying 3-manifold (see Rips' theory \cite{Kap10} and Skora's duality theorem \cite{Sko96}).
\end{enumerate}
The contradiction for Thurston's compactness theorem is that acylindrical 3-manifolds do not admit such decomposition constructed in Step~\ref{step:2}. The story is similar for the double limit theorem, except the contradiction comes from geometric constraints of the laminations.

It is already suggested in \cite{McM95} that a similar strategy might work for rational maps.
There have been many constructions of limiting dynamics on trees for degenerations of rational maps (see \cite{McM09, Kiwi15, Luo19a, Luo19b, FG24}). These constructions complete Step~\ref{step:1} for rational maps. In this analogy, Step~\ref{step:2} becomes essential for the boundedness results of rational maps.

The boundary of a hyperbolic component $\mathcal{H}$ consists of two types of points: {\em geometrically finite maps} and {\em geometrically infinite maps}.
To carry out Step~\ref{step:2}, we consider these two cases separately.

Geometrically finite maps are the first to be understood and are essentially determined by a finite set of data (somewhat similar to PCF maps). In \cite{Luo21b}, the second author showed that in a Sierpinski hyperbolic component $\mathcal{H}$ (of any type) a `geometrically finite degenerations' $[f_n]$ always land at geometrically finite parameter $[f_\infty]\in \partial \mathcal{H}$; in particular, $[f_n]$ does not diverge in $\M_d$.  The main step is the following finiteness statement for the dynamics in the limiting tree.
\begin{enumerate}[label=\text{(\alph*)},start=2,font=\normalfont,leftmargin=*]
\item\label{item:b:intro} There is a finite `core' in the limiting tree if $[f_n]$ diverges.
This finiteness induces a decomposition of the rational maps in $\mathcal{H}$ by some {\em limiting Thurston obstruction}.
\end{enumerate}
Similar to Kleinian groups, the contradiction is that Sierpinski carpet Julia set would prevent the existence of such limiting obstructions.

Geometrically infinite maps are more mysterious.
Conjecturally, they all arise as limits of geometrically finite maps.
To study such maps, some uniform bound is usually needed (see \S~\ref{ss:a priori bounds}).
In this paper, we use bounds from renormalization theory developed in \cite{DL22,DLL25} to prove such a uniform bound for a special class of geometrically infinite maps, called {\em eventually-golden-mean maps} (see \S \ref{subsec:egmssm}).
The uniform bound allows us to obtain a limiting map on a finite tree of Riemann spheres.
Similar as in the geometrically finite case, the finiteness allows us construct a decomposition of the rational map in terms of limiting Thurston obstructions, and we obtain a contradiction here (see Theorem \ref{thm:cd} and \S~\ref{ss:prf:Thm:cd}).

Our results are related to the {\em Thurston's realization problem}.
Given a topological branched covering of the sphere $S^2$, Thurston's realization problem asks when it is equivalent to a rational map.
Thurston gives a negative criterion to answer the question for {\em post-critically finite} branched coverings \cite{DH93}. Recently, Dylan Thurston gives a positive criterion for the realization problem for post-critically finite maps \cite{Thu20} (with non-vacuous Fatou dynamics). For geometrically finite maps, 
Thurston's realization problem has been studied extensively (see \cite{DH93, CJS04, CJ11, CT11, CT18}).
Usually, the realization problems for geometrically finite maps are studied by deformation of hyperbolic maps.
Various elementary deformations such as pinching and spinning were constructed and studied (see \cite{Makienko00, TanLei02, HT04, PT04, CT18}).
These operations are generalized in \cite{Luo21a, Luo21b}.
From this perspective, Theorem \ref{thm:A} and Theorem \ref{thm:B} can be interpreted as a Thurston's realization theorem for geometrically infinite maps.
Our method, perhaps for the first time, combines two theories in complex dynamics: bounds from the Thurston theory and the bounds from the renormalization theory; see~\S\ref{sss:RenormTheory} for details regarding the latter theory.

There have been many previous studies to understand deformations of rational maps and related boundedness problems.
Results on unboundedness of hyperbolic components were obtained in \cite{Makienko00,TanLei02}. 
In \cite{Epstein00}, Epstein used algebraic and analytic methods to give the first general boundedness result of hyperbolic component of disjoint type in the quadratic case.
This was later generalized in the {\em bi-critical} setting by Nie and Pilgrim in \cite{NP19}.
Related boundedness results in the degree $4$ Newton family was also proved in \cite{NP20}.
To the best of our knowledge, Theorem \ref{thm:A} is the first time a boundedness theorem of an entire hyperbolic component in $\mathcal{M}_d$ is proved in degree $d\geq 3$.

\begin{rmk}
\label{rem:Khan:MSRI}
In Spring 2022, J. Kahn simultaneously presented an independent approach to the boundedness of Sierpinski hyperbolic components of all types (not necessarily disjoint); see his MSRI-talks at~\cite{Kah22}. To the best of our knowledge, the approaches in~\cite{Kah22} and in our paper are somewhat orthogonal and have different scopes.

The approach in~\cite{Kah22} relies on estimating the degeneration around small Julia sets. This naturally leads to the Sierpinski condition so that small Julia sets are disjoint.

The approach in the current paper relies on estimating the degeneration around small postcritical sets. They are within Siegel disks, while the associated ``small Julia sets'' can have complicated mating-type interactions, cf.,~\S\ref{ss:intro:z^2}. This naturally leads to the disjoint-type condition (see Theorem~\ref{thm:cd} and Remark~\ref{rem:N_Siegel}) where pseudo-Siegel bounds~\cite{DL22,DLL25} are available.
\end{rmk}

\subsection{\emph{A priori} bounds}\label{ss:a priori bounds} As we have already indicated, Theorems~\ref{thm:A} and~\ref{thm:B} are applications of Theorem~\ref{thm:cd} which establishes \emph{a priori} bounds (a precompactness condition) to control the postcritical set of maps on the boundary of a hyperbolic component $ \mathcal H$ so that the bounds depend only on the combinatorial pulled-off constant $N_{Siegel}([f])$.

We will state Theorem~\ref{thm:cd} in \S~\ref{sss:ThmC} after appropriate preparation. In~\S\ref{sss:RenormTheory}, we briefly sketch the evolution of key renormalization ideas and techniques used in the paper. In~\S\ref{sss:EGM}, we introduce the class of eventually-golden-mean maps on $\partial \mathcal H$. A precompactness condition for such a class of maps is stated in~\S\ref{sss:EGM:Degen}; see Definition~\ref{dfn:bounds} and Theorem~\ref{thm:ubd}. An important combinatorial invariant, called {\em the pulled-off constant} $N_{Siegel}([f])$ is introduced in~\S\ref{sss:constant:Pulloff}, on which our \emph{a priori} bounds in Theorem~\ref{thm:cd} depend; see \S\ref{sss:ThmC}.

\subsubsection{Renormalization theory}\label{sss:RenormTheory}
The notion of \emph{a prori} comes from the theory of differential equations. Following the work of Sullivan in the late 1980s -- early 1990s, this notion is widely used in Complex Dynamics as an important initial step for the development of respective Renormalization theory. In particular, it is believed that the MLC conjecture (the Mandelbrot set is locally connected) should follow from an appropriate form of \emph{a priori} bounds for all quadratic polynomials, see~\cite[\S1.5]{DL22} for details. 

In the mid-2000s, Kahn \cite{Kah06} introduced a new machinery, now collectively known as the \emph{Near-Degenerate Regime}, to produce ``quadratic-like'' \emph{a priori} bounds for quadratic polynomials. Together with Lyubich, they set up fundamental tools, such as the Quasi-Additivity Law and the Covering Lemma~\cite{KL09}, and attained substantial progress in the primitive case of the MLC conjecture \cite{KL08, KL09c}. Other applications include the extension of Yoccoz’s results to higher degrees~\cite{KLUnicr, AKLS,KvS09,CDKvS}.

The Near-Degenerate machinery developed in the 2000s addresses the \emph{positive-entropy} case: the ``anti-Molecule'' condition in \cite{KL09c} is equivalent to positivity (i.e., $\ge \varepsilon>0$) of the ``core-entropy'' of the associated combinatorics. It has been known since the 1990s that quadratic-like bounds fail for near-Molecule combinatorics where various near-rotation effects kicks-off. In particular, quadratic-like bounds fail for \emph{non-JLC parameters} where the Julia sets are \emph{not} locally connected. 

A remarkable progress in understanding near-neutral Complex Dynamics (and non-JLC phenomenon) came in the 2000s with the development of the Inou-Shishikura~\cite{IS} near-parabolic renormalization theory in the perturbative regime; some of its most-spectacular applications are the construction of positive area Julia sets~\cite{BC12,AL22}; see~\cite[\S1.3]{DL22} for more discussion.

Renormalization Theory of neutral Siegel maps, also initiated by physicists, was mathematically designed by McMullen in \cite{McMSiegel} in the mid-1990s. In the late-2010s, the hyperbolicity of Siegel Renormalization of bounded type was established in~\cite{DLS20} in the framework of Pacman Renormalization~\cite{DLS20}, providing tools to study near-Siegel maps~\cite{DL23a}. We will use the theory~\cite{DLS20,DL23a} in Theorem~\ref{thm:sctssm} to relate Siegel maps on the boundary $\partial_\egm \mathcal H$ with the center of $\mathcal H$.

In~\cite{DL22}, \emph{pseodu-Siegel a priori bounds} were established for \emph{all} neutral quadratic polynomials by introducing the near-degenerate regime to the zero-netropy setting. Roughly, almost-invariant pseudo-Siegel disks $\wZ^m$ ``hide'' non-JLC issues, and near-degenerate tools are used to establish \emph{a priori} bounds for $\partial \wZ^m$.

\subsubsection{Eventually-golden-mean maps} \label{sss:EGM}
In the remainder of this section, let $\mathcal{H}$ be a hyperbolic component of disjoint type, which may or may not be Sierpinski.
Our strategy is to uniformly control the geometry of the Julia set for some special maps on $\partial \mathcal{H}$, called {\em eventually-golden-mean maps}.

An irrational number $\theta \in (0,1)$ is said to be {\em eventually-golden-mean} if it has a continuous fraction expansion $\theta= [0; a_1,..., a_m, ...]$ with $a_m = 1$ for all large $m$.
A map $[f] \in \partial \mathcal{H}$ is called {\em eventually-golden-mean map} if the multiplier for any of its indifferent periodic cycle is of the form $e^{2\pi i \theta}$, where $\theta$ is eventually-golden-mean. In particular, every critical point of $[f]$ is either in an attracting basin or on the boundary of a Siegel disk.

\subsubsection{Degenerations of compact Riemann surfaces}
To discuss our bound on the geometry, let $X$ be a compact Riemann surface with boundaries.
Let $\gamma$ be a non-peripheral arc connecting $\partial X$, and let $\Gamma_\gamma$ be the family of arcs isotopic to $\gamma$.
We define the {\em degeneration} $\mathcal{W}(\gamma)$ for $\gamma$ of $X$ as the extremal width of the family $\Gamma_\gamma$.
The {\em arc degeneration} for $X$ is
$$
\mathcal{W}_{arc}(X) = \sum_{\gamma: \mathcal{W}(\gamma) \geq 2} \mathcal{W}(\gamma).
$$

Similarly, let $\alpha$ be a homotopically non-trivial simple closed curve, and let $\Gamma_\alpha$ be the family of simple closed curves isotopic to $\alpha$.
We define the {\em degeneration} $\mathcal{W}(\alpha)$ for $\alpha$ of $X$ as the extremal width of $\Gamma_\alpha$.
We define the {\em loop degeneration} for $X$ as
$$
\mathcal{W}_{loop}(X) = \sum_{\alpha: \mathcal{W}(\alpha) \geq 2} \mathcal{W}(\alpha).
$$

By losing $\leq 2$ extremal width of the family, we may assume $\Gamma_\gamma$ and $\Gamma_\alpha$ are laminations.
We remark that since wide families do not cross, both $\mathcal{W}_{arc}(X)$ and $\mathcal{W}_{loop}(X)$ are in fact finite sums.

\subsubsection{Degenerations of eventually-golden-mean maps}\label{sss:EGM:Degen}
Let $[f] \in \partial \mathcal{H}$ be an eventually-golden-mean map.
Denote the list of Siegel disks and attracting Fatou components of $f$ by 
$$
Z_{1,0},... Z_{1,p_1-1}, Z_{2, 0},..., Z_{k_1, p_{k_1}-1} \text{ and } D_{1,0},... D_{1,q_1-1}, D_{2, 0},..., D_{k_2, q_{k_2}-1}.
$$

In \S\ref{sss:ValDomain} and \S\ref{subsec:psd}, we will define the corresponding {\em pseudo-Siegel disks} $\widehat{Z}_{i,j}$ and {\em valuable-attracting domain} $\widehat{D}_{i,j}$.
Some important properties are 
\begin{itemize}
\item $\widehat{Z}_{i,j}$ and $\widehat{D}_{i,j}$ are closed disks, with $\overline{Z_{i,j}} \subseteq \widehat{Z}_{i,j}$ and $\widehat{D}_{i,j} \subseteq D_{i,j}$;
\item $f$ is injective on $\widehat{Z}_{i,j}$ and $\widehat{D}_{i,j}$ is forward invariant under $f$;
\item $\widehat{Z}_{i,j}$ and $\widehat{D}_{i,j}$ contain the critical, post-critical points and the non-repelling periodic point in $\overline{Z_{i,j}}$ and $D_{i,j}$ respectively.
\end{itemize}
To quantify the arc and loop degeneration of the map $[f]$ and provide a precompactness condition, we introduce the following definition.
\begin{defn}
\label{dfn:bounds}
    We say $[f]$ has degeneration bounded by $K$ if there exist 
    \begin{itemize}
    \item $K$-quasiconformal pseudo-Siegel disks $\widehat{Z}_{i,j}$,
    \item $K$-quasiconformal valuable-attracting domains $\widehat{D}_{i,j}$,
\end{itemize}
so that the {\em pseudo-core surface} of $[f]$ (see \S \ref{sec:psd} for more discussions)
$$
\widehat X_f := \widehat\C - \bigcup \Int(\widehat{D}_{i,j}) - \bigcup \Int(\widehat{Z}_{i,j}) \text{ satisfies}
$$
\begin{itemize}
\item  
$\mathcal{W}_{arc}(\widehat X_f) \leq K$; \text{ and }
\item 
$\mathcal{W}_{loop}(\widehat X_f) \leq K$.
\end{itemize}
\end{defn}

We will prove the following boundedness theorems for eventually-golden-mean maps with uniformly bounded degenerations.
\begin{theorem}[Precompactness Condition]\label{thm:ubd}
    Let $[f_n] \in \partial \mathcal{H}$ be a sequence of eventually-golden-mean maps.
    Suppose that $[f_n]$ has degeneration bounded by $K$.
    Then after possibly passing to a subsequence, $[f_n] \to [f] \in \M_{d, \fm}$, and $[f]$ has $2d-2$ non-repelling cycles.
\end{theorem}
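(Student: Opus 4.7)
\textbf{Proof plan for Theorem~\ref{thm:ubd}.} The plan is to treat the maps $[f_n]$ as a kind of pinched mapping scheme built out of three pieces --- the pseudo-Siegel disks, the valuable-attracting domains, and the pseudo-core surface $\widehat X_{f_n}$ --- and to argue that each piece converges subsequentially, then reassemble the pieces into a single limit rational map $[f]\in\M_{d,\fm}$.

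First, I would extract a subsequential limit of the \emph{geometric configuration} of domains on $\widehat{\mathbb{C}}$. The hypothesis that $\widehat Z_{i,j}$ and $\widehat D_{i,j}$ are $K$-quasiconformal disks, together with the bounds $\mathcal{W}_{\mathrm{arc}}(\widehat X_{f_n})\le K$ and $\mathcal{W}_{\mathrm{loop}}(\widehat X_{f_n})\le K$, forbids thin annular or rectangular collars from forming inside the pseudo-core surface. After normalizing by the marked fixed-point action of $\mathrm{PSL}_2(\mathbb{C})$, this rules out any hyperbolic pinching of $\widehat X_{f_n}$, so the data $\bigl(\widehat{\mathbb C},\{\widehat Z_{i,j}\},\{\widehat D_{i,j}\},\widehat X_{f_n}\bigr)$ lies in a compact family in the Carath\'eodory/Hausdorff topology. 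In particular each $\widehat Z_{i,j}$ and $\widehat D_{i,j}$ subsequentially Carath\'eodory-converges to a nondegenerate $K$-qc disk $\widehat Z^\infty_{i,j}$ resp.\ $\widehat D^\infty_{i,j}$, and $\widehat X_{f_n}\to \widehat X^\infty$ with bounded moduli.

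Second, I would pass the dynamics to the limit on each piece. On a pseudo-Siegel disk the eventually-golden-mean condition together with the pseudo-Siegel apparatus of~\cite{DL22,DLL25} models $f_n|_{\widehat Z_{i,j}}$ by a bounded-type Siegel map; the rotation number sits in a compact family of Brjuno-type numbers, so along the subsequence the internal model converges to a rotation whose multiplier lies on the unit circle. On each $\widehat D_{i,j}$, forward invariance and $K$-qc control give a well-defined non-repelling periodic point whose position and multiplier converge in $\overline{\mathbb D}$. On the pseudo-core surface $\widehat X_{f_n}$, the restriction is an unramified covering of controlled combinatorial type onto the complement of the target domains, and bounded moduli let us extract a normal-family limit. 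Because the three pieces share boundaries which converge in Carath\'eodory sense, the limit maps match up and assemble into a global holomorphic endomorphism $f$ of $\widehat{\mathbb C}$; tracking the $2d-2$ critical points inside the limiting domains shows that no degree is lost, so $\deg f = d$ and $f$ respects the fixed-point marking, giving $[f_n]\to[f]$ in $\M_{d,\fm}$.

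Third, for the cycle count, disjoint-type means $[f_n]$ has exactly $2d-2$ attracting cycles, one in each chain $(D_{i,0},\dots,D_{i,q_i-1})$; under the Carath\'eodory convergence these periodic points stay in the interior of the nondegenerate limits $\widehat D^\infty_{i,j}$ and give $2d-2$ distinct periodic cycles of $f$ with multipliers in $\overline{\mathbb D}$. Distinctness follows because the limiting $\widehat D^\infty_{i,j}$ and $\widehat Z^\infty_{i,j}$ occupy distinct isotopy slots of $\widehat X^\infty$, and by the Fatou--Shishikura inequality there can be no additional non-repelling cycles.

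\textbf{Main obstacle.} The hard step is the first one: showing that bounded arc and loop degenerations, combined with $K$-qc control of each individual domain, genuinely precompactify the \emph{joint} configuration on $\widehat{\mathbb{C}}$. The subtle failure mode one must rule out is a ``ghost collapse'' in which individual $\widehat Z_{i,j}$ or $\widehat D_{i,j}$ remains internally $K$-qc but shrinks to a point of $\widehat{\mathbb{C}}$, or equivalently the pseudo-core $\widehat X_{f_n}$ degenerates in an unseen way that is not detected by any $\mathcal{W}(\gamma)\ge 2$. Handling this requires the fixed-point marking of $\M_{d,\fm}$ to pin down the conformal normalization and the presence of distinguished marked points (critical, periodic) inside each $\widehat Z_{i,j}$, $\widehat D_{i,j}$ to control their hyperbolic diameters in the normalized picture. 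A secondary technical point is ensuring that the rotation-number limits for Siegel cycles and the multiplier limits for attracting cycles are compatible under Carath\'eodory convergence, which is exactly where the eventually-golden-mean hypothesis and the pseudo-Siegel bounds of~\cite{DL22,DLL25} enter the argument.
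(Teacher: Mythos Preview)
Your approach is genuinely different from the paper's, and the gap you yourself flag in Step~1 is real. The paper does not attempt a direct Carath\'eodory--normal-families argument at all. Instead it argues by contradiction through the tree-of-Riemann-spheres machinery of \S\ref{sec:uthind}: by Theorem~\ref{thm:gc2} (whose proof uses only the uniform bound on $\mathcal{W}_{arc}$ and the $K$-qc control on the domains), any subsequence converges to a degree-$d$ rational map $(F,R)$ on a finite tree $(\RT,\widehat\C^\RV)$, and $[f_n]$ converges in $\M_{d,\fm}$ iff $\RT$ is a single vertex. If $\RT$ were nontrivial, an edge $E$ carries a sequence of annuli $\mathcal A_{E,n}$ with $\Mod(\mathcal A_{E,n})\to\infty$ (Proposition~\ref{prop: moduluse}); the arc bound, via Proposition~\ref{prop:dss} and Corollary~\ref{cor:dj}, forces the core curve of $\mathcal A_{E,n}$ to sit in $\widehat X_{f_n}$, so $\mathcal W_{loop}(\widehat X_{f_n})\to\infty$, contradicting the hypothesis. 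This is precisely the mechanism that rules out your ``ghost collapse'': two domains living at incomparable scales is the same thing as two distinct vertices of $\RT$, and the edge between them \emph{is} a degenerating loop in $\widehat X_{f_n}$. Your proposed fix via fixed-point marking and ``hyperbolic diameters'' does not supply this link.

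Your Step~2 also has problems that the paper's route avoids. The restriction of $f_n$ to $\widehat X_{f_n}$ is not a self-map (pseudo-Siegel disks are not forward invariant, only almost so), so there is no clean ``unramified covering of controlled type'' to take a normal limit of and then glue to the disk models. More specifically, your claim that the eventually-golden-mean rotation numbers ``sit in a compact family of Brjuno-type numbers'' is false: they are dense in $\R/\Z$, and the limit multiplier can be rational or Liouville, so the limit need not have a Siegel disk at all. None of this is needed. Once one knows $[f_n]\to[f]$ in $\M_{d,\fm}$, the $2d-2$ non-repelling periodic points lie in the pairwise disjoint domains $\widehat Z_{i,j},\widehat D_{i,j}$; since the degeneration bound prevents these domains from colliding in the limit, the cycles stay distinct, and Fatou--Shishikura gives exactly $2d-2$. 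The paper's proof uses no renormalization input from \cite{DL22,DLL25} at this stage; those bounds are consumed earlier, in producing the hypothesis of Theorem~\ref{thm:ubd}, not in its proof.
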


To apply Theorem~\ref{thm:ubd}, it is thus essential to have a uniform degeneration bound for eventually-golden-mean maps, which is stated in Theorem~\ref{thm:cd}. We remark that $K$-quasiconformality of valuable-attracting domains $\widehat{D}_{i,j}$ is controlled by the associated multpliers of attracting cycles. On the other hand, $K$-quasiconformality of pseudo-Siegel disks $\widehat{Z}_{i,j}$ is controlled by local degeneration parameters $\mathcal W^+_\lambda(I)$ defined in~\S\ref{subsec:sfc}. A fundamental property used in the proof of Theorem~\ref{thm:cd} is a linear relation between $\mathcal W^+_\lambda(I)$ and $\mathcal{W}_{arc}(\widehat X_f)$, see~\eqref{eq:local:K_f} in~\S\ref{ss:prf:Thm:cd}

\subsubsection{The pulled-off constant}\label{sss:constant:Pulloff}
We now introduce an important combinatorial constant that controls the degenerations of eventually-golden-mean maps.

Two arcs $\gamma_1$ and $\gamma_2$ are said to 
\begin{itemize}
    \item {\em intersect essentially} if for any arcs $(\widetilde{\gamma}_i)_{i\in \{1,2\}}$ homotopic to $(\gamma_i)_{i\in \{1,2\}}$, $\widetilde{\gamma}_1$ intersects $\widetilde{\gamma}_2$; and
    \item {\em intersect laminally} if for any arcs $(\widetilde{\gamma}_i)_{i\in \{1,2\}}$ homotopic and disjoint to $(\gamma_i)_{i\in \{1,2\}}$, $\widetilde{\gamma}_1$ intersects $\widetilde{\gamma}_2$.
\end{itemize}

To justify the notations, let $\mathcal L_1,\mathcal L_2$ be two laminations consisting of homotopic curves. If $\gamma_1\in \mathcal L_1$ intersects laminally $\gamma_2\in \mathcal L_2$, then every curve in $\mathcal L_1$ intersects every curve in $\mathcal L_2$.

A family of arcs $\gamma_i$ are said to be {\em essentially disjoint} (or {\em laminally disjoint}) if no pairs in the family intersect essentially (or laminally).
An essentially (or laminally) disjoint pull back of a map $f$ is an essentially (or laminally) sequence of arcs $\gamma_0,..., \gamma_n$ so that $f: \gamma_{i+1} \longrightarrow \gamma_{i}$ is a homeomorphism for each $i = 0,..., n-1$.

Let $[f] \in \partial \mathcal{H}$ be an eventually-golden-mean map.
Let $\gamma \subseteq \widehat \C - \bigcup \Int(\widehat D_{i,j}) - \bigcup Z_{i,j}$ be a non-peripheral arc connecting boundaries of Siegel disks.
The {\em pulled-off constant} $N(\gamma)$ for $\gamma$ is the smallest number $n$ so that for any laminally disjoint pull back sequence
$\gamma_0 = \gamma, \gamma_1, ..., \gamma_{n}$,
at least one end point of $\partial{\gamma_n}$ is not on the boundary of a periodic Siegel disk.

Similarly, let $[f_{pcf}] \in \mathcal{H}$ be the post-critically finite center of $\mathcal{H}$, and let $P(f_{pcf})$ be its post-critical set.
Let $\gamma$ be a non-peripheral arc in $\widehat \C - P(f_{pcf})$ that connects points in $P(f_{pcf})$.
Its pulled-back constant $N(\gamma)$ is the smallest number $n$ so that for any essentially disjoint pull back sequence
$\gamma_0 = \gamma, \gamma_1, ..., \gamma_{n}$,
at least one end point of $\partial{\gamma_n}$ is not in $P(f_{pcf})$.

\begin{defn}[Pulled-off constant]\label{defn:poc}
Let $[f] \in \partial \mathcal{H}$ be an eventually-golden-mean map.
    The {\em (Siegel) pulled-off constant} for $[f]$ is
$$
N_{Siegel}
([f]) := \sup_{\gamma} N(\gamma)
$$ 
where the supreme is over all non-peripheral arcs connecting boundaries of Siegel disks.

Let $[f_{pcf}] \in \mathcal{H}$ be the post-critically finite center of $\mathcal{H}$.
The {\em pulled-off constant} for $[f_{pcf}]$ is
$$
N([f_{pcf}]) := \sup_{\gamma} N(\gamma)
$$ 
where the supreme is over all non-peripheral arcs connecting the points in the post-critical set.
\end{defn}
In \S \ref{sec:pc}, we will prove that
\begin{itemize}
    \item $N([f_{pcf}]) < \infty$ if and only if $\mathcal{H}$ is Sierpinski.
    \item $N_{Siegel}([f]) \leq N([f_{pcf}])$ for any eventually-golden-mean map $[f] \in \partial \mathcal{H}$, where $\mathcal{H}$ is a Sierpinski hyperbolic component.
\end{itemize}
By combining the above two statements, we see that $N_{Siegel}([f])$ is uniformly bounded if $\mathcal{H}$ is a Sierpinski.

\subsubsection{Key technical theorem}\label{sss:ThmC} The following technical theorem gives the uniform bound for eventually-golden-mean maps, and is the key in our argument.
\begin{maintheorem}[\emph{A priori} bounds]\label{thm:cd}
Let $\mathcal{H}$ be a hyperbolic component of disjoint type, and let $[f] \in \partial \mathcal{H}$ be an eventually-golden-mean map.
Then $[f]$ has degeneration bounded by $K$, where $K$ depends on 
\begin{enumerate}
    \item the hyperbolic component $\mathcal{H}$,
    \item\label{ass:2:thmcd} the pulled-off constant $N_{Siegel}([f])$, and
    \item the multipliers of the attracting cycles of $f$.
\end{enumerate}
\end{maintheorem}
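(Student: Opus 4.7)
The plan is to reduce the theorem to a self-improving linear estimate between the local degeneration parameters $\mathcal W^+_\lambda(I)$ at each pseudo-Siegel disk and the global arc degeneration $\mathcal W_{arc}(\widehat X_f)$, and then close the loop by an iterative pullback argument whose length is controlled by $N_{Siegel}([f])$. I would first construct, for each Siegel cycle of $f$, a pseudo-Siegel disk $\widehat Z_{i,j}\supseteq \overline{Z_{i,j}}$ by invoking the pseudo-Siegel a priori bounds of~\cite{DL22,DLL25}; these produce an $f$-injective disk trapping the Siegel postcritical set, with quasiconformality constant a function only of $\mathcal W^+_\lambda(I)$. For each attracting cycle, Koebe-type linearization around the attracting orbit yields a valuable-attracting domain $\widehat D_{i,j}$ whose quasiconformality depends only on the multiplier. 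This handles input~(3) of the theorem and reduces the bound on $\widehat Z_{i,j}$ to a bound on the local degenerations $\mathcal W^+_\lambda(I)$.

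The second step is the ``local $\le$ global'' half of the linear relation alluded to after Definition~\ref{dfn:bounds}: any wide family of arcs landing at $\partial\widehat Z_{i,j}$ and contributing to $\mathcal W^+_\lambda(I)$ embeds, up to bounded multiplicative loss, into a wide arc family in $\widehat X_f$. This is where the disjoint-type hypothesis enters, since it guarantees that the pseudo-Siegel disks and the valuable-attracting domains sit in pairwise disjoint combinatorial locations, so that horocyclic pieces near $\partial\widehat Z_{i,j}$ are comparable to the hyperbolic structure of the pseudo-core surface. Combined with Step~1, this already shows that $\mathcal W_{arc}(\widehat X_f)\le K$ is enough to conclude.

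The crux is therefore the ``global $\le$ combinatorial'' half: bound $\mathcal W_{arc}(\widehat X_f)$ and $\mathcal W_{loop}(\widehat X_f)$ in terms of $N := N_{Siegel}([f])$, $\mathcal H$, and the attracting multipliers. Given any wide arc family $\Gamma_\gamma$ joining pseudo-Siegel boundaries, pull $\gamma$ back along a laminally disjoint sequence $\gamma_0 = \gamma,\gamma_1,\ldots$; by the very definition of $N_{Siegel}([f])$, after at most $N$ steps some endpoint of $\gamma_n$ is no longer on a periodic Siegel boundary. Such an endpoint either (i) lies in a valuable-attracting domain, where the multiplier gives a geometric contraction of width, or (ii) lies on a strictly preperiodic Siegel boundary, and the grand-orbit argument in~\cite{DL22,DLL25} converts the accumulated width into a lower bound for $\mathcal W^+_\lambda(I)$ at one of the ancestral periodic disks. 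A Kahn--Lyubich Covering-Lemma estimate~\cite{KL09} (used in the near-degenerate pseudo-Siegel setting of~\cite{DL22}) is what converts ``many pullbacks of a wide family'' into a lower bound on $\mathcal W^+_\lambda$. Loops are handled in parallel: a wide annulus is either eventually absorbed into an attracting basin or unwraps across a Siegel boundary within $N$ iterates.

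Combining these estimates gives an inequality of the form
\begin{equation*}
\mathcal W_{arc}(\widehat X_f) \;\le\; A(N,\mathcal H,\text{mult.})\cdot \bigl(\mathcal W_{arc}(\widehat X_f)\bigr)^{\theta} \;+\; B(N,\mathcal H,\text{mult.})
\end{equation*}
for some $\theta<1$, which forces a universal bound on $\mathcal W_{arc}(\widehat X_f)$ depending only on the listed data; feeding this bound into Step~1 then controls the pseudo-Siegel quasiconformality. The main obstacle I expect is exactly this third step: the covering-lemma transfer from combinatorial pullback depth to geometric width requires simultaneously handling the near-rotation Siegel dynamics (where \cite{DL22,DLL25} applies but only once local degenerations are brought into play) and the attracting dynamics on adjacent Fatou components. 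Making this bookkeeping sharp enough to produce a sub-linear return of $\mathcal W_{arc}$ into itself — as opposed to merely a linear one with unknown constant — is the delicate point of the argument.
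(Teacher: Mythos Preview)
Your proposal captures the overall two-ingredient architecture (pseudo-Siegel bounds from \cite{DL22,DLL25} plus a pullback argument governed by $N_{Siegel}$), but the closing mechanism you describe is not the one that actually works, and the loop-degeneration step is essentially missing.

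For the arc degeneration, the paper does \emph{not} obtain a sub-linear self-improvement of the form $\mathcal W_{arc}\le A\cdot(\mathcal W_{arc})^\theta+B$. The actual contradiction is between two \emph{linear} estimates with incompatible constants. On one side, the pseudo-Siegel construction (Theorem~\ref{thm:cps}) gives, for every grounded interval $J$ with $\length_{m+1}<|J|\le\length_m$, the uniform-distribution bound $\mathcal W^{+,np}_m(J)=O(K\length_m+1)$. On the other side, the localization Theorem~\ref{thm:ll} produces, for any prescribed $\epsilon$, an interval $I$ with $|I|<\epsilon$ and $\mathcal W^{+,np}(I)+\mathcal W^{+,per}_\lambda(I)\ge K/\mathbf a$. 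Feeding one into the other forces $K/\mathbf a=O(\epsilon K+1)$, which is absurd for $\epsilon\ll 1/\mathbf a$ and $K$ large. There is a genuine subtlety you do not address: at the special transition level $\bbm_F$ the estimate $O(K\length_m+1)$ is not yet small enough, and one needs the Calibration Lemma (Theorem~\ref{thm:calls}) to push the interval down one more combinatorial level before the contradiction closes. Your Covering-Lemma heuristic does not supply this step.

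The larger gap is the loop degeneration. Your one-line ``loops are handled in parallel'' is not a plan, and in fact the paper's argument here is of a completely different nature: one argues by contradiction, takes a sequence $[f_n]$ with $\mathcal W_{loop}\to\infty$, and uses the uniform arc bound already established to pass to a limiting degree-$d$ map on a nontrivial finite tree of Riemann spheres (Theorem~\ref{thm:gc2}). The tree edges are then dualized to a curve system $\Sigma_n$ in $\widehat\C\setminus\mathcal U_{f_n}$ (this step uses the arc bound crucially, via Proposition~\ref{prop:dss}), and a Markov/degree matrix computation shows the Thurston transition matrix for $\Sigma_n$ has spectral radius $\ge 1$, contradicting Proposition~\ref{prop:mtc}. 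None of this is a pullback/absorption argument; it is limiting-tree plus Thurston-obstruction machinery, and your proposal would need to be rewritten entirely on this point.
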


\noindent For applications, Assumption~\eqref{ass:2:thmcd} is the main one in Theorem~\ref{thm:cd}. Therefore, the condition ``$[f_n]$ has degeneration bounded by $K$" in Theorem~\ref{thm:ubd} can be replaced (for practical purposes) with ``$N_{Siegel}([f_n])\le M$" for some $M$.

The current paper develops a mechanism to apply pseudo-Siegel bounds from~\cite{DL22} to various classes of neutral rational maps.  A key challenge is to relate global degeneration parameters of a rational map (i.e.,~$\mathcal{W}_{arc},\mathcal{W}_{loop}$ in Definition~\ref{dfn:bounds}) with local degeneration parameters of respective pseudo-Siegel disks (i.e., $\mathcal{W}_{\lambda}^+(I),\mathcal{W}_{\lambda}(I)$ in \S\ref{subsec:sfc}). We emphasize that such a relation should be linear (see~\eqref{eq:local:K_f}) to make use of global dynamical properties of rational maps as an input for \emph{a priori} bounds, see also~\cite[\S7.4]{DLL25}.

When first announced in 2022, the original implementation of the proof of Theorem~\ref{thm:cd} involved justifying a variant of pseudo-Siegel bounds directly in the dynamical plane of $f$, obtained by adapting arguments from~\cite{DL22}. In~\cite{DLL25}, we developed a $\psi^\bullet$ quadratic-like variant of pseudo-Siegel bounds, allowing us to apply these bounds directly.

\begin{rmk}
\label{rem:N_Siegel}
We remark that if $\mathcal{H}$ is Sierpinski, then 
$$
N_{Siegel}([f]) \leq N([f_{pcf}]) < \infty.
$$
Thus, in this case, the constant $K$ is independent of the pulled-off constant.
We also remark that the constant $K$ is independent of the indifferent multipliers of the map $[f]$. This crucial fact allows us to take the limit of those eventually-golden-mean maps.
\end{rmk}

\subsection{Sketch of the proof of Theorems \ref{thm:A} and \ref{thm:B} from Theorem~\ref{thm:cd}}\label{ss:A_B:from:C} To discuss how Theorem \ref{thm:cd} allows us to prove Theorems \ref{thm:A} and \ref{thm:B}, we start with the following decomposition of the boundary $\partial \mathcal{H}$.
\begin{defn}
    Let $\mathcal{H}$ be a hyperbolic component of disjoint type. The boundary 
$$
\partial \mathcal{H} = \partial_{\operatorname{reg}}\mathcal{H} \sqcup \partial_{\operatorname{exc}}\mathcal{H}\subseteq \mathcal{M}_{d, \fm}
$$ 
splits into the \emph{regular} and \emph{exceptional} parts, where $[f] \in \partial_{\operatorname{reg}}\mathcal{H}$ if $[f]$ has exactly $2d-2$ non-repelling periodic cycles, and $[f] \in \partial_{\operatorname{exc}}\mathcal{H}$ otherwise: at least two non-repelling periodic cycles of $[f]$ collide. 
\end{defn}
We remark that by transversality,  we have that (see Proposition \ref{prop:ts})
\begin{itemize}
    \item the natural extension of the multiplier map ~\eqref{eq:H is D} is an embedding on the regular boundary $\boldsymbol{\rho}: \partial_{\operatorname{reg}}\mathcal{H} \xhookrightarrow{} \partial\D^{2d-2}$;
    \item $\boldsymbol{\rho}(\partial_{\operatorname{reg}}\mathcal{H}) \cap \boldsymbol{\rho}(\partial_{\operatorname{exc}}\mathcal{H}) = \emptyset$.
\end{itemize}
Thus, to prove Theorem \ref{thm:A}, it suffices to show that if $\mathcal{H}$ is Sierpinski, then $\boldsymbol{\rho}(\partial_{\operatorname{reg}}\mathcal{H}) =\partial\D^{2d-2}$.

Denote the boundary of eventually-golden-mean maps and geometrically finite maps by $\partial_{\operatorname{egm}}\mathcal{H}$ and $\partial_{\operatorname{\Q}}\mathcal{H}$ respectively.
If $\mathcal{H}$ is Sierpinski, then $\boldsymbol{\rho}(\partial_{\operatorname{\Q}}\mathcal{H})$ is dense in $\partial \D^{2d-2}$ (see \cite{CT18} or \cite{Luo21b}).
This allows us to show that $\boldsymbol{\rho}(\partial_{\operatorname{egm}}\mathcal{H})$ is dense in $\partial\D^{2d-2}$ (see Proposition \ref{prop:rl}).

Given any multiplier profile $\rho = (\rho_1,..., \rho_{2d-2}) \in \partial \D^{2d-2}$, by Theorem \ref{thm:ubd} and Theorem \ref{thm:cd}, we can construct a convergent sequence of eventually-golden-mean maps $[f_n] \to [f]$ with $\boldsymbol{\rho}([f]) = \rho$.
Since $[f]$ has $2d-2$ non-repelling cycles, $[f] \in \partial_{\operatorname{reg}}\mathcal{H}$.
Thus, $\boldsymbol{\rho}(\partial_{\operatorname{reg}}\mathcal{H}) = \partial\D^{2d-2}$, and Theorem \ref{thm:A} follows.

To prove Theorem \ref{thm:B}, we first construct a semiconjugacy between a map $[f]\in \overline{\mathcal{H}}$, and a topological model $\bar{f}: S^2 \longrightarrow S^2$ which is the quotient map of the post-critical finite map $[f_{pcf}] \in \mathcal{H}$ by collapsing Fatou components.
This allows us to show there exists a quadratic-like restriction near every non-repelling periodic point.
The uniform bound of the modulus in Theorem \ref{thm:B} then follows from the conclusion of Theorem \ref{thm:A} that $\overline{\mathcal{H}}$ is compact; see \S~\ref{ss:proofB} for more details.

\subsection{Boundaries of hyperbolic components of disjoint type}\label{subsec:boundaryHyerbolicDisjointType}
A general hyperbolic components of disjoint type $\mathcal{H}$ may not be bounded in $\mathcal{M}_{d,\fm}$.
We give the following definition to parameterize the boundary at infinity.
\begin{defn}
    Let $\mathcal{H}$ be a hyperbolic component of disjoint type.
    We define the {\em obstructed boundary} 
\begin{multline*}
 \partial^\infty \mathcal{H} =\{\rho \in \partial \D^{2d-2} :\\
 \text{ $\exists [f_n] \in \mathcal{H}$ with }\text{$[f_n] \to \infty$ in $\mathcal{M}_{d, \fm}$ and $\boldsymbol{\rho}([f_n]) \to \rho$} \}.
\end{multline*}

    The {\em rational obstructed boundary} 
    $$
    \partial^\infty_\Q \mathcal{H} = \partial^\infty \mathcal{H} \cap \partial_\Q \D^{2d-2}
    $$ 
    where $\partial_\Q\D^{2d-2}$ consists of tuples $(\rho_1,..., \rho_{2d-2})$ so that all indifferent multipliers are rational, i.e. of the form $e^{2\pi i p/q}$.
\end{defn}
We remark that the rational obstructed boundary $\partial^\infty_\Q \mathcal{H}$ can be identified as obstructed geometrically finite maps on the boundary of $\mathcal{H}$, and can be effectively computed.
We formulate the following conjecture.
\begin{conj}\label{conj:gdt}
Let $\mathcal{H}$ be a hyperbolic component of disjoint type.
Then
    $$
    \partial^\infty \mathcal{H} = \overline{\partial^\infty_\Q \mathcal{H}}.
    $$
In particular, the natural extension of the multiplier map ~\eqref{eq:H is D} gives an identification for the regular boundary:
$$
\partial_{\operatorname{reg}}\mathcal{H} \cong \partial \D^{2d-2} - \overline{\partial^\infty_\Q \mathcal{H}} - \boldsymbol{\rho}(\partial_{\operatorname{exc}}\mathcal{H}).
$$

An appropriate version of Theorem~\ref{thm:B} is applicable to maps on $\partial_{\operatorname{reg}}\mathcal{H}$, where the Douady-Hubbard straightening theorem is replaced by the sector renormalization.
\end{conj}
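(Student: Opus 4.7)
The plan is to deduce Conjecture~\ref{conj:gdt} as an application of Theorems~\ref{thm:cd} and~\ref{thm:ubd} once an appropriate quantitative substitute for the Sierpinski estimate $N_{Siegel}([f])\le N([f_{pcf}])<\infty$ has been put in place. The easy inclusion $\overline{\partial^\infty_\Q\mathcal{H}}\subseteq \partial^\infty\mathcal{H}$ is automatic, since $\partial^\infty\mathcal{H}$ is closed and every element of $\partial^\infty_\Q\mathcal{H}$ is by construction an accumulation of multipliers along sequences in $\mathcal{H}$ that escape in $\mathcal{M}_{d,\fm}$. The entire content lies in the reverse inclusion, which is equivalent to showing that any sequence $[f_n]\in\mathcal{H}$ with $\boldsymbol{\rho}([f_n])\to\rho\notin\overline{\partial^\infty_\Q\mathcal{H}}$ remains precompact in $\mathcal{M}_{d,\fm}$.

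For this I would first, as in the proof sketch of Theorem~\ref{thm:A}, replace $[f_n]$ by an approximating sequence $[g_n]\in\partial_{\egm}\mathcal{H}$ with $\boldsymbol{\rho}([g_n])\to\rho$, using the analogue of Proposition~\ref{prop:rl}. The task then reduces to bounding the degenerations of eventually-golden-mean maps whose multiplier profiles stay uniformly away from $\partial^\infty_\Q\mathcal{H}$. The central conjectural ingredient — the main promised product of the follow-up paper~\cite{DLuo} — is a uniform estimate of the form
\begin{equation*}
N_{Siegel}([g]) \;\le\; \Phi\bigl(\dist_\comb(\boldsymbol{\rho}([g]),\,\partial^\infty_\Q\mathcal{H})\bigr),
\end{equation*}
where $\dist_\comb$ is a suitable combinatorial distance on $\partial\D^{2d-2}$ and $\Phi$ depends only on $\mathcal{H}$. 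Granting this estimate, the hypothesis $\rho\notin\overline{\partial^\infty_\Q\mathcal{H}}$ yields $\sup_n N_{Siegel}([g_n])<\infty$, so Theorem~\ref{thm:cd} supplies a uniform degeneration bound and Theorem~\ref{thm:ubd} extracts a subsequential limit $[g]\in\mathcal{M}_{d,\fm}$ carrying exactly $2d-2$ non-repelling cycles and multiplier profile $\rho$. Hence $[g]\in\partial_{\operatorname{reg}}\mathcal{H}$ and $\rho\notin\partial^\infty\mathcal{H}$, giving the reverse inclusion.

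The refined identification $\partial_{\operatorname{reg}}\mathcal{H}\cong \partial\D^{2d-2}\setminus\overline{\partial^\infty_\Q\mathcal{H}}\setminus\boldsymbol{\rho}(\partial_{\operatorname{exc}}\mathcal{H})$ then follows exactly as in Theorem~\ref{thm:A}, combining the multiplier-map transversality on the regular boundary (Proposition~\ref{prop:ts}) with the surjectivity just established. The analogue of Theorem~\ref{thm:B} on $\partial_{\operatorname{reg}}\mathcal{H}$ is obtained by running the semiconjugacy argument of \S\ref{ss:proofB} and replacing the Douady-Hubbard straightening of quadratic-like restrictions by the sector renormalization of~\cite{DLS20,DL23a} at those irrational neutral cycles that arise in the limit and for which a global quadratic-like template is no longer adequate. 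The decisive and clearly main obstacle is the combinatorial distance estimate on $N_{Siegel}$: in the Sierpinski case laminally disjoint pullback sequences are finite for topological reasons inherited from the postcritical set of the center, while in a general disjoint-type component such sequences can persist arbitrarily long precisely when the combinatorics is drifting toward $\partial^\infty_\Q\mathcal{H}$, so the bound requires a substantial cactoid-style analysis of pulled-off pullbacks inside the pseudo-core surface, which is what~\cite{DLuo} is designed to supply.
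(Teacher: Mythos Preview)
Your proposal is essentially identical to the approach the paper itself outlines for Conjecture~\ref{conj:gdt}. Note that the paper does \emph{not} prove this conjecture; it explicitly defers it to the sequel~\cite{DLuo}, stating that the missing ingredient is precisely the combinatorial bound
\[
N_{Siegel}([f]) \;\le\; \Phi\bigl(\text{combinatorial distance from }\boldsymbol{\rho}([f])\text{ to }\overline{\partial^\infty_\Q\mathcal{H}}\bigr),
\]
after which the argument proceeds via Theorem~\ref{thm:cd} and Theorem~\ref{thm:ubd} exactly as you describe (see the remarks following Conjecture~\ref{conj:gdt} in \S\ref{subsec:boundaryHyerbolicDisjointType}). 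Your identification of this bound as the decisive obstacle, and your reduction of the remaining steps to the template of \S\ref{ss:A_B:from:C} and \S\ref{ss:proofB}, match the paper's stated strategy.
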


\subsection*{Remarks about Conjecture~\ref{conj:gdt}, Theorem~\ref{thm:cd}, and  $\boldsymbol{N_{Siegel}([f])}$} An important combinatorial input for Theorem \ref{thm:A} is that if $\mathcal{H}$ is Sierpinski, the pulled-off constant $N_{Siegel}([f])$ is uniformly bounded for any eventually-golden-mean map $[f] \in \partial \mathcal{H}$. 

For general hyperbolic component of disjoint type, $N_{Siegel}([f])$ may be unbounded. To deduce Conjecture~\ref{conj:gdt} from Theorem~\ref{thm:cd} following the lines of~\S\ref{ss:A_B:from:C}, we need a combinatorial bound on the pull-off constant $N_{Siegel}([f])$ in terms of the combinatorial distance between $\boldsymbol{\rho}([f])$ and $\overline{\partial^\infty_\Q \mathcal{H}}$. In the sequel~\cite{DLuo}, we will provide such a bound for $N_{Siegel}([f])$ and prove Conjecture \ref{conj:gdt}. See Theorem~\ref{thm:hypz2} and its proof for illustration.

\subsection{The example of $\mathcal{H}_{z^2}$}\label{ss:intro:z^2}
To illustrate Conjecture~\ref{conj:gdt} and the subtlety about the exceptional boundary, consider the hyperbolic component $\mathcal{H}_{z^2}$ in the moduli space of quadratic rational maps that contains $z^2$.

The (marked) moduli space $\mathcal{M}_{2, \fm}$ of quadratic rational maps can be parameterized by the multipliers of the three marked fixed points $(\rho_1, \rho_2, \rho_3)$, with the restriction (from the holomorphic index formula) $\rho_1\rho_2\rho_3 - (\rho_1 + \rho_2 + \rho_3) + 2 = 0$ (see \cite{Mil93}).
In this coordinate,
$$
\mathcal{H}_{z^2} = \{(\rho_1, \rho_2, \rho_3): |\rho_1|, |\rho_2| < 1, \rho_3 = \frac{2-\rho_1-\rho_2}{1-\rho_1\rho_2}\} \cong \D^2.
$$
A simple computation shows that 
$$
\partial^\infty \mathcal{H}_{z^2} = \{(e^{2\pi i t}, e^{-2\pi i t})\} = \overline{\partial^\infty_\Q \mathcal{H}_{z^2}} = \overline{\{(e^{2\pi i p/q}, e^{-2\pi i p/q})\}}.
$$
Note that that when $\rho_1 = \rho_2 = 1$, $\rho_3$ can be an arbitrary number.
Thus, it is easy to see that the exceptional boundary contains infinitely many maps and fibers over $(1,1)$, i.e.,
$\boldsymbol{\rho}(\partial_{\operatorname{exc}}\mathcal{H}_{z^2}) = \{(1, 1)\}$.
Hence, rigidity fails on the exceptional boundary.
Depending on how the multipliers converge to $(1, 1)$, the corresponding sequence $[f_n]$ can be either bounded or unbounded in $\mathcal{M}_{2, \fm}$.

In the case of $\partial_\text{reg}\mathcal{H}_{z^2}$, the pulled-off constant $N_{Siegel}([f])$ can be explicitly bounded in terms of the difference of rotational speeds. Therefore, applying Theorem \ref{thm:cd}, we obtain:
\begin{maintheorem}[Bi-neutral quadratics]\label{thm:hypz2}
Consider $f\in \partial_\text{reg} \mathcal H_{z^2}$ with two neutral fixed points. Let $c_1,c_2$ be its critical points. Consider the associated postcritical sets $P_1, P_2$. Then 
\begin{itemize}
    \item $f\mid P_1$ and $f\mid P_2$ have degree $1$;
    \item $P_1, P_2$ are separated by some annulus whose modulus depends only on the distance of $\rho_1, \overline{\rho_2} \in S^1$.
\end{itemize}
\end{maintheorem}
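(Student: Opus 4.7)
The plan is to apply Theorem~\ref{thm:cd} (and then Theorem~\ref{thm:ubd}) to an eventually-golden-mean approximation of $f$, after first proving a combinatorial upper bound on $N_{Siegel}$ in terms of $\operatorname{dist}(\rho_1,\overline{\rho_2})$. Writing $\rho_i = e^{2\pi i \theta_i}$, I first note that since $[f]\in \partial_{\operatorname{reg}}\mathcal H_{z^2}$ has exactly $2d-2=2$ non-repelling cycles carried by the two marked fixed points, and the fiber $\boldsymbol\rho(\partial_{\operatorname{exc}})=\{(1,1)\}$ is avoided, both $\theta_i$ are irrational (the parabolic sub-case reduces similarly). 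The disjoint-type hypothesis together with the critical count $\#\{c_i\}=2d-2=2$ forces each $c_i$ to lie on $\partial Z_i$, where $Z_i$ is the Siegel disk at the $i$-th fixed point, so $P_i\subseteq \partial Z_i$.

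The heart of the argument is a combinatorial bound
\[
  N_{Siegel}(f)\;\leq\;\frac{C}{\|\theta_1+\theta_2\|_{\R/\Z}}
\]
for a universal constant $C$. Let $\gamma$ be any arc from $p_1\in\partial Z_1$ to $p_2\in\partial Z_2$. In any laminally disjoint pull-back sequence $\gamma_0=\gamma,\gamma_1,\ldots,\gamma_n$ whose endpoints remain on Siegel boundaries, one must at each step choose the ``rotational'' branch of $f^{-1}$, since the other preimage of a point of $\partial Z_i$ lies on a pre-periodic Fatou boundary, not on $\partial Z_1\cup \partial Z_2$. Coordinatizing $\partial Z_i$ by its linearizer, the endpoint of $\gamma_j$ on $\partial Z_i$ lies at $p_i-j\theta_i$. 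The arcs $\gamma_0,\ldots,\gamma_n$ live in the annular complement $\widehat\C\setminus(\overline{Z_1}\cup \overline{Z_2})$; disjoint representatives exist only if the cyclic orderings on $\{-j\theta_1\}_{j\leq n}\subset \partial Z_1$ and $\{-j\theta_2\}_{j\leq n}\subset \partial Z_2$ are compatible (in opposite senses, accounting for the opposite boundary orientations of the annulus). A direct ordering argument forces compatibility to break once $n\|\theta_1+\theta_2\|_{\R/\Z}$ exceeds a definite constant, yielding the bound since $\operatorname{dist}(\rho_1,\overline{\rho_2})\asymp\|\theta_1+\theta_2\|_{\R/\Z}$.

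Next I would choose eventually-golden-mean maps $f_k\in\partial \mathcal H_{z^2}$ with $\boldsymbol\rho(f_k)\to(\rho_1,\rho_2)$ and $\operatorname{dist}(\rho_1^{(k)},\overline{\rho_2^{(k)}})\geq \tfrac12\operatorname{dist}(\rho_1,\overline{\rho_2})$ for large $k$, using density of eventually-golden-mean numbers in $S^1$ away from the obstructed locus. By the combinatorial bound, $N_{Siegel}(f_k)$ is uniformly bounded. Theorem~\ref{thm:cd} applied to each $f_k$ yields $K$-quasiconformal pseudo-Siegel disks $\widehat Z_i^{(k)}$ and pseudo-core surfaces $\widehat X_{f_k}$ with $\mathcal W_{arc}(\widehat X_{f_k}),\mathcal W_{loop}(\widehat X_{f_k})\leq K$, where $K$ depends only on $\operatorname{dist}(\rho_1,\overline{\rho_2})$ (the hyperbolic component is fixed, and there are no attracting cycles on the bi-neutral boundary). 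Theorem~\ref{thm:ubd} then gives, after subsequence, $f_k\to f$ in $\M_{2,\fm}$, and the limit inherits pseudo-Siegel disks $\widehat Z_i\supseteq P_i$ on which $f$ is injective, whence $\deg(f|P_i)=1$.

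Finally, topologically $\widehat X_f=\widehat\C\setminus(\Int(\widehat Z_1)\cup\Int(\widehat Z_2))$ is an annulus, and the bound $\mathcal W_{loop}(\widehat X_f)\leq K$ yields an essential subannulus $A\subset \widehat X_f$ separating $\widehat Z_1$ from $\widehat Z_2$ with $\Mod(A)\geq 1/K$; since $P_i\subset \widehat Z_i$, $A$ separates $P_1$ from $P_2$ with modulus controlled by $\operatorname{dist}(\rho_1,\overline{\rho_2})$ alone. The main obstacle in this plan is the combinatorial bound on $N_{Siegel}$: the interaction of the two irrational rotations on $\partial Z_1,\partial Z_2$ under the degree-two pull-back is subtle, and one must carefully rule out twisted laminations that might survive many iterations despite a generic rotational mismatch. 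Once that bound is established, the remainder is an orchestration of Theorems~\ref{thm:cd} and~\ref{thm:ubd} with standard extremal-length considerations.
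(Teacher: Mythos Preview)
Your approach is essentially the same as the paper's: bound $N_{Siegel}$ combinatorially by $O(1/\dist(\rho_1,\overline{\rho_2}))$, apply Theorem~\ref{thm:cd} to an eventually-golden-mean approximation, and pass to the limit. The paper's proof is equally terse about the combinatorial step you flag as the main obstacle --- it simply asserts $N_{Siegel}([f])=O(1/\dist(\rho_1,\overline{\rho_2}))$ from the difference of rotational speeds --- so your cyclic-ordering sketch is already more explicit than what the paper provides; your explicit invocation of Theorem~\ref{thm:ubd} and the extraction of the separating annulus from the bounded loop degeneration are likewise just unpacking what the paper leaves implicit in ``taking the limit.''
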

\begin{proof}
Consider an eventually-golden-mean map $f$ on the boundary $\partial\mathcal{H}_{z^2}$ with two neutral fixed points. Then the combinatorial distance $\dist(\rho_1,\overline{\rho_2})$ is the difference of the rotational speeds at the two neutral fixed points. Therefore, $N_{Siegel}([f])=O\left(\frac{1}{\dist(\rho_1,\overline{\rho_2})}\right)$.  By taking the limit and applying Theorem \ref{thm:cd}, the corollary follows.
\end{proof}

If $\rho_1, \rho_2$ are eventually-golden-mean, then~\cite{YZ01} provides a convenient description of the dynamical plane of $f\in \partial_\text{reg} \mathcal H_{z^2}$ as the mating of the associated Siegel polynomials.

We believe that, using the sectorial bounds~\cite{DL:sector bounds}, one can introduce a puzzle-type partition for $f$ and then refine Theorem~\ref{thm:hypz2} into a mating-type result between the associated quadratic polynomials with appropriate accommodation to maps with non-locally connected Julia sets, i.e., to Cremer and hairy-Siegel polynomials. (Combinatorially, non-locally connected parts of polynomials do not interact during the mating.) This is beyond the goals of the current paper, where we prefer to keep Theorem~\ref{thm:hypz2} and its proof as simple as possible for illustration purposes.

\subsection{Outline of the proof of Theorem \ref{thm:cd}} \label{ss:prf:Thm:cd}
The proof of Theorem \ref{thm:cd} breaks up into 2 steps.
In the first step, we construct $K$-quasiconformal disks, and show there are no arc degenerations.
In the second step, we show there are no loop degenerations.
The proof for both steps are summarized as follows.

\subsection*{Step 1: no arc degeneration.} For an attracting Fatou component $D_{i,j}$, we define its \emph{valuable domain} $\widehat{D}_{i,j} \subseteq D_{i,j}$ to be the subdisk of $D_{i,j}$ bounded by the equipotential through the unique critical value of the first return map, see~\S\ref{sss:ValDomain}. We fix the multipliers of all attracting cycles; then, the modulus of the annulus $D_{i,j} - \widehat{D}_{i,j}$ is bounded from below by Lemma~\ref{lem:kad}.

Denote the pseudo-core surface (see \S \ref{sec:psd}) of $[f]$ by 
$$
X_f := \widehat \C - \bigcup \Int(\widehat D_{i,j}) - \bigcup Z_{i,j} \text{ and } K_f\coloneqq \mathcal{W}_{arc}(X_f).
$$ We will argue by contradiction and suppose that $K_f$ can be arbitrarily large.

For a Siegel disk $Z_{i,j}$, the dynamics of its first return map $f_{i,j}\colon \partial Z_{i,j}\selfmap$ is conjugate to some rigid irrational rotation on the circle. The conjugacy gives a combinatorial coordinate on $f_{i,j}\colon \partial Z_{i,j}\selfmap$.
The renormalization of the irrational rotation gives a level structure on $\partial Z_{i,j}$: a \emph{level $m$ combinatorial interval} is of the form $J=\big[x, f^{q_{m+1}}_{i,j}(x)\big]\subset \partial Z_{i,j}$, where $f^{q_{m+1}}_{i,j}(x)$ is the closest (level $m$) return of $x$, see~\S\ref{subsec:ci}. 
We denote the combinatorial length of a level $m$ combinatorial interval by $\length_m := |J|$.
Note that $\length_m$ satisfies 
\[  \frac{0.5}{q_{m+1}}<\length_m < \frac{1}{q_{m+1}}.\]

\subsubsection{Non-uniform construction of pseudo-Siegel disks.} \label{sss:into:wX}
In Theorem \ref{thm:cps}, using renormalization theory~\cite{DL22,DLL25} for $\psi^\bullet$-ql maps, we will construct a collection of pseudo-Siegel disks $\widehat Z_{i,j}\supset Z_{i,j}$ whose degenerations are bounded in terms of $K_f$.
Roughly, we will show that each Siegel disk $Z_{i,j}$ is contained in a pseudo-Siegel disk $\widehat Z_{i,j}\supset Z_{i,j}$ such that 
\begin{enumerate}
    \item $\widehat Z_{i,j}$ is a $M = M(K_f)$-quasiconformal disk;
    \item \label{case3:intro} for every interval $J\subseteq \partial \widehat Z_{i,j}$ (``grounded'' rel $\widehat Z_{i,j}$) with $\length_{m+1}< |J| \leq \length_m$, we have
    \begin{enumerate}
        \item\label{est:2a:intro} $\mathcal{W}^{+, np}(J) = O(K\length_m + 1)$; and 
        \item $\mathcal{W}^{+, per}_{\lambda}(J) = O(\sqrt{K\length_{m}} + 1)$.
    \end{enumerate}
\end{enumerate}
Here $\mathcal{W}^{+, np}(J)$ is the extremal width of the family of non-peripheral arcs starting at $J$, and $\mathcal{W}_\lambda^{+, per}(J)$ is the extremal width of the family of peripheral arcs connecting the interval $J \subseteq \partial \widehat Z_{i,j}$ to $\partial \widehat X_f - {\lambda J}$ in $\widehat X_f := \widehat\C - \bigcup \Int(\widehat{D}_{i,j}) - \bigcup \Int(\widehat{Z}_{i,j})$.

Important properties of pseudo-Siegel disks are discussed in~\S\ref{subsec:psd} and \S\ref{subsec:stability} (see, in particular,~\S\ref{sss:compart:wZ}).

\subsubsection{Pulled-off Argument and Localization.}\label{sss:Localization} 
Let $N_f := N_{Siegel}([f])$ be the pulled-off constant.
We show that any wide families of non-peripheral arcs in $\widehat X_f$ must intersect some strictly periodic psuedo-Siegel disks of pre-period $\leq N_f$ (see Lemma \ref{lem:pull}).
It allows us to localize the degeneration (see Theorem \ref{thm:ll}). More precisely, we show that for every $\epsilon> 0$, if the arc degeneration satisfies $K_f\gg_{\epsilon, N_f, \chi(X_f)} \ 1$, where $\chi(X_f)$ is the Euler characteristic (i.e., complexity), then there exists some interval $I$ on some periodic pseudo-Siegel disk $\widehat Z'$ so that
\begin{equation}
\label{eq:local:K_f}
\Width^{+, np}(I) + \Width^{+,per}_\lambda (I) \ge K_f/A\ \  \text{ and }\  \  |I|<\epsilon,    
\end{equation}
for some constant $A\equiv A\big(N_f, \chi(X_f)\big) > 1$ independent of $\epsilon$.
We may assume $\length_{m+1} < |I| \leq \length_m$.

\subsubsection{Calibration Lemma.} \label{sss:Calibration} 
Finally, we apply Theorem \ref{thm:calls} and show that we can find an interval $J \subseteq \partial \widehat Z'$  (grounded rel $\widehat Z'$) such that
\begin{equation}
\label{eq:CL:intro}
\Width^{+, np} (J) \geq K_f/C \ \ \text{and} \ \ |J| \leq \length_{m+1} \leq |I| <\epsilon   
\end{equation}
for some constant $C\equiv C\big(N_f,\chi(X_f)\big) > A > 1$ independent of $\epsilon$. 

We refer to~\eqref{eq:CL:intro} as \emph{Combinatorial Localization} of the degeneration; it is essentially a combination of the localization (Theorem~\ref{thm:ll}) and calibration (Theorem \ref{thm:calls}).

By choosing $\epsilon$ sufficiently small, we obtain from Property~\eqref{case3:intro} and Estimate~\eqref{eq:CL:intro} that   
$$
K_f/C \leq \Width^{+, np}(J)=O(\length_{m+1} K_f+1) = O(\epsilon K_f +1),
$$
which is a contradiction.

\begin{rmk}\label{rem:to:CL}
    We can summarize the argument in Step $1$ as follows. Theorem \ref{thm:cps} stated in \S~\ref{sss:into:wX} says that the arc degeneration $\mathcal{W}_{arc}(X_f)$ of $X_f$ near Siegel disks $Z_{i,j}$ are uniformly distributed along $\partial Z_{i,j}$. On the other hand, Theorem \ref{thm:ll} stated in \S~\ref{sss:Localization}  says that a substantial part of $\mathcal{W}_{arc}(X_f)$ can be localized on a small interval of some $\partial Z_{i,j}$.

    The incompatibility of these two facts almost leads to a contradiction. We note, however, that the estimate in~\eqref{est:2a:intro} is not sufficient to rule out degeneration on the ``special transition scale'' (compare with Remark~\ref{rem:thm:info}). 
    
    A potential degeneration on the special transition scale is handled in Theorem \ref{thm:calls} as explained in \S~\ref{sss:Calibration}. Combinatorially, such degeneration obeys certain invariance constraints of $f\mid X_f$ (see Figure~\ref{fig:ShL}). This leads to Combinatorial Localization~\eqref{eq:CL:intro}. Therefore, we obtain a contradiction by producing a bigger than $K_f$ degeneration.
\end{rmk}

\subsection*{Step 2: no loop degeneration.} We follow the lines of~\cite{Luo21b} (compare with Item~\ref{item:b:intro} in~\S\ref{ss:HistBackgr}) to show that big loop degeneration implies the existence of a Thurston obstruction for realized maps.

\subsubsection{Limiting map on a finite tree}
We will argue by contradiction. Suppose there exists a sequence of eventually golden-mean maps $f_n \in \partial \mathcal{H}$ with $\mathcal{W}_{loop}(\widehat X_f) \to \infty$.
We prove that, after passing to a subsequence if necessary, $f_n$ converges to a non-trivial map on a finite tree of Riemann spheres (see Theorem \ref{thm:gc2}).

\subsubsection{Duality to multi-curves}
We show that this limiting finite tree is ``dual'' to some multi-curves in the complement of periodic Fatou components of $f_n$ for all sufficiently large $n$ (see Proposition \ref{prop:dss}).
This step crucially uses the fact that the arc degeneration is uniformly bounded.

\subsubsection{Limiting Thurston obstruction}
The dynamics on the tree is recorded by a Markov matrix $M$ and a degree matrix $D$.
We show that there exists a non-negative vector $\vec{v} \neq \vec{0}$ with $M\vec v = D \vec v$.
Since the limiting tree is dual to multi-curves, for all sufficiently large $n$, we show that $D^{-1}M$ is no bigger than the Thurston matrix for the corresponding multi-curves of $f_n$.
So the spectral radius of the Thurston's matrix is greater or equal to $1$ (see Proposition \ref{prop:srg1}).
This is a contradiction, and Theorem \ref{thm:cd} follows.

\subsection*{Structure of the paper}
In \S \ref{sec:pan}, we give preparations and introduce some notations.
Four main ingredients in proving uniformly bounded arc degeneration are introduced in \S \ref{sec:cps}, \S \ref{sec:pc}, \S \ref{sec:ltd} and \S \ref{sec:cld}, and these ingredients are assembled in \S \ref{sec:utd}.
The uniformly bounded loop degeneration and Theorem \ref{thm:A} is proved in \S \ref{sec:uthind}.
Theorem \ref{thm:cd} is proved combining Theorem \ref{thm:utd} and Theorem \ref{thm:uthind}.
Finally, Theorem \ref{thm:B} is proved in \S \ref{sec:pc}.

\subsection*{Notations}
In this paper, we will usually fix a hyperbolic component. By a {\em universal constant}, we mean a constant that depends, potentially, only on the hyperbolic component.

We use $A = O(1)$ to mean there exists a universal constant $K$ so that 
$A \leq K$.
More generally, $A = O_{x}(1)$ means that there exists a constant $K_x$ depending on $x$ so that $A \leq K_x$.
Similarly, we use $A \succeq B$ and $A \succeq_x B$ to mean $B/A = O(1)$ and $B/A = O_x(1)$ respectively.

\subsection{Acknowledgement} The first author is partially supported by the NSF grant DMS $2055532$. The second author is partially supported by NSF Grant DMS $2349929$.

We thank Jeremy Kahn, Jan Kiwi, Curt McMullen, Mikhail Lyubich  for many insightful discussions over the years.

The results of the paper were first announced in Spring 2022 during the MSRI semester program ``Complex Dynamics: from special families to natural generalizations in one and several variables''.

\section{Background on hyperbolic components}\label{sec:pan}
In this section, we summarize some background facts on hyperbolic components, and introduce the notion of {\em eventually-golden-mean maps} on the boundary of a hyperbolic component in \S \ref{subsec:egmssm}.

\subsection{Marked hyperbolic component}\label{subsec:mhc}
Following the terminology in \cite{Milnor12}, a {\em fixed point marked rational map} $(f; z_0, z_1,...., z_d)$ is a rational map $f: \widehat\C \longrightarrow \widehat\C$ of degree $d\geq 2$, together with an ordered list of its $d+1$ (not necessarily distinct) fixed points $z_j$.
Let $\Rat_{d,\fm}$ be the space of all fixed point marked rational maps of degree $d$.
The group of M\"obius transformation $\PSL_2(\C)$ acts naturally on $\Rat_{d,\fm}$ and we define the {\em marked moduli space} 
$$
\M_{d,\fm} = \Rat_{d, \fm}/ \PSL_2(\C).
$$
The space of marked hyperbolic maps are open in $\M_{d,\fm}$, and a component is called a (marked) hyperbolic component.
To avoid complicated notations, we shall use $[f]$ to denote an element in $\M_{d,\fm}$, and simply refer to it as a (marked) map.
We remark that as maps vary in a hyperbolic component $\mathcal{H}$, the topology of the Julia sets remains constant.

\begin{defn}
\label{dfn:HH_fm}
Let $\mathcal{H} \subseteq \M_{d,\fm}$ be a hyperbolic component.
\begin{itemize}
    \item It is of {\em disjoint type} if for any map $[f] \in \mathcal{H}$, each grand orbit of a Fatou component of $[f]$ contains a unique critical orbit. 
    \item It is a {\em Sierpinski carpet hyperbolic component} if the Julia set of any map $[f] \in \mathcal{H}$ is homeomorphic to a Sierpinski carpet.
\end{itemize}
\end{defn}

We remark that $\mathcal{H}$ is a finite branched covering of $\mathcal{H}$.
We choose to work with $\mathcal{H}$ as the markings allows us to have a nice parameterization as follows.

Let $\mathcal{H}$ be a hyperbolic component of disjoint type.
There are exactly $2d-2$ attracting periodic cycles for a map $[f] \in \mathcal{H}$.
Let $\mathcal{C}_1, ..., \mathcal{C}_{2d-2}$ be a list of attracting periodic cycles and let $\rho_1,..., \rho_{2d-2}$ be the corresponding multipliers.
The marking of the fixed points allows us to consistently label these attracting periodic cycles throughout $\mathcal{H}$ (see \cite[Theorem 9.3]{Milnor12}), and $\mathcal{H}$ is parameterized by the {\em multiplier profile}, i.e. the multipliers of these $2d-2$ attracting periodic cycles
$$
\boldsymbol{\rho}: \mathcal{H} \overset{\simeq}{\longrightarrow} \D^{2d-2} =  \D_1 \times ... \times \D_{2d-2}.
$$

\subsection{Transversality for multipliers}
Recall that the boundary 
$$
\partial \mathcal{H} = \partial_{\operatorname{reg}}\mathcal{H} \sqcup \partial_{\operatorname{exc}}\mathcal{H}\subseteq \mathcal{M}_{d, \fm}
$$ 
splits into the \emph{regular} and \emph{exceptional} parts, where $[f] \in \partial_{\operatorname{reg}}\mathcal{H}$ if $[f]$ has exactly $2d-2$ non-repelling periodic cycles, and $[f] \in \partial_{\operatorname{exc}}\mathcal{H}$ otherwise.

Let $[f] \in \partial_{\operatorname{reg}}\mathcal{H}$.
Let $x$ be a non-repelling periodic point of $f$ with period $p$.
Suppose $[f_n] \in \mathcal{H}$ with $f_n \to f$, and $x_n \to x$ be a sequence of non-repelling periodic points of $f_n$.
We classify the non-repelling periodic point $x$ into three categories:
\begin{itemize}
    \item Type\setword{(1)}{Type:1}: The multiplier of $[f]$ at $x$ is not $1$ and $x_n$ has period $p$;
    \item Type\setword{(2)}{Type:2}: The multiplier of $[f]$ at $x$ is not $1$ and $x_n$ has period $\nu p$, with $\nu \geq 2$;
    \item Type\setword{(3)}{Type:3}: The multiplier of $[f]$ at $x$ is $1$.
\end{itemize}

\begin{prop}\label{prop:ts}
    The multiplier map extends to an embedding on the regular boundary $\boldsymbol{\rho}: \partial_{\operatorname{reg}}\mathcal{H} \xhookrightarrow{} \partial\D^{2d-2}$ and $\boldsymbol{\rho}(\partial_{\operatorname{reg}}\mathcal{H}) \cap \boldsymbol{\rho}(\partial_{\operatorname{exc}}\mathcal{H}) = \emptyset$.

    \noindent
    Here $\boldsymbol{\rho}(\partial_{\operatorname{reg}}\mathcal{H})$ or $\boldsymbol{\rho}(\partial_{\operatorname{exc}}\mathcal{H})$ are understood as the accumulation set of $\boldsymbol{\rho}([f_n])$ as $[f_n] \to \partial_{\operatorname{reg}}\mathcal{H}$ or $[f_n] \to \partial_{\operatorname{exc}}\mathcal{H}$ respectively.
\end{prop}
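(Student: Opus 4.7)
The plan is to establish the proposition in three stages. First I would extend $\boldsymbol{\rho}$ continuously to $\overline{\mathcal H}$, then invoke transversality of multipliers at non-repelling cycles to promote this to a local biholomorphism at every $[f] \in \partial_{\operatorname{reg}}\mathcal H$, and finally derive global injectivity and disjointness by analytic continuation.

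\textbf{Step 1: continuous extension.} For any sequence $[f_n] \in \mathcal H$ converging to $[f] \in \partial \mathcal H$, the $2d-2$ marked attracting cycles of $f_n$ (labelled by the fixed-point marking, see~\S\ref{subsec:mhc}) converge as algebraic sets to periodic cycles of $f$, and their multipliers $\rho_i^{(n)} \in \D$ converge in $\overline{\D}$. The tuple $\boldsymbol{\rho}([f]) := \lim \boldsymbol{\rho}([f_n])$ depends only on $[f]$ because the labelling is algebraic: on $\partial_{\operatorname{reg}}\mathcal H$ no two labels collide, while on $\partial_{\operatorname{exc}}\mathcal H$ at least two do. The Type~\ref{Type:1}/\ref{Type:2}/\ref{Type:3} trichotomy records the relationship between the tracked limit multiplier and the multiplier of the actual limit cycle; in Types~\ref{Type:2} and \ref{Type:3} the tracked limit equals $1$. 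This yields a continuous extension $\boldsymbol{\rho}: \overline{\mathcal H} \to \overline{\D^{2d-2}}$ with $\boldsymbol{\rho}(\partial \mathcal H) \subseteq \partial \D^{2d-2}$.

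\textbf{Step 2: local biholomorphism via transversality.} At any $[f] \in \partial_{\operatorname{reg}}\mathcal H$, the $2d-2$ distinct non-repelling cycles persist holomorphically in a neighborhood of $[f]$ in $\M_{d,\fm}$. By the classical transversality principle for multipliers of non-repelling cycles (applied to an appropriate iterate $f^{\nu p}$ in the parabolic Types~\ref{Type:2} and \ref{Type:3}), the derivative
\[
d\boldsymbol{\rho}|_{[f]} \colon T_{[f]}\M_{d,\fm} \longrightarrow \C^{2d-2}
\]
is surjective. Since $\dim_\C \M_{d,\fm} = 2d-2$, this derivative is an isomorphism, so $\boldsymbol{\rho}$ is a local biholomorphism from an open $U \ni [f]$ onto an open $V \ni \boldsymbol{\rho}([f])$ in $\C^{2d-2}$.

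\textbf{Step 3: embedding and disjointness.} The local inverse $V \to U$ restricts on $V \cap \D^{2d-2}$ to the global inverse $(\boldsymbol{\rho}|_\mathcal H)^{-1}$. If $[f], [f'] \in \partial_{\operatorname{reg}}\mathcal H$ share the same multiplier profile, the two local inverses constructed at $[f]$ and at $[f']$ both extend $(\boldsymbol{\rho}|_\mathcal H)^{-1}$ across the boundary near the common image, so by uniqueness of analytic continuation they coincide, forcing $[f] = [f']$; combined with the openness of the local biholomorphism this yields the embedding. For disjointness, if $\boldsymbol{\rho}([f]) = \boldsymbol{\rho}([g])$ with $[f] \in \partial_{\operatorname{exc}}\mathcal H$ and $[g] \in \partial_{\operatorname{reg}}\mathcal H$, any approximating sequence $[f_n] \in \mathcal H$ of $[f]$ would satisfy $\boldsymbol{\rho}([f_n]) \to \boldsymbol{\rho}([g])$, and the local biholomorphism at $[g]$ would force $[f_n] \to [g]$, so $[f] = [g]$, contradicting $[f] \in \partial_{\operatorname{exc}}\mathcal H$. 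The main obstacle is Step 2: the Type~\ref{Type:2} case requires carefully matching iterates and deformation directions so that the parabolic transversality theorem applies simultaneously across all $2d-2$ multiplier coordinates at once.
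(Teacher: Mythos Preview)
Your overall strategy matches the paper's: transversality of multipliers at the $2d-2$ non-repelling cycles gives local holomorphic coordinates, from which the embedding and the disjointness follow. Your Step~3 is essentially how the paper concludes as well (the local biholomorphism at $[g]\in\partial_{\operatorname{reg}}\mathcal H$ forces any sequence in $\mathcal H$ with multipliers converging to $\boldsymbol\rho([g])$ to converge to $[g]$).

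The gap is in Step~2, and it is more serious than ``matching iterates''. For a Type~\ref{Type:2} cycle, the coordinate $\rho_i$ that you are tracking is the multiplier of the period-$\nu p$ cycle $x_n$, \emph{not} of the period-$p$ limit cycle $x$. The period-$\nu p$ cycle does not persist as a single holomorphic family in a neighborhood of $[f]$: it collapses onto the $p$-cycle, and near $[f]$ the equation $f^{\nu p}(y)=y$ has a multiple root at $x$ (since $\lambda^\nu=1$). So $\rho_i$ is not even \emph{a priori} a single-valued holomorphic function on a neighborhood of $[f]$, and ``apply transversality to $f^{\nu p}$'' does not produce the map whose derivative you want to invert. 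Type~\ref{Type:3} is worse: the $p$-cycle itself has multiplier $1$, so even it fails to persist holomorphically.

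The paper's resolution is to pass to a branched cover of $\M_{d,\fm}$: the moduli space of rational maps with \emph{marked} $n$-periodic points for a suitable $n$. On this cover the colliding periodic points are separated and move holomorphically, so both $\rho_i$ and the companion multiplier $\widetilde\rho_i$ become honest holomorphic functions. Transversality (Levin, Epstein) applied to the \emph{enlarged} multiplier map $(\rho_1,\dots,\rho_k,\rho_{k+1},\widetilde\rho_{k+1},\dots)$ shows it is a finite branched covering onto its image with open image under the restricted map; one then uses $\Aut(f)=\{\mathrm{id}\}$ to descend back to $\M_{d,\fm}$. This branched-cover passage is the technical ingredient your Step~2 is missing.
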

\begin{proof}
    Let $[f] \in \partial_{\operatorname{reg}}\mathcal{H}$.
    Let us first suppose that every non-repelling periodic point of $f$ is Type~\ref{Type:1}.
    Let $\mathcal{C}_i$ be a list of non-repelling periodic cycles of $[f]$.
    Then by implicit function theorem, the cycles $\mathcal{C}_i$ move holomorphically on a neighborhood of $f$.
    Thus, we can define a holomorphic map 
    $(\rho_1,..., \rho_{2d-2}): U \longrightarrow \C^{2d-2}$ on a neighborhood $U$ of $[f]$,
    where $\rho_i$ is the multiplier of the cycle $\mathcal{C}_i$.
    By transversality of the multipliers (see \cite[Theorem 6]{Lev10} or \cite{Eps09}), $(\rho_1,..., \rho_{2d-2})$ gives a local parameterization of the moduli space $\mathcal{M}_{d, \fm}$.
    Therefore, $\boldsymbol{\rho}$ extends to an embedding near $[f]$, and $\boldsymbol{\rho}^{-1}(\boldsymbol{\rho}([f])) = \{[f]\}$.
    
    If there are Type~\ref{Type:2} or Type~\ref{Type:3} non-repelling periodic points, the argument is similar, but we need to pass to a branched cover (see \cite[\S 9]{Milnor12}).
    We consider the space of {\em $n$-periodic marked rational maps} consisting of 
    $$
    (f; x_0,..., x_{d^n}) \in \Rat_d\times \widehat\C^{d^n-1},
    $$ 
    where $f$ is a rational map of degree $d$ together with an ordered list of its $d^n+1$ (not necessarily distinct) periodic points dividing $n$.
    Since the iteration map $\Rat_d \longrightarrow \Rat_{d^n}$ is a local immersion (see \cite[Proposition 4.1]{Ye15}), by pulling back the local charts for $\Rat_{d^n, \fm}$ (which is a smooth manifold by \cite[Lemma~9.2]{Milnor12}), we have local holomorphic charts near any $n$-periodic marked rational map. Since $f$ has at least $3$ distinct fixed points, we obtain local chart on the corresponding quotient marked moduli space (see \cite[\S 9]{Milnor12}).
    Note that the forgetful map from $n$-periodic marked rational maps to fixed point marked rational maps is a branched covering.

    If $x$ is a Type~\ref{Type:2} point, then two or more periodic points in the same periodic cycle of $f_n$ collide in the limit, as we assume $[f] \in \partial_{\operatorname{reg}}\mathcal{H}$.
    Denote the period $\nu p$ and $p$ cycle of $[f]$ by $\mathcal{C}$ and $\widetilde{\mathcal{C}}$ respectively.
    By marking these periodic points, we may assume the $\widetilde{\mathcal{C}}$ and $\mathcal{C}$ move holomorphically on this branched cover, and their multipliers $\rho$ and $\widetilde{\rho}$ are holomorphic functions.

    Similarly, if $x$ is a Type~\ref{Type:3} point, then there is a period $p$ repelling point $\tilde{x}_n$ of $f_n$ with $\tilde{x}_n \to x$.
    Denote these two cycles of $[f]$ by $\mathcal{C}$ and $\widetilde{\mathcal{C}}$.
    By marking these periodic points, we may assume that $\mathcal{C}$ and $\widetilde{\mathcal{C}}$ move holomorphically, and their multipliers $\rho$ and $\widetilde{\rho}$ are holomorphic functions.

    In this way, there exists a neighborhood $U$ of $[f]$ and a branched cover $\widetilde{U}$ of $U$ so that the multipliers map $(\rho_1,..., \rho_k, \rho_{k+1}, \widetilde{\rho}_{k+1},..., \rho_{2d-2}, \widetilde{\rho}_{2d-2})$ is a holomorphic map on $\widetilde{U}$, where $k$ is the number of Type~\ref{Type:1} cycles.
    By transversality of the multipliers (see \cite[Theorem 6]{Lev10} or \cite{Eps09}) and restrict the domain if necessary, the map is a finite branched covering onto its image, and the image of $\widetilde{U}$ under the restricted multiplier map $(\rho_1,..., \rho_k, \rho_{k+1}, \rho_{k+2}, ..., \rho_{2d-2})$ is open.

    Since $f$ has at least $3$ distinct fixed points, $\Aut(f) = \{id\}$, where $\Aut(f)$ is the automorphism group of the fixed point marked rational map $f$.
    Therefore, the fiber of the branched cover $\widetilde{U} \longrightarrow U$ consists only of the same map with (potentially) a different marking on the periodic points.
    By fixing a marking of the attracting periodic points in $\mathcal{H}$, we obtain a homeomorphic lift $\widetilde{V} \subseteq \widetilde{U}$ of $V = \mathcal{H}\cap U$.
    The branched cover $\widetilde{U} \longrightarrow U$ is injective on $\overline{\widetilde{V}}$, and hence a homeomorphism between $\overline{\widetilde{V}}$ and $\overline{V}$.
    By lifting the map $\boldsymbol{\rho}$ from $V$ to $\widetilde{V}$, we see that $\boldsymbol{\rho}$ extends to an embedding of $\partial{\mathcal{H}}$ near $[f]$, and $\boldsymbol{\rho}^{-1}(\boldsymbol{\rho}([f])) = \{[f]\}$.
    The proposition now follows.
    \end{proof}

\subsection{Eventually-golden-mean maps}\label{subsec:egmssm}
In this subsection, we will fix a hyperbolic component $\mathcal{H}$ of disjoint type.

Let $\theta \in (0,1)$ be an irrational number, with continued fraction expansion
$$
\theta =  [0; a_1,..., a_m,...] = \cfrac{1}{a_1+\cfrac{1}{a_2+\cfrac{1}{ a_3+\cdots}}}.
$$
We say $\theta$ is of {\em bounded type} if
$$
\sup \{a_m\} < \infty.
$$
More generally, we say $\theta$ is {\em Brjuno} if
$$
\sum\frac{\log \mathfrak{q}_{m+1}}{\mathfrak{q}_n} < \infty.
$$
Note that if $\theta$ is of bounded type, then $\theta$ is Brjuno.

These arithmetic properties of irrational numbers are relevant to holomorphic dynamics.
It is well-known that if $f$ is a holomorphic map defined on $0 \in U$, with $f(0) = 0$ and $f'(0) = e^{2\pi i \theta}$ with $\theta$ being Brjuno, then $f$ is conjugate to the rigid rotation $z \mapsto e^{2\pi i \theta}z$ in a neighborhood of $0$.
If $f$ is a globally defined, then this neighborhood is part of a Siegel disk for $f$.

If $f$ is a rational map with a fixed point of multiplier $e^{2\pi i \theta}$ with $\theta$ of bounded type, then the corresponding Siegel disk has quasi-circle boundary which passes through at least one critical point \cite{Zha11}.

Let $\theta = [0; a_1,..., a_m, ...]$.
We say it is {\em eventually-golden-mean} if there exists $m_\theta$ so that $a_n = 1$ for all $n \geq m_\theta$.
Note that in this case, $\theta$ is automatically of bounded type.

Let $\mathcal{H}$ be a hyperbolic component of disjoint type, and $[f] \in \partial{\mathcal{H}}$.
Then some attracting periodic cycles must become indifferent.
By following the deformations for the corresponding periodic cycles, its multiplier profile $(\rho_1,..., \rho_{2d-2}) = (\rho_1([f]),..., \rho_{2d-2}([f]))$ lies on the boundary 
$$
(\rho_1,..., \rho_{2d-2}) \in \partial\D^{2d-2}.
$$ 

\begin{defn}
We say a boundary parameter  $(\rho_1,..., \rho_{2d-2}) \in \partial\D^{2d-2}$ is
\begin{itemize}
\item {\em rational} if each $\rho_j$ is either in $\D$ or $\rho_j \in S^1$ and is rational;
\item {\em irrational} if each $\rho_j$ is either in $\D$ or $\rho_j \in S^1$ and is irrational;
\item {\em eventually-golden-mean} if each $\rho_j$ is either in $\D$ or $\rho_j \in S^1$ and is eventually-golden-mean.
\end{itemize}
We also say  $(\rho_1,..., \rho_{2d-2})$  is {\em realizable} if there exists $[f] \in \partial{\mathcal{H}} \subseteq \M_{d,\fm}$ with multiplier profile $(\rho_1,..., \rho_{2d-2})$.
\end{defn}

Let $\mathfrak{S} \subseteq \partial \D^{2d-2}$ be the set of realizable eventually-golden-mean boundary parameter.
In this paper, we will focus on the following maps:
\begin{defn}
\label{defn:EGM maps}
A map $[f] \in \partial \mathcal{H} \subseteq \mathcal{M}_{d,\fm}$ is called an {\em eventually-golden-mean map} if its multiplier profile $(\rho_1,..., \rho_{2d-2}) \in \mathfrak{S}$. We denote by $\partial_\egm \mathcal H\simeq \mathfrak{S}$ the set of all such maps in $\partial \mathcal H$.
\end{defn}

We remark that since eventually-golden-mean irrational numbers are of bounded type, any non-repelling cycles of a eventually-golden-mean map $[f]$ are contained either in (super-)attracting Fatou components or Siegel disks.
Moreover, since its multiplier profile is on the boundary $\partial \D^{2d-2}$, there is at least one cycle of Siegel disk for $[f]$.

\begin{defn}\label{defn:strong}
Let $(\rho_1,..., \rho_{2d-2}) \in  \partial \D^{2d-2}$.
A sequence $(\rho_{1,n},..., \rho_{2d-2,n})\in  \partial \D^{2d-2}$ is said to converge to $(\rho_1,..., \rho_{2d-2})$ {\em strongly}, denoted by 
$$
(\rho_{1,n},..., \rho_{2d-2,n}) \to_s (\rho_1,..., \rho_{2d-2}),
$$
if $\rho_{j,n} \to \rho_j$ for all $j$, and $\rho_{j,n} = \rho_j$ when $\vert \rho_{j} \vert < 1$.
\end{defn}

If we further assume that the Julia set is a Sierpinski carpet, then we have the following density result for eventually-golden-mean maps.
\begin{prop}\label{prop:rl}
Let $\mathcal{H}$ be a Sierpinski carpet hyperbolic component of disjoint type.
The set $\mathfrak{S}$ is dense in $\partial \D^{2d-2}$.

Moreover, for any $(\rho_1,..., \rho_{2d-2}) \in \partial \D^{2d-2}$, there exists a sequence 
$$
(\rho_{1,n},..., \rho_{2d-2,n}) \in \mathfrak{S}
$$ 
converging to $(\rho_1,..., \rho_{2d-2})$ strongly.
\end{prop}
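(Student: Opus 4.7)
My plan is to reduce the statement to the known density of geometrically finite boundary parameters in the Sierpinski case, and then to locally perturb a geometrically finite parameter $[f_0] \in \partial_\Q \mathcal H$ so that each parabolic indifferent cycle opens into a Siegel cycle with an eventually-golden-mean multiplier, while pinning down the attracting multipliers. Both steps will rely on the transversality of multipliers from Proposition~\ref{prop:ts} and its branched-covering reformulation.

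Concretely, fix a target $(\rho_1, \ldots, \rho_{2d-2}) \in \partial \D^{2d-2}$ and partition the indices into $K = \{j : |\rho_j|<1\}$ and $J = \{j : |\rho_j|=1\}$. For each $j \in J$, pick a sequence $\theta_{j,n}$ of eventually-golden-mean angles with $e^{2\pi i \theta_{j,n}} \to \rho_j$; I want to produce $(\rho_{1,n}, \ldots, \rho_{2d-2,n}) \in \mathfrak{S}$ with $\rho_{j,n} = \rho_j$ for $j \in K$ and $\rho_{j,n} = e^{2\pi i \theta_{j,n}}$ for $j \in J$. By the Sierpinski-case density cited in \S\ref{ss:A_B:from:C}, $\boldsymbol{\rho}(\partial_\Q \mathcal H)$ is dense in $\partial \D^{2d-2}$, so I can approximate the target by $\boldsymbol{\rho}([f_0])$ for some geometrically finite $[f_0] \in \partial_\Q \mathcal H$ whose attracting multipliers lie near the $\rho_j$ for $j \in K$ and whose parabolic multipliers lie near the $\rho_j$ for $j \in J$. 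Near $[f_0]$ the multiplier map lifts to a holomorphic map on a branched covering of a neighborhood whose image is open in $\overline{\D}^{2d-2}$ (the same argument as in the proof of Proposition~\ref{prop:ts}, this time marking the parabolic cycles and tracking them under perturbation). Inside this open image set I can choose parameters that equal exactly $\rho_j$ on the $K$-coordinates and lie in the eventually-golden-mean locus on the $J$-coordinates, arbitrarily close to $\rho_j$; these yield the desired $(\rho_{1,n}, \ldots, \rho_{2d-2,n})$.

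The main obstacle I foresee is verifying that these local preimages actually lie in $\partial \mathcal H$ and not in the closure of some adjacent hyperbolic component sharing the same multiplier profile. To deal with this, I would use a parabolic implosion / pinching construction: at $[f_0] \in \partial_\Q \mathcal H$ each parabolic cycle admits a family of deformations within $\overline{\mathcal H}$ opening it into a Siegel cycle with any prescribed bounded-type multiplier, and the disjoint-type hypothesis makes these deformations at distinct indifferent cycles independent. Sufficiently small perturbations of $[f_0]$ preserve the Sierpinski carpet structure of the Julia set, keeping us in $\overline{\mathcal H}$. Since eventually-golden-mean angles are of bounded type, Zhang's theorem provides quasi-circle Siegel disks through critical points, so the perturbed maps belong to $\partial_{\egm} \mathcal H$, completing the strong approximation and hence proving density of $\mathfrak{S}$.
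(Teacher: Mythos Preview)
Your overall plan---approximate by a geometrically finite boundary map using the density of $\partial_\Q\mathcal H$, then use openness of the multiplier map near that point to perturb to an eventually-golden-mean profile---is exactly what the paper does. The first two paragraphs of your proposal are essentially correct and match the paper's argument.

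However, your third paragraph identifies a genuine concern (why does the preimage lie in $\partial\mathcal H$?) but proposes an unnecessarily complicated and partly incorrect fix. You do not need parabolic implosion, independence of deformations, or preservation of Sierpinski structure under perturbation (the last claim is false: Julia sets can change discontinuously at parabolic parameters). The resolution is much simpler and is already contained in the proof of Proposition~\ref{prop:ts}. Since $\mathcal H$ is Sierpinski, the non-repelling cycles of $[f_0]$ do not collide, so $[f_0]\in\partial_{\operatorname{reg}}\mathcal H$. Transversality then says that the multiplier map $\boldsymbol{\rho}$ (after passing to a branched cover if some multiplier equals $1$) is a local biholomorphism from a neighborhood $U$ of $[f_0]$ in $\mathcal M_{d,\fm}$ onto an open set in $\C^{2d-2}$---not merely onto an open set in $\overline{\D}^{2d-2}$ as you wrote. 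Since $\boldsymbol{\rho}|_{\mathcal H}$ is the known homeomorphism $\mathcal H\cong\D^{2d-2}$, we have $\boldsymbol{\rho}^{-1}(\D^{2d-2})\cap U=\mathcal H\cap U$, and therefore $\boldsymbol{\rho}^{-1}(\partial\D^{2d-2})\cap U=\partial\mathcal H\cap U$. Any eventually-golden-mean parameter in $\boldsymbol{\rho}(U)\cap\partial\D^{2d-2}$ thus automatically corresponds to a map in $\partial\mathcal H$.

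So your proof is complete once you replace the third paragraph with this observation; the pinching and Zhang arguments should be deleted.
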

\begin{proof}
It follows from the pinching deformation in \cite{CT18} (see also \cite{Luo21b}) that all rational boundary points are realizable.
Let $[f] \in \partial \mathcal{H}$ with rational multiplier profile $(\rho_1,..., \rho_{2d-2})$.
Since $\mathcal{H}$ is Sierpinski, no non-repelling periodic points collide.
We may assume $\rho_i \neq 1$ for all $i$, as other wise, we can pass to a branched cover as in Proposition \ref{prop:ts}.
Hence, we can locally parameterized the periodic cycles analytically.
Thus, there exists a neighborhood $U \subseteq \M_{d,\fm}$ of $[f]$ so that the multipliers 
$$
\boldsymbol{\rho}(t):=(\rho_1(t),..., \rho_{2d-2}(t))
$$ 
is an analytic function on $t\in U$.
By transversality of the multipliers (see \cite[Theorem 6]{Lev10} or \cite{Eps09}), $\boldsymbol{\rho}^{-1}((\rho_1,..., \rho_{2d-2})) = \{[f]\}$.
Thus by shrinking $U$ if necessary, the image $\boldsymbol{\rho}(U) \subseteq \C^{2d-2}$ is open (see \cite[p. 107]{GR84}).
Since eventually-golden-mean irrational numbers are dense, we can find an eventually-golden-mean map $[f] \in U$.
Since the rational parameters are dense, $\mathfrak{S}$ is dense.
The moreover part can be proved in the same way.
\end{proof}
\subsection{Valuable-attracting domains}\label{subsec:vad}
\label{sss:ValDomain}
Let $D$ be an attracting Fatou component for $f$ of period $p$.
Assume that the multiplier of the attracting periodic point is $\rho$.
Then the first return map $f^{p}: D \longrightarrow D$ is conjugate to the Blaschke product
\begin{align*}
F: \D &\longrightarrow \D\\
z &\mapsto z \frac{z+\rho}{1+\bar\rho z}.
\end{align*}
Let $\Psi: D \longrightarrow \D$ be the conjugacy map, and let $r:= \max\{\frac{1}{2}, \vert \rho\vert\}$.
We call the closed Jordan disk
$$
\widehat D = \Psi^{-1}(\overline{B(0, r)}) \subseteq D
$$
the {\em valuable-attracting domain} for $D$.
One can easily verify by our construction that
\begin{lem}\label{lem:kad}
Let $\widehat D$ be the valuable-attracting domain for $D$. Then
\begin{itemize}
\item $\widehat D$ is forward invariant under $f^{p}$;
\item $\widehat D$ contains the unique critical point of $f^{p}$ in $D$;
\item The annulus $D - \widehat D$ has modulus $-\frac{1}{2\pi} \log (\max\{\frac{1}{2}, \vert \rho\vert\})$.
\end{itemize}
\end{lem}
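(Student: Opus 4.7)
The plan is to reduce everything to the explicit Blaschke model $F\colon \D\to\D$ and verify each of the three statements there, using that $\Psi\colon D\to\D$ is a conformal conjugacy between $f^p\vert_D$ and $F$. Since $\widehat D=\Psi^{-1}(\overline{B(0,r)})$ with $r=\max\{1/2,|\rho|\}$, each assertion of the lemma follows from its image under $\Psi$.

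For forward invariance, observe that $F(0)=0$ and $F$ is a proper self-map of $\D$, so the Schwarz lemma gives $|F(z)|\le |z|$ for all $z\in\D$. Hence $F(\overline{B(0,r)})\subseteq\overline{B(0,r)}$ for every $r\in(0,1)$, and pulling back through $\Psi$ yields $f^p(\widehat D)\subseteq\widehat D$.

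For the critical point, the direct computation
\[
F'(z)=\frac{\bar\rho\,z^2+2z+\rho}{(1+\bar\rho z)^2}
\]
shows that $F$ has a unique critical point $z_c$ in $\D$, namely $z_c=\bar\rho^{-1}(-1+\sqrt{1-|\rho|^2})$. The Blaschke product $F$ has degree $2$, matching the degree of $f^p\colon D\to D$ in the disjoint-type setting (one critical orbit per attracting cycle), so $\Psi^{-1}(z_c)$ is the unique critical point of $f^p$ inside $D$. It remains to show $|z_c|\le r$. An elementary estimate gives $|z_c|=(1-\sqrt{1-|\rho|^2})/|\rho|$, and one checks by a routine algebraic manipulation that this is $\le|\rho|$ whenever $|\rho|\le 1$ (giving $|z_c|\le r$ in the regime $|\rho|\ge 1/2$) and that it is $\le 1/2$ whenever $|\rho|\le 1/2$ (giving $|z_c|\le r$ in the remaining regime). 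Both inequalities reduce, after squaring, to quadratic inequalities in $|\rho|$ that are immediate.

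For the modulus, the round annulus $\{r<|z|<1\}$ in $\D$ has modulus $-\tfrac{1}{2\pi}\log r$; since $\Psi\colon D-\widehat D\to \{r<|z|<1\}$ is a conformal isomorphism, the modulus is preserved, and plugging in $r=\max\{1/2,|\rho|\}$ gives the stated value. I expect no genuine obstacle here — the only mildly nontrivial point is the two-case inequality $|z_c|\le r$ in statement~(2), and that reduces to direct algebra.
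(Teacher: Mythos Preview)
Your proof is correct and is precisely the direct verification the paper has in mind; the paper itself omits the argument, simply stating that one can easily verify the three claims from the construction. One minor simplification: after rationalizing, you get $|z_c|=\dfrac{|\rho|}{1+\sqrt{1-|\rho|^2}}\le |\rho|$ for every $|\rho|<1$, so $|z_c|\le\max\{1/2,|\rho|\}$ without a case split.
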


\section{Core and pseudo-Core Surfaces of maps in $\partial_\egm \mathcal H $}
\label{sec:psd} In this section, we will introduce pseudo-Siegel disks and pseudo-core surfaces. The main construction is in Theorem \ref{thm:cps}; see also~\S\ref{sss:into:wX}.

Let us fix a hyperbolic component $\mathcal{H}$ of disjoint type. Recall from Definition~\ref{defn:EGM maps} that $\partial_\egm \mathcal H$ denotes the set of eventually-golden-mean maps in $\partial \mathcal H$: every neutral periodic cycle of a map in $\partial_\egm \mathcal H$ is Siegel of the eventually-golden-mean type. 

We discuss some combinatorial facts of irrational rotations on a circle in \S \ref{subsec:ci}. In~\S\ref{subsec:psd}, we review the notion of almost-invariant pseudo-Siegel disks. They are obtained from regular forward-invariant Siegel disks by filling-in parabolic fjords as illustrated on Figure~\ref{fig:CD}; see Definition~\ref{defn:pseudo-Siegel disks}.

The \emph{core surface} $X_f$ of $f\in \partial_\egm \mathcal H$ is the complement to the union of all periodic valuable-attracting domains and Siegel disks; see~\eqref{eq:defn:X_f}. The \emph{pseudo-core surface} $\widehat X_f\subset X_f$ is obtained by removing pseudo-Siegel disks instead of Siegel disks; see~\eqref{eq:defn:hat X_f}.  Properties of $X_f$ and $\widehat X_f$ are discussed in~\S\ref{ss:CoreSurfaces}.
We remark that some terminologies for degeneration of Riemann surfaces are summarised in \S \ref{subsec:dr}.

\subsection{Combinatorial intervals for Siegel disks}\label{subsec:ci}
In this subsection, we introduce the terminologies for dynamics on Siegel disks.
We remark that most of the discussions in this section work for any rational map with a Siegel disk with a single critical point on its boundary.

Let $\mathcal{H}$ be a hyperbolic component of disjoint type.
Let $[f] \in \partial \mathcal{H}$ be an eventually-golden-mean map.
Let $Z$ be a Siegel disk for $f$ of period $p$ with rotation number $\theta$.
Let $h: Z \longrightarrow \D$ be a Riemann mapping with $h(\alpha) = 0$, where $\alpha$ is the fixed point in $Z$.
Since $Z$ is a quasi-disk, $h$ extends continuously to
$$
h: \overline{Z} \longrightarrow \overline{\D}, \, h(\alpha) = 0
$$
which conjugate $f^p|_{\partial Z}$ with the rigid rotation on $S^1$.

We define the {\em combinatorial length} of an interval $I \subseteq \partial Z$ as
$$
\vert I \vert := \vert h(I)\vert_{\R/\Z} \in (0,1).
$$
Similarly, we define the {\em combinatorial distance} between $x, y \in \partial Z$ as
$$
\dist(x,y):= \dist_{\R/\Z}(h(x), h(y)) \in [0,1/2].
$$
Let $x \in \partial Z$ and $t\in \R/\Z$, we set
$$
x \boxplus t = h^{-1}(h(x) + t),
$$
i.e., $x \boxplus t$ is $x$ rotated by angle $t$.
Note that $f^p(x) = x \boxplus \theta$ for all $x\in \partial Z$.

Let $[0; a_1,..., a_m,...]$ be the continued fraction expansion for $\theta$.
Let $$
\mathfrak{p}_m/\mathfrak{q}_m :=
\begin{cases}
[0; a_1, ..., a_m], \text{ if }a_1 >1\\
[0; a_1, ..., a_{m+1}], \text{ if }a_1 = 1
\end{cases}
$$ 
be the sequence of approximations for $\theta$ given by the continued fraction.
We use the convention and set $\mathfrak{q}_0 = 1$.
Then $f^{\mathfrak{q}_0p} = f^p, f^{\mathfrak{q}_1p}, ...$ is the sequence of first returns of $f^p|_{\partial Z}$, i.e.,
$$
\dist(f^{ip}(x), x) > \dist(f^{\mathfrak{q}_mp}(x), x) =: \mathfrak{l}_m, \, x\in \partial Z \, \text{ for all } i < \mathfrak{q}_m.
$$
We define $\theta_m\in (-1/2, 1/2)$ so that
$$
f^{\mathfrak{q}_mp}(x) = x \boxplus \theta_m.
$$
Note that $\mathfrak{l}_m = \vert \theta_m\vert$.

Given two points $x,y \in \partial Z$ with $\dist(x,y) < 1/2$, we let $[x,y]$ be the shortest closed interval of $\partial Z$ between $x, y$.
Let $I \subseteq \partial Z$ be an interval.
We define the $\lambda$-scaling of $I$ as
$$
\lambda I := \{x \in \partial Z: \dist(x, I) \leq (\lambda-1) \vert I \vert /2\}.
$$

An interval $I \subseteq \partial Z$ is called a {\em combinatorial interval of level $m$}, or simply a {\em level $m$ interval} if $\vert I \vert = \mathfrak{l}_m$.
Note that a level $m$ interval is of the form
$$
I = [x, f^{\mathfrak{q}_mp}(x)].
$$

\begin{figure}[ht]
  \centering
  \resizebox{0.8\linewidth}{!}{
    \def\svgwidth{\columnwidth}
\begingroup%
  \makeatletter%
  \providecommand\color[2][]{%
    \errmessage{(Inkscape) Color is used for the text in Inkscape, but the package 'color.sty' is not loaded}%
    \renewcommand\color[2][]{}%
  }%
  \providecommand\transparent[1]{%
    \errmessage{(Inkscape) Transparency is used (non-zero) for the text in Inkscape, but the package 'transparent.sty' is not loaded}%
    \renewcommand\transparent[1]{}%
  }%
  \providecommand\rotatebox[2]{#2}%
  \newcommand*\fsize{\dimexpr\f@size pt\relax}%
  \newcommand*\lineheight[1]{\fontsize{\fsize}{#1\fsize}\selectfont}%
  \ifx\svgwidth\undefined%
    \setlength{\unitlength}{841.88976378bp}%
    \ifx\svgscale\undefined%
      \relax%
    \else%
      \setlength{\unitlength}{\unitlength * \real{\svgscale}}%
    \fi%
  \else%
    \setlength{\unitlength}{\svgwidth}%
  \fi%
  \global\let\svgwidth\undefined%
  \global\let\svgscale\undefined%
  \makeatother%
  \begin{picture}(1,0.33670034)%
    \lineheight{1}%
    \setlength\tabcolsep{0pt}%
    \put(0,0){\includegraphics[width=\unitlength,page=1]{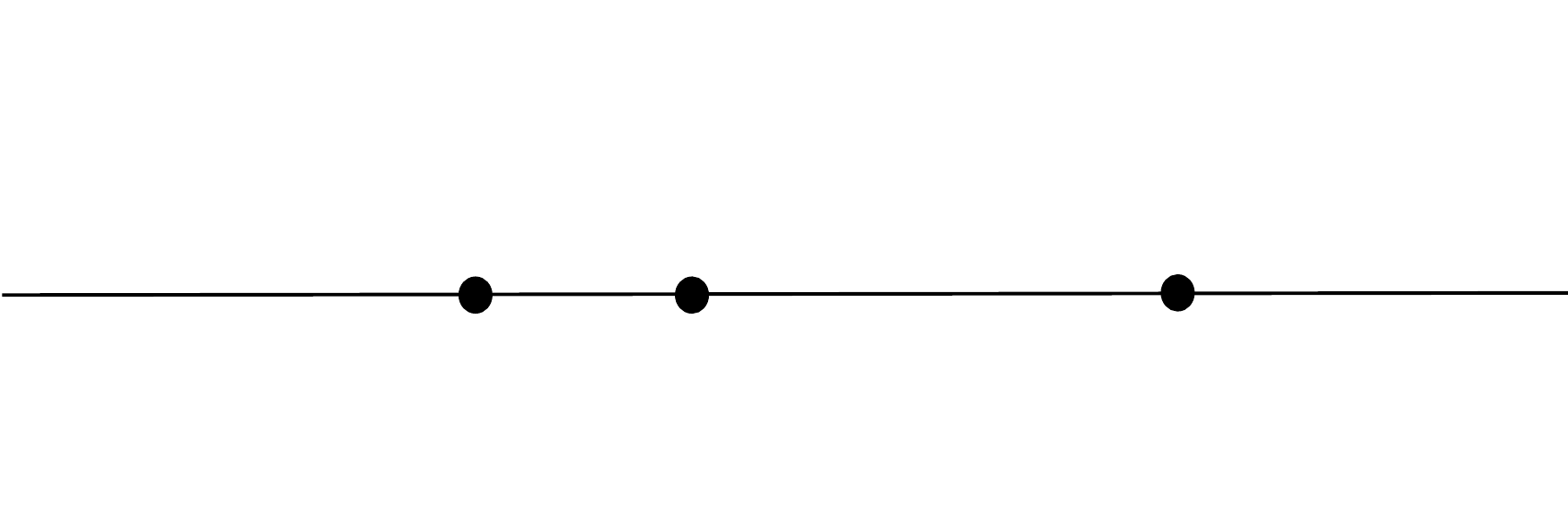}}%
    \put(0.4338289,0.10627646){\color[rgb]{0,0,0}\makebox(0,0)[lt]{\lineheight{1.25}\smash{\begin{tabular}[t]{l}$x$\end{tabular}}}}%
    \put(0.74270388,0.10066054){\color[rgb]{0,0,0}\makebox(0,0)[lt]{\lineheight{1.25}\smash{\begin{tabular}[t]{l}$f^{\mathfrak{q}_{mp}}(x)$\end{tabular}}}}%
    \put(0.26091771,0.1017659){\color[rgb]{0,0,0}\makebox(0,0)[lt]{\lineheight{1.25}\smash{\begin{tabular}[t]{l}$f^{\mathfrak{q}_{m+1}p}(x)$\end{tabular}}}}%
    \put(0.58124648,0.17226338){\color[rgb]{0,0,0}\makebox(0,0)[lt]{\lineheight{1.25}\smash{\begin{tabular}[t]{l}$\mathfrak{l}_m$\end{tabular}}}}%
    \put(0.35087986,0.17459872){\color[rgb]{0,0,0}\makebox(0,0)[lt]{\lineheight{1.25}\smash{\begin{tabular}[t]{l}$\mathfrak{l}_{m+1}$\end{tabular}}}}%
  \end{picture}%
\endgroup%

  }
  \caption{The first return and combinatorial intervals.}
  \label{fig:CI}
\end{figure}

Let $I$ be a level $m$ interval.
We say the intervals
$$
\{f^{ip}(I): i \in \{0,1,..., \mathfrak{q}_{m+1}-1\} 
$$
are obtained by {\em spreading around} $I$.
We enumerate these intervals counterclockwise starting with $I = I_0$
$$
I_0 = I, I_1 = f^{i_1p}(I), ..., I_{\mathfrak{q}_{m+1} - 1} = f^{(i_{\mathfrak{q}_{m+1}-1})p}(I), \, i_j \in \{1,2, ..., \mathfrak{q}_{m+1} -1\}.
$$
Note that the interval $I_i$ is either attached to $I_{i+1}$ or there is a level $m+1$ combinatorial interval between $I_i$ and $I_{i+1}$.

\subsubsection{Diffeo-tiling $\mathfrak{D}_m$}\label{sss:diffeo tiling} There is a unique critical point $c$ of $f^p$ on $\partial Z$.
We denote by $\CP_m = \CP_m(Z)$ the set of critical points of $f^{\mathfrak{q}_{m+1}p}$ on $\partial Z$.
We define the {\em diffeo-tiling} $\mathfrak{D}_m$ of level $m$ as the partition of $\partial Z$ induced by $\CP_m$.
Note that there are $\mathfrak{q}_{m+1}$ intervals in $\mathfrak{D}_m$, and each interval has length either $\mathfrak{l}_m$ or $\mathfrak{l}_m + \mathfrak{l}_{m+1}$.

\subsection{Pseudo-Siegel disks}\label{subsec:psd} A pseudo-Siegel disk $\wZ^m$ is obtained from a Siegel disk $\overline Z$ by filling-in all parabolic fjords of levels $\ge m$. The formal definition of $\wZ^m$ for maps in $\partial _\egm \mathcal H$ (see~\S\ref{sss:defn:pseudo-Siegel disks}) is the same as for quadratic polynomials with the additional requirement that the ``territory'' $\XX(\wZ^m)$ containing all auxiliary objects of $\wZ^m$ is peripheral rel $\overline Z$; see~\S\ref{sss:zW:conventions} and Property~\ref{prop:P:wZ^m} in \S\ref{sss:defn:pseudo-Siegel disks}.

\subsubsection{Parabolic fjords and their protections}\label{sss:zW:conventions}
As in~\S\ref{subsec:ci}, let $Z$ be a periodic Siegel disk of $[f]\in \partial _\egm \mathcal H$ with period $p$ and rotation number $\theta$. 

We say that a disk $D\supset Z$ is {\em peripheral rel $\overline Z$} if $D\setminus \overline Z$ does not intersect the post-critical set of $f$. More generally, we say that a set $S\subset \wC$ is {\em peripheral rel $\overline Z$} if $S$ is within a peripheral disk $D$. In other words, $S$ is peripheral rel $\overline Z$ if $S$ can be ``contracted'' rel the postcritical set into $\overline Z$.

Consider a diffeo-tiling $\mathfrak D_m$ (see~\S\ref{sss:diffeo tiling}) and an interval $I\in \mathcal D_m$. Given a peripheral curve $\beta \subset \wC\setminus Z$ with endpoints in $I$, set $\widehat{\mathfrak F}_\beta$ to be the closure of the connected component of $\wC\setminus (Z\cup \beta)$ enclosed by $\beta \cup I$. If $\widehat{\mathfrak F}_\beta$ is peripheral rel $\overline Z$, then we call $\widehat{\mathfrak F}_\beta$ the \emph{parabolic fjord} bounded by $\beta$; see Figure \ref{fig:CD}. We will refer to $\beta$ as the \emph{dam} of $\widehat{\mathfrak F}_\beta$.

Let $\XX\subset \wC\setminus Z$ be a rectangle with \[\partial^{h} \XX\subset \overset{\circ} {I}\ := \ I\setminus \{\text{ends of $I$}\}.\] We denote by $\upbullet \XX$ the union of $\XX$ and the closure of the connected component of $\wC\setminus (\XX\cup I)$ enclosed by $\partial^v \XX \cup \overset{\circ} {I}$. We say that $\XX$ \emph{protects} a fjord $\widehat{\mathfrak F}_\beta$ if
\begin{itemize}
    \item $\widehat{\mathfrak F}_\beta\subset \upbullet \XX\setminus \XX$;
    \item $\upbullet \XX$ is peripheral rel $\overline Z$.
\end{itemize}

\subsubsection{Pseudo Siegel disks for rational maps in $\partial_\egm \mathcal H$} \label{sss:defn:pseudo-Siegel disks} Let us state a definition of pseudo-Siegel disks with slightly simplified notations. In particular, outer protections $\XX_I$ are considered below rel $\overline Z$ instead of $\widehat Z^{m+1}$ as it was in \cite[Assumption 6]{DL22}. (Protecting rectangle rel $\overline Z$ can be easily projected into a protecting rectangle rel $\wZ^{m+1}$ and vise-versa, see \cite[ Lemma 5.9]{DL22}.)

\begin{defn}
\label{defn:pseudo-Siegel disks} Following notations from~\S\ref{sss:zW:conventions}, a \emph{pseudo-Siegel disk} 
$\widehat Z^m$ of $m\ge -1$ and its \emph{territory} $\XX (\wZ^m)\supset \wZ^m$ are disks inductively constructed as follows (from bigger $m$ to smaller ones):
\begin{enumerate}
\item $\widehat Z^m = \overline{Z}$ and $\XX(\widehat Z^m) = \overline{Z}$ for all sufficiently large $m \gg 0$,
\item either \[\widehat Z^m := \widehat Z^{m+1}\text{ and } \XX(\widehat Z^m) := \XX(\widehat Z^{m+1}),\] 
or for every interval $I\in \mathfrak D_m$ there is 
\begin{itemize}
    \item a parabolic peripheral fjord $\widehat{\mathfrak F}_I\equiv \widehat{\mathfrak F}_{\beta_I}$ bounded by its dam $\beta_I$ with endpoints in $I$; and
    \item a peripheral rectangle $\XX_I$ protecting $\widehat{\mathfrak F}_I$ 
\end{itemize}
 such that    
   \begin{equation}\label{eq:defn:pseudo-Siegel disks}
      \begin{aligned}
     \widehat Z^m &:= \widehat Z^{m+1} \cup \bigcup_{I\in \mathfrak{D}_m} \widehat{\mathfrak F}_{I}\\ 
         \XX \big(\widehat Z^m\big) &:= \XX(\wZ^{m+1})\cup \bigcup_{I\in \mathfrak{D}_m} \upbullet \XX_I 
    \end{aligned}\end{equation}
and such that $\wZ^m$ and $\XX \big(\widehat Z^m\big)$ satisfy $7$ compatibility condition stated in \cite[\S~5.1]{DL22} and briefly summarized in~\S\ref{sss:compart:wZ}. 
\end{enumerate}
\end{defn}

We remark that in addition to $7$ compatibility conditions from \cite[\S~5.1]{DL22}, a pseudo-Siegel disk satisfies the following additional property:
\vspace{0.1cm}

\begin{enumerate}[start=16, label=(\Alph*)]
\item \label{prop:P:wZ^m} $\XX(\wZ^m)$ is peripheral rel $\overline Z$. 
\end{enumerate}
\vspace{0.1cm}

If $\widehat Z^m \neq \widehat Z^{m+1}$, then we say $\widehat Z^m$ is a {\em regularization} of $\widehat Z^{m+1}$ at level $m$.
We denote $\widehat Z = \widehat Z^{-1}$, and call it the pseudo-Siegel disk. We remark that Definition ~\ref{defn:pseudo-Siegel disks} allows us to potentially take $\widehat Z^n = \overline{Z}$ and $\XX(\wZ^n)=\overline{Z}$ for all $n$, which will satisfy all the compatible conditions.
Thus, $\overline{Z}$ is trivially a pseudo-Siegel disk (of any level). Similarly, any level $m$ pseudo-Siegel disk $\widehat Z^m$ can be extended to lower levels by setting $\widehat Z^n = \widehat Z^m$ for all $n \leq m$.
With this in mind, when we introduce definitions or state theorems for pseudo-Siegel disks, they apply to regular Siegel disks as well.

\begin{figure}[ht]
  \centering
  \resizebox{\linewidth}{!}{
    \def\svgwidth{\columnwidth}
    \import{./Figure/}{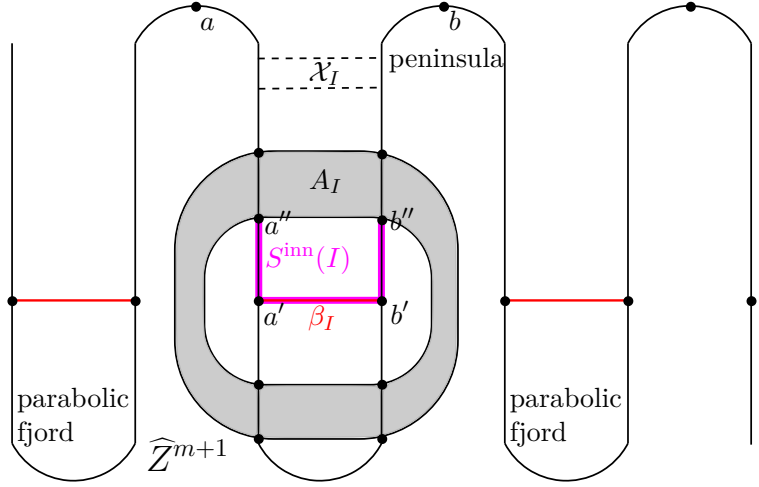}

  }
  \caption{An illustration of psuedo-Siegel disk. 
  The intersection patterns of the protecting annulus $A_I$, the inner buffer $S^{\inn}(I)$, extra outer protection $\XX_I$ are indicated on the graph.}
  \label{fig:CD}
\end{figure}

\subsubsection{Regular intervals}
A point $x\in \partial \widehat Z^m$ is {\em regular} if $x\in \partial \widehat Z^m \cap \partial Z$.
By construction, if a point $x\in\partial \widehat Z^m$ is regular, then $x$ is regular on $\partial \widehat Z^k$ for all $k\geq m$.
A {\em regular interval} $I \subseteq \partial\widehat Z^m$ is an interval with regular endpoints.

The projection of a regular interval $I \subseteq \partial \widehat Z^m$ onto $\partial Z$ is the interval $I^\bullet \subseteq \partial Z$ with the same endpoints and the same orientation as $I$.
We define the combinatorial length of $I$ by $\vert I \vert := \vert I^\bullet \vert$. 
Similarly, we can define the projection $I^k$ of a regular interval $I \subseteq \partial \widehat Z^m$ onto $\partial \widehat Z^k$ for $k > m$.\

For an interval $I \subseteq \partial Z$, the projection $I^m$ onto $\partial Z^m$ is the smallest regular interval whose projection onto $\partial Z$ contains $I$.
Similarly, we can define the projection of an interval $I \subseteq \partial \widehat Z^m$ onto $\partial \widehat Z^n$ for $n < m$.

Let $I \subseteq \partial \widehat Z^m$ be a regular interval.
Abusing the notations, we denote $\lambda I\subseteq \partial \widehat Z^m$ as the projection of $\lambda I^\bullet$ on $\partial \widehat Z^m$ where $I^\bullet \subseteq \partial Z$ is the projection of $I$ onto $\partial Z$.

\subsubsection{Compatibility conditions between $\wZ^m$ and $\overline Z$} \label{sss:compart:wZ} The $7$ compatibility conditions stated in \cite[\S~5.1]{DL22} are designed to ensure the following key-properties of $\wZ^m$:
\begin{enumerate}[label=(\Alph*)]
  \item\label{prop:wZ:main:A} $\wZ^m$ is almost invariant under $f^{i}$ for $|i|\le \qq_{m+1}$;
   \item\label{prop:wZ:main:B} the ``slight'' shrinking 
   \[ \wC\setminus \overline Z\ \leadsto \ \wC\setminus \wZ^m \]
  has small affect on the width of rectangles in $\wC\setminus \overline Z$ that have vertices in $\wC\setminus \XX(\wZ^m)$; see Lemma~\ref{lem:C-Z to C-wZ}
\end{enumerate}

Below we will recall the main aspects of the axiomatization of $\wZ^m$ from~\cite[\S~5.1]{DL22}. Various minor technical conditions will be omitted. We remark that $\wZ^m$ can be defined explicitly using explicit hyperbolic geodesics in the complement of $\overline Z$; see Appendix~\ref{ss:good:wZ}.

Property~\ref{prop:wZ:main:B} follows the requirement that all $\Width(\XX_I)$ are sufficiently wide and will be discussed in~\S\ref{sss:Z vs wZ}. 

Let us now discuss~\ref{prop:wZ:main:A}. It follows from~\eqref{eq:defn:pseudo-Siegel disks} that
\begin{enumerate}[label=(\Alph*), start=3]
  \item\label{prop:wZ:main:C} that critical points $\CP_m$ of $f^{\mathfrak{q}_{m+1}p}\cap \partial Z$ are regular points of $\wZ^n$ for any $n\ge m$. 
\end{enumerate}
In particular, the projections $I^m$ of $I\in \mathfrak D_m$ induce a well-defined diffeo-tiling of $\partial \wZ^n$.

As illustrated on Figure~\ref{fig:CD}, for all $I\subset \mathfrak D_m$, we require the existence of annuli $A_I$ around the $\beta_I$ with 
\begin{equation}
\label{eq:dfn:A_I}
\mod(A_I)\ge \delta >0, \sp\sp\text{ where $\delta>0$ is small but fixed} \end{equation}
such that for all $|i|\le q_{m+1}$ the annuli $(A_I)_{I\in \mathfrak D_m}$ control the difference between $f^i(\wZ^m)$ and $\wZ^m$ in the following sense.
\begin{enumerate}[label=(\Alph*), start=4]
\item\label{prop:wZ:main:D}  Assume $f^i\colon \overline Z\to \overline Z$ maps $J\in \mathfrak D_{m}$ into most of the $J\in \mathfrak D_{m}$; i.e., the difference $f^i(J)\setminus I$ is either empty or consists of an interval in $\mathfrak D_{m+1}$. Then we require that $A_I$ also surrounds $f^i(\beta_J)$.
\end{enumerate}
We remark that $A_I$ was denote by $A^\text{out}(\beta_I)$ in~\cite{DL22}.

Write $I=[a,b]\subset \partial \overline Z$ and denote by $a',b'$ the endpoints of $\beta_I$ as shown on Figure~\ref{fig:CD}. Denote by $a'',b''$ the intersection of the inner boundary of $A$ with $[a,a']^{m+1},[b',b'']^{m+1} \subset I^{m+1}$, where the superscript indicates the projections of the intervals onto $\wZ^{m+1}$.  The \emph{inner buffer} is defined by \[ S^\text{inn}(I):=\ [a'',b'']^m\ =\ [a'',a']^{m+1}\cup \beta_I\cup [\beta',\beta'']^{m+1} \ \subset \partial \wZ^m .\] We also define
\[ S^\text{inn} (\wZ^m):= \bigcup_{n\ge m , \ I} S^\text{inn} (I)\subset \partial \wZ^m,\]
where the union is taken over all $I\in \mathfrak D_n$ and $n\ge m$.

It is required that there is an annulus $A^\text{inn} _I$ separating $\{a'', \ b''\}$ from $\beta_I$ with $\mod (A^\text{inn})\ge \delta$ such that $(A^\text{inn}_I)_{I\in \mathfrak D_m}$ also control the difference between $f^i(\wZ^m)$ and $\wZ^m$ as in \ref{prop:wZ:main:D} above. In short: 
\begin{itemize}
    \item the $A_I\equiv A^\text{out}_I$ guarantee that wide families typically submerge into $\wZ^m$ through ``grounded intervals,'' see~\S\ref{sss:grounded intervals};
    \item the $A^\text{inn}_I$ guarantee that $f^i\mid \wZ^m$ is ``geometrically close'' to the standard rigid rotation; conseequently, $\partial \wZ^m$ has inner geometry similar to that of a Siegel disk; see~\cite[Theorem~5.12]{DL22}.
\end{itemize}

\subsubsection{Robustness of the outer geometry under $\wC\setminus \overline Z\ \leadsto \ \wC\setminus \wZ^m$} \label{sss:Z vs wZ} In~\cite[Remark~5.11]{DL22}, it is assumed that 
\[ \Width(\XX_I) \ge \Delta, \sp\sp\text{ where $\Delta>0$ is sufficiently big but fixed}\]
for all $I\in \mathfrak D_n$ and all $n\ge m$.

Consider a rectangle $\RR\subset \wC\setminus Z$. Assume that:
\begin{itemize}
    \item vertices $V_\RR$ of $\RR$ are outside of $\intr  (\wZ^m)$; and
    \item $\RR\setminus \wZ^m$ has a connected component $\RR'$ such that $V_\RR \subset \partial \RR'$.
\end{itemize}
Then $\RR'$ is Jordan domain, and we view its closure $\RR^m:= \overline \RR'$ as a rectangle with vertex set $V_\RR$ and the same orientation of sides as $\RR$. We call $\RR^m$ the \emph{restriction} of $\RR$ to $\wC\setminus \intr (\wZ^m)$.

\begin{lem}[{\cite[{(5.12) in \S5.2.4}]{DL22}}]\label{lem:C-Z to C-wZ} If $\RR^m$ is the restriction of a rectnalge $\RR$ to $\wC\setminus \intr (\wZ^m)$ as above and if the vertex set $V_\RR$ of $\RR$ is outside of $\intr \XX(\ZZ^m)$, then
\[  1-\varepsilon_\Delta <\frac{\Width(\RR^m)}{\Width(\RR)}\le 1+\varepsilon_\Delta,\]
where $\varepsilon_\Delta \to 0$ as $\Delta \to \infty$.
\end{lem}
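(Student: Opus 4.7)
\medskip
\noindent\textbf{Plan.} The inequality encodes the heuristic that filling in parabolic fjords is ``invisible'' to wide rectangles whose vertices avoid the territory $\XX(\wZ^m)$. The strategy is to compare the path families $\Gamma$ (connecting horizontal sides of $\RR$) and $\Gamma^m$ (connecting horizontal sides of $\RR^m$), using the protection bounds $\Width(\XX_I)\ge \Delta$ and $\mod(A_I)\ge \delta$ to bound the ``cost'' of replacing excursions into filled fjords by detours along $\partial\wZ^m$.

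\medskip
\noindent\textbf{Upper direction.} The inequality $\Width(\RR^m)\le (1+\varepsilon_\Delta)\Width(\RR)$ follows from the inclusion $\Gamma^m\subseteq \Gamma$: every admissible metric for $\RR$ restricts to one for $\RR^m$ with no larger $L^2$ mass, so $\Width(\RR^m)\le \Width(\RR)$, and the $\varepsilon_\Delta$ slack accommodates minor adjustments at boundary points that may be moved slightly from $\partial\overline Z$ to $\partial\wZ^m$.

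\medskip
\noindent\textbf{Lower direction.} For the reverse bound, I would show that every curve $\gamma\in \Gamma$ can be replaced by a curve $\gamma'\in \Gamma^m$ of extremal length at most $(1+O(1/\Delta))$ times that of $\gamma$. Decompose $\gamma$ into its maximal excursions into filled fjords: each such excursion crosses $\partial\wZ^m$ along a dam $\beta_I$ of some $\widehat{\mathfrak F}_I\subset \upbullet \XX_I\setminus \XX_I$, with $I\in \mathfrak D_n$ for some $n\ge m$. Re-route the excursion along $\partial \wZ^m$ so that it hugs the inner-buffer side $S^{\inn}(I)$; the detour is contained in $\upbullet \XX_I$. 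Since $\Width(\XX_I)\ge \Delta$ and the original excursion had to traverse $\XX_I$ transversally (because $V_\RR\cap \intr\XX(\wZ^m)=\emptyset$ prevents $\gamma$ from reaching $\widehat{\mathfrak F}_I$ without crossing $\partial^v \XX_I$ or a vertical side of $\XX_I$), the replacement at $I$ costs at most $O(1/\Delta)$ in relative extremal width.

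\medskip
\noindent\textbf{Aggregation and the main obstacle.} The nontrivial point is that the correction from many fjords, spread across all levels $n\ge m$, must remain bounded by a single $\varepsilon_\Delta$. The territories $\upbullet \XX_I$ of the protecting rectangles at different levels are either nested or essentially disjoint (this is among the compatibility conditions of~\cite[\S5.1]{DL22}), and wide families of transversal crossings of distinct $\XX_I$ do not cross each other. Combining this ``non-crossing'' of wide families with the Quasi-Additivity Law / Covering Lemma of Kahn--Lyubich yields that the total relative correction summed over fjords is $O(1/\Delta)$, so $\Width(\RR)\le (1+\varepsilon_\Delta)\Width(\RR^m)$, equivalently $\Width(\RR^m)/\Width(\RR) > 1-\varepsilon_\Delta'$ after rearrangement. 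The main obstacle is precisely this aggregation step: one has to verify that the bound from $\Width(\XX_I)\ge \Delta$ translates into a \emph{uniform} (independent of the combinatorial complexity of the fjord system) estimate, which ultimately reduces to the width/modulus bounds in the definition of pseudo-Siegel disks and the non-crossing of wide families.
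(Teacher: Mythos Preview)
The paper does not prove this lemma; it is quoted verbatim from \cite[(5.12) in \S5.2.4]{DL22}, so there is no in-paper argument to compare against. I can only assess your sketch on its own terms.

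Your overall mechanism---protecting rectangles $\XX_I$ with $\Width(\XX_I)\ge\Delta$ make fjord excursions expensive---is the right one, but two steps are incorrect as written. First, the ``upper direction'' is not a trivial inclusion: $\Gamma^m\subseteq\Gamma$ fails in general, because $\partial^h\RR^m$ may contain arcs of $\partial\wZ^m$ (pieces of dams $\beta_I$) replacing portions of $\partial^h\RR$ swallowed by fjords. A curve in $\Gamma^m$ starting on such an arc need not connect $\partial^{h,0}\RR$ to $\partial^{h,1}\RR$. Both inequalities genuinely require the $\Delta$-protection, not just the lower one. Second, in the ``lower direction'' you justify transversal crossing of $\XX_I$ by the vertex condition $V_\RR\cap\intr\XX(\wZ^m)=\emptyset$; but that condition constrains only the four corners of $\RR$, not the curve $\gamma$. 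The correct reason is topological: since $\widehat{\mathfrak F}_I\subset\upbullet\XX_I\setminus\XX_I$ and the curve lives in $\wC\setminus Z$, any arc from outside $\upbullet\XX_I$ into the fjord must cross $\XX_I$, hence picks up extremal length $\succeq\Delta$.

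Your instinct that aggregation across levels is the crux is correct, but the tools you invoke are too heavy. The Quasi-Additivity Law and Covering Lemma are not needed here; the argument in \cite{DL22} runs on the series law together with the nesting/disjointness of the territories $\upbullet\XX_I$ (one of the compatibility conditions you cite), which prevents double-counting of crossings.
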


\subsubsection{Grounded intervals}\label{sss:grounded intervals}
We will be usually working with a special type of regular intervals called {\em grounded intervals}. An interval $I\subseteq \partial \widehat Z^m$ is a grounded interval if the end points $\partial I$ are in $\partial \widehat Z^m - S_{\inn}(\wZ^m)$.

Lemma~\ref{lem:C-Z to C-wZ} implies the following fact about grounded intervals. For a pair $I, J\subset \partial \widehat Z^m$ of disjoint grounded intervals, consider a rectangle \[\RR\subset \wC\setminus Z \sp\sp\text{ with }\partial^{h,0} \RR=I , \sp \sp \partial^{h,1} \RR=J.\] 
Since $I,J$ are grounded, $\RR$ restrict to a rectangle $\RR^m$ in $\wC\setminus \wZ^m$ with $\partial^{h,0} \RR^m=I^m$ and $\partial^{h,1} \RR^m=J^m$.

Then the argument in~\cite[Lemma 5.10]{DL22} implies that
\begin{equation}
\label{eq:RR vs RR^m}
\Width(\RR)-O_\delta (1) <\Width(\RR^m) <(1+\varepsilon_\Delta)\Width(\RR) +O_\delta(1).\end{equation}
In the paper, we will usually replace ``$1+\varepsilon$'' with ``$2$''; see for example Proposition \ref{prop:gi}.

\subsection{Stability of $\wZ^m$ and pseudo-bubbles}\label{subsec:stability}
 Recall that $Z$ has period $p$. Given a peripheral closed disk $D\supset \overline Z$ and an iteration $f^{np}$, set $\widetilde D$ to be the closure of the connected component of $f^{-np}(\Int(D))$ containing $Z$. If $\widetilde D\overset{f^{np}}{\longrightarrow} D$ has degree $1$ (i.e., it is a homeomorphism), then we call \[D(-np)\ :=\ \widetilde D\] the {\em pullback} of $D$ under $f^{np}$ (rel $\overline Z$).

Observe that $D(-np)$ is well defined if and only if $\partial (D\setminus \overline Z)$ does not contain any critical value of $f^{np}$. Since $D$ is peripheral, every critical value of $f^{np}$ in $D$ are necessary on $\partial Z$.

We say that a pseudo-Siegel disk $\wZ^m$ is $k$-stable if for every $n\le k q_{m+1}$ the pullback of $\XX(\wZ^m)$ under $f^{np}$ is well defined. It follows then 
\[[\wZ^m(-n), \XX(\wZ^m)(-n)] :=(f^n)^*[\wZ^m, \XX(\wZ^m)]\]
is a well-defined pseudo-Siegel disk together with its territory.

For every $I=[a,b]\in \mathfrak D_m$, set \[k_I := \frac{\dist (\partial^h \XX_I, \ \{a,b\})}{\mathfrak{l}_{m+1}}-2,\sp \sp k_I^+:=\max \{k_I, 0\} ,\]
and $k_m :=\min_{n\ge m}\  \min_{I \in \mathfrak D_n} \{k^+_{I}\}$. Then it follows from the above discussion that $\wZ^m$ is $k_m$-stable.

Observe that if $\wZ^m$ is $T$-stable, then so is any $\wZ^n$ for $n\ge m$. In particular, if $\wZ = \wZ^{-1}$ is $T$-stable, then $\wZ^m$ is $T$-stable for all $m$.
We remark that this $T$ can be chosen arbitrarily large, (see Remark \ref{rmk:stablitiy} and \S~\ref{subsubsect:stabilityConstruction}).

\subsubsection{Pseudo-bubbles}

 A \emph{bubble} $B$ is a closed strictly preperiodic Siegel disk; i.e., it is the closure of a connected component of $f^{-k}(Z)\setminus Z$. The \emph{generation} of $B$ is the minimal $k$ such that $f^k(B)=\overline Z$; i.e. $f^k\colon B\to \overline Z$ is the first landing. Given a pseudo-Siegel disk $\widehat Z^m$, the \emph{pseudo-bubble} $\widehat B$ is the closure of the connected component of $f^{-k}\big( \intr \widehat Z^m\big)$ containing $\intr B$. In other words, $\widehat B$ is obtained from $B$ by adding the lifts of all reclaimed fjords (components of $\widehat Z^m\setminus \overline Z$) along $f^k\colon B\to \overline Z$.

Dams $\beta_I$, collars $A_I$, extra protections $\XX_I$ are defined for $\widehat B$ as pullbacks of the corresponding objects along $f^k\colon \widehat B \to  \wZ^m$. 
For instance, $\XX(\wZ_\ell)$ is the pullback of $\XX(\wZ^m)$ under $f^k$. 
The length of an interval $I\subset \partial \widehat B$ is the length of its image $f^k(I)\subset \partial \wZ^m$. 
Properties of pseudo-Siegel disks can also be obtained for pseudo-bubbles by pulling back using the dynamics.

\subsection{Convention for valuable-attracting domains and pseudo-Siegel disks}
We assume the following convention throughout the paper.
Let $\mathcal{C}$ be a cycle of attracting Fatou components of $[f]$ with period $p$, and let $D \in \mathcal{C}$ be the unique Fatou component that contains the critical point.
Then
\begin{align}\label{eq:cvd}
\widehat{f^j(D)} = f^j(\widehat D)  \text{ for all $j = 1, ..., p-1$}.
\end{align}
Similar to valuable-attracting domains, we shall use the following convention for pseudo-Siegel disks throughout the paper.
Let $\mathcal{C}$ be a cycle of Siegel disks of $[f]$ with period $p$, and let $Z \in \mathcal{C}$ be the unique Fatou component that contains the critical point on its boundary.
Then
\begin{align}\label{eq:cvz}
\widehat {f^j(Z)}^m = f^j(\widehat Z^m)  \text{ for all $m$ and for all $j = 1, ..., p-1$}.
\end{align}

\subsection{Core and pseudo-Core Surfaces}\label{ss:CoreSurfaces}
Let $\mathcal{H}$ be a hyperbolic component of disjoint type.
Let $[f]\in \partial \mathcal{H}$ be an eventually-golden-mean map.
Let
$$
Z_{1,0},... Z_{1,p_1-1}, Z_{2, 0},..., Z_{k_1, p_{k_1}-1} \text{ and } D_{1,0},... D_{1,q_1-1}, D_{2, 0},..., D_{k_2, q_{k_2}-1}
$$
be the list of Siegel disks and attracting Fatou components of $f$.
Denote the corresponding pseudo-Siegel disks and valuable-attracting domains by $\widehat Z_{i,j}^m$ and $\widehat D_{i,j}$ respectively.
As usual, we define $\widehat Z_{i,j} := \widehat Z_{i,j}^{-1} = \bigcup_m \widehat Z_{i,j}^m$.

The indices are chosen so that for all 
$$
\widehat Z_{i,j+1}^m = f(\widehat Z_{i,j}^m) \text{ for all } m\in \N, i=1,..., k_1, j = 0,1,..., p_{k_i}-2;
$$
and
$$
\widehat D_{i,j+1} = f(\widehat D_{i,j}) \text{ for all } i=1,..., k_2, j = 0, 1,..., p_{k_i}-2.
$$

By construction, $\widehat{D}_{i,j}$ and $Z_{i,j}$ are all Jordan domains with pairwise disjoint closures.
Thus, we define the {\em core surface} as the Riemann surface with boundary by
\begin{equation}
 \label{eq:defn:X_f} X_f := \widehat\C - \bigcup_{i,j} \Int(\widehat{D}_{i,j}) - \bigcup_{i,j} Z_{i,j},
\end{equation}
and the {\em pseudo-core surface} as
\begin{equation}
    \label{eq:defn:hat X_f}
\widehat X_f := \widehat\C - \bigcup_{i,j} \Int(\widehat{D}_{i,j}) - \bigcup_{i,j} \Int(\widehat{Z}_{i,j}).
\end{equation}

Since we will construct pseudo-Siegel disks for a single cycle while fixing other cycles of pseudo-Siegel disks and valuable-attracting domains, it is useful to introduce the following notations.

Let $Z_k:= Z_{k,0}$.
We define the {\em level $m$ pseudo-core surface} for $k$-th cylce of Siegel disks $Z_k$ as
$$
\widehat X_{f}^m(Z_k):=  \widehat\C - \bigcup_{i,j} \Int(\widehat{D}_{i,j}) - \bigcup_{i\neq k, j} \Int(\widehat{Z}_{i,j}) - \bigcup_{j=0}^{p_k-1} \Int(\widehat{Z}_{k,j}^m).
$$
Note that under this notation, $\widehat X_f  = \widehat X_f^{-1}(Z_k)$ for any $k$.
We also define
$$
\widehat X_f^\infty(Z_k) = \widehat\C - \bigcup_{i,j} \Int(\widehat{D}_{i,j}) - \bigcup_{i\neq k, j} \Int(\widehat{Z}_{i,j}) - \bigcup_{j=0}^{p_k-1} \Int(\overline{Z}_{k,j}),
$$
as $\widehat{Z}_{k,j}^m = \overline{Z}_{k,j}$ for all sufficiently large $m$.

To avoid too many subindices, we shall simplify the notation as $X_{f}^m:= X_{f}^m(Z_k)$ if the underlying cycle of Siegel disks is not ambiguous.

\subsubsection{Pullback of pseudo-Siegel disks and pseudo-core surfaces}
Note that pseudo-Siegel disks are not necessarily forward invariant. Thus, it is important to introduce notations for the pullbacks.
Let us assume that all pseudo-Siegel disks are $T$-stable.

For each iterate $n \leq T$, the preimage $f^{-n}(\bigcup_{i,j}\Int(\widehat{Z}^m_{i,j}))$ is union of disks, each mapped conformally to some component $\Int(\widehat{Z}^m_{i,j})$.
We denote by 
$\widehat Z_{i,j}^m(-n)$ 
the closure of the unique component of $f^{-n}(\bigcup_{i,j}\Int(\widehat{Z}^m_{i,j}))$ that contains $Z_{i,j}$.

Generalizing such notations for pseudo-core surfaces, we define
$$
\widehat X_f(-n) := \widehat\C - \bigcup_{i,j} \Int(\widehat{D}_{i,j}) - \bigcup_{i,j} \Int(\widehat{Z}_{i,j}(-n)),
$$
and
$$
\widehat X_{f}^m(-n):=  \widehat\C - \bigcup_{i,j} \Int(\widehat{D}_{i,j}) - \bigcup_{i\neq k, j} \Int(\widehat{Z}_{i,j}(-n)) - \bigcup_{j=0}^{p_k-1} \Int(\widehat{Z}_{k,j}^m(-n)).
$$
We also define 
$$
\widehat Y_f(-n) := f^{-n}(\widehat X_f),
$$
and
$$
\widehat Y_f^m(-n) := f^{-n}(\widehat X_f^m).
$$
We summarize the relations between these spaces in the following diagram.
\begin{center}
\begin{tikzcd} 
\widehat Y_f(-n)
  \arrow[drr, "f^n"]
  \arrow[d, hook]\\
\widehat X_f(-n) \arrow[dr, hook] & & \widehat X_f\arrow[dl, hook]\\
 & X_f
\end{tikzcd}
\begin{tikzcd} 
\widehat Y_f^m(-n)
  \arrow[drr, "f^n"]
  \arrow[d, hook]\\
\widehat X_f^m(-n) \arrow[dr, hook] & & \widehat X_f^m\arrow[dl, hook]\\
 & X_f^m
\end{tikzcd}
\end{center}
Here the hooked arrows represent inclusions and $f^n: \widehat Y_f(-n) \longrightarrow \widehat X_f$ or $f^n: \widehat Y_f^m(-n) \longrightarrow \widehat X_f^m$ are covering maps.

\subsection{Local degeneration on pseudo-core surfaces}\label{subsec:sfc}
In this subsection, using the dynamics of $f$ on the boundaries of the Siegel disks, we introduce some special families of curves for the pseudo-core surfaces.
The extremal widths for such families are crucial in our analysis.

Set $Z := Z_{1,0}$ with period $p = p_1$. 
Consider the level $m$ pseudo-core surface for $Z$
$$
\widehat X_f^m:= \widehat \C - \bigcup_{i\neq 1} \Int(\widehat Z_{i,j}) - \bigcup \Int(\widehat D_{i,j}) - \bigcup_{j=0}^{p-1} \Int( \widehat {Z}_{1, j}^m).
$$

Let $I \subseteq \partial \widehat Z^m$ be a regular interval.
Denote
$$
B = B(I, \lambda) := \partial \widehat X_f^m - \lambda I.
$$
We use the notations
$$
\mathcal{F}_{\lambda}^+(I) = \mathcal{F}_{\lambda, \widehat Z^m}^+(I) \text{ and } \mathcal{F}_{\lambda}(I) =\mathcal{F}_{\lambda, \widehat Z^m}(I)
$$
to denote that curves families connecting $I$ with $B$ in $\widehat X_f^m$ and in $\widehat \C$ respectively.
As we will be working with pseudo-Siegel disks of $Z$ of different levels simultaneously, the subindex $\widehat Z^m$ is sometimes added to clarify which pseudo-Siegel disk we are considering.
We denote
\begin{align}
\mathcal{W}_{\lambda}^+(I) &=\mathcal{W}_{\lambda,m}^+(I) = \mathcal{W}_{\lambda, \widehat Z^m}^+(I) := \mathcal{W}_{\widehat X_f^m}(I, B(I, \lambda)) = \mathcal{W}(\mathcal{F}_{\lambda}^+(I))\\
\mathcal{W}_{\lambda}(I) &=\mathcal{W}_{\lambda,m}(I) = \mathcal{W}_{\lambda, \widehat Z^m}(I) := \mathcal{W}_{\widehat \C}(I, B(I, \lambda)) = \mathcal{W}(\mathcal{F}_{\lambda}(I)).
\end{align}
We shall refer to the quantities $\mathcal{W}_{\lambda}^+(I)$ as {\em local degenerations}.
We remark that here local means that we have localized one end of the arcs to be in the interval $I$.
The arcs are not necessarily restricted in a local part of $\widehat X_f^m$.

The family $\mathcal{F}_{\lambda}^+(I)$ can be decomposed into the peripheral and non-peripheral parts, and we denote the corresponding families by
$$
\mathcal{F}_{\lambda}^{+, per}(I) \text{ and } \mathcal{F}_{\lambda}^{+, np}(I).
$$
Their widths are denoted by
$$
\mathcal{W}_{\lambda}^{+, per}(I) \text{ and } \mathcal{W}_{\lambda}^{+, np}(I)
$$

We remark that $\lambda$ is chosen to be a large constant. 
Thus, there is a large combinatorial distance between the end points of any arc in $\mathcal{F}_{\lambda}^+(I)$.

We also use the notation $\mathcal{W}_{m}^{+, np}(I):= \mathcal{W}_{0, m}^{+, np}(I)$, i.e. the width of non-peripheral arcs in $\widehat X^m_f$ starting on the interval $I$.

\subsubsection{Comparing local degenerations}
One of the most important properties of grounded intervals is that local degenerations behave nicely as we pass from Siegel disks to Pseudo-Sigel disks.
Moreover, for non-peripheral degenerations, we can simply replace an interval by a grounded interval with some uniform control on the correction.
\begin{prop}\label{prop:gi}
    Let $I\subseteq \partial \widehat Z^m$ be a grounded interval, and let $I^\bullet \subseteq \partial Z$ be the projection of $I$ onto $\partial Z$.
    Suppose that $\lambda \geq 10$.
    Then
    $$
 \mathcal{W}^+_{\lambda, Z}(I^\bullet) - O(1) \leq \mathcal{W}^+_{\lambda, \widehat Z^m}(I) \leq 2 \mathcal{W}^+_{\lambda, Z}(I^\bullet) + O(1).
    $$
    
For any interval $I\subset \partial Z$,  let $I^{\text{GRND}}, I^{\text{grnd}} \subseteq \partial Z$ be the smallest grounded interval of level $m$ that contains $I$ and the largest grounded interval of level $m$ that is contained in $I$ respectively.
    Then
    \begin{align*}
       \mathcal{W}^{+, np}_{Z}(I^{\text{GRND}}) - O(1) &\leq \mathcal{W}^{+, np}_{Z}(I) \leq \mathcal{W}^{+, np}_{Z}(I^{\text{GRND}}),\\
        \mathcal{W}^{+, np}_{Z}(I^{\text{grnd}}) &\leq \mathcal{W}^{+, np}_{Z}(I) \leq \mathcal{W}^{+, np}_{Z}(I^{\text{grnd}}) + O(1).
    \end{align*}
\end{prop}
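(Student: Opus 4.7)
The plan is to deduce the first statement from the comparison estimate~\eqref{eq:RR vs RR^m} applied to a near-optimal family of rectangles, and the second statement from a decomposition argument exploiting that $I^{\text{GRND}}\setminus I$ and $I\setminus I^{\text{grnd}}$ each consist of at most two subintervals contained in single cells of $\mathfrak{D}_{m+1}$.

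For the first statement, since $I$ is grounded its endpoints agree with those of $I^\bullet$ and lie outside $\Int\XX(\wZ^m)$, placing us in the setting of~\S\ref{sss:grounded intervals}. To obtain the lower bound $\mathcal W^+_{\lambda,Z}(I^\bullet)-O(1) \leq \mathcal W^+_{\lambda,\wZ^m}(I)$, I would take a near-optimal disjoint family of rectangles from $I^\bullet$ to $B(I^\bullet,\lambda)$ in $\wC\setminus \overline Z$ and restrict each to $\wC\setminus \Int(\wZ^m)$; the restriction is well-defined because the rectangle vertices lie outside $\XX(\wZ^m)$, and the lower half of~\eqref{eq:RR vs RR^m} then converts into the desired width inequality after absorbing the single $O_\delta(1)$ correction into $O(1)$. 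For the upper bound $\mathcal W^+_{\lambda,\wZ^m}(I) \leq 2\mathcal W^+_{\lambda,Z}(I^\bullet)+O(1)$ I would reverse the construction, extending a near-optimal family of rectangles in $\widehat X_f^m$ based on $I$ across the fjords in $\wZ^m\setminus \overline Z$ adjacent to $I$ so as to produce rectangles in $\wC\setminus \overline Z$ with horizontal side on $I^\bullet$. The factor $2$ reflects the upper side $(1+\varepsilon_\Delta)$ of~\eqref{eq:RR vs RR^m}, which is bounded by $2$ provided the protection-width constant $\Delta$ in the construction of $\wZ^m$ is chosen sufficiently large.

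For the second statement, the inclusions $I\subseteq I^{\text{GRND}}$ and $I^{\text{grnd}}\subseteq I$ immediately give the two monotone inequalities by inclusion of starting families. For each $O(1)$ correction in the opposite direction, I would decompose the non-peripheral family from the larger interval into arcs starting on the smaller interval and arcs starting on the short ``excess'' tails. Each excess tail is contained in a single cell $J$ of $\mathfrak{D}_{m+1}$, around which the pseudo-Siegel disk construction provides a uniform annular insulator: the definite-modulus collar $A_J$ together with the wide protection rectangle $\XX_J$ (Definition~\ref{defn:pseudo-Siegel disks}) bound the width of any non-peripheral family emerging from a subinterval of $J$ by $O(1)$.

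The main technical obstacle I anticipate is justifying this last $O(1)$ bound on the non-peripheral width from a short tail inside a single $\mathfrak{D}_{m+1}$-cell. The non-peripherality hypothesis is essential here — without it, peripheral arcs entering an adjacent fjord could carry arbitrarily large width — and the argument must combine the definite-modulus collar $A_J$, the wide protection rectangle $\XX_J$, and the bounded combinatorial length of a $\mathfrak{D}_{m+1}$-cell to produce a uniform annular barrier whose modulus yields the claimed universal constant.
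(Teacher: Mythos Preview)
Your proposal is essentially correct and follows the paper's approach: Thin--Thick decomposition into rectangles plus the comparison~\eqref{eq:RR vs RR^m} for the first statement, and an annular-collar bound on the excess tails for the second. Two minor corrections are worth noting. First, the excess tail need not lie in a cell of $\mathfrak{D}_{m+1}$ specifically; since $S^{\inn}(\wZ^m)=\bigcup_{n\ge m,\,J}S^{\inn}(J)$, the non-grounded endpoint of $I$ may sit in an inner buffer coming from any level $n\ge m$. Second, you are over-engineering the $O(1)$ bound: the paper does not use $\XX_J$ or combinatorial-length estimates at all. It simply observes that each of the (at most two) excess intervals is surrounded by a protecting annulus $A^{\text{out}}(\cdot)$ of modulus $\ge\varepsilon$, and every curve in $\mathcal F^{+,np}_Z(I^{\text{GRND}})\setminus\mathcal F^{+,np}_Z(I)$ must cross this annulus, giving width $\le 2/\varepsilon$ immediately. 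Your anticipated ``technical obstacle'' dissolves once you see that the collar alone does the job; non-peripherality is used only to guarantee the curve actually leaves the region enclosed by the collar.
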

\begin{proof}
The Thin-Thick Decomposition (see \cite[Theorem 7.25]{Lyu} allows us, up to $O(1)$, to replace $\mathcal{F}^+_{\lambda,  Z}(I^\bullet)$ with a union of finitely many rectangles in $\mathcal{F}^+_{\lambda,  Z}(I^\bullet)$. Therefore,~\eqref{eq:RR vs RR^m} implies the first statement.
    
    The second statement follows from the observation that $I^{\text{GRND}}\setminus I$ is within a union of at most two intervals, each being surrounded by an annulus $A^\text{out}(.)$ with modulus $\ge \varepsilon$. Therefore, the width of curves in $\mathcal{F}^{+, np}_{Z}(I^{\text{GRND}}) \setminus \mathcal{F}^{+, np}_{Z}(I) $ is bounded by $\frac 2{\varepsilon}$. A similar argument holds for $I$ and $I^\text{grnd}$.
\end{proof}

We remark that it is possible to replace $2$ by by $1+\delta$, where $\delta=\delta(\Delta)$ can be arbitrary small if the protection $\Delta$ for the Pseudo-Siegel disk is sufficiently big.

\subsection{Non-uniform construction of pseudo-Siegel disks}\label{sec:cps}

In this subsection, we construct pseudo-Siegel disks so that non-peripheral degeneration dominates peripheral degeneration -- see~ \eqref{cl:2a:thm:cps} and \eqref{cl:2a:thm:cps} below.
More precisely, with the notations introduced in \S \ref{ss:CoreSurfaces} and \S \ref{subsec:sfc}, we will prove
\begin{theorem}\label{thm:cps}
Let $\mathcal{H}$ be a hyperbolic component of disjoint type.
Let $[f]\in \partial \mathcal{H}$ be an eventually-golden-mean map with the pseudo-core surface $X_f$.
Let $K:=\mathcal{W}_{arc}(X_f)$ be the arc degeneration of $X_f$.
There exist a constant $M = M(K)$ depending on $K$, pseudo-Siegel disks $\widehat{Z}_{i,j}^m$ for all $i,j$ so that for all $\lambda \geq 10$,
\begin{enumerate}
    \item $\widehat{Z}_{i,j} = \widehat{Z}_{i,j}^{-1}$ is an $M$ quasiconformal disk;
    \item \label{cl:2:thm:cps} for every grounded interval $J \subseteq \partial Z_{i,j}$ rel $\widehat{Z}_{i,j}^m$ with $\length_{m+1} < |J| \leq \length_m$, we have
    \begin{enumerate}
        \item\label{cl:2a:thm:cps} $\mathcal{W}^{+, np}_{m}(J^m) = O(K\length_m + 1)$; and 
        \item \label{cl:2b:thm:cps}$\mathcal{W}^{+, per}_{\lambda, m}(J^{m}) = O(\sqrt{K\length_{m}} + 1)$.
    \end{enumerate}
\end{enumerate}
\end{theorem}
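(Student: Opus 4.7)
\medskip

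\textbf{Proof proposal.} The plan is to build the pseudo-Siegel disks $\widehat Z^m_{i,j}$ by running the $\psi^\bullet$-quadratic-like pseudo-Siegel construction from~\cite{DL22, DLL25} on the first-return dynamics $f^{p_i}\colon \partial Z_{i,j}\selfmap$, while carefully tracking how the global arc-degeneration $K=\mathcal{W}_{arc}(X_f)$ of the surrounding pseudo-core surface is distributed among the levels. Fix one cycle, say $Z=Z_{1,0}$ of period $p$. The renormalization machinery inputs a (geometrically controlled) $\psi^\bullet$-quadratic-like extension of $f^p$ near the critical point on $\partial Z$ and outputs, level by level from high $m$ downward, either the trivial regularization $\widehat Z^m=\widehat Z^{m+1}$ or a new layer of dams $\beta_I$ with protecting rectangles $\mathfrak X_I$ satisfying the seven compatibility conditions of~\cite[\S5.1]{DL22}. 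Peripheralness rel $\overline Z$ (Property~\ref{prop:P:wZ^m}) is automatic because the protections are constructed inside the complement of the finite postcritical set, which lies on $\bigcup \partial Z_{i,j}$ together with the attracting cycles already isolated by the $\widehat D_{i,j}$.

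Paragraph~2: the decision at level $m$ is governed by whether the peripheral width $\mathcal{W}^{+,per}_{\lambda, m+1}(I)$ on some interval $I\in \mathfrak D_m$ exceeds a threshold of order $\sqrt{K\ell_m}$. If yes, we regularize, converting that peripheral width into a (bounded) collar modulus of $A_I$ plus new fjord interior; if not, we leave $\widehat Z^m=\widehat Z^{m+1}$. This dichotomy is the exact analogue of the ``pseudo-Siegel'' trichotomy in~\cite[\S5-6]{DL22}, but with the role of ``external combinatorial input'' played by $K$ rather than by abstract Siegel data. Conclusion~\eqref{cl:2b:thm:cps} then follows directly from the Quasi-Additivity Law / Covering Lemma analogues in~\cite{DLL25} applied to this construction: any peripheral arc-family of width $w$ at level $m$ can be pulled back $\sim 1/\ell_m$ times under the near-return $f^{q_{m+1}p}$ to produce $\sim 1/\ell_m$ almost-disjoint copies, giving $w^2\lesssim K\ell_m$ once one accounts for the quadratic overlap inherent in the near-parabolic regime.

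Paragraph~3: Conclusion~\eqref{cl:2a:thm:cps} is more elementary. Non-peripheral arcs starting in $J^m\subset \partial \widehat Z^m$ can be pulled back to arcs starting in any of the $\approx 1/\ell_m$ translates $f^{jp}(J^m)$, $0\le j<q_{m+1}$, obtained by spreading around; these translates are grounded at level $m$ and their interiors are pairwise disjoint. Since non-peripheral arcs in $X_f$ remain non-peripheral after pullback along covering maps from $\widehat Y^m_f(-n)\to \widehat X^m_f$, the disjoint union of these families has total width at most the arc-degeneration $K$ of $X_f$ (modulo $O(1)$ corrections from Proposition~\ref{prop:gi} passing from $J^m$ to the grounded projection on $\partial Z$). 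Hence each single copy has width $O(K\ell_m+1)$.

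Paragraph~4: the quasiconformality bound $M=M(K)$ on $\widehat Z_{i,j}$ follows from the pseudo-Siegel axioms (each $\widehat Z^m$ is a quasidisk with dilatation controlled by $\Delta$ and the collar moduli) combined with the observation that the number of non-trivial regularization levels is $O(K)$, since each such level consumes at least a unit of peripheral degeneration. The main obstacle I expect is the interplay between the stability requirement of~\S\ref{subsec:stability} (needed so that pullbacks $\widehat Z^m(-n)$ are well-defined and satisfy the same bounds) and the threshold-based regularization procedure: making the regularization dynamically symmetric across the orbit $\{f^{jp}(I)\}_{j<q_{m+1}}$ while keeping $\widehat Z^m$ $T$-stable with $T$ arbitrarily large forces one to enlarge the protecting rectangles $\mathfrak X_I$ uniformly, and verifying that this enlargement still fits peripherally rel $\overline Z$ and does not spoil the linear dependence of the bounds on $K$ is the most delicate technical point.
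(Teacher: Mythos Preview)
Your high-level plan matches the paper's: reduce to a $\psi^\bullet$-ql Siegel map around each cycle and invoke the a priori bounds of \cite{DLL25}. The paper's proof is, however, much terser than your sketch and does not attempt to re-derive those bounds. For each cycle it constructs the $\psi^\bullet$-ql map $F=(f^p,\iota)\colon U\rightrightarrows V$ via the covering-annulus construction of \S\ref{subsec:qlfromr} (open up every boundary loop of $X$ except $\partial Z$, so that $V\setminus\overline Z$ becomes an honest annulus of width $\Width^{loc}_{arc}(Z)+O(1)$), observes that vertical and peripheral arcs for $F$ correspond, up to $O(1)$, to non-peripheral and peripheral arcs in $\widehat X_f$ with an endpoint on $\partial Z$, and then cites Theorem~\ref{thm:apbs} (equivalently \cite[Theorem~1.3]{DLL25}) as a black box via~\eqref{eqn:VandP}. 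Proposition~\ref{prop:gi} handles the passage between $Z$ and $\widehat Z^m$, and induction over the cycles finishes.

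Your Paragraph~3 argument for~\eqref{cl:2a:thm:cps} has a genuine gap. Spreading $J$ around gives $q_{m+1}$ disjoint intervals $J_j\subset\partial Z$, but the non-peripheral families $\mathcal F^{+,np}(J_j)$ are \emph{not} disjointly supported in $X_f$: arcs based at different $J_j$ can lie in the same homotopy class and their optimal rectangles can overlap, so neither the parallel law nor the dynamical pullback $\widehat Y_f^m(-n)\to \widehat X_f^m$ yields $\sum_j \mathcal W^{+,np}(J_j)\lesssim K$. What makes this bound true is precisely the covering lift you leave implicit: in the annulus $V\setminus\overline Z$ the lifted vertical families from disjoint base intervals \emph{are} disjoint (they foliate the flat cylinder), whence $\Width^{+,ver}(J)\asymp |J|\cdot K_F$ in Theorem~\ref{thm:apbs}\ref{thm:apbs:B}. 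Likewise, your Paragraph~2 mechanism ``$w^2\lesssim K\ell_m$ via $1/\ell_m$ almost-disjoint pullback copies'' and your Paragraph~4 claim that the number of non-trivial regularization levels is $O(K)$ are not how \cite{DLL25} obtains~\eqref{cl:2b:thm:cps} or the $M(K)$-qc bound; once you cite that paper these heuristics are unnecessary, and as written they would not stand on their own.
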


We remark that~\eqref{cl:2a:thm:cps} and~\eqref{cl:2b:thm:cps} can be improved for all $m$ unless $m$ is the ``special transition'' level; see refined versions in Appendix~\ref{ap:psd}, Theorem~\ref{thm:apbs}. See also Remark~\ref{rem:thm:info} for an explanation of the estimates.  

\begin{proof}
    The construction is by induction on the cycles of Siegel disks.
    Suppose that we have constructed pseudo-Siegel disks $\widehat{Z}_{i,j}^m$ for $i<k$, and we want to construct the pseudo-Siegel disks for the $k$-th cycle.
    Abusing the notations, denote the pseudo-core surface by $\widehat X_f$. 
    Let $Z := Z_{k, 1}$ be a Siegel disk for $f$.
    Note that $Z$ is a boundary component of $\widehat X_f$.
    Since we can construct $\widehat{Z}_{k, j}$ by $f^{j-1}(\widehat{Z}_{k, 1})$, it suffices to construct the pseudo-Siegel disk $\widehat{Z} = \widehat{Z}_{k, 1}$.
    After passing to an iterate, we may assume $Z$ is fixed by $f$.

    The idea is to construct a $\psi^\bullet$-ql (pseudo-bullet-quadratic-like) map (see \S \ref{sss:qlb:Defn} for the definition).
    By the construction in \S \ref{subsec:qlfromr}, we can associate a $\psi^\bullet$-ql map \[ F=(f^p,\iota) \colon U\rightrightarrows V\]
    with 
    \[\Width^\bullet(F) = 2\Width_{arc}(Z) + O(1).\]
    Since the vertical (or non-vertical) degeneration for $F$ corresponds to, up to a width of $O(1)$, the non-peripheral (or peripheral) degeneration of $\widehat X_f$ with endpoints on $Z$ (see \S \ref{ss:TTD} and \S \ref{subsec:qlfromr}), the statements for intervals on $\partial Z$ now follow from Theorem \ref{thm:apbs}; more specifically from Eqation \ref{eqn:VandP}.
    
    By Proposition \ref{prop:gi}, replacing the Siegel disk $Z$ by a pseudo-Siegel disk only changes the the degeneration by a bounded error, we conclude that statements for intervals on $\partial Z_{i,j}$ for $i < k$ still holds.
    Since there are only finitely many cycles of Siegel disks, we conclude the theorem.
\end{proof}

\begin{rmk}\label{rmk:stablitiy}
    We remark that for any given $T>1$, we can construct the pseudo-Siegel disk that are $T$-stable.
    This parameter $T$ affects only the constant $M$ and constants representing the ``$O(\ )$'' in \eqref{cl:2:thm:cps} (see \S~\ref{subsubsect:stabilityConstruction}).  

In this paper, we will select $T$ to be sufficiently big to dominate the pulled-off constant $N$ in \S \ref{sec:pc} and the constant $\mathbf{a}$ in Theorem \ref{thm:ll}. See \S \ref{subsec:constants} for the choice of these constants. This selection will be used in: 
\begin{itemize}
    \item the proof of Localization of arc degenerations in Theorem \ref{thm:ll}; 
    \item the proof of the Calibration lemma on shallow levels in Theorem \ref{thm:calls}.
\end{itemize}
\end{rmk}

\section{The pulled-off constant and expanding model}\label{sec:pc}
Let $\mathcal{H}$ be a hyperbolic component of disjoint type.
Let $[f] \in \partial \mathcal{H}$ be an eventually-golden-mean map with the pseudo-core surface, and $[f_{pcf}] \in\mathcal{H}$ be the post-critically finite center.
Let $N_{Siegel}([f])$ and $N([f_{pcf}])$ be the pulled-off constant as in Definition \ref{defn:poc}.

In this section, we will show that a pulled-off constant is uniformly bounded for a Sierpinski carpet hyperbolic component.
This is one of the key reasons why a Sierpinski carpet hyperbolic component is bounded.
\begin{theorem}[Pulled-off Principle]\label{thm:pop}
    Let $\mathcal{H}$ be a Sierpinski carpet hyperbolic component of disjoint type.
    Then there exists a constant $\mathbf{N}$ so that for any eventually-golden-mean map $[f] \in \partial \mathcal{H}$, $N([f]) \leq \mathbf{N}$.
\end{theorem}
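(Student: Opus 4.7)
The plan is to deduce the theorem from the two complementary facts stated at the end of \S\ref{sss:constant:Pulloff}: $N([f_{pcf}])<\infty$ whenever $\mathcal H$ is Sierpinski, and $N_{Siegel}([f])\le N([f_{pcf}])$ for every eventually-golden-mean $[f]\in\partial_{\egm}\mathcal H$. Combining these with $\mathbf N:=N([f_{pcf}])$ then gives the stated uniform bound.

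For the first fact, I would exploit that $f_{pcf}$ is post-critically finite and hyperbolic with a Sierpinski-carpet Julia set: its $2d-2$ cycles of Fatou components are pairwise disjoint Jordan domains with pairwise disjoint closures, and $P:=P(f_{pcf})$ is a finite set of isolated points, one in each periodic Fatou component. The Thurston orbifold metric then makes $f_{pcf}\colon \widehat\C\setminus f_{pcf}^{-1}(P)\to \widehat\C\setminus P$ strictly expanding by some factor $\mu>1$. For any essentially disjoint pull-back sequence $\gamma_0,\gamma_1,\dots,\gamma_n$ with endpoints in $P$, one has $\ell_{\mathrm{orb}}(\gamma_{k+1})\le \mu^{-1}\,\ell_{\mathrm{orb}}(\gamma_k)$, so $\ell_{\mathrm{orb}}(\gamma_n)\to 0$ exponentially. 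On the other hand, because $P$ is a finite isolated set in the Sierpinski configuration, there is a uniform positive lower bound on the orbifold length of any homotopically non-trivial arc in $(\widehat\C,P)$ joining two points of $P$ and essentially disjoint from all previous $\gamma_k$; this forces $n$ to be bounded in terms of $\mu$ and the finite data of $(P,\widehat\C)$, yielding $N([f_{pcf}])<\infty$.

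For the second fact, I would construct a topological semi-conjugacy $\pi\colon\widehat\C\to S^2$ from $f$ to the branched self-cover $\bar f\colon S^2\to S^2$ obtained from $f_{pcf}$ by collapsing each periodic Fatou component to a single marked point; this is the semi-conjugacy referenced in \S\ref{ss:A_B:from:C} and used in the proof of Theorem \ref{thm:B}. Under $\pi$, each periodic Siegel disk and each periodic attracting basin of $f$ is collapsed to the marked point in $S^2$ corresponding to the associated Fatou component of $f_{pcf}$, so any non-peripheral arc $\gamma$ between boundaries of periodic Siegel disks of $f$ descends to a non-peripheral arc in $S^2\setminus P(f_{pcf})$ with endpoints in $P(f_{pcf})$. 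Since $\pi$ intertwines $f$ and $\bar f$, a laminally disjoint pull-back sequence for $f$ pushes forward to an essentially disjoint pull-back sequence for $\bar f$, which in turn lifts, via the topological conjugacy between $\bar f$ away from its marked points and $f_{pcf}$ modulo its Fatou components, to an essentially disjoint pull-back sequence for $f_{pcf}$ of the same length. This yields $N_{Siegel}([f])\le N([f_{pcf}])$, completing the proof with $\mathbf N=N([f_{pcf}])$.

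The main technical obstacle is to verify that the two notions of disjointness really match across $\pi$: laminal disjointness for $f$ (where endpoints lie on non-degenerate Siegel-disk boundary circles) must correspond to essential disjointness for $\bar f$ (where those circles are collapsed to single points). Concretely, one must show that any two laminally disjoint arcs for $f$ admit representatives whose $\pi$-images are essentially disjoint, and conversely that any essentially disjoint pull-back sequence for $f_{pcf}$ lifts back to a laminally disjoint sequence for $f$ of the same depth. The orbifold-expansion step for $N([f_{pcf}])<\infty$ is comparatively routine once the Sierpinski geometry has been invoked; the subtle part is this disjointness compatibility, especially because a Siegel-disk boundary of $f$ contains a critical point and multiple peripheral arc classes can collapse onto the same marked point under $\pi$.
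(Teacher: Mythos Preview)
Your overall strategy matches the paper's exactly: set $\mathbf N:=N([f_{pcf}])$ and prove the two bullets from \S\ref{sss:constant:Pulloff}. The paper carries these out as Theorem~\ref{thm:cshc} and Theorem~\ref{thm:sctssm}, then combines them just as you do.

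There is, however, a genuine gap in your argument for $N([f_{pcf}])<\infty$. For a hyperbolic post-critically finite map of disjoint type, every post-critical point is a superattracting periodic point, so the Thurston orbifold has weight $\infty$ (a cusp) at each point of $P=P(f_{pcf})$. Arcs with endpoints in $P$ therefore have infinite orbifold length, and neither the contraction estimate $\ell_{\mathrm{orb}}(\gamma_{k+1})\le\mu^{-1}\ell_{\mathrm{orb}}(\gamma_k)$ nor the claimed positive lower bound on $\ell_{\mathrm{orb}}$ makes sense as stated. One can try to salvage this by truncating at horocycles, but then the expansion/lower-bound interplay becomes delicate and the argument is no longer routine. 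The paper avoids metrics entirely: if $N([f_{pcf}])=\infty$ there are arbitrarily long essentially disjoint pull-back sequences, but essentially disjoint arc systems on $(\widehat\C,P)$ have cardinality bounded by the topology, so two arcs in the sequence are homotopic; this produces a periodic Levy arc, contradicting the Sierpinski property via \cite[Corollary~5.18]{Pil94}. This combinatorial route is both shorter and robust.

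On the second point, you correctly locate the only nontrivial step, and the paper's resolution is the simple observation you are missing: a laminally disjoint family of arcs (endpoints on Siegel boundaries) can by definition be realized by genuinely pairwise disjoint representatives; pushing these forward by the semiconjugacy $h$ of Theorem~\ref{thm:sctssm} gives arcs in $(S^2,P(\bar f))$ with pairwise disjoint interiors, i.e.\ an essentially disjoint family. No converse lifting is needed, since the inequality $N_{Siegel}([f])\le N([f_{pcf}])$ only requires pushing sequences forward from $f$ to $\bar f\simeq f_{pcf}$.
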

We will deduce this theorem by justifying that the expanding model of maps in $\mathcal{H}$ persists for eventually-golden-mean maps on $\partial \mathcal H$. Then, assuming Theorem \ref{thm:cd}, we will show that the expanding model persists for all maps $\partial \mathcal H$ implying Theorem B.

\subsection{Characterization of Sierpinski carpet hyperbolic component}
\begin{theorem}\label{thm:cshc}
    Let $\mathcal{H}$ be a hyperbolic component, and let $[f_{pcf}] \in \mathcal{H}$ be the post-critically finite center.
    Then $\mathcal{H}$ is Sierpinski if and only if $N([f_{pcf}]) < \infty$.
\end{theorem}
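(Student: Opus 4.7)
The plan is to prove both implications via the expanding dynamics of $f_{pcf}$ on the finite-type surface $S := \wC \setminus P(f_{pcf})$ equipped with its hyperbolic orbifold metric.

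For the easy direction, assume $\mathcal{H}$ is Sierpinski, so the Fatou components of $f_{pcf}$ are pairwise disjoint Jordan disks and no parabolic cycles occur. The strict inclusion $P(f_{pcf}) \subsetneq f_{pcf}^{-1}(P(f_{pcf}))$ together with the Schwarz--Pick lemma shows that inverse branches of $f_{pcf}$ contract the hyperbolic metric on $S$ by a definite factor $\leq \lambda^{-1} < 1$. For an essentially disjoint pull-back sequence $\gamma_0, \ldots, \gamma_n$ with endpoints in $P(f_{pcf})$, the arcs $\gamma_i$ form a disjoint family in the finite-type surface $S$. Combining the combinatorial finiteness of disjoint arc systems in $S$ (bounded linearly in $\#P(f_{pcf})$ by a triangulation argument) with the expansion-driven separation of successive isotopy classes under pullback (pulled-back arcs occupy strictly higher-complexity isotopy strata relative to a fixed reference arc system), one obtains a uniform bound $n \leq N_0$ depending only on the combinatorial data of $f_{pcf}$.

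For the hard direction, I argue by contrapositive: if $\mathcal{H}$ is not Sierpinski, construct an arc $\gamma$ with $N(\gamma) = \infty$. Since a hyperbolic PCF Julia set is locally connected with empty interior, failure of the Sierpinski condition is equivalent to two Fatou components of $f_{pcf}$ sharing a boundary point. By iteration one may reduce to periodic Fatou components $U_0, V_0$ sharing a point $x_0 \in J(f_{pcf})$; since every periodic Fatou component of a hyperbolic PCF map is super-attracting and therefore contains a critical periodic point, pick post-critical points $p \in P(f_{pcf}) \cap U_0$ and $q \in P(f_{pcf}) \cap V_0$. Take $\gamma_0 \subset \wC \setminus P(f_{pcf})$ to be a short arc from $p$ to $q$ through $x_0$, and construct $\gamma_{n+1}$ inductively as a lift of $\gamma_n$ through a preimage $x_{n+1} \in f_{pcf}^{-1}(x_n)$ selected to be a shared boundary point of two Fatou components $U_{n+1}, V_{n+1}$ containing post-critical points. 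Uniform expansion of $f_{pcf}$ on the Julia set forces the arcs $\gamma_n$ to localize to disjoint neighborhoods of the distinct preimages $x_n$, making them essentially disjoint, so $N(\gamma_0) = \infty$.

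The main technical obstacle is the inductive combinatorial selection in the hard direction: guaranteeing at each step that there exists a preimage $x_{n+1} \in f_{pcf}^{-1}(x_n)$ which is a shared boundary point of two Fatou components, both containing post-critical points. This is resolved by exploiting the periodic structure of the post-critical Fatou cycles: the shared-boundary relation between two periodic Fatou cycles is invariant under the cyclic pullback along those cycles, so the combinatorial selection persists indefinitely within the finite post-critical data. The special case when $U_0$ and $V_0$ lie in the same periodic Fatou orbit is handled analogously by tracking the self-touching relation within that orbit. Once this combinatorial scaffolding is in place, essential disjointness of the entire pull-back sequence follows automatically from the uniform expansion of $f_{pcf}$ on $J(f_{pcf})$, completing the proof.
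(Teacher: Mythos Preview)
Your proposal has genuine gaps in both directions, and the paper's route is substantially simpler.

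\textbf{Easy direction (Sierpinski $\Rightarrow N<\infty$).} You correctly observe that an essentially disjoint family of arcs on the finite-type surface $\wC\setminus P(f_{pcf})$ has boundedly many isotopy classes, so a long pull-back sequence must repeat a class. But your next step---that expansion forces ``pulled-back arcs to occupy strictly higher-complexity isotopy strata''---is neither precise nor true in general. Expansion in the hyperbolic metric holds for \emph{every} hyperbolic PCF map, Sierpinski or not, and does nothing by itself to prevent an arc class from being periodic under pullback. Indeed, the existence of such a periodic class is exactly the definition of a \emph{periodic Levy arc}, and ruling these out is precisely where the Sierpinski hypothesis must enter. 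The paper does this by quoting Pilgrim's characterization \cite[Corollary~5.18]{Pil94}: $f_{pcf}$ has Sierpinski carpet Julia set if and only if it admits no periodic Levy arc. Once you know that, the repetition you found immediately yields a periodic Levy arc and hence a contradiction. Your argument as written never uses the Sierpinski hypothesis in a way that distinguishes it from an arbitrary hyperbolic PCF map.

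\textbf{Hard direction (not Sierpinski $\Rightarrow N=\infty$).} Your strategy of exploiting a touching pair of periodic Fatou components is sound, but your mechanism for essential disjointness is broken. You claim the arcs $\gamma_n$ ``localize to disjoint neighborhoods of the distinct preimages $x_n$''. However, to keep the endpoints of $\gamma_n$ in $P(f_{pcf})$ you are forced at each step to choose the \emph{periodic} preimage of $x_n$ (since the endpoints must lie in periodic, hence post-critical, Fatou components). The points $x_n$ therefore cycle through a finite periodic orbit and are \emph{not} distinct; in particular $\gamma_k$ passes through $x_0$ again for some $k$. Essential disjointness then cannot come from spatial separation. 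The paper's fix is to realize the arc, up to isotopy, as a concatenation of two internal rays landing at the shared boundary point; pull-backs of internal rays are internal rays in the corresponding periodic Fatou components, and these rays are genuinely pairwise disjoint (while the periodic classes repeat, isotopic arcs are automatically essentially disjoint). Equivalently: the touching pair yields a periodic Levy arc, and any periodic Levy arc immediately gives $N=\infty$.

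In short, both directions reduce cleanly to the equivalence \emph{Sierpinski $\Leftrightarrow$ no periodic Levy arc} from \cite{Pil94}, together with the elementary observation that a periodic Levy arc exists if and only if arbitrarily long essentially disjoint pull-back sequences exist (pigeonhole in one direction, iteration in the other). Your proposal rediscovers half of each argument but leaves the essential link unproved.
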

\begin{proof}
    By \cite[Corollary 5.18]{Pil94}, the map $f_{pcf}$ has Sierpinski carpet Julia set if and only if there is no periodic Levy arc.
    Here a Levy arc is a non-peripheral simple curve $\gamma$ with endpoints in the post-critical set $P(f_{pcf})$ so that $f_{pcf}^n(\gamma)$ is isotopic rel $P(f_{pcf})$ to $\gamma$ for some $n$.
    
    If there is a periodic Levy arc, then, up to isotopy, it can be realized as a concatenation of two internal rays and, hence, $N([f_{pcf}]) = \infty$.

    Conversely, suppose that $N([f_{pcf}]) = \infty$.
    Then there exist arbitrarily long essentially disjoint pull back sequence $\gamma_0, ..., \gamma_n$.
    Note that the number of essentially disjoint isotopic classes of arcs is bounded by the topological complexity of $\widehat\C - P(f_{pcf})$. 
    Thus, for all large $n$, some pairs in $\gamma_0, ..., \gamma_n$ are isotopic.
    Therefore, there exists a periodic Levy arc.
\end{proof}

\subsection{Semiconjugacy to an expanding model}
Let $\mathcal{H}$ be a Sierpinski carpet hyperbolic component of disjoint type.
In this subsection, we will show that an eventually-golden-mean map $[f] \in \partial \mathcal{H}$ is semiconjugate to a topologically expanding map.

Let $[f_{pcf}] \in \mathcal{H}$ be the center of $\mathcal{H}$, i.e., the unique post-critical finite map in $\mathcal{H}$.
We define $\bar{f}: S^2 \longrightarrow S^2$ as the quotient map of $f_{pcf}$ by collapsing each Fatou component to a point.
Note that $\bar{f}$ is topologically expanding, as $f_{pcf}$ has Sierpinski carpet Julia set.

Let $[f] \in \partial \mathcal{H}$ be an eventually-golden-mean map.
Using renormalization theory on Siegel disks, we will prove
\begin{theorem}[Expanding model for $\partial_\egm \mathcal H   $]\label{thm:sctssm}
Let $[f] \in \partial \mathcal{H}$ be an eventually-golden-mean map.
Then there exists a topological semiconjugacy
$$
h: \widehat \C \longrightarrow S^2 \text{ with } \bar{f} \circ h = h \circ f.
$$
In particular, $N_{Siegel}([f]) \leq   N([f_{pcf}])$.
\end{theorem}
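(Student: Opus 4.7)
The plan is to construct the semiconjugacy $h$ as a Moore-type topological quotient that collapses each closed Fatou component of $f$ to a point, according to the combinatorial structure inherited from $f_{pcf}$. Since both $f$ and $f_{pcf}$ lie in $\overline{\mathcal{H}}$, the grand-orbit graphs of their Fatou components (labeled by dynamics) are canonically isomorphic. The key combinatorial input near Siegel disks is supplied by Pacman renormalization theory \cite{DLS20, DL23a}: for the eventually-golden-mean $f$, each Siegel disk $Z_{i,j}$ has a quasi-circle boundary \cite{Zha11} supporting the rigid irrational rotation, and the chain of pseudo-bubbles attached to $\partial Z_{i,j}$ is combinatorially equivalent to the chain of pre-attracting Fatou components at the corresponding attracting cycle of $f_{pcf}$.

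Formally, define the $f$-invariant equivalence relation $\sim$ on $\widehat\C$ generated by collapsing each closed (periodic or pre-periodic) Fatou component to a single point. Each equivalence class is either a singleton in $J_f$ or a closed Jordan disk, hence a non-separating continuum. Assuming upper semicontinuity of $\sim$, Moore's theorem gives $\widehat\C/{\sim}\cong S^2$. The combinatorial equivalence above canonically identifies this quotient with the Sierpinski-carpet quotient $\widehat\C \to S^2$ defining $\bar{f}$, and the identification intertwines the induced dynamics. Composition yields the desired continuous semiconjugacy $h\colon \widehat\C \to S^2$ with $\bar{f}\circ h = h\circ f$.

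For the pulled-off bound, let $\gamma$ be a non-peripheral arc connecting Siegel boundaries that achieves $N_{Siegel}([f])$, and let $\gamma_0=\gamma, \gamma_1, \ldots, \gamma_n$ be a laminally disjoint pullback sequence. The images $h(\gamma_i)$ are arcs in $S^2 \setminus h(P(f_{pcf}))$ connecting points of $h(P(f_{pcf}))$, and they form a pullback sequence for $\bar{f}$. Since $h$ contracts each Siegel boundary to a single point, the laminal disjointness of the $\gamma_i$ (that is, disjointness of representatives up to small intervals on $\partial Z_{i,j}$) passes to \emph{essential} disjointness of the $h(\gamma_i)$. Hence $N([f_{pcf}])\ge n = N(\gamma)$, and taking the supremum over $\gamma$ gives $N_{Siegel}([f]) \le N([f_{pcf}])$.

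The main technical obstacle is verifying upper semicontinuity of $\sim$, equivalently the continuity of $h$. In the complement of a neighborhood of the Siegel cycles, this follows from topological expansion of $\bar{f}$ on its Sierpinski-carpet Julia set (Theorem~\ref{thm:cshc}): pre-attracting Fatou components of $f$ deep in the bubble hierarchy have uniformly small diameters. Along the Siegel boundaries and their pseudo-bubbles, one must use the quasi-circle property of $\partial Z_{i,j}$ \cite{Zha11} together with Pacman renormalization \cite{DLS20, DL23a} to control the geometry of accumulating pseudo-bubbles. This is where the bounded-type (eventually-golden-mean) hypothesis on the rotation numbers becomes essential -- it ensures hyperbolicity of the renormalization fixed point and thereby prevents pathological accumulation of bubbles on the Siegel boundaries, which would otherwise destroy continuity of the collapse.
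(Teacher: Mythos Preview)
Your approach has a concrete error and a substantive gap, and the paper proceeds quite differently.

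The error is in the claim ``each equivalence class is either a singleton in $J_f$ or a closed Jordan disk.'' For the Siegel map $f$, the critical point $c$ lies on $\partial Z$, so the primary preimage bubble $Z'$ satisfies $\overline{Z'}\cap\overline{Z}=\{c\}$. Your equivalence relation, being \emph{generated} by the collapses, therefore merges $\overline Z$ with $\overline{Z'}$ and, inductively, with the entire infinite tree of bubbles attached along the backward orbit of $c$ on $\partial Z$. This class is not a Jordan disk but a tree of disks accumulating on $\partial Z$. Relatedly, your asserted ``combinatorial equivalence'' between the bubbles of $f$ and the pre-attracting components of $f_{pcf}$ is wrong: for $f_{pcf}$ the first return on the immediate basin has degree $2$, whereas for $f$ it has degree $1$ on $Z$ with the extra degree accounted for by the attached bubble $Z'$. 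The bubble $Z'$ has \emph{no} counterpart among the Fatou components of $f_{pcf}$; under any semiconjugacy it must collapse to the \emph{same} point as $\overline Z$. So even after correcting the description of the classes, the identification of your quotient with the $\bar f$--sphere is the entire content of the theorem and is not established by your argument; it requires knowing that the bubble tree is a closed, non-separating, cellular continuum and that the induced pattern on the complement matches that of $f_{pcf}$.

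The paper avoids this direct Moore--quotient analysis altogether. It approximates $f$ by geometrically finite maps $f_n\in\partial\mathcal H$ with rational multiplier profiles converging strongly; for these, the semiconjugacy $h_n$ to $\bar f$ is already supplied by Cui--Tan \cite{CT18}. Pacman renormalization \cite[Theorem~6.9]{DLS20} is invoked only to show that the parabolic valuable flowers of $f_n$ Hausdorff--approximate the Siegel disks of $f$, which lets one perturb some $h_n$ to an initial map $h^0$ collapsing disjoint neighborhoods of the $Z_{i,j}$ and $\widehat D_{i,j}$. One then checks that the lift $h^1$ under $f$ is homotopic to $h^0$ on the core surface, and the standard pull-back argument using topological expansion of $\bar f$ produces the limit $h$. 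This route never needs local connectivity of $J_f$ or a description of the fibers. Your argument for $N_{Siegel}([f])\le N([f_{pcf}])$ from the semiconjugacy is, by contrast, essentially the same as the paper's.
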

\begin{proof}
Denote the multiplier profile for $[f]$ as $(\rho_1,..., \rho_{2d-2})$.
Let 
$$
(\rho_{1,n}, ..., \rho_{2d-2, n})
$$
be rational parameters converging strongly to $(\rho_1,..., \rho_{2d-2})$, with corresponding maps $[f_n] \in \partial \mathcal{H}$.
By approximating each $[f_n]$ with hyperbolic maps in $\mathcal{H}$ radially, there exists semiconjugacy (see \cite[Theorem 1.5]{CT18})
$$
h_n: \widehat \C \longrightarrow S^2 \text{ with } \bar{f} \circ h_n = h_n \circ f_n.
$$
We assume the representatives are chosen so that $f_n \to f$ as rational maps.

Since any orbit on the boundary of a Siegel disk is dense on the boundary, it is easy to see that the Siegel disks and valuable attracting domains have disjoint closures.
By \cite[Theorem 6.9]{DLS20}, for sufficiently large $n$, we can find {\em parabolic valuable flowers} $L_{i,j,n}$ approximating the Siegel disks $Z_{i,j}$.
Therefore, we can find disjoint small neighborhoods
$U_{i,j}$ of $Z_{i,j}$ and $W_{i,j}$ of $\widehat D_{i,j}$ so that for sufficiently large $n$, we have
$L_{i,j,n} \subseteq U_{i,j}$ and $\widehat D_{i,j,n} \subseteq W_{i,j}$.
For sufficiently large $n$, we can find a small perturbation $h^0$ of $h_n$ so that $h^0(U_{i,j})$ and $h^0(W_{i,j})$ are points.
Note that the union $U = \bigcup U_{i,j} \cup \bigcup W_{i,j}$ contains the union of parabolic valuable flowers and valuable attracing domains of $f_n$, so $U$ also contains the post-critical set of $f_n$.
Then we can pull back $h^0$ and get 
\begin{align*}
h^1_n: \widehat \C &\longrightarrow S^2 \text{ with } \bar{f} \circ h^0 = h^1_n \circ f_n,\\
h^1: \widehat \C &\longrightarrow S^2 \text{ with } \bar{f} \circ h^0 = h^1 \circ f.
\end{align*}
Since $h_n$ is a semiconjugacy between $f_n$ and $\bar{f}$, $h^1_n \sim h^0$ on $X_f$ for sufficiently large $n$.
Since $f_n \to f$, $h^1 \sim h^0$ on $X_f$ as well.

Since $\Int(X_f)$ contains no post-critical point and $\bar{f}$ is topologically expanding, a standard pull-back argument gives the semiconjugacy.

Note that any laminally disjoint pull-back sequence for an eventually-golden-mean map $[f]$ gives a laminally disjoint pull-back sequence for $\bar f$.
If $\gamma$ for $[f]$ connects boundaries of Siegel disks, then the corresponding arc $\delta$ for $\bar f$ connects points in critical periodic cycles.
Since each laminally disjoint pull-back sequence is essentially disjoint, and $\bar f$ is homotopically equivalent to $f_{pcf}$, we have that $N_{Siegel}([f]) \leq N([f_{pcf}])$.
\end{proof}

\begin{proof}[Proof of Theorem \ref{thm:pop}]
By Theorem \ref{thm:sctssm}, $N_{Siegel}([f]) \leq N([f_{pcf}])$.
By Theorem \ref{thm:cshc}, $N([f_{pcf}]) < \infty$.
Therefore, $N_{Siegel}([f])$ is uniformly bounded.
\end{proof}

\subsection{Proof of Theorem \ref{thm:B} (assuming Theorem~\ref{thm:A} and \ref{thm:cd})}\label{ss:proofB}
Recall that $\bar f: S^2 \longrightarrow S^2$ is the topologically expanding map obtained from collapsing Fatou components of the center $[f_{pcf}] \in \mathcal{H}$.
\begin{theorem}[Expanding model for maps in $\partial \mathcal H$]\label{thm:scem}
Let $[f] \in \overline{\mathcal{H}}$.
There exists a topological semiconjugacy
$$
h: \widehat \C \longrightarrow S^2 \text{ with } \bar{f} \circ h = h \circ f.
$$
\end{theorem}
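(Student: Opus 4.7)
The plan is to extend Theorem~\ref{thm:sctssm} from $\partial_\egm \mathcal H$ to all of $\overline{\mathcal H}$ by a limiting argument, using Theorem~\ref{thm:A} to identify $\boldsymbol{\rho}\colon\overline{\mathcal H}\simeq \overline{\D}^{2d-2}$ and Theorem~\ref{thm:cd} together with Theorem~\ref{thm:pop} for uniform geometric control along the approximation. First I would reduce to the boundary case not already handled: if $[f]\in \mathcal H$, then $[f]$ is quasiconformally conjugate to the center $[f_{pcf}]$, and collapsing Fatou components on both sides yields the desired semiconjugacy; if $[f]\in \partial_\egm \mathcal H$, then $h$ is exactly the content of Theorem~\ref{thm:sctssm}. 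So the task is to treat $[f]\in \partial \mathcal H \setminus \partial_\egm \mathcal H$.

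Next I would approximate $[f]$ by eventually-golden-mean maps. By Theorem~\ref{thm:A}, $\boldsymbol{\rho}$ is a homeomorphism onto $\overline{\D}^{2d-2}$; using density of eventually-golden-mean irrationals in $S^1$ exactly as in Proposition~\ref{prop:rl}, choose $[f_n]\in \partial_\egm \mathcal H$ with $\boldsymbol{\rho}([f_n])\to_s \boldsymbol{\rho}([f])$, so that $[f_n]\to [f]$ in $\overline{\mathcal H}$. Apply Theorem~\ref{thm:sctssm} to each $[f_n]$ to obtain topological semiconjugacies $h_n\colon \widehat\C\to S^2$ with $\bar f\circ h_n = h_n\circ f_n$.

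The main obstacle is to pass to the limit $h_n\to h$. Here is where the \emph{a priori} bounds enter: since $\mathcal H$ is Sierpinski and each $[f_n]$ is eventually-golden-mean on $\partial \mathcal H$, Theorem~\ref{thm:pop} gives a uniform bound $N_{Siegel}([f_n])\le \mathbf N$, so Theorem~\ref{thm:cd} provides $K$-quasiconformal pseudo-Siegel disks and valuable attracting domains with $K$ independent of $n$. Consequently the fibres $h_n^{-1}(y)$ over special points $y\in S^2$ are closed disks of uniformly controlled geometry (Fatou components or Siegel disks of $f_n$), while all other fibres are single Julia points. Normalizing each $h_n$ to agree with a common reference on three chosen marked orbits, the family $\{h_n\}$ becomes equicontinuous on $\widehat\C$, and an Arzel\`a--Ascoli argument extracts a subsequential limit $h\colon \widehat\C\to S^2$. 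Passing to the limit in $\bar f\circ h_n = h_n\circ f_n$ using $f_n\to f$ and continuity of $\bar f$ yields $\bar f\circ h = h\circ f$, while surjectivity is inherited from the $h_n$ together with compactness. The hardest part of this step is the collision case $[f]\in \partial_{\operatorname{exc}}\mathcal H$: several attracting cycles of $f_n$ coalesce into a single parabolic cycle of $f$, so the a priori disk bounds for $f_n$ must be shown to force the corresponding ``special'' fibres of $h$ to nest consistently inside the parabolic basin of $f$, respecting the combinatorial identification of cycles furnished by the marking and by Proposition~\ref{prop:ts}.
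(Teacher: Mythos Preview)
Your overall strategy (approximate by eventually-golden-mean maps, invoke the uniform bounds of Theorem~\ref{thm:cd} and Theorem~\ref{thm:pop}) matches the paper, but the limiting step is carried out differently, and your version has a gap.

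The paper does \emph{not} try to pass to a limit of the semiconjugacies $h_n$ via Arzel\`a--Ascoli. Instead it uses Theorem~\ref{thm:cd} only to take Hausdorff limits of the pseudo-Siegel disks $\widehat Z_{i,j,n}$ and valuable-attracting domains $\widehat D_{i,j,n}$, obtaining closed disks $\widehat Z_{i,j},\widehat D_{i,j}$ that contain the post-critical set of the limit map $f$. With $\widehat X_f:=\widehat\C-\bigcup\Int(\widehat Z_{i,j})-\bigcup\Int(\widehat D_{i,j})$ in hand, it picks a single large $n$, perturbs $h_n$ to an $h^0$ collapsing small neighborhoods of these limit disks, and then runs the pull-back iteration $\bar f\circ h^k=h^{k+1}\circ f$ directly for $f$. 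Convergence follows from the expansion of $\bar f$ on $S^2$ and the fact that $\Int(\widehat X_f)$ is disjoint from $P_f$; the sequence $h_n$ is used only to certify the correct homotopy class of $h^0$.

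Your Arzel\`a--Ascoli step asserts equicontinuity of $\{h_n\}$ from the uniform QC bounds on $\widehat Z_{i,j,n},\widehat D_{i,j,n}$, but this does not follow. The fibres of $h_n$ are closures of \emph{full} Fatou components, not the pseudo-Siegel or valuable-attracting domains; Theorem~\ref{thm:cd} says nothing about the geometry of $\partial D_{i,j,n}$ or about how nearby distinct Fatou components of $f_n$ are separated in the target $S^2$. Equicontinuity of $h_n$ is really a statement about uniform control of the fine combinatorics of the Julia sets $J_{f_n}$ relative to the fixed model $\bar f$, and nothing in the cited results gives that directly. This is exactly the difficulty the paper sidesteps by iterating for $f$ itself rather than taking limits of the $h_n$.

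One minor point: in the Sierpinski case under discussion, Theorem~\ref{thm:A} (together with Theorem~\ref{thm:ubd}) already forces every $[f]\in\partial\mathcal H$ to have $2d-2$ non-repelling cycles, so $\partial_{\operatorname{exc}}\mathcal H=\emptyset$ and your final paragraph about collisions is moot.
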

\begin{proof}
Let $[f] \in \partial \mathcal{H}$ with multiplier profile $(\rho_1, ..., \rho_{2d-2})$.
Let $[f_n] \in \partial \mathcal{H}$ be a sequence of eventually-golden-mean maps with
\begin{itemize}
\item $[f_n] \to [f]$; and 
\item its multiplier profile $(\rho_{1,n}, ..., \rho_{2d-2,n}) \to_s (\rho_1, ..., \rho_{2d-2})$.
\end{itemize}
Assume that the representatives are chosen so that $f_n \to f$.

By Theorem \ref{thm:cd},
after passing to a subsequence, we may assume pseudo-Siegel disks $\widehat Z_{i,j,n}$ and valuable-attracting domains $\widehat D_{i,j,n}$ converge in Hausdorff topology to $\widehat Z_{i,j}$ and $\widehat D_{i,j}$.
Note that $\widehat Z_{i,j}$ and $\widehat D_{i,j}$ contain the post-critical set of $f$.
Denote the Riemann surface
$$
\widehat X_f:= \widehat\C - \bigcup \Int(\widehat Z_{i,j}) - \bigcup \Int(\widehat D_{i,j}).
$$

By Theorem \ref{thm:sctssm}, there exists semiconjugacies 
$$
h_n: \widehat \C \longrightarrow S^2 \text{ with } \bar{f} \circ h = h \circ f_n.
$$

We can construct a similar purtabation $h^0$ of $h_n$ for some sufficiently large $n$.
Let $h^1$ be the pull back of $h^0$ under $f$.
A similar argument as in Theorem \ref{thm:sctssm} gives that $h^0 \sim h^1$ on $\widehat X_f$ with
$$
\bar{f} \circ h^0 = h^1 \circ f
$$ 

Since $\widehat X_f$ is disjoint from the post-critical set and $\bar{f}$ is topologically expanding, a standard pull back argument gives the semiconjugacy $h$.
\end{proof}

Theorem \ref{thm:B} now follows immediately from Theorem \ref{thm:scem}.
\begin{proof}[Proof of Theorem \ref{thm:B}]
Let $\tilde{x} = h(x) \in S^2$, where $h$ is the semiconjugacy in Theorem \ref{thm:scem}.
Let $\tilde{x}\in \tilde{U}$ be a small neighborhood so that $\tilde{U} \Subset \bar{f}^p(\tilde{U})$.
Let $U = h^{-1}(\tilde{U})$.
Then $f^p: U \longrightarrow V= f^p(U)$ gives the quadratic-like restriction.
Since $\overline{\mathcal{H}}$ is compact by Theorem~\ref{thm:A}, the modulus of $V - \overline{U}$ is uniformly bounded.
This proves the theorem.
\end{proof}

\section{Localization of arc degeneration}\label{sec:ltd}
In this section, we will prove that if the arc degeneration $\mathcal{W}_{arc}(\widehat X_f)$ of the pseudo-core surface $\widehat X_f$ is sufficiently big, then there exists some small grounded interval $I$ whose local degeneration $\mathcal{W}_{\lambda}^+(I)$ is at least comparable to $\mathcal{W}_{arc}(\widehat X_f)$; compare with~\S\ref{sss:Localization}. 
More precisely, we will prove
\begin{theorem}[Localization of arc degeneration]\label{thm:ll}
Let $\mathcal{H}$ be a hyperbolic component of disjoint type.
Let $[f] \in \partial \mathcal{H}$ be an eventually-golden-mean map with pulled-off constant $N = N_{Siegel}([f])$.
There exist 
\begin{itemize}
    \item a constant $\mathbf{a} > 1$ that depends on $N$,
    \item and a threshold constant $\boldsymbol{\Lambda} \gg 1$
\end{itemize}
such that for every $\lambda \geq  \boldsymbol{\Lambda}$ and for every $0 < \epsilon < 1/2\lambda$, there exists a threshold constant $\mathbf{K}_{\epsilon,\lambda, N} \gg 1$ depending on $\epsilon, \lambda, N$ and the multipliers of attracting cycles of $f$ with the following properties.

Suppose that the Riemann surface $\widehat X_f:= \widehat \C - \bigcup \Int(\widehat Z_{i,j}) - \bigcup \Int(\widehat D_{i,j}) $ has
$$
\mathcal{W}_{arc}(\widehat X_f) := K \geq \mathbf{K}_{\epsilon,\lambda, N}.
$$
{Suppose that all pseudo-Siegel disks are at least $N$-stable.}
Then there exists a pseudo-Siegel disk $\widehat Z = \widehat Z_{i,j}$ and a grounded interval $I \subseteq \partial \widehat Z$ with $|I| \leq \epsilon$ such that
$$
\mathcal{W}^{+, np}(I) +\mathcal{W}_{\lambda}^{+, per}(I) \geq K/\mathbf{a}.
$$
\end{theorem}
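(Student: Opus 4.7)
The plan is to extract a single wide non-peripheral arc family from $\mathcal{W}_{arc}(\widehat X_f)=K$, invoke the pulled-off principle (Lemma \ref{lem:pull}) to force this family to cross a strictly periodic pseudo-Siegel disk (pseudo-bubble) of pre-period at most $N$, and then push that crossing forward via the dynamics to concentrate the width onto a sub-interval of a periodic pseudo-Siegel boundary. The small-size condition $|I|\le \epsilon$ is achieved by iterating the cut-and-push operation until the target interval shrinks below $\epsilon$; the $\epsilon$-dependence is absorbed entirely into the threshold $\mathbf K_{\epsilon,\lambda,N}$, while $\mathbf a$ depends only on $N$ and on universal constants of $\mathcal H$.

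\paragraph{Step-by-step outline.} First, decompose $\mathcal{W}_{arc}(\widehat X_f)$ as a finite sum over the $O(\chi(\widehat X_f))$ non-peripheral arc classes of width $\ge 2$; pigeonhole gives one class $\gamma_0$ with family $\Gamma_0$ of width $\ge K/C_0$, where $C_0$ is universal. If an endpoint of $\gamma_0$ lies on a valuable-attracting boundary, use Lemma \ref{lem:kad} to extend arcs into that domain at bounded cost, so we may assume both endpoints lie on periodic pseudo-Siegel boundaries. Second, apply Lemma \ref{lem:pull} to $\Gamma_0$: the family must cross some pseudo-bubble $\widehat B$ of pre-period $k\le N$. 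Cut every arc at its first crossing with $\widehat B$ and keep by pigeonhole a sub-family $\Gamma_0'$ of width $\ge K/(2C_0)$ with one endpoint on $\partial \widehat B$. Third, by the $N$-stability hypothesis $f^k$ is injective on $\widehat B$ and maps it homeomorphically onto a periodic pseudo-Siegel disk $\widehat Z$; the push-forward $f^k(\Gamma_0')$ is a lamination in $\widehat X_f$ of width $\ge K/(2C_0)$ whose fixed endpoint lies in a specific grounded sub-interval $J_1\subset\partial\widehat Z$.

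\paragraph{Shrinking to size $\le\epsilon$.} If $|J_1|\le\epsilon$, take $I:=J_1$ and the conclusion follows with $\mathbf a=2C_0$. Otherwise treat $f^k(\Gamma_0')$ as a new wide family on $\widehat X_f$ and re-apply Lemma \ref{lem:pull}; its wide subfamily must cross a further pseudo-bubble which, upon push-forward, attaches the endpoints to a smaller sub-interval $J_2$ of a (possibly different) periodic pseudo-Siegel boundary. Iterating yields $J_1\supset J_2\supset\cdots$, and using the uniform expansion of $f$ away from the Siegel disks together with the bound $k\le N$ at each round, one gets $|J_j|\lesssim\mu^{-j}|J_1|$ for a universal $\mu>1$; after $J_\epsilon=O(\log(1/\epsilon))$ iterations, $|J_{J_\epsilon}|\le\epsilon$. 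Each iteration uses a single pigeonhole cut, so after $j$ iterations the width is $\ge K/(2^j C_0)$; by requiring $K\ge \mathbf K_{\epsilon,\lambda,N}$ with $\mathbf K$ growing like $2^{J_\epsilon}$ in $\epsilon$, the terminal width exceeds $K/\mathbf a$ for a fixed $\mathbf a=\mathbf a(N)$. The output family on $I$ decomposes into its non-peripheral and peripheral (rel $\lambda I$) parts, giving the claimed lower bound $\mathcal W^{+,np}(I)+\mathcal W_\lambda^{+,per}(I)\ge K/\mathbf a$.

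\paragraph{Main obstacle.} The delicate point is the width bookkeeping: a naive accounting would let the pigeonhole factor $2^{J_\epsilon}$ leak into $\mathbf a$, making it depend on $\epsilon$ rather than only on $N$. To keep $\mathbf a=\mathbf a(N)$ one has to exploit structure of Lemma \ref{lem:pull} more carefully, either (a) arranging that only $O(N)$ iterations suffice because pseudo-bubbles of pre-period exactly $N$ already partition $\partial\widehat Z$ into intervals of controlled size, or (b) using a ``complete family'' version of the cut step that does not lose a factor of $2$ at every iteration. Pinning down this quantitative mechanism, together with the correct accounting of peripheral vs.~non-peripheral arcs under the push-forward (since the push-forward can change the peripheral character, which is why the statement bounds the sum $\mathcal W^{+,np}+\mathcal W_\lambda^{+,per}$ rather than each piece individually), is the most subtle aspect of the argument; the role of the threshold $\boldsymbol\Lambda$ on $\lambda$ appears precisely here, guaranteeing that the peripheral part is detected relative to a sufficiently large dilation of $I$.
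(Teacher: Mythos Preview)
Your proposal has a genuine gap in the shrinking mechanism. You claim that iterating the cut-and-push produces intervals $J_1\supset J_2\supset\cdots$ with $|J_j|\lesssim \mu^{-j}|J_1|$ by ``uniform expansion of $f$ away from the Siegel disks.'' But the intervals $J_j$ live on boundaries of Siegel disks, where the first return map is (combinatorially) an isometric rotation; there is no expansion there. Pushing a pseudo-bubble forward by $f^k$ onto a periodic pseudo-Siegel disk does not shrink boundary intervals, so the claimed contraction $\mu>1$ is unjustified. This is exactly why your width bookkeeping collapses: you correctly diagnose that a naive iteration makes $\mathbf a$ depend on $\epsilon$, but neither of your fixes (a) or (b) is actually available.

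The paper avoids iteration entirely and localizes to size $\le\epsilon$ in a single step (Lemma~\ref{lem:4.4}). After Lemma~\ref{lem:pull} produces a family $\mathcal F_1$ overflowing arcs into a strictly preperiodic pseudo-bubble $\widehat U(-N)$, one takes the leftmost and rightmost arcs of $\mathcal F_1$, marks their first intersections $x_1,x_2$ with $\partial\widehat U(-N)$, and surrounds each by a grounded interval $I_i$ of length $\le\epsilon$. Any arc of $\mathcal F_1$ that misses both $I_1$ and $I_2$ must overflow an arc inside $\widehat U(-N)$ connecting the two components of $\partial\widehat U(-N)\setminus(I_1\cup I_2)$; that family has width $O(|\log\epsilon|)$. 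Hence the subfamily through $I_1\cup I_2$ still has width $\ge K/\mathbf a_1 - O(|\log\epsilon|)$, and the $O(|\log\epsilon|)$ loss is absorbed into the threshold $\mathbf K_{\epsilon,\lambda,N}$, leaving $\mathbf a$ independent of $\epsilon$. One then pushes $I'=I_i$ forward by $f^N$ to a periodic $\partial\widehat Z$, lifts to the associated $\psi^\bullet$-ql map, and applies the rectangle localization Lemma~\ref{lem:rect loc} with parameter $\lambda'=\max\{\lambda,1/\epsilon\}$; this is where the dichotomy between a non-peripheral rectangle and a peripheral interval with $\mathcal W^{+,per}_\lambda(J)\succeq K$ appears, and where the threshold $\boldsymbol\Lambda$ on $\lambda$ enters.
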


\begin{rmk}
    We remark that if $\mathcal{H}$ is a Sierpinski carpet hyperbolic component, then by Theorem \ref{thm:pop}, the pulled-off constant is uniformly bounded.
    In this case, the constant $\mathbf{a}$ can be chosen to be universal, and the constant $\mathbf{K}_{\epsilon,\lambda, N}$ depends only on $\epsilon, \lambda$ and the multipliers of the attracting cycles.
\end{rmk}

\subsection*{Pulled-off argument}
We will follow the notations introduced in \S \ref{ss:CoreSurfaces}.
It follows from Proposition \ref{prop:gi} and the fact that the Siegel disks are $N$-stable that for sufficiently large arc degeneration $K = \mathcal{W}_{arc}(\widehat X_f)$, we have
$$
\frac{1}{8}\mathcal{W}_{arc}(\widehat X_f) \leq \mathcal{W}_{arc}(\widehat X_f(-N)) \leq 8 \mathcal{W}_{arc}(\widehat X_f).
$$

Note that there are only finitely many homotopy classes of non-peripheral arcs $\gamma \subseteq \widehat X_f(-N)$ with $\mathcal{W}(\gamma) \geq 2$.
This number is bounded by a constant $M$, which depends only on the number of boundary components of $\widehat X_f(-N)$.
Let $\gamma \subseteq \widehat X_f(-N)$ be a non-peripheral arc with 
$$
\mathcal{W}(\gamma) \geq \frac{\mathcal{W}_{arc}(\widehat X_f(-N))}{2M} \geq \frac{\mathcal{W}_{arc}(\widehat X_f)}{16M} = \frac{K}{16M}.
$$
We may realize such wide families by a rectangle $R_{\gamma}$ whose vertical arcs are homotopic to $\gamma$ and 
$$
\mathcal{W}(\mathcal{F}_\gamma) = \mathcal{W}(\gamma) - O(1),
$$
where $\mathcal{F}_\gamma$ is the family of vertical arcs in $R_{\gamma}$.

By our construction, the modulus of the annulus $D_{i,j} - \widehat D_{i,j}$ is bounded below in terms of the multipliers of the attracting cycles of $f$.
By making the threshold $\mathbf{K}_{\epsilon,\lambda, N}$ larger if necessary, we may assume $\gamma$ connects two pseudo-Siegel disks $\widehat Z(-N)$ and $\widehat Z'(-N)$.
Note that $\widehat Z(-N)$ may equal to $\widehat Z'(-N)$.

Let $U$ be a component of
$f^{-N}(\bigcup Z_{i,j})$.
Denote $\widehat U(-N)$ as the corresponding pseudo-Siegel disks, i.e., $\widehat U(-N)$ is the closure of the component of $\widehat\C - \widehat Y_f(-N)$ that contains $U$.
\begin{lem}[Submergence into $U$]\label{lem:pull}
There exists 
\begin{itemize}
    \item a constant $\mathbf{a}_1$ depending on $N$,
    \item a strictly pre-periodic Siegel disk $U \subseteq f^{-\mathbf{N}}(\bigcup Z_{i,j})$ so that $\partial Z$ and $\partial U$ are in different connected components of $\partial Y_f(-N)$,
    \item a family $\mathcal{G}$ of homotopically equivalent non-peripheral arcs connecting $\widehat Z$ and $\widehat U$, and 
    \item a subfamily $\mathcal{F}_1 \subseteq \mathcal{F}_\gamma$
\end{itemize}   
so that
\begin{itemize}
\item $\mathcal{W}(\mathcal{F}_1) \geq K/\mathbf{a}_1$;
\item the family $\mathcal{F}_1$ overflows $\mathcal{G}$.
\end{itemize}
\end{lem}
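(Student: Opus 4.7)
The plan is to combine a pigeonhole argument over the strictly pre-periodic pseudo-Siegel bubbles of $\widehat X_f(-N)\setminus \widehat Y_f(-N)$ with the pulled-off principle. By $N$-stability, this difference decomposes as a finite disjoint union $\bigsqcup_{j=1}^{L}\Int \widehat U_j$, where each $\widehat U_j$ is a preimage component of some periodic $\widehat Z_{i,j}$ under $f^k$ with $1\leq k\leq N$, and $L$ is bounded by a constant depending on $N$, the degree $d$, and the number of periodic Siegel cycles of $f$. For each $\alpha\in\mathcal{F}_\gamma$, set $j(\alpha)\in\{0,1,\ldots,L\}$ to be the index of the first bubble $\widehat U_{j(\alpha)}$ entered by $\alpha$ as it is traversed from its endpoint on $\partial \widehat Z$, with $j(\alpha)=0$ when $\alpha\subseteq \widehat Y_f(-N)$. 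This partitions $\mathcal{F}_\gamma = \mathcal{F}_\gamma^{\mathrm{miss}} \sqcup \mathcal{F}_\gamma^{\mathrm{hit}}$ along $j=0$ versus $j\geq 1$.

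The crucial step is to show $\mathcal{W}(\mathcal{F}_\gamma^{\mathrm{miss}})\leq C_N$ for a constant $C_N$ depending only on $N$ and the topological complexity of $\widehat X_f$. If this failed, a sub-rectangle $R_{\mathrm{miss}}\subseteq R_\gamma$ of arbitrarily large width would realize a family of non-peripheral arcs in $\widehat Y_f(-N)$ connecting $\partial \widehat Z$ and $\partial \widehat Z'$. For a generic vertical arc $\alpha$ of $R_{\mathrm{miss}}$, the forward iterates $\alpha_i:=f^{N-i}(\alpha)$ for $i=0,\ldots,N$ form a candidate pull-back sequence of length $N$ in which every term has both endpoints on periodic Siegel disk boundaries; here $N$-stability ensures that each intermediate $f$-restriction is an unbranched homeomorphism, making the sequence well defined. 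Exploiting the width of $R_{\mathrm{miss}}$ and of its forward iterate rectangles $f^j(R_{\mathrm{miss}})$ to select pairwise-disjoint parallel copies of the $\alpha_i$, one promotes $(\alpha_0,\ldots,\alpha_N)$ to a laminally disjoint pull-back sequence of length $N$ with endpoints remaining on periodic Siegel disks throughout, contradicting $N_{Siegel}([f])\leq N$.

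Once this bound is established, $\mathcal{W}(\mathcal{F}_\gamma^{\mathrm{hit}})\geq K/(16M)-C_N\geq K/(32M)$ for $K$ exceeding the threshold $\mathbf{K}_{\epsilon,\lambda,N}$. Pigeonhole on $j\in\{1,\ldots,L\}$ produces an index $j_*$ with $\mathcal{W}(\{j(\alpha)=j_*\})\geq K/(32ML)$. Setting $U:=U_{j_*}$, $\widehat U:=\widehat U_{j_*}$, and $\mathcal{F}_1:=\{\alpha\in\mathcal{F}_\gamma:j(\alpha)=j_*\}$, the truncation of each $\alpha\in\mathcal{F}_1$ at its first intersection with $\partial \widehat U$ yields sub-arcs of $\widehat Y_f(-N)$ from $\partial \widehat Z$ to $\partial \widehat U$. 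A further pigeonhole over the finitely many homotopy classes of such non-peripheral arcs (bounded by $\chi(\widehat Y_f(-N))$) extracts the coherent family $\mathcal{G}$. Letting $\mathbf{a}_1$ absorb all pigeonhole factors gives the required width lower bound, and the overflow relation holds by construction, since each $\alpha\in\mathcal{F}_1$ contains its truncation as a sub-arc isotopic to an arc of $\mathcal{G}$.

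The principal obstacle is the pulled-off bound on $\mathcal{F}_\gamma^{\mathrm{miss}}$: the definition of $N_{Siegel}$ directly forbids only a single laminally disjoint pull-back sequence of length $N$ keeping its endpoints on periodic Siegel disks, whereas we need to rule out a wide family of arcs with the analogous property. The translation uses the rectangle structure to extract laminally disjoint representatives of the iterates $\alpha_i$, which in turn requires $f^i$ restricted to appropriate neighborhoods to remain unbranched along the entire orbit of length $N$; this is precisely the content of the $N$-stability hypothesis, which motivates the requirement in the statement of Theorem~\ref{thm:ll} that all pseudo-Siegel disks are $N$-stable.
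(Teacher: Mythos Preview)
Your approach mirrors the paper's almost exactly: both use the pulled-off principle to bound the width of arcs that avoid all strictly preperiodic components, then pigeonhole over the finitely many such components and over homotopy classes in $\widehat Y_f(-N)$.

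There is, however, one genuine gap in your decomposition. The difference $\widehat X_f(-N)\setminus \widehat Y_f(-N)$ contains not only the strictly preperiodic pseudo-Siegel bubbles $\widehat U_j$ but also the strictly preperiodic preimages of the valuable-attracting domains $\widehat D_{i,j}$ (recall $\widehat Y_f(-N)=f^{-N}(\widehat X_f)$ removes \emph{all} $f^N$-preimages of both $\widehat Z_{i,j}$ and $\widehat D_{i,j}$). An arc $\alpha\in\mathcal{F}_\gamma$ may intersect one of these attracting-domain preimages while avoiding every $\widehat U_j$; such an $\alpha$ belongs neither to your $\mathcal{F}_\gamma^{\mathrm{miss}}$ (since $\alpha\not\subseteq\widehat Y_f(-N)$) nor to your $\mathcal{F}_\gamma^{\mathrm{hit}}$ (since no $j(\alpha)\in\{1,\ldots,L\}$ is defined for it). Thus your partition is incomplete, and the pigeonhole step does not cover this case; yet the lemma demands that $U$ be a \emph{Siegel} disk, so you cannot simply enlarge the list of bubbles.

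The paper patches this with a short extra argument: each strictly preperiodic preimage $\widehat D'$ of some $\widehat D_{i,j}$ is surrounded, inside its Fatou component, by the conformal pullback of the annulus $D_{i,j}\setminus\widehat D_{i,j}$, whose modulus is bounded below in terms of the attracting multipliers (Lemma~\ref{lem:kad}). Hence the family of arcs crossing any fixed $\widehat D'$ has width $O(1)$, and there are only boundedly many (in terms of $N$) such $\widehat D'$. Absorbing this total width into the threshold $\mathbf{K}_{\epsilon,\lambda,N}$ --- which is explicitly allowed to depend on the attracting multipliers --- leaves a subfamily of width $\ge K/(32M)$ in which every arc must hit some Siegel preimage $\widehat U_j$, after which your pigeonhole and truncation steps go through verbatim.
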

\begin{proof}
Note that $\gamma \subseteq \widehat X_f(-N)$ determines a unique homotopy class of non-peripheral arc $\widetilde \gamma \subseteq \widetilde X_f$.
We first claim after removing some buffers, we may assume any arc in $\mathcal{F}_\gamma$ intersects some strictly pre-periodic component of $f^{-N}(\bigcup Z_{i,j} \cup \bigcup \Int(\widehat D_{i,j}))$.
Indeed, otherwise, we get $N$ disjoint rectangles $R_0 \subseteq R_\gamma, R_1 = f(R_0), ..., R_N = f^N(R)$. 
All vertical arcs of each of the rectangles are homotopic to some non-peripheral arc in $X_f$. 
This produces a disjoint pull back sequence $\gamma_0, ..., \gamma_{N}$.
This is impossible by the definition of pulled-off constant $N$.

Since the modulus of the annulus $D_{i,j} - \widehat D_{i,j}$ is bounded below, by making the threshold $\mathbf{K}_{\epsilon,\lambda, N}$ larger if necessary, we may assume there exists a subfamily $\mathcal{F}' \subseteq \mathcal{F}_\gamma$ so that
\begin{itemize}
\item $\mathcal{W}(\mathcal{F}') \geq \mathcal{W}(\mathcal{F}_\gamma) / 2$;  and 
\item no curve in $\mathcal{F}'$ intersects the preimages of valuable-attracting domains $f^{-N}(\bigcup \Int(\widehat D_{i,j}))$.
\end{itemize}
Thus, any arc in $\mathcal{F}'$ must intersect some strictly pre-periodic Siegel disk in $f^{-N}(\bigcup Z_{i,j}) \subseteq f^{-N}(\bigcup \Int(\widehat Z_{i,j}))$.

Note that there are a bounded number (depending on $N$) of strictly pre-periodic Siegel disks $U \subseteq f^{-N}(\bigcup Z_{i,j})$.
So there are a bounded number of homotopy classes of wide non-peripheral arcs in $Y_f(-{N})$.
Thus there exists a constant $\mathbf{a}_1$ depending on $N$, some family $\mathcal{G}$ of homotopically equivalent arcs connecting $\widehat Z$ and a strictly pre-periodic pseudo-Siegel disk $\widehat U(-{N})$ so that the arcs in $\mathcal{F}'$ overflowing $\mathcal{G}$ has width at least $K/\mathbf{a}_1$. 
Let $\mathcal{F}_1 \subseteq \mathcal{F}'$ be this collection of arcs and we conclude the lemma.
\end{proof}

Let $\mathcal{F}_1$ be the family of arcs in Lemma \ref{lem:pull}.
Consider an arc $\gamma: [0,1] \longrightarrow \widehat X_f(-{N})$ in $\mathcal{F}_1$ with $\gamma(0) \in \partial \widehat Z(-{N})$ and $\gamma(1) \in \partial \widehat Z'(-{N})$.
Let $t_0 > 0$ be the first time that $\gamma(t_0) \in \partial \widehat Y_f(-{N})$.
Let $\gamma' = \gamma|_{[0, t_0]}$.
By our construction, $\gamma' \in \mathcal{G}$. Thus $\gamma' \subseteq \widehat Y_f(-{N})$ is an arc connecting $\widehat Z(-{N})$ and $\widehat U(-{N})$.

\begin{figure}[ht]
  \centering
  \resizebox{\linewidth}{!}{
    \def\svgwidth{\columnwidth}
\begingroup%
  \makeatletter%
  \providecommand\color[2][]{%
    \errmessage{(Inkscape) Color is used for the text in Inkscape, but the package 'color.sty' is not loaded}%
    \renewcommand\color[2][]{}%
  }%
  \providecommand\transparent[1]{%
    \errmessage{(Inkscape) Transparency is used (non-zero) for the text in Inkscape, but the package 'transparent.sty' is not loaded}%
    \renewcommand\transparent[1]{}%
  }%
  \providecommand\rotatebox[2]{#2}%
  \newcommand*\fsize{\dimexpr\f@size pt\relax}%
  \newcommand*\lineheight[1]{\fontsize{\fsize}{#1\fsize}\selectfont}%
  \ifx\svgwidth\undefined%
    \setlength{\unitlength}{751.18110236bp}%
    \ifx\svgscale\undefined%
      \relax%
    \else%
      \setlength{\unitlength}{\unitlength * \real{\svgscale}}%
    \fi%
  \else%
    \setlength{\unitlength}{\svgwidth}%
  \fi%
  \global\let\svgwidth\undefined%
  \global\let\svgscale\undefined%
  \makeatother%
  \begin{picture}(1,0.60377358)%
    \lineheight{1}%
    \setlength\tabcolsep{0pt}%
    \put(0,0){\includegraphics[width=\unitlength,page=1]{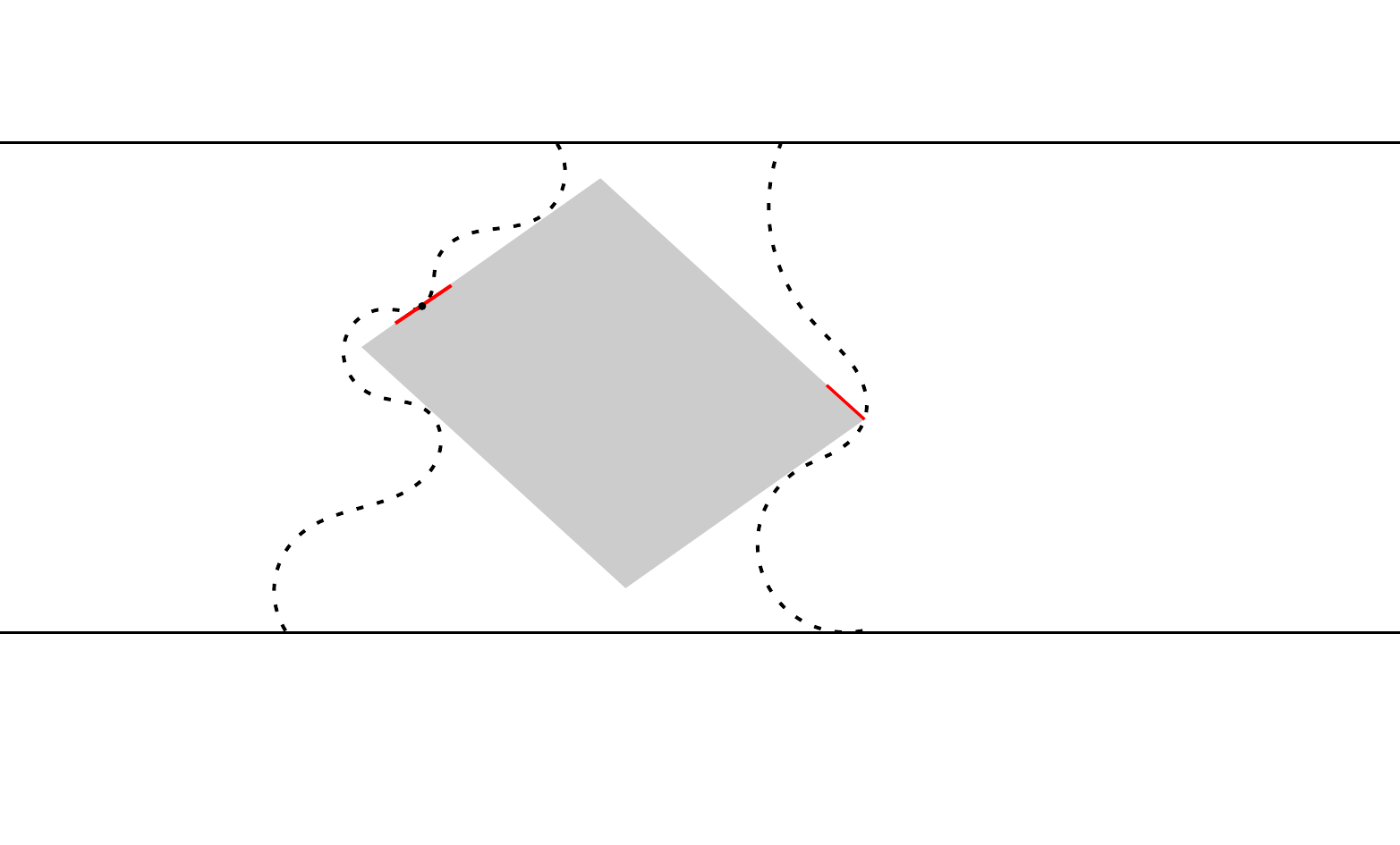}}%
    \put(0.21965407,0.25717655){\color[rgb]{0,0,0}\makebox(0,0)[lt]{\lineheight{1.25}\smash{\begin{tabular}[t]{l}$\gamma_1$\end{tabular}}}}%
    \put(0.56638944,0.40853561){\color[rgb]{0,0,0}\makebox(0,0)[lt]{\lineheight{1.25}\smash{\begin{tabular}[t]{l}$\gamma_2$\end{tabular}}}}%
    \put(0,0){\includegraphics[width=\unitlength,page=2]{LT.pdf}}%
    \put(0.3038073,0.35773249){\color[rgb]{1,0,0}\makebox(0,0)[lt]{\lineheight{1.25}\smash{\begin{tabular}[t]{l}$I_1$\end{tabular}}}}%
    \put(0.56320225,0.30380934){\color[rgb]{1,0,0}\makebox(0,0)[lt]{\lineheight{1.25}\smash{\begin{tabular}[t]{l}$I_2$\end{tabular}}}}%
    \put(0.43003707,0.34061656){\color[rgb]{0,0,0}\makebox(0,0)[lt]{\lineheight{1.25}\smash{\begin{tabular}[t]{l}{\Large$\widehat U$}\end{tabular}}}}%
  \end{picture}%
\endgroup%

  }
  \caption{The curve $\gamma_1$ and $\gamma_2$ are the left and right most arcs in $\mathcal{F}_1$. Most of the arcs in $\mathcal{F}_1$ passes through $I_1$ or $I_2$.}
  \label{fig:LT}
\end{figure}

\begin{lem}[Localization of the submergence as $I'\subset \partial \widehat U(-N)$]
\label{lem:4.4}
There exists a threshold constant $\mathbf{K}_{\epsilon,\lambda, N} \gg 1$ so that if 
$$
K = \mathcal{W}_{arc}(\widehat X_f) \geq \mathbf{K}_{\epsilon,\lambda, N},
$$ 
then
there exist a constant $\mathbf{a}_2$ depending on $N$ and a grounded interval $I' \subseteq \partial \widehat U(-{N})$ with $\vert I' \vert \leq \epsilon$ so that the collection $\mathcal{F}_2 \subseteq \mathcal{F}_1$ of arcs passing through $I'$ has width $\mathcal{W}(\mathcal{F}_2) \geq K/ \mathbf{a}_2$.
\end{lem}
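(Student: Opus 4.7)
The plan is to realize the family $\mathcal{F}_1$ (after truncation at the first hit of $\partial \widehat Y_f(-N)$) as the vertical arcs of a single rectangle $R_1$ in $\widehat Y_f(-N)$, push the picture forward by $f^N$ onto the periodic pseudo-Siegel disk $\widehat Z_\star := f^N(\widehat U(-N))$, apply the non-peripheral width estimate of Theorem~\ref{thm:cps} on $\widehat Z_\star$ to pin down the combinatorial length of the endpoint interval, and then extract a small sub-interval by a pigeonhole over grounded level-$m'$ sub-intervals.

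First, by the Thin-Thick decomposition, the truncated arcs $\{\gamma|_{[0,t_0]}:\gamma\in\mathcal{F}_1\}\subseteq\mathcal{G}$ can be realized, losing at most $O(1)$ in width, as the vertical arcs of a rectangle $R_1\subseteq\widehat Y_f(-N)$. Because these arcs all lie in the fixed homotopy class $\mathcal{G}$, their endpoints sweep out a single interval $J'\subseteq\partial\widehat U(-N)$ of combinatorial length $\ell:=|J'|$. Since $f^N\colon\widehat Y_f(-N)\to\widehat X_f$ is a conformal covering and $f^N|_{\partial\widehat U(-N)}$ is a conformal homeomorphism onto $\partial\widehat Z_\star$, the image $J'':=f^N(J')\subseteq\partial\widehat Z_\star$ satisfies $|J''|=\ell$ by the conventions of~\S\ref{subsec:psd}, and the pushed-forward family is contained in $\mathcal{F}^{+,np}(J''^{\text{GRND}})$ with width $\geq K/\mathbf{a}_1 - O(1)$.

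Next, apply Theorem~\ref{thm:cps} on $\widehat Z_\star$. Let $m$ be the unique level with $\length_{m+1}<\ell\leq\length_m$; then $\mathcal{W}^{+,np}(J''^{\text{GRND}})\leq C(K\length_m+1)$. Comparing with the lower bound $\mathcal{W}(\mathcal{F}_1)\geq K/\mathbf{a}_1$ forces $\length_m\geq 1/(2C\mathbf{a}_1)$ once $K\geq\mathbf{K}_{\epsilon,\lambda,N}$, so $\ell\gtrsim c(N)>0$. I then subdivide $J'$ into grounded level-$m'$ sub-intervals, where $m'$ is chosen so that $\length_{m'}\leq\epsilon<\length_{m'-1}$; this yields at most $k=\lceil \ell/\length_{m'}\rceil$ sub-intervals, and by the subadditivity of extremal width some sub-interval $I'\subseteq\partial\widehat U(-N)$ of length $\leq\length_{m'}\leq\epsilon$ captures a subfamily $\mathcal{F}_2\subseteq\mathcal{F}_1$ of width $\mathcal{W}(\mathcal{F}_2)\geq \mathcal{W}(\mathcal{F}_1)/k$.

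The hard part is producing a constant $\mathbf{a}_2=\mathbf{a}_1\cdot k$ that depends only on $N$: the naive pigeonhole gives $k\sim\ell/\length_{m'}\sim 1/\epsilon$, which is $\epsilon$-dependent. Removing this dependence should come from a secondary use of the pull-off principle within $J'$ itself: because only $O(N)$ consecutive level-$m'$ sub-intervals of $J'$ can carry comparable shares of $\mathcal{W}(\mathcal{F}_1)$ without yielding an essentially disjoint pull-back sequence of length $>N$ under $f^N$, the effective count $k$ is in fact $O(N)$ rather than $O(1/\epsilon)$. This combinatorial refinement, which repackages the mechanism of Lemma~\ref{lem:pull} at a finer scale and is consistent with the clustering of endpoints near $p_1,p_2$ suggested by Figure~\ref{fig:LT}, is the technical heart of the argument; once it is in place the interval $I'$ produced by the pigeonhole satisfies $|I'|\leq\epsilon$ and $\mathcal{W}(\mathcal{F}_2)\geq K/\mathbf{a}_2$ with $\mathbf{a}_2=\mathbf{a}_2(N)$.
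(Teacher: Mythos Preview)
Your approach has a genuine gap, and it misses the elementary mechanism the paper actually uses.

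The paper's argument is much simpler and avoids Theorem~\ref{thm:cps} entirely. One takes the leftmost and rightmost arcs $\gamma_1,\gamma_2\in\mathcal{F}_1$, records an intersection point $x_i$ of each with $\partial\widehat U(-N)$, and places a grounded interval $I_i$ of length $\le\epsilon$ around each $x_i$. Any arc of $\mathcal{F}_1$ that misses both $I_1$ and $I_2$ must, before reaching $\partial\widehat Z'(-N)$, cross from one component of $\partial\widehat U(-N)\setminus(I_1\cup I_2)$ to the other \emph{through} the disk $\widehat U(-N)$; the width of that crossing family is $\preceq|\log\epsilon|$. So once $K/\mathbf{a}_1\gg|\log\epsilon|$ --- and this is exactly where the $\epsilon$-dependence is absorbed into the threshold $\mathbf{K}_{\epsilon,\lambda,N}$ --- at least half the width of $\mathcal{F}_1$ passes through one of $I_1,I_2$, giving $\mathbf{a}_2=4\mathbf{a}_1$ depending only on $N$.

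Your route runs into trouble precisely at the point you flag. The pigeonhole over $k\asymp\ell/\length_{m'}$ sub-intervals produces $\mathbf{a}_2\asymp\mathbf{a}_1/\epsilon$, and you cannot push this $\epsilon$-dependence into the threshold: enlarging $K$ does not improve the ratio $K/(k\mathbf{a}_1)$. The ``secondary pull-off'' you sketch is not the mechanism of Lemma~\ref{lem:pull} --- that lemma is about iterated pull-backs of a single arc under $f$, not about how the endpoints of a fixed family $\mathcal{F}_1$ distribute over many sub-intervals of $\partial\widehat U(-N)$. There is no iteration happening at the sub-interval scale, so no disjoint pull-back sequence is being produced, and I do not see how to make that refinement rigorous. (Also, your step~3 only yields $\length_m\gtrsim c(N)$, not $\ell\gtrsim c(N)$; $\ell$ can be as small as $\length_{m+1}$.) The paper's two-interval-plus-crossing argument sidesteps all of this.
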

\begin{proof}
Let $\gamma_1, \gamma_2 \in \mathcal{F}_1$ be the left and right most arcs in $\mathcal{F}_1$.
Let $x_i$ be an intersection point of $\gamma_i$ with $\partial \widehat U(-{N})$.
Since the end points of $\gamma_i$ are outside of $\XX(\widehat U(-{N}))$ and the extra outer protection $\XX_I$ has width bounded below, by removing a collection of arcs of bounded width, we may assume that $x_i$ is away from the extra outer protections $\XX(\widehat U(-{N}))$.

Let $x_i \in I_i \subseteq \partial \widehat U(-{N})$ be a grounded interval with $\vert I_i \vert \leq \epsilon$.
Let $\mathcal{F}' \subseteq \mathcal{F}_1$ be the family of arcs that is disjoint from $I_1 \cup I_2$.
Let $\mathcal{F}''$ be the collection of arcs $\delta \subseteq \widehat U(-{N})$ connecting the two components of $\partial U - I_1 - I_2$.
Then any arc $\mathcal{F}'$ must overflow an arc in $\mathcal{F}''$.
However, the width $\mathcal{W}(\mathcal{F}'') \preceq \vert \log \epsilon \vert$.
Thus, $\mathcal{W}(\mathcal{F}') \preceq \vert \log \epsilon \vert$.

By choosing the threshold $\mathbf{K}_{\epsilon,\lambda, N}$ larger if necessary, we may assume $K/ \mathbf{a}_1 \gg \vert \log \epsilon \vert$.
Thus the collection of arcs passing through $I_1 \cup I_2$ has width $\geq K/ 2\mathbf{a}_1$.
Without loss of generality, we assume the arcs of at least half of the width pass through $I_1$.
The lemma now follows by letting $I' = I_1$.
\end{proof}

We are ready to prove Theorem \ref{thm:ll}. In the proof, we first push forward by $f^N$ the wide family from Lemma~\ref{lem:4.4} passing through $I'$ to obtain a wide family $\mathcal F'$ based at $I''=f^N(I')\subset \partial \widehat Z$. We then lift the appropriate restriction of $\mathcal F'$ to the associated $\psi^\bullet$-map $\bf g$ around $\widehat Z$. Applying Lemma~\ref{lem:rect through PB}, we construct an appropriate interval $\widetilde{I}$ in the dynamical plane of $\bf g$. Using natural properties of the Thin-Thick decomposition, the projection of $\widetilde I$ back to $\partial \widehat Z$ gives a required interval $I$.

\begin{proof}[Proof of Theorem \ref{thm:ll}]
Consider an arc $\gamma: [0,1] \longrightarrow \widehat X_f(-{N})$ in $\mathcal{F}_2$ with $\gamma(0) \in \partial \widehat Z(-{N})$ and $\gamma(1) \in \partial \widehat Z'(-{N})$.
Let $t_0 > 0$ be the first time that $\gamma(t_0) \in I'$.
Denote the truncation $\gamma|_{[0, t_0]}$ by $\gamma'$.
Let $\mathcal{F}_2'$ be the collection of such truncations 
$$
\mathcal{F}_2' := \{\gamma': \gamma\in \mathcal{F}_2\}.
$$
{Let $I'' = f^N(I')$.
Then $I''$ is a grounded interval on some periodic pseudo-Siegel disk $\widehat Z''$.
Since $\partial Z$ and $\partial U$ are in different connected components of $\partial Y_f(-N)$, we conclude that $\alpha:= f^N(\gamma')$ is a non-peripheral arc starting at $I''$, i.e., $\alpha$ is an arc in $\widehat\C$ so that at least one component of $\alpha \cap \widehat X_f$ is non-peripheral.
Let $\alpha'$ be the smallest sub-arc of $\alpha$ starting at $I''$ so that $\alpha' \cap \widehat X_f$ contains a non-peripheral component, and let $\mathcal{F}_3$ be the collection of such truncations.
Note that $\mathcal{W}(\mathcal{F}_3) \geq K/\mathbf{a}_3$ for some constant $\mathbf{a}_3$ depending only on $N$.
Let $\bf g$ be the associated $\psi^\bullet$-map around $\widehat Z$. Then the arcs in $\mathcal{F}_3$ can be lifted to vertical arcs for the $\psi^\bullet$-map (see \S \ref{subsec:qlfromr} and \S \ref{subsec:aprioribounds}).
Let $\widetilde I$ be the lift of the interval $I''$.
By the Thin-Thick Decomposition (see \S \ref{ss:TTD}), we obtain a rectangle $\RR$ of width $K/\mathbf{a}_4$ consisting of vertical arcs that start at $\widetilde I$, for some constant $\mathbf{a}_4$ depending only on $N$.
We now apply Lemma \ref{lem:rect loc} with $\lambda' = \max \{\lambda, \frac{1}{\epsilon}\}$.
We choose the threshold $\mathbf{K}_{\epsilon,\lambda, N}$ large enough so that there exists some constant $\mathbf{a}_5$ depending only on $N$ so that there exists either
\begin{itemize}
    \item a subrectangle  $\RR_1$ of $\RR$ with $\Width(\RR_1) \ge K/\mathbf{a}_5 $ such that $\RR_1$ is outside of $\intr \widetilde \wZ$; or   
    \item a grounded interval $J\subset \partial \widetilde \wZ$ such that $\Width^{+, per}_{\lambda}(J) \geq \Width^{+, per}_{\lambda'}(J) \ge K/\mathbf{a}_5$.
\end{itemize}
Note that in the second case, since $\Width^{+, per}_{\lambda'}(J)> 0$, we have $|J| < \frac{1}{\lambda'} \leq \epsilon$.
We project the wide lamination down to the dynamical plane of $f$.
In the first case, we obtain some grounded interval $I \subseteq I''$ with 
$$
\mathcal{W}^{+,np}(I) \geq K/\mathbf{a}.
$$
for some constant $\mathbf{a}$ depending on $N$. 
In the second case, we obtain some grounded interval $I \subseteq \wZ$ with $|I| < \epsilon$ so that 
$$
\mathcal{W}_{\lambda}^{+,per}(I) \geq K/\mathbf{a}.
$$
This proves the theorem.
}
\end{proof}

\section{Calibration lemma on shallow levels for $\Width^{+, np}_m(I)$}\label{sec:cld}
In this section, we will prove a calibration lemma for non-peripheral arc degenerations. 
Roughly speaking, we will show that if there exists a grounded interval $I$ with sufficiently large non-peripheral arc degeneration $\mathcal{W}_{m}^{+, np}(I)$, then there is a grounded interval on a deeper level with comparable local degeneration; see also~\S \ref{sss:Calibration}.

\begin{theorem}[Calibration lemma on shallow levels]\label{thm:calls}
Let $\mathcal{H}$ be a hyperbolic component of disjoint type.
Let $[f] \in \partial \mathcal{H}$ be an eventually-golden-mean map  with pulled-off constant $N = N([f])$.
Let $Z$ be a Siegel disk of period $p$ and $\widehat Z = \widehat Z^m$ be a pseudo-Siegel disk of level $m$.

For every $a > N$, there is a constant $\boldsymbol{\chi}_{a} > 1$ 
and a threshold constant $\mathbf{K}_{a} > 1$ with the following property. 

Suppose that $\mathcal{W}_{arc}(\widehat X_f) \geq \mathbf{K}_{a}$, {that all pseudo-Siegel disks are at least $4apN$-stable} and that
$I$ is a grounded interval with $\mathfrak{l}_{m+1} < \vert I \vert \leq \mathfrak{l}_m$, $\mathfrak{l}_m > 1/4a$ such that 
$$
K:= \mathcal{W}_{m}^{+, np}(I) \geq \mathcal{W}_{arc}(\widehat X_f)/a.
$$
Then there is a grounded interval $J \subseteq \partial \widehat Z$ with $\vert J \vert \leq \mathfrak{l}_{m+1}$ such that $$\mathcal{W}_{m+1}^{+, np}(J) \geq K/\boldsymbol{\chi}_{a} \geq \mathcal{W}_{arc}(\widehat X_f)/\boldsymbol{\chi}_{a}a.
$$
\end{theorem}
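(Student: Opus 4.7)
The plan is to combine the shallow-level bound $\mathfrak q_{m+1} < 4a$, the $4apN$-stability of the pseudo-Siegel disks, and the pulled-off principle with constant $N$ to refine the wide family from level $m$ to level $m+1$. First I represent the width-$K$ family $\mathcal F_m^{+,np}(I)$ by a rectangle $\RR \subset \widehat X_f^m$ of width $K - O(1)$ with horizontal side on $I^m$, via the Thin-Thick decomposition.

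Next I localize the horizontal side to the level-$(m+1)$ scale. The key dynamical input is that the first-return rotation $f^{\mathfrak q_{m+1} p}$ on $\partial Z$ shifts by $\theta_{m+1}$ of magnitude $\mathfrak l_{m+1}$; applying it iteratively covers $I$ by shifts of a single level-$(m+1)$ grounded interval $J_0 \subset I$. Pulling back $\RR$ along the inverse iterates of $f^{\mathfrak q_{m+1} p}$ (using the $4apN$-stability to guarantee the pullbacks of $\widehat Z^m$ and $\XX(\widehat Z^m)$ are defined) produces a collection of rectangles centered at various shifts of $J_0$, each carrying comparable width in $\widehat X_f^m$. A combinatorial invariance-pigeonhole argument of the type indicated by Figure~\ref{fig:ShL} in Remark~\ref{rem:to:CL} then selects a single $J_0$ carrying width $\geq K/C_1$ for a constant $C_1 = C_1(a, N)$.

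Finally I convert $\widehat X_f^m$-non-peripherality into $\widehat X_f^{m+1}$-non-peripherality. The enlarged surface $\widehat X_f^{m+1} \supseteq \widehat X_f^m$ differs by the reattached level-$m$ parabolic fjords $\widehat{\mathfrak F}_{I'}$, $I' \in \mathfrak D_m$. By Lemma~\ref{lem:C-Z to C-wZ}, the horizontal width of $\RR$ restricted to $J_0$ in the larger surface is essentially preserved, but non-peripheral arcs in $\widehat X_f^m$ could become peripheral in $\widehat X_f^{m+1}$ by being isotopic into a reattached fjord $\widehat{\mathfrak F}_{I'}$. To rule out that this absorbs most of the width, I invoke the pulled-off principle: a substantial sub-family absorbed into $\widehat{\mathfrak F}_{I'}$ can be iteratively pulled back under $f^{-p}$ to produce a laminally disjoint pullback sequence of length exceeding $N$, contradicting $N_{Siegel}([f]) \leq N$. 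Taking $\boldsymbol \chi_a$ to combine $C_1$ with the constants from this conversion gives the desired interval $J \subset \partial \widehat Z$.

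The main obstacle is the localization in the second step: obtaining $J_0$ with width comparable to $K$ rather than $K$ divided by the number of level-$(m+1)$ sub-intervals of $I$ (which is $\approx a_{m+2}$ and can be unbounded in terms of $a$ alone). The refined argument must exploit the near-invariance of $\widehat Z^m$ under $f^{\mathfrak q_{m+1} p}$ so that the various shifts of $J_0$ are rotation-equivalent rather than genuinely independent, and apply the pulled-off principle with its $4apN$-stability budget to collapse the naive pigeonhole factor to a constant depending only on $a$ and $N$. Careful accounting of the $O(1)$ losses at each intermediate stage (absorbed into the choice of threshold $\mathbf K_a$) is what makes the final constant $\boldsymbol \chi_a$ uniform over all eventually-golden-mean maps with pulled-off constant $\leq N$.
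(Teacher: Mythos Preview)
Your second step is the heart of the matter, and as you yourself recognize in the final paragraph, it does not work as stated. The number of level-$(m+1)$ subintervals of $I$ is roughly $\mathfrak l_m/\mathfrak l_{m+1}\asymp a_{m+2}$, which is not bounded in terms of $a$ or $N$; a pigeonhole over these subintervals loses an uncontrolled factor. Your proposed remedy, to ``exploit the near-invariance of $\widehat Z^m$ under $f^{\mathfrak q_{m+1}p}$'' so that the shifts become ``rotation-equivalent rather than genuinely independent'', is not an argument: rotation-equivalence of the intervals does not by itself imply that the non-peripheral widths over them are comparable, because the rectangle $\RR$ is not rotation-invariant, and pulling it back creates a new rectangle on a new base with no a priori relation to the portion of $\RR$ sitting over that base. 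Nor is it clear how the pulled-off principle would ``collapse the pigeonhole factor''; the pulled-off constant controls the length of disjoint pullback chains of arcs, not the distribution of width over subintervals.

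The paper avoids this difficulty by arguing by contradiction and never attempting to localize directly. Assume every grounded $J$ with $|J|\le\mathfrak l_{m+1}$ has $\mathcal W^{+,np}(J)\le K/\boldsymbol\chi_a$. Extract a rectangle $\mathcal F_1$ of width $\ge K/C_1(a)$ with base $I_1\subset I$, and compare $\mathcal F_1$ with the pullback $\mathcal G$ of (essentially) the same family under $f^{N\mathfrak q_{m+1}p}$. The symmetric difference $f^{N\mathfrak q_{m+1}p}(I_1)\triangle I_1$ consists of $2N$ intervals of level $m+1$, each of width $\le K/\boldsymbol\chi_a$ by the contradiction hypothesis; hence $|\mathcal W(\mathcal G)-\mathcal W(\mathcal F_1)|=O(K/\boldsymbol\chi_a)$. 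After trimming buffers, a subfamily $\mathcal F_{1,\text{new}}\subset\mathcal F_1$ overflows $\mathcal G$. Now the pulled-off constant enters in a different way than you propose: since $N\mathfrak q_{m+1}p\ge N$, the family $\mathcal G$ must land on a strictly pre-periodic pseudo-Siegel disk $\widehat U$, so each curve in $\mathcal F_{1,\text{new}}$ decomposes as a piece in $\mathcal G$ followed by a remainder $\mathcal F'$ whose pushforward contains a non-peripheral arc, giving $\mathcal W(\mathcal F')\le C_3(a)\,\mathcal W(\mathcal F_1)$. The series law $\mathcal W(\mathcal F_{1,\text{new}})\le \mathcal W(\mathcal G)\oplus\mathcal W(\mathcal F')$ then yields, after dividing through by $K$, an inequality of the form $1/(c+O(1/\boldsymbol\chi_a))+1/(C_3c)\le 1/(c+O(1/\boldsymbol\chi_a))$, which is impossible for $\boldsymbol\chi_a$ large. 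The contradiction hypothesis is what makes $\mathcal G$ close to $\mathcal F_1$; this is the mechanism that replaces your missing localization step.
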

We remark that the shallow level refers to that $\mathfrak{l}_m$ (and hence $m$) is bounded from below.

\subsection*{Bounds on $\mathfrak{q}_{m+1}$}
One important observation is the following lemma, which bounds the iterations of $f$ to consider.
\begin{lem}\label{lem:q<a}
Suppose that $\mathfrak{l}_m \geq 1/4a$.
Then
$$
\mathfrak{q}_{m+1} \leq 4a.
$$
\end{lem}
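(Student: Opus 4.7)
The plan is to read off the estimate directly from the standard closest-return inequality for the continued fraction expansion of the rotation number $\theta$. Recall from the discussion in~\S\ref{subsec:ci} (displayed just after the definition of $\mathfrak{l}_m$ in the introduction) that the combinatorial length of a level $m$ interval satisfies the sharp two-sided bound
$$
\frac{1}{2\mathfrak{q}_{m+1}} \;<\; \mathfrak{l}_m \;<\; \frac{1}{\mathfrak{q}_{m+1}}.
$$
This is a classical fact about best rational approximations: the denominator $\mathfrak{q}_{m+1}$ of the $(m+1)$-st convergent of $\theta$ realizes the closest return on the orbit of $f^p\mid \partial Z$, and the combinatorial distance of this return is precisely $|\theta_m| = \mathfrak{l}_m$, which is pinched between $1/(2\mathfrak{q}_{m+1})$ and $1/\mathfrak{q}_{m+1}$.

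Given the hypothesis $\mathfrak{l}_m \geq 1/(4a)$, the upper half of this bound yields
$$
\frac{1}{4a} \;\leq\; \mathfrak{l}_m \;<\; \frac{1}{\mathfrak{q}_{m+1}},
$$
so $\mathfrak{q}_{m+1} < 4a$. Since $\mathfrak{q}_{m+1}$ is a positive integer, this gives $\mathfrak{q}_{m+1} \leq 4a$, as claimed.

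There is no real obstacle: the lemma is a one-line transcription of the closest-return inequality under the quantitative shallow-level hypothesis $\mathfrak{l}_m \geq 1/(4a)$. Its role in the proof of Theorem~\ref{thm:calls} is to bound the number of iterates of $f^p\mid \partial Z$ that the spreading-around construction at level $m$ must track; this is precisely what is needed to match the ``$4apN$-stable'' stability requirement on the ambient pseudo-Siegel disks already imposed in the statement of Theorem~\ref{thm:calls}, so that all relevant pullbacks of $\widehat Z^m$ (and of its territory $\XX(\widehat Z^m)$) are well defined during the calibration argument.
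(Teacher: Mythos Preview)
Your proof is correct and essentially the same as the paper's: both rely on the inequality $\mathfrak{q}_{m+1}\,\mathfrak{l}_m \le 1$ (equivalently $\mathfrak{l}_m < 1/\mathfrak{q}_{m+1}$), with the paper deriving it on the spot via the spreading-around argument and you citing it from the displayed bound in the introduction. The remark that $\mathfrak{q}_{m+1}$ is a positive integer is harmless but unnecessary, since $\mathfrak{q}_{m+1} < 4a$ already implies $\mathfrak{q}_{m+1} \le 4a$.
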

\begin{proof}
Note that by spreading around a level $m$ interval $I$, we get $\mathfrak{q}_{m+1}$ disjoint intervals of length $\mathfrak{l}_m$:
$$
I_0 = I, I_1 = f^{i_1p}(I), ..., I_{\mathfrak{q}_{m+1} - 1} = f^{(i_{\mathfrak{q}_{m+1}-1})p}(I), \, i_j \in \{1,2, ..., \mathfrak{q}_{m+1} -1\}.
$$
So
$\mathfrak{q}_{m+1} \mathfrak{l}_m \leq 1$.
Thus, if $\mathfrak{l}_m \geq 1/4a$, $\mathfrak{q}_{m+1} \leq 1/ \mathfrak{l}_{m} \leq 4a$.
\end{proof}

\subsection*{Proof of the calibration lemma}
Following the definitions in \S \ref{ss:CoreSurfaces}, 
let $\widehat Z_{i,j}(-n)$ be the closure of the component of $f^{-n}(\Int(\bigcup \widehat Z_{i,j}))$ that contains $Z_{i,j}$. Since $\mathfrak{q}_{m+1}p \leq 4ap$ by Lemma \ref{lem:q<a} and all pseudo-Siegel disks are $4apN$-stable, we define
$$
\widehat X_f(-N\mathfrak{q}_{m+1}p) := \widehat\C - \bigcup \Int(\widehat D_{i,j}) - \bigcup \Int(\widehat Z_{i,j}(-N\mathfrak{q}_{m+1}p)),
$$
and
$$
\widehat Y_f(-N\mathfrak{q}_{m+1}p) := f^{-N\mathfrak{q}_{m+1}p}(\widehat X_f).
$$
Since $\mathfrak{q}_{m+1}p \leq 4ap$ and $N < a$, the topological complexity, i.e., the number of boundary components of $\widehat Y_f(-N\mathfrak{q}_{m+1}p)$ is bounded in terms of $a$.

Note that since the interval $I$ is grounded, 
by Proposition \ref{prop:gi},
the wide families for $I$ of $\widehat X_f$ and $\widehat X_f(-N\mathfrak{q}_{m+1}p)$ have compatible width, as they are both compatible to the corresponding family in $X_f$.
Denote the corresponding family of $\widehat X_f(-N\mathfrak{q}_{m+1}p)$ for $\mathcal{W}_{m}^{+, np}(I)$ by $\mathcal{F}$.
Let $\gamma$ be a directed arc in $\widehat X_f(-N\mathfrak{q}_{m+1}p) \subseteq \widehat Y_f(-N\mathfrak{q}_{m+1}p)$.
The {\em initial segment} $\delta$ of $\gamma$ in $\widehat Y_f(-N\mathfrak{q}_{m+1}p)$ is the first segment of the union of arcs $\gamma \cap Y_f(-N\mathfrak{q}_{m+1}p)$.
We say two initial segments $\delta_1, \delta_2$ are {\em homotopic} if they are homotopic in $\widehat Y_f(-N\mathfrak{q}_{m+1}p)$ and they both connect $\partial{U}$ with $\partial{V}$ where $U, V$ are component of $\widehat\C - \widehat Y_f(-N\mathfrak{q}_{m+1}p)$.
We remark that the homotopy condition does not imply the second condition as $\partial U \cup \partial V$ may be connected.

Since the topological complexity of $\widehat Y_f(-N\mathfrak{q}_{m+1}p)$ is bounded in terms of $a$, there exists a constant $C_1 = C_1(a)$ depending on $a$ and a subfamily $\mathcal{F}_1 \subseteq \mathcal{F}$ with
\begin{itemize}
\item $\mathcal{W}(\mathcal{F}_1) \geq \mathcal{W}(\mathcal{F})/C_1 = K/C_1$; and
\item all arcs in $\mathcal{F}_1$ have homotopic initial segments.
\end{itemize}
Note that we may assume $\mathcal{F}_1$ forms the vertical foliations of a rectangle connecting $I_1 \subseteq I$ and $L_1 \subseteq \partial \widehat X_f(-N\mathfrak{q}_{m+1}p)$.
Since the non-peripheral arc degeneration for $I_1$ is large, by Proposition \ref{prop:gi}, we may assume that $I_1$ is grounded.
Since $\vert I_1 \vert \leq \length_m$, there are at most $N$ critical points of $f^{N\mathfrak{q}_{m+1}p}$ in $I_1$. Subdividing $I_1$ into $N+1$ subintervals if necessary, we may also assume that there are no critical points of $f^{N\mathfrak{q}_{m+1}p}$ on $I_1$.

We are now ready to prove Theorem \ref{thm:calls}.
\begin{proof}[Proof of Theorem \ref{thm:calls}]
Suppose for contradiction that any grounded interval $J \subseteq \partial Z$ with $\vert J \vert \leq \length_{m+1}$ satisfies $\mathcal{W}^{+, np}(J) \leq K/\boldsymbol{\chi}_a$, where $\boldsymbol{\chi}_a$ is some constant to be determined.

Note that if $\vert I_1 \vert \leq \mathfrak{l}_{m+1}$, then we may take $\boldsymbol{\chi}_a \geq C_1$ and obtain a contradiction.
Thus, we may assume $\mathfrak{l}_{m+1} < \vert I_1 \vert \leq \mathfrak{l}_m$.
Then the symmetric difference 
$$
(f^{N\mathfrak{q}_{m+1}p}(I_1) - I_1) \cup (I_1 - f^{N\mathfrak{q}_{m+1}p}(I_1))
$$
consists of $2N$ number of level $m+1$ combinatorial intervals.
Note by assumption, the widths $\mathcal{W}^{+, np}$ for these combinatorial intervals are bounded by $K$.
Thus, the width $\mathcal{W}^{+, np}$ for the union of these $2N$ intervals is bounded from above by $2N K/\boldsymbol{\chi}_a$.
Thus there exists a rectangle in $\widehat X_f$ with base $f^{N\mathfrak{q}_{m+1}p}(I_1)$ so that the family $\widetilde{\mathcal{F}}$ of its vertical arcs satisfies
$$
|\mathcal{W}(\widetilde{\mathcal{F}}) - \mathcal{W}(\mathcal{F}_1)| = O(K/\boldsymbol{\chi}_a).
$$
Let $\mathcal{G}$ be the pull back $\widetilde{\mathcal{F}}$ under $f^{N\mathfrak{q}_{m+1}p}$ that starts at the interval $I_1$.
Since $f^{N\mathfrak{q}_{m+1}p}$ is univalent on $I_1$, we have
$|\mathcal{W}(\mathcal{G}) - \mathcal{W}(\widetilde{\mathcal{F}})| = O(1)$ (see \cite[Lemma A.10]{DL22}).
Thus,
$$
|\mathcal{W}(\mathcal{G}) - \mathcal{W}(\mathcal{F}_1)| = O(K/\boldsymbol{\chi}_a).
$$
After removing two $3NK/\boldsymbol{\chi}_a$-buffers from $\mathcal{F}_1$, we get a subfamily $\mathcal{F}_{1, new} \subseteq \mathcal{F}_1$ starting at some interval $I_{1, new}$.
Note that by our assumption, the length of each of the intervals in $I_1 - I_{1, new}$ is at least $3N\length_{m+1}$, and at most $O(1)$ curves in $\mathcal{F}_{1, new}$ can cross the $1$-buffers of $\mathcal{G}$ starting at $I_1 - I_{1, new}$.
Thus, by removing these curves if necessary, we obtain a subfamily $\mathcal{F}_{1, new} \subseteq \mathcal{F}_1$ with
$$
|\mathcal{W}(\mathcal{F}_{1, new}) - \mathcal{W}(\mathcal{F}_1)| = O(K/\boldsymbol{\chi}_a)
$$
that overflows $\mathcal{G}$ (see Figure \ref{fig:ShL}).

\begin{figure}[ht]
  \centering
  \resizebox{\linewidth}{!}{
    \def\svgwidth{\columnwidth}
    \import{./Figure/}{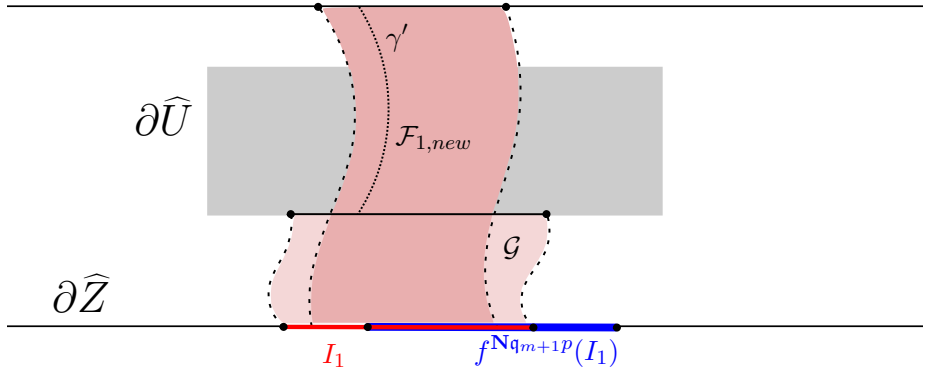}

  }
  \caption{The configuration of the families $\mathcal{F}_{1, new}$ and $\mathcal{G}$.}
  \label{fig:ShL}
\end{figure}

Note that $N\mathfrak{q}_{m+1}p \geq N$, arcs in $\mathcal{G}$ do not connect periodic pseudo-Siegel disks by the definition of pulled-off constant. 
Thus, $\mathcal{G}$ consists of homotopically equivalent arcs connecting $\partial \widehat Z$ with some strictly pre-periodic pseudo-Siegel disk $\partial \widehat U$.

Let $\gamma'$ be the part of $\gamma \in \mathcal{F}_{1,new}$ after its first intersection with $\partial \widehat U$.
Consider $\mathcal{F'}:= \{\gamma': \gamma \in \mathcal{F}_{1, new}\}$.
Note that for each arc $\gamma' \in \mathcal{F'}$, its image $f^{N\mathfrak{q}_{m+1}p}(\gamma')$ contains some non-peripheral arc.
Since the iteration $N\mathfrak{q}_{m+1}p$ is bounded, there exists a constant $C_2 = C_2(a)$ so that 
$$
\mathcal{W}(\mathcal{F}') \leq C_2 \mathcal{W}_{arc}(\widehat X_f).
$$
By Proposition \ref{prop:gi}, $\mathcal{W}(\mathcal{F}_1)  \geq \mathcal{W}_{arc}(\widehat X_f
(-N\mathfrak{q}_{m+1}p))/aC_1 \geq \mathcal{W}_{arc}(\widehat X_f
)/4aC_1$.
Thus there exists a constant $C_3 = C_3(a)$ so that
$$
\mathcal{W}(\mathcal{F}') \leq C_3 \mathcal{W}(\mathcal{F}_1).
$$
By the series law, we have
$$
\mathcal{W}(\mathcal{F}_{1,new}) \leq \mathcal{W}(\mathcal{G}) \oplus \mathcal{W}(\mathcal{F}').
$$
Equivalently, we have
$$
1/\mathcal{W}(\mathcal{G}) + 1/ \mathcal{W}(\mathcal{F}') \leq 1/\mathcal{W}(\mathcal{F}_{1,new}).
$$
Using our estimates on $\mathcal{W}(\mathcal{F}_{1,new}), \mathcal{W}(\mathcal{G}), \mathcal{W}(\mathcal{F}')$, we have
$$
1/(\mathcal{W}(\mathcal{F}_1)+O(K/\boldsymbol{\chi}_a)) + 1/(C_3 \mathcal{W}(\mathcal{F}_1)) \leq 1/ (\mathcal{W}(\mathcal{F}_1)+O(K/\boldsymbol{\chi}_a)).
$$
Note that $\mathcal{W}(\mathcal{F}_1) \in [K/C_1, K]$.
Let $\mathcal{W}(\mathcal{F}_1) = cK$ for $c \in [1/C_1, 1]$.
Thus, by cancelling the common term $K$, we obtain
$$
1/(c+O(1/\boldsymbol{\chi}_a)) + 1/(C_3 c) \leq 1/ (c+O(1/\boldsymbol{\chi}_a)).
$$
This is a contradiction as such an inequality cannot hold if $\boldsymbol{\chi}_a \gg 1$.
This concludes the proof of Theorem \ref{thm:calls}.
\end{proof}

\section{Bounds on arc degeneration}\label{sec:utd}
In this section, we shall prove that $\mathcal{W}_{arc}(\widehat X_f)$ is bounded in terms of the pulled-off constant.
By Theorem \ref{thm:pop}, this would immediately imply that $\mathcal{W}_{arc}(\widehat X_f)$ is uniformly bounded for eventually-golden-mean maps on the boundary of a Sierpinski carpet hyperbolic component of disjoint type.
More precisely, we will show
\begin{theorem}\label{thm:utd}
Let $\mathcal{H}$ be a hyperbolic component of disjoint type.
Let $[f] \in \partial \mathcal{H}$ be an eventually-golden-mean map  with pulled-off constant $N = N([f])$.
There exists a constant $\mathbf{K}$ depending only on $N$ and the multipliers of attracting cycles of $f$ with the following properties.
\begin{enumerate}
\item For each Siegel disk $Z_{i,j}$ of $f$, there exists a pseudo-Siegel disk $\widehat Z_{i,j}$ which is a $\mathbf{K}$-quasiconformal closed disk.

\item For each attracting domain $D_{i,j}$, there exists a valuable-attracting domain $\widehat D_{i,j}$ with $\Mod(D_{i,j} - \widehat D_{i,j}) \geq 2\pi/\mathbf{K}$.

\item The pseudo-core surface $\widehat X_f:= \widehat \C - \bigcup \Int(\widehat D_{i,j}) - \bigcup \Int(\widehat Z_{i,j})$ has uniformly bounded arc degeneration
$$
\mathcal{W}_{arc}(\widehat X_f) \leq \mathbf{K}.
$$
\end{enumerate} 
\end{theorem}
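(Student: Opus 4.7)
The plan is to establish claim (3)—the uniform bound $\mathcal{W}_{arc}(\widehat X_f)\le \mathbf K$—since the other two claims then follow readily. Claim (2) is an immediate consequence of Lemma~\ref{lem:kad}, which yields $\Mod(D_{i,j}-\widehat D_{i,j}) = -\frac{1}{2\pi}\log\max\{\tfrac12,|\rho|\}$, depending only on the multipliers. For claim (1), Theorem~\ref{thm:cps} produces pseudo-Siegel disks whose quasiconformal dilatation $M=M(K)$ depends only on $K$, so a uniform bound on $K$ yields a uniform dilatation bound.

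The proof of (3) proceeds by contradiction: suppose $K:=\mathcal{W}_{arc}(\widehat X_f)$ can be made arbitrarily large while $N$ and the multipliers remain fixed. First apply Theorem~\ref{thm:cps} to obtain pseudo-Siegel disks $\widehat Z^m_{i,j}$ satisfying, for every grounded level-$m$ interval $J$,
\[
\mathcal W^{+,np}_m(J)\le C_1(K\length_m+1),\qquad \mathcal W^{+,per}_{\lambda,m}(J)\le C_1(\sqrt{K\length_m}+1),
\]
where $C_1$ is a universal constant. Fix $\lambda\ge\boldsymbol\Lambda$ from Theorem~\ref{thm:ll} and pick a small $\epsilon>0$ to be specified. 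Assuming $K\ge \mathbf K_{\epsilon,\lambda,N}$, Theorem~\ref{thm:ll} provides a pseudo-Siegel disk $\widehat Z$ and a grounded interval $I\subseteq\partial\widehat Z$ with $|I|\le\epsilon$ and
\[
\mathcal W^{+,np}(I)+\mathcal W^{+,per}_\lambda(I)\ge K/\mathbf a.
\]
Since $\length_m\le 1$ always, the peripheral bound gives $\mathcal W^{+,per}_\lambda(I)=O(\sqrt K +1)$, which is smaller than $K/(2\mathbf a)$ once $K$ is large; hence $\mathcal W^{+,np}(I)\ge K/(2\mathbf a)$.

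Let $m$ be the level of $I$ and fix $a>N$ large in terms of $\mathbf a,C_1$. If $\length_m\le 1/(4a)$, the non-peripheral bound gives $\mathcal W^{+,np}(I)\le C_1(K\length_m+1)\le C_1 K/(4a)+C_1$, which contradicts $\mathcal W^{+,np}(I)\ge K/(2\mathbf a)$ for $a$ large and $K$ large. Otherwise $\length_m>1/(4a)$, and Theorem~\ref{thm:calls} applies, producing a grounded interval $J\subseteq\partial\widehat Z$ with $|J|\le\length_{m+1}\le|I|\le\epsilon$ and
\[
\mathcal W^{+,np}_{m+1}(J)\ge K/(\boldsymbol{\chi}_a\, a).
\]
The level $m'$ of $J$ satisfies $m'\ge m+1$, so $\length_{m'}\le\length_{m+1}\le\epsilon$. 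Proposition~\ref{prop:gi} compares $\mathcal W^{+,np}_{m+1}(J)$ with $\mathcal W^{+,np}_{m'}(J)$ up to a factor of $2$ and an $O(1)$ error, and applying the non-peripheral estimate from Theorem~\ref{thm:cps} at the correct level gives $\mathcal W^{+,np}_{m+1}(J)=O(K\epsilon+1)$. Choosing $\epsilon$ small (depending on $\boldsymbol{\chi}_a,a,\mathbf a$) produces the contradiction. This forces $K$ to be bounded in terms of $N$ and the multipliers, completing claim (3); combining with Theorem~\ref{thm:pop} in the Sierpinski setting then bounds $K$ by a universal constant.

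The main obstacle will be to thread all threshold constants in the correct dependency order: $\mathbf a$ comes from Theorem~\ref{thm:ll} and depends on $N$; then $a$ must be chosen large in terms of $\mathbf a,C_1$ both to close the deep-level case and to invoke Theorem~\ref{thm:calls}; then $\epsilon$ must be small in terms of $\boldsymbol{\chi}_a,a,\mathbf a$; and finally $K$ must exceed all resulting thresholds. A secondary technical point, flagged in Remark~\ref{rmk:stablitiy}, is that the stability parameter $T$ of the pseudo-Siegel disks must simultaneously dominate both $N$ (required by Theorem~\ref{thm:ll}) and $4apN$ (required by Theorem~\ref{thm:calls}); this is arranged by choosing $T$ sufficiently large from the outset, at the cost of enlarging the constants $M$ and $C_1$ in Theorem~\ref{thm:cps}.
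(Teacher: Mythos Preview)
Your proposal is correct and follows essentially the same route as the paper's proof: reduce to claim (3) via Theorem~\ref{thm:cps} and Lemma~\ref{lem:kad}, then argue by contradiction using localization (Theorem~\ref{thm:ll}), the peripheral/non-peripheral bounds of Theorem~\ref{thm:cps}, and calibration (Theorem~\ref{thm:calls}). The only cosmetic difference is that the paper uses $\mathbf a$ directly as the parameter $a$ in Theorem~\ref{thm:calls} (deriving $\length_m\ge 1/(4\mathbf a)$ rather than introducing a separate case split with a new constant $a$), and applies the bound from Theorem~\ref{thm:cps} at level $m+1$ directly via monotonicity rather than routing through Proposition~\ref{prop:gi}.
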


Let us outline the strategy of the argument.
As a preparation, we first construct pseudo-Siegel disks as in Theorem \ref{thm:cps} with bounds depending on $\mathcal{W}_{arc}(\widehat X_f)$.
Let $\widehat X_f$ be the pseudo-core surface.
To prove Theorem \ref{thm:utd},  by Theorem \ref{thm:cps}, it suffices to show that $\mathcal{W}_{arc}(\widehat X_f)$ is uniformly bounded.

We will argue by contradiction. 
Suppose $\mathcal{W}_{arc}(\widehat X_f)$ is sufficiently large. Then
\begin{enumerate}[label=\roman*)]
\item we can first localize the arc degeneration (Theorem \ref{thm:ll}) and obtain a small grounded interval $I_1$ with comparable local degeneration 
$$
\mathcal{W}^{+, np}(I) +\mathcal{W}_{\lambda}^{+, per}(I)\succeq \mathcal{W}_{arc}(\widehat X_f). 
$$
\item 
By Property (2)(b) in Theorem \ref{thm:cps}, the peripheral part $\mathcal{W}_{\lambda, m}^{+, per}(I)$ is relatively small.
\item This means $\mathcal{W}_{m}^{+, np}(I)$ is large. 
We will apply the  calibration lemma (see Theorem \ref{thm:calls}), and construct an interval on a deeper level with big local degeneration, which contradicts Property (2)(a) in Theorem \ref{thm:cps}.
\end{enumerate}

\subsection{Choosing the constants}\label{subsec:constants}
There are many constants in the proof.
We summarize their relations and the order we choose them here.
\begin{itemize}
\item $\mathbf{a}$ is the constant in the localization lemma (Theorem \ref{thm:ll}), and we assume $\mathbf{a} > N$;
\item $\boldsymbol{\chi}$ is chosen so that it satisfies the calibration lemma (with constant $a = \mathbf{a}$)(Theorem \ref{thm:calls});
\item $\boldsymbol{\lambda}$ is chosen so that $\boldsymbol{\lambda} \gg \boldsymbol{\Lambda}, \boldsymbol{\chi}$, where $ \boldsymbol{\Lambda}$ is the constant in the localization lemma (Theorem \ref{thm:ll}).
\end{itemize}

Let $[f] \in \partial \mathcal{H}$ be an eventually-golden-mean map.
Let $\theta_1,..., \theta_l$ be the list of rotation numbers for Siegel disks of $f$.
\begin{itemize}
\item $\boldsymbol{\epsilon}$ is chosen so that 
$$
\boldsymbol{\epsilon} \ll 1/\boldsymbol{\chi}\mathbf{a}.
$$
\end{itemize}
We also assume that all psuedo-Siegel disks are $T$-stable where 
$$
T \gg 4\mathbf{a}N \max\{p_i\}
$$ 
where $p_i$ is the period of Siegel disks (see Remark \ref{rmk:stablitiy}).
We remark that all the constants above depend only on the pulled-off constant $N$.

\subsection{Uniform geometric control}\label{subsec:ugc}
\begin{lem}\label{lem:tb}
There exists a constant $\mathbf{K}$ depending on the pulled-off constant $N$ so that $\mathcal{W}_{arc}(\widehat X_f)\leq \mathbf{K}$.
\end{lem}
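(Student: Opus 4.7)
The plan is to argue by contradiction, combining the three main ingredients of Sections~\ref{sec:cps},~\ref{sec:ltd}, and~\ref{sec:cld} into a single combinatorial localization scheme. I will first fix the cascade of constants $\mathbf{a}, \boldsymbol{\chi}, \boldsymbol{\lambda}, \boldsymbol{\epsilon}$ exactly as prescribed in~\S\ref{subsec:constants}, and choose the stability parameter $T$ for the pseudo-Siegel disks so that $T \gg 4\mathbf{a} N \max_i p_i$; this makes both Theorem~\ref{thm:ll} and Theorem~\ref{thm:calls} applicable. I then invoke Theorem~\ref{thm:cps} to produce pseudo-Siegel disks $\widehat Z_{i,j}^m$ together with the corresponding pseudo-core surface $\widehat X_f$, whose quasiconformality constants and local estimates (2)(a) and (2)(b) are a priori allowed to depend on $K := \mathcal{W}_{arc}(\widehat X_f)$. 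Assume for contradiction that $K$ exceeds the threshold $\mathbf{K}_{\boldsymbol{\epsilon}, \boldsymbol{\lambda}, N}$ coming from Theorem~\ref{thm:ll}.

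Applying the Localization Theorem~\ref{thm:ll} with $\lambda = \boldsymbol{\lambda}$ and $\epsilon = \boldsymbol{\epsilon}$, I obtain a pseudo-Siegel disk $\widehat Z = \widehat Z_{i,j}$ and a grounded interval $I \subseteq \partial \widehat Z$ with $|I| \leq \boldsymbol{\epsilon}$ satisfying
\[
\mathcal{W}^{+, np}(I) + \mathcal{W}^{+, per}_{\boldsymbol{\lambda}}(I) \geq K/\mathbf{a}.
\]
Let $m$ be such that $\length_{m+1} < |I| \leq \length_m$. Combining Theorem~\ref{thm:cps}(2)(b) with Proposition~\ref{prop:gi} (which translates widths between $\widehat Z$ and $\widehat Z^m$ up to a factor of $2$ and an additive $O(1)$), the peripheral contribution satisfies
\[
\mathcal{W}^{+, per}_{\boldsymbol{\lambda}}(I) = O\bigl(\sqrt{K \length_m} + 1\bigr) = O\bigl(\sqrt{K \boldsymbol{\epsilon}} + 1\bigr),
\]
which is negligible relative to $K/\mathbf{a}$ for $K$ large. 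Thus the degeneration is concentrated in the non-peripheral direction: $\mathcal{W}^{+, np}(I) \geq K/(2\mathbf{a})$, and correspondingly $\mathcal{W}^{+, np}_m(I^m) \geq K/(4\mathbf{a}) - O(1)$ after one more application of Proposition~\ref{prop:gi}.

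I then split into two regimes according to the level $m$. In the \emph{deep-level} case $\length_m \leq 1/(4\mathbf{a})$, Theorem~\ref{thm:cps}(2)(a) applied to $I^m$ gives $\mathcal{W}^{+, np}_m(I^m) = O(K \length_m + 1) = O(K \boldsymbol{\epsilon} + 1)$, which is smaller than $K/(4\mathbf{a})$ once $\boldsymbol{\epsilon} \ll 1/\mathbf{a}$ and $K$ is large: contradiction. In the \emph{shallow-level} case $\length_m > 1/(4\mathbf{a})$, the estimate $O(K\length_m + 1)$ is too weak, and I invoke the Calibration Lemma~\ref{thm:calls} with $a$ a small multiple of $\mathbf{a}$: its hypotheses are met by the choice of $T$ and by $\mathcal{W}^{+, np}_m(I^m) \geq K/a$, and it produces a grounded interval $J \subseteq \partial \widehat Z$ with $|J| \leq \length_{m+1} < |I| \leq \boldsymbol{\epsilon}$ and $\mathcal{W}^{+, np}_{m+1}(J) \geq K/(\boldsymbol{\chi}_a \cdot a)$. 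Applying Theorem~\ref{thm:cps}(2)(a) one level deeper then yields $\mathcal{W}^{+, np}_{m+1}(J) = O(K \length_{m+1} + 1) = O(K \boldsymbol{\epsilon} + 1)$, which contradicts the previous lower bound as soon as $\boldsymbol{\epsilon} \ll 1/(\mathbf{a} \boldsymbol{\chi})$, exactly as arranged in~\S\ref{subsec:constants}.

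The main obstacle is the shallow-level ``special transition scale'' noted in Remark~\ref{rem:to:CL}: at such levels the naive combination of Theorems~\ref{thm:ll} and~\ref{thm:cps}(2)(a) leaves a multiplicative gap of roughly $\length_m$, and it is precisely the one-step calibration of Theorem~\ref{thm:calls} that bridges this gap by pushing the localized degeneration to the next deeper level, where the $O(K \length_{m+1}+1) = O(K \boldsymbol{\epsilon}+1)$ bound is again conclusive. Careful bookkeeping of the superscripts $\widehat Z$ versus $\widehat Z^m$ (handled by Proposition~\ref{prop:gi}) and of the stability of the pseudo-Siegel disks under the $O(\mathbf{a} N p)$ iterates implicit in both ingredients is essential but routine.
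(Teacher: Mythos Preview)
Your approach is essentially the paper's, but there is a slip in the deep-level case. From $|I|\le \boldsymbol{\epsilon}$ and $\length_{m+1}<|I|\le \length_m$ you only get $\length_{m+1}<\boldsymbol{\epsilon}$; you do \emph{not} know $\length_m\le \boldsymbol{\epsilon}$. Hence the step $O(K\length_m+1)=O(K\boldsymbol{\epsilon}+1)$ is unjustified: at $\length_m=1/(4\mathbf{a})$ the upper bound is only $O(K/\mathbf{a})$, which is comparable to---not smaller than---your lower bound $K/(4\mathbf{a})$, so no contradiction follows. (The same confusion appears in your peripheral estimate $O(\sqrt{K\length_m}+1)=O(\sqrt{K\boldsymbol{\epsilon}}+1)$, though there the conclusion survives since $\sqrt{K\length_m}\le \sqrt{K}\ll K/\mathbf{a}$ regardless.)

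The fix, which is exactly what the paper does, is to read the inequality the other way: the lower bound $\mathcal{W}^{+,np}_m(I^m)\ge K/\mathbf{a}$ combined with the upper bound $O(K\length_m+1)$ from Theorem~\ref{thm:cps}(2)(a) \emph{forces} $\length_m\gtrsim 1/\mathbf{a}$, so the deep-level case is vacuous and one is always in the shallow regime where the Calibration Lemma applies. Then at level $m+1$ one genuinely has $\length_{m+1}<|I|\le\boldsymbol{\epsilon}$, and the bound $\mathcal{W}^{+,np}_{m+1}(J)=O(K\length_{m+1}+1)=O(K\boldsymbol{\epsilon}+1)$ delivers the contradiction with $\mathcal{W}^{+,np}_{m+1}(J)\ge K/(\boldsymbol{\chi}\mathbf{a})$.
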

\begin{proof}
Choose $\mathbf{K} \gg 1$ so that it is much bigger than the threshold in Theorem \ref{thm:ll} (with constant $\epsilon=\boldsymbol{\epsilon}, \lambda=\boldsymbol{\lambda}$ and $N$), Theorem \ref{thm:calls} (with constant $a= \mathbf{a}$).

Suppose by contradiction that $\mathcal{W}_{arc}(\widehat X_f) = K \geq \mathbf{K}$.
By the localization lemma (Theorem \ref{thm:ll}), there exist a pseudo-Siegel disk $\widehat Z$ and a grounded interval $I$ of $\widehat Z^m$ with 
\begin{itemize}
\item $\vert I \vert \leq \boldsymbol{\epsilon} \ll 1/\boldsymbol{\chi}\mathbf{a}$; and
\item $\mathcal{W}_m^{+, np}(I) +\mathcal{W}_{\boldsymbol{\lambda},m }^{+, per}(I)  \geq 2K/\mathbf{a}$
\end{itemize}
Suppose that $\mathfrak{l}_{m+1} < \vert I \vert \leq \mathfrak{l}_m$.
By Property (2)(b) of Theorem \ref{thm:cps}, the peripheral part 
$$
\mathcal{W}_{\boldsymbol{\lambda}, m}^{+, per}(I) = O(\sqrt{\length_{m} K} + 1) \ll K/\mathbf{a},
$$
for all sufficiently large $K$.

Thus, $\mathcal{W}_{m}^{+, np}(I) \geq K/\mathbf{a}$.
 By Property (2)(a) for level $m$ of Theorem \ref{thm:cps}, $K/\mathbf{a}\leq \mathcal{W}^{+, np}_{m}(I) +\mathcal{W}_{\boldsymbol{\lambda}, m}^{+, per}(I) = O(K\mathfrak{l}_m + 1)$, so $\mathfrak{l}_m \geq \frac{1}{4\mathbf{a}}$.
Therefore, we can apply Theorem \ref{thm:calls}, and obtain a grounded interval $J$ with $\vert J \vert \leq \length_{m+1}$ with 
$$
\mathcal{W}_{m+1}^{+, np}(J)\geq K/\boldsymbol{\chi}\mathbf{a}.
$$

By Property (2)(a) for level $m+1$ of Theorem \ref{thm:cps}, we have
$$
\mathcal{W}^{+, np}_{m+1}(J) = O(\length_{m+1}K + 1).
$$
Since $\mathfrak{l}_{m+1} \leq \vert I \vert \leq \boldsymbol{\epsilon} \ll 1/\boldsymbol{\chi}\mathbf{a}$, increase $\mathbf{K}$ if necessary, we have
$$
\mathcal{W}^{+, np}_{m+1}(J) \ll K/\boldsymbol{\chi}\mathbf{a},
$$
which is a contradiction.
The lemma now follows.
\end{proof}

\begin{proof}[Proof of Theorem \ref{thm:utd}]
The theorem follows by combining Theorem \ref{thm:cps}, Lemma \ref{lem:kad} and Lemma \ref{lem:tb}.
\end{proof}

\section{Dynamics on limiting trees and bounds on loop degeneration}\label{sec:uthind}
In this section, we shall prove the following theorem giving uniform bounds of loop degeneration for eventually-golden-mean maps.
\begin{theorem}\label{thm:uthind}
Let $\mathcal{H}$ be a hyperbolic component of disjoint type.
Let $[f] \in \partial \mathcal{H}$ be an eventually-golden-mean map  with (Siegel) pulled-off constant $N = N_{Siegel}([f])$.
Let 
$$
\widehat X_f:= \widehat \C - \bigcup \Int(\widehat D_{i,j}) - \bigcup \Int(\widehat Z_{i,j})
$$ 
be the Riemann surface as in Theorem \ref{thm:utd}.
There exists a constant $\mathbf{K}_{loop}$ depending only on $N$ and the multipliers of the attracting cycle of $[f]$ so that
$$
\mathcal{W}_{loop}(\widehat X_f) \leq \mathbf{K}_{loop}.
$$
\end{theorem}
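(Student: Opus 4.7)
I would argue by contradiction along the strategy sketched in~\S\ref{ss:prf:Thm:cd} (Step 2). Suppose there exists a sequence of eventually-golden-mean maps $[f_n] \in \partial \mathcal H$ with Siegel pulled-off constants $N_{Siegel}([f_n])$ and attracting multipliers uniformly bounded as in the hypothesis, but with $\mathcal{W}_{loop}(\widehat X_{f_n}) \to \infty$. By Theorem~\ref{thm:utd}, the arc degenerations $\mathcal{W}_{arc}(\widehat X_{f_n})$ stay uniformly bounded, each pseudo-Siegel disk $\widehat Z_{i,j,n}$ is uniformly $\mathbf K$-quasiconformal, and each annulus $D_{i,j,n}\setminus \widehat D_{i,j,n}$ has modulus bounded from below. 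This is the essential input: all degeneration must occur through loops, never through arcs.

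The first step is to extract a limiting map on a finite tree of Riemann spheres. Selecting a maximal wide multi-loop $\Gamma_n\subset \widehat X_{f_n}$ that carries the divergent loop degenerations, I would rescale $f_n$ on each thin annulus of $\Gamma_n$ via M\"obius transformations and pass to a subsequence to obtain a non-trivial rational self-map $\mathbf f\colon \mathcal T\to\mathcal T$ on a finite tree $\mathcal T$ of Riemann spheres (Theorem~\ref{thm:gc2}). Finiteness of $\mathcal T$ follows from the uniform bound on $\mathcal{W}_{arc}(\widehat X_{f_n})$ together with the bounded topological complexity of $\widehat X_{f_n}$: only boundedly many pairwise disjoint homotopy classes of wide loops can coexist with bounded arc degeneration.

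Next, I would establish a duality between $\mathcal T$ and the multi-curve $\Gamma_n$ for large $n$: each vertex of $\mathcal T$ is dual to a cluster of components of $\widehat X_{f_n}\setminus\Gamma_n$, and each edge of $\mathcal T$ corresponds to a thin annulus of $\Gamma_n$ (Proposition~\ref{prop:dss}). The dynamics on $\mathcal T$ is encoded by a non-negative Markov incidence matrix $M$ and a local-degree matrix $D$; a preservation-of-mass argument followed by Perron--Frobenius yields a nonzero non-negative vector $\vec v$ with $M\vec v = D\vec v$, so $D^{-1}M$ has spectral radius $\geq 1$. Via the duality, the Thurston transition matrix of $(\Gamma_n,f_n)$ dominates $D^{-1}M$ coordinatewise for all sufficiently large $n$ (Proposition~\ref{prop:srg1}), hence also has spectral radius $\geq 1$. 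This produces a Thurston obstruction for the rational map $f_n$, the desired contradiction.

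The main obstacle I expect is the first step: building the limiting tree map $\mathbf f\colon\mathcal T\to\mathcal T$ and passing the dynamics to the limit across multiple simultaneously diverging scales of modulus requires a careful hierarchical rescaling procedure. A secondary technical point is the calibration in Step~2 that makes the Thurston matrix genuinely dominate $D^{-1}M$: here the uniform arc bound from Theorem~\ref{thm:utd} is crucial, as it prevents any wide loop in $\Gamma_n$ from degenerating, in the limit, into a wide arc that would sit outside the tree combinatorics and break the comparison.
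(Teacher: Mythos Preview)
Your proposal is correct and follows essentially the same route as the paper: contradiction via a degenerating sequence, limiting tree of spheres (Theorem~\ref{thm:gc2}), the eigenvector relation $M\vec v = D\vec v$, duality with a multicurve (Proposition~\ref{prop:dss}), and a Thurston-obstruction contradiction (Proposition~\ref{prop:srg1} versus Proposition~\ref{prop:mtc}). Two minor implementation differences: the paper builds the rescalings from the non-repelling periodic points inside each $\widehat Z_{i,j},\widehat D_{i,j}$ and assembles the tree via the spine of the hyperbolic convex hull in $\mathbb H^3$ rather than directly from thin annuli, and the vector $\vec v$ with $M\vec v=D\vec v$ is obtained as a limit of normalized edge-length vectors (Proposition~\ref{prop:mdto}) rather than via Perron--Frobenius.
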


\subsection{Limiting maps on trees}
Recall that the marked hyperbolic component $\mathcal{H}$ are parameterized by the $2d-2$ multipliers $\rho_1, ..., \rho_{2d-2}$:
$$
\mathcal{H} \cong \D_1 \times ... \times \D_{2d-2}.
$$
Fixing $a_1, ..., a_k \in \D$ and a constant $N$, and consider the slice
\begin{align*}
\mathfrak{A} := \{&[f] \in \partial \mathcal{H}:  [f] \text{ is an eventually-golden-mean map, with }\\ 
&\rho_i = a_i , i=1,..., k, \vert \rho_{i} \vert = 1, i = k+1,..., 2d-2, N_{Siegel}([f]) \leq N\}.
\end{align*}
To prove Theorem \ref{thm:uthind}, it suffices to show that there exists a constant $\mathbf{K}_{loop}$ with $\mathcal{W}_{loop}(\widehat X_f) \leq \mathbf{K}_{loop}$ for any map $[f] \in \mathfrak{A}$.

The proof is by contradiction.
We first show that, after passing to a subsequence, any sequence $[f_n] \in \mathfrak{A}$ converges to a limiting map on a tree of Riemann spheres.
If there is no such constant $\mathbf{K}_{loop}$, then the limiting tree is non-trivial.
The dynamics on the tree is recorded by a Markov matrix $M$ and a degree matrix $D$.
We show that there exists a non-negative vector $\vec{v} \neq \vec{0}$ with $M\vec v = D \vec v$.
As matrices with non-negative entries, we show that $D^{-1}M$ is no bigger than the Thurston matrix for $f_n$ for sufficiently large $n$.
So the spectral radius of the Thurston's matrix is greater or equal to $1$, giving a contradiction.

Following the notations in \cite{Luo21b}, we define
\begin{defn}\label{defn:trs}
A {\em tree of Riemann spheres} $(\RT, \widehat\C^\RV)$ consists of a finite tree $\RT$ with vertex set $\RV$, a disjoint union of Riemann spheres $\widehat \C^\RV := \bigcup_{a\in \RV}\widehat \C_{a}$, together with markings $\xi_a: T_a\RT \xhookrightarrow{} \widehat\C_a$ for $a\in \RV$.

The image $\Xi_a :=\xi_a(T_a\RT)$ is called the {\em singular set} at $a$, and $\Xi = \bigcup_{a\in \RV}\Xi_a$.

A {\em rational map} $(F, R)$ on $(\RT, \widehat\C^\RV)$ is a map 
$$
F: (\RT, \RV) \longrightarrow (\RT, \RV) \text{ that is injective on edges},
$$ 
and a union of maps $R:= \bigcup_{a\in \RV} R_a$ so that
\begin{itemize}
\item $R_a: \widehat\C_a \longrightarrow \widehat \C_{F(a)}$ is a rational map;
\item $R_a \circ \xi_a = \xi_{F(a)} \circ DF_a$.
\end{itemize}
It is said to have degree $d$ if $R$ has $2d-2$ critical points in $\widehat\C^\RV-\Xi$.

A sequence $f_n$ of degree $d$ rational maps is said to {\em converge} to $(F, R)$ on $(\RT, \widehat\C^\RV)$ if there exist rescalings $A_{a,n} \in \PSL_2(\C)$ for $a \in \RV$ such that
\begin{itemize}
	\item $A_{F(a),n}^{-1} \circ f_n \circ A_{a,n}(z) \to R_a(z)$ compactly on $\widehat \C_a- \Xi_a$;
	\item $A_{b,n}^{-1} \circ A_{a,n}(z)$ converges to the constant map $\xi_a(v)$, where $v\in T_a\RT$ is the tangent vector in the direction of $b$.
\end{itemize}
\end{defn}

\begin{theorem}\label{thm:gc}
Let $[f_n] \in \mathfrak{A}$.
Then after passing to a subsequence $[f_n]$ converges to a degree $d$ rational map $(F, R)$ on $(\RT, \widehat\C^\RV)$.

Moreover, $[f_n]$ converges in $\M_{d, \fm}$ if and only if $\RT$ is trivial, i.e., $\RT$ consists of a single vertex.
\end{theorem}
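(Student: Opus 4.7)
The plan is to extract a subsequential limit via a tree-of-spheres construction, following the framework of \cite{Luo21b} (cf.~\cite{Kiwi15, Luo19a, Luo19b, FG24}), with Theorem~\ref{thm:utd} providing the essential \emph{a priori} bounds. The key input is that on $\mathfrak{A}$, the pseudo-Siegel disks $\widehat{Z}_{i,j,n}$ and valuable-attracting domains $\widehat{D}_{i,j,n}$ are $\mathbf{K}$-quasiconformal for all $[f_n] \in \mathfrak{A}$, and the arc degeneration of $\widehat{X}_{f_n}$ is at most $\mathbf{K}$. In the Deligne--Mumford-type compactification of planar bordered Riemann surfaces, the sequence $\widehat{X}_{f_n}$ has a subsequence converging to a noded limit: only loops can pinch (arcs cannot, by the bound on $\mathcal{W}_{arc}$), each corresponding to a collar annulus whose modulus diverges. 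Since $\widehat{X}_{f_n}$ sits in $\widehat{\C}$, the dual graph $\RT$ of the noded limit is a tree with vertices $a \in \RV$ indexing the pieces and edges indexing the pinching loops.

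For each vertex $a$ I would choose M\"obius rescalings $A_{a,n} \in \PSL_2(\C)$ so that the corresponding piece of $\widehat{X}_{f_n}$ converges to a domain in a Riemann sphere $\widehat{\C}_a$; the collapsing pieces give the singular set $\Xi_a$ and the marking $\xi_a: T_a \RT \hookrightarrow \widehat{\C}_a$. Next, I would build the dynamics $(F,R)$ on this tree. The uniform arc-degeneration bound is critical here: a non-peripheral arc crossing a pinching collar would contribute unbounded width, contradicting Theorem~\ref{thm:utd}. Consequently, $f_n$ almost respects the decomposition, and passing to a further subsequence produces a well-defined simplicial map $F: (\RT, \RV) \to (\RT, \RV)$, injective on edges. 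The rescaled iterates $A_{F(a),n}^{-1} \circ f_n \circ A_{a,n}$ converge, after a diagonal subsequence, to a rational map $R_a: \widehat{\C}_a \to \widehat{\C}_{F(a)}$ satisfying the equivariance $R_a \circ \xi_a = \xi_{F(a)} \circ DF_a$. A preservation-of-critical-points count then yields $\sum_{a \in \RV}(\deg R_a - 1) = 2d-2$, so $(F,R)$ has degree $d$.

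For the moreover statement: if $\RT$ is trivial then by definition a single Möbius rescaling already produces a genuine limit, so $[f_n]$ converges in $\M_{d, \fm}$. Conversely, if $[f_n]$ converges in $\M_{d, \fm}$, then, after applying the limiting M\"obius normalization, no loops of $\widehat{X}_{f_n}$ can pinch, so the dual tree has a single vertex.

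The main obstacle is verifying the second step: that the simplicial map $F$ is well-defined and that each limit $R_a$ is a \emph{non-constant} rational map on all of $\widehat{\C}_a$. Non-constancy requires a careful selection of rescalings $A_{a,n}$ so that each vertex carries nontrivial dynamical content, and well-definedness of $F$ on edges depends on the bounded-arc-degeneration argument to prevent a pinching loop from being mapped across a different pinching loop. These are essentially the same issues that arise in the constructions of \cite{Luo19a, Luo21b}, and the uniform bounds of Theorem~\ref{thm:utd} supply exactly what is needed to adapt those arguments to our setting.
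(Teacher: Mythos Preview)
Your broad framework is correct and closely parallels the paper's, though you package it differently. You frame the construction as a Deligne--Mumford compactification of $\widehat{X}_{f_n}$; the paper instead follows \cite{Luo21b} and builds the tree $\RT$ as the spine of the convex hull in $\Hyp^3$ of rescaling basepoints $A_{U,n}({\bf 0})$, one for each $U\in\mathcal{V}$ (the collection of periodic pseudo-Siegel disks and valuable-attracting domains). The rescalings are anchored concretely: $A_{U,n}$ sends $0$ to the non-repelling periodic point in $U(f_n)$, $1$ to a point of $\partial U(f_n)$, and $\infty$ outside $\overline{U(f_n)}$. Non-constancy of each $R_a$ (your acknowledged obstacle) is then dispatched by combining the uniform $\mathbf{K}$-quasiconformality of the disks with McMullen's Carath\'eodory convergence theorem \cite[Theorem~5.6]{McM94}: the rescaled disk $A_{U,n}^{-1}(U(f_n))$ converges to a genuine quasidisk, on which the rescaled $f_n$ cannot degenerate to a constant. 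Injectivity of $F$ on edges comes from direct modulus estimates on the collar annuli (Proposition~\ref{prop: moduluse} and Corollary~\ref{cor:inje}), not from the arc-degeneration bound as you propose. In fact, the arc-degeneration bound from Theorem~\ref{thm:utd} is not used in the proof of Theorem~\ref{thm:gc} at all; it enters only later, in Proposition~\ref{prop:dss}, to show that the singular set $\Xi_a$ is disjoint from the closures of the non-trivial disk limits, which is what ultimately allows the Thurston-obstruction argument. Your Deligne--Mumford picture is morally dual to all of this, but the paper's explicit anchoring of rescalings at marked periodic points, together with the appeal to \cite[Theorem~5.6]{McM94}, gives a more direct route past the non-constancy obstacle than what you sketch, and you should correct the attribution of which bound does which job.
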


We remark that a similar result is proved for {\em quasi-post critically finite} degenerations of arbitrary rational maps in \cite{Luo21b}.
For quasi-post critically finite degeneration, the orbit of the critical points is controlled uniformly throughout the sequence.
In our setting, the critical orbits are controlled in those valuable-attracting domains, as the multipliers stay constant, and Theorem \ref{thm:utd} gives a uniform bound for pseudo-Siegel disks.
More precisely, we use these uniform bounds crucially in two places.
\begin{enumerate}
    \item We use the fact that valuable-attracting domains and pseudo-Siegel disks are uniform quasi disks to show that their rescaling limits converge (Definition \ref{defn:res} and Lemma \ref{lem:cqd}).
    \item We use the uniform bound on arc degeneration to control the holes for the rescaling limit (Proposition \ref{prop:dss}), which allow us to construct Thurston obstructions for large $n$ (Proposition \ref{prop:srg1}).
\end{enumerate}
With these modifications, the proof is similar to the quasi-post critically finite case as in \cite{Luo21b}.

We also remark that the same proof of Theorem \ref{thm:gc} also gives the following
\begin{theorem}\label{thm:gc2}
    Let $[f_n] \in \mathcal{H}$ be a seuqnece of eventually-golden-mean maps with uniformly bounded $\mathcal{W}_{arc}(\widehat X_f)$.
    Then after passing to a subsequence $[f_n]$ converges to a degree $d$ rational map $(F, R)$ on $(\RT, \widehat\C^\RV)$.

    Moreover, $[f_n]$ converges in $\M_{d, \fm}$ if and only if $\RT$ is trivial, i.e., $\RT$ consists of a single vertex.
\end{theorem}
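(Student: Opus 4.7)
The plan is to follow the proof of Theorem \ref{thm:gc} essentially verbatim, with the uniform bound $\mathcal{W}_{arc}(\widehat X_{f_n})\le K$ supplied directly as a hypothesis rather than derived from a pulled-off constant bound via Theorem \ref{thm:utd}. The sequence should be understood as lying in $\partial_\egm \mathcal{H}$, since eventually-golden-mean maps are defined on the boundary, and we may assume the attracting multipliers are controlled (otherwise one of the valuable-attracting annuli degenerates trivially).

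First, I would erect the uniform geometric scaffolding. For each $n$, apply Theorem \ref{thm:cps} with the hypothesized arc bound to produce pseudo-Siegel disks $\widehat Z_{i,j,n}$ that are $M(K)$-quasiconformal, and use Lemma \ref{lem:kad} to produce valuable-attracting domains $\widehat D_{i,j,n}$ whose complementary annuli have moduli bounded below in terms of the attracting multipliers. Together these give a family of pairwise disjoint uniform quasi-disks carrying all non-repelling dynamics of $f_n$.

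Next, I would carry out the rescaling construction exactly as in the quasi-post-critically-finite framework of \cite{Luo21b} (see also Theorem \ref{thm:gc}). Pass to a subsequence so that the Hausdorff limits of all $\widehat Z_{i,j,n}$ and $\widehat D_{i,j,n}$ stabilize on the ambient sphere. Whenever a cluster of these disks collapses to a single point, introduce a M\"obius rescaling $A_{a,n}\in\PSL_2(\C)$ that resolves the cluster into separated quasi-disks on a new Riemann sphere $\widehat\C_a$; by uniform quasiconformality, the rescaled maps $A_{F(a),n}^{-1}\circ f_n\circ A_{a,n}$ subconverge to a rational map $R_a$ on $\widehat\C_a\setminus\Xi_a$. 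Iterating the bubble-blowing at each degenerating scale produces a tree $\RT$, an induced dynamical map $F\colon(\RT,\RV)\to(\RT,\RV)$, and markings $\xi_a\colon T_a\RT\hookrightarrow\widehat\C_a$ read off from the directions in which neighboring pieces escape. Finiteness of $\RT$ follows from the uniform arc bound (which caps the number of independent annular degenerations in $\widehat X_{f_n}$) combined with the fixed number of non-repelling cycles. A degree count verifies $(F,R)$ is of degree $d$: the $2d-2$ critical points of $f_n$ all lie in valuable-attracting domains or on Siegel boundaries and therefore appear as critical points of some $R_a$ outside $\Xi_a$. For the ``moreover'' part, if $\RT$ is a single vertex then no rescaling is required and $[f_n]$ converges in $\M_{d,\fm}$, while a non-trivial edge corresponds to a cluster of post-critical disks collapsing, which by transversality (Proposition \ref{prop:ts}) forces divergence in $\M_{d,\fm}$.

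The main obstacle is the control of the singular sets $\Xi_a$ and the verification that the limiting map matches the combinatorics of $f_n$ for large $n$; this is precisely the step where the arc bound is used in \cite{Luo21b} and in the proof of Theorem \ref{thm:gc} (compare Proposition \ref{prop:dss}), and the argument carries over unchanged once the bound $\mathcal{W}_{arc}(\widehat X_{f_n})\le K$ is in hand. The only genuine simplification relative to Theorem \ref{thm:gc} is that the pulled-off constant, which entered \ref{thm:gc} solely through Theorem \ref{thm:utd} to secure the arc bound, no longer plays a role.
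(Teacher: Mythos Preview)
Your proposal is correct and matches the paper's approach exactly: the paper states that the same proof as Theorem \ref{thm:gc} applies, and you correctly identify that the only role of the pulled-off constant there was (via Theorem \ref{thm:utd}) to secure the arc bound and the uniform quasiconformality of the pseudo-Siegel disks, both of which are now available directly. Two small corrections: finiteness of $\RT$ comes simply from finiteness of $\mathcal V$ (there are only finitely many periodic Fatou components, hence finitely many rescaling classes and branch points in the spine), not from the arc bound, which enters later in Proposition \ref{prop:dss}; and the ``moreover'' clause does not require transversality---the existence of two inequivalent rescalings is by itself incompatible with convergence of $[f_n]$ in $\M_{d,\fm}$, as in the last paragraph of the proof of Theorem \ref{thm:gc}.
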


Since maps in the slice $\mathfrak{A}$ are marked, we use $\mathcal{V}$ to denote the collection of valuable-attracting domains and open pseudo-Siegel disks.

More precisely, this means that if $U \in \mathcal{V}$ and $[f] \in \mathfrak{A}$, then $U(f)$ is either a valuable-attracting component or the interior of a pseudo-Siegel disk for $f$.
Note that we have an induced dynamics
$$
f_*: \mathcal{V} \longrightarrow \mathcal{V}.
$$

\subsection*{Construction of the rescaling}
Let us fix a sequence $[f_n] \in \mathfrak{A}$.
We define the rescaling for $U\in \mathcal{V}$ as follows.
\begin{defn}\label{defn:res}
Let $U\in \mathcal{V}$.
Let $\alpha_n \in U(f_n)$ be the corresponding non-repelling periodic point.
A sequence $A_{U, n} \in \PSL_2(\C)$ is called a {\em rescaling} for $U$ if 
\begin{itemize}
\item $A_{U, n}(0) = \alpha_n \in U(f_n)$;
\item $A_{U, n}(1) \in \partial U(f_n)$;
\item $A_{U, n}(\infty) \in \widehat \C - \overline{U(f_n)}$.
\end{itemize}
\end{defn}

\begin{lem}\label{lem:cqd}
    After passing to a subsequence, $A_{U,n}^{-1}(U(f_n))$ converges in Hausdorff topology to some quasiconformal disk.
\end{lem}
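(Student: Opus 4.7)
The plan is to control $A_{U,n}^{-1}\big(\overline{U(f_n)}\big)$ by extracting a limit from a compact normalized family, treating the two types of domain in $\mathcal V$ separately. When $U$ is a pseudo-Siegel disk, Theorem~\ref{thm:utd}(1) provides $\overline{U(f_n)}$ as a $K$-quasiconformal closed disk with $K$ uniform in $n$; M\"obius invariance transfers this to $V_n := A_{U,n}^{-1}\big(\overline{U(f_n)}\big)$, which by Definition~\ref{defn:res} carries the normalized data $0 \in \Int V_n$, $1 \in \partial V_n$, $\infty \in \widehat\C \setminus V_n$. I would represent each $V_n$ as $H_n(\overline{\D})$ for a $K'$-quasiconformal self-homeomorphism $H_n \colon \widehat\C \to \widehat\C$ fixing the triple $(0, 1, \infty)$, with $K' = K'(K)$ uniform in $n$, using pre- and post-composition with standard quasiconformal normalizing homeomorphisms of the pairs $(\widehat\C, \partial\D)$ and $(\widehat\C, \partial V_n)$. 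The classical equicontinuity of $K'$-quasiconformal self-maps of $\widehat\C$ with three-point normalization then yields, along a subsequence, $H_n \to H$ uniformly, with $H$ again $K'$-quasiconformal. Hence $V_n = H_n(\overline{\D}) \to H(\overline{\D})$ in Hausdorff topology and the limit is a $K'$-quasiconformal closed disk.

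When $U$ is a valuable-attracting domain, within the slice $\mathfrak A$ the attracting multipliers are held equal to $a_i$, so $r_i := \max\{1/2, |a_i|\}$ is constant, and the B\"ottcher-type conjugacy $\Psi_n$ to the Blaschke model gives $\widehat D_{i,j}(f_n) = \Psi_n^{-1}\big(\overline{B(0, r_i)}\big)$ with $\Psi_n^{-1}$ univalent on the slightly larger disk $B\big(0, (1 + r_i)/2\big) \Subset \D$. Koebe distortion then gives uniform conformal control of $\phi_n := A_{U,n}^{-1} \circ \Psi_n^{-1}$ on $\overline{B(0, r_i)}$: Definition~\ref{defn:res} forces $\phi_n(0) = 0$ and $\phi_n(\zeta_n) = 1$ for some $\zeta_n \in \partial B(0, r_i)$, so Koebe's one-quarter theorem and distortion bounds yield uniform upper and lower bounds on $|\phi_n'(0)|$, making $\{\phi_n\}$ a normal family. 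Any subsequential uniform limit $\phi$ is a nondegenerate conformal embedding, so $V_n = \phi_n\big(\overline{B(0, r_i)}\big) \to \phi\big(\overline{B(0, r_i)}\big)$ in Hausdorff topology, and the limit is the conformal image of a closed round disk, hence a quasiconformal disk.

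The main technical step is the uniform three-point normalization in the pseudo-Siegel case; the underlying content is the standard precompactness of $K$-quasidisks in $\widehat\C$ with fixed $(0,1,\infty)$-normalization under Hausdorff convergence, which in turn rests on the axioms of quasidisks together with compactness of normalized qc self-maps of $\widehat\C$. The valuable-attracting case reduces to classical Koebe-type normal family compactness and requires no new ingredients beyond the definition of $\widehat D_{i,j}$ from \S\ref{subsec:vad} combined with the fact that the slice $\mathfrak A$ fixes $r_i$.
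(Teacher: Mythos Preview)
Your argument for the pseudo-Siegel case is essentially the paper's proof: both produce a uniformly quasiconformal homeomorphism $H_n\colon(\widehat\C,\overline{\D})\to(\widehat\C,V_n)$ fixing $0,1,\infty$ and invoke compactness of normalized qc self-maps of $\widehat\C$.

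For the valuable-attracting case your direct Koebe argument on $\phi_n=A_{U,n}^{-1}\circ\Psi_n^{-1}$ has a small gap. Definition~\ref{defn:res} only requires $A_{U,n}(\infty)\in\widehat\C\setminus\overline{U(f_n)}=\widehat\C\setminus\widehat D_{i,j}(f_n)$, so $A_{U,n}(\infty)$ may lie in the annulus $D_{i,j}\setminus\widehat D_{i,j}$. In that case $\phi_n$ has a pole in $\D\setminus\overline{B(0,r_i)}$, possibly inside your Koebe domain $B\big(0,(1+r_i)/2\big)$, and the standard one-quarter and distortion theorems for \emph{holomorphic} univalent functions do not directly yield the lower bound on $|\phi_n'(0)|$ you need for normality.

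The cleanest fix---and what the paper does, treating both cases uniformly---is to apply Koebe to $\Psi_n^{-1}$ rather than $\phi_n$. Since $\Psi_n^{-1}\colon\D\to D_{i,j}$ is univalent and $\overline{B(0,r_i)}\Subset\D$ with $r_i$ fixed on the slice $\mathfrak A$, compactness of the Schlicht class on $\overline{B(0,r_i)}$ shows that $\widehat D_{i,j}(f_n)=\Psi_n^{-1}\big(\overline{B(0,r_i)}\big)$ is itself a $K(r_i)$-quasidisk. M\"obius invariance then transfers this to $V_n$, and the valuable-attracting case reduces to exactly the normalized qc-compactness argument you already gave for pseudo-Siegel disks; no separate treatment is needed.
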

\begin{proof}
    Since $U(f_n)$ are uniformly quasiconformal disks by Theorem \ref{thm:utd}, there is a sequence $\Psi_n: \widehat\C \longrightarrow \widehat\C$ of uniformly quasiconformal maps so that $\Psi_n(\D) = U(f_n)$, normalized so that $\Psi_n(0) = A_{U, n}(0), \Psi_n(1) = A_{U,n}(1)$ and $\Psi_n(\infty) = A_{U,n}(\infty)$.
    Then $A_{U,n}^{-1} \circ \Psi_n$ is uniformly quasiconformal and fixes $0, 1, \infty$.
    Therefore, after passing to a subsequence, $A_{U,n}^{-1} \circ \Psi_n$ converges to a quasiconformal map $\Psi$.
    Thus, $A_{U,n}^{-1}(U(f_n))$ converges to the quasiconformal disk $\Phi(\D)$.
\end{proof}

The following lemma follows from the same argument as in \cite[Lemma 4.3]{Luo21b}.
\begin{lem}
If $A_{U, n}, B_{U, n}$ are two rescalings for $U \in \mathcal{V}$, then the sequence $ B_{U, n}^{-1} \circ A_{U, n}$ is bounded.

Equivalently, if we identify the hyperbolic 3-space $\Hyp^3$ as the unit ball and $\PSL_2(\C) \cong \Isom(\Hyp^3)$, then
$$
d_{\Hyp^3}(A_{U, n}({\bf 0}), B_{U, n}({\bf 0})) \text{ is bounded,}
$$
where ${\bf 0} \in \Hyp^3$ is the center of the unit ball.
\end{lem}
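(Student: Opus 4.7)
My plan is to establish the equivalent claim that the Möbius transformations $M_n := B_{U,n}^{-1} \circ A_{U,n}$ form a relatively compact family in $\PSL_2(\C)$; this implies the hyperbolic distance bound because the stabilizer of $\mathbf{0}\in\Hyp^3$ under the isometric action is the compact group $\SU(2)/\{\pm I\}$, so that $\PSL_2(\C)/\SU(2) \cong \Hyp^3$ with basepoint $\mathbf{0}$. Set $\Omega_n^A:=A_{U,n}^{-1}(U(f_n))$ and $\Omega_n^B:=B_{U,n}^{-1}(U(f_n))$, so that $M_n(0)=0$, $M_n(\Omega_n^A)=\Omega_n^B$, $M_n(1)\in\partial\Omega_n^B$, and $M_n(\infty)\in\widehat\C\setminus\overline{\Omega_n^B}$.

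The first and most substantial step, which I expect to be the main obstacle, is to establish uniform two-sided size bounds
\[
B(0,r)\ \subset\ \Omega_n^A,\ \Omega_n^B\ \subset\ B(0,R),
\]
for constants $0<r=r(K)<R=R(K)<\infty$ depending only on the quasiconformal constant from Theorem~\ref{thm:utd}. Following the proof of Lemma~\ref{lem:cqd}, one chooses uniformly $K$-quasiconformal extensions $\Psi_n^A,\Psi_n^B:\widehat\C\to\widehat\C$ with $\Psi_n^A(\D)=\Psi_n^B(\D)=U(f_n)$, normalized so that $(0,1,\infty)$ is sent to $(A_{U,n}(0),A_{U,n}(1),A_{U,n}(\infty))$ and $(B_{U,n}(0),B_{U,n}(1),B_{U,n}(\infty))$ respectively. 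The compositions $\phi_n^A:=A_{U,n}^{-1}\circ\Psi_n^A$ and $\phi_n^B:=B_{U,n}^{-1}\circ\Psi_n^B$ are then uniformly $K$-quasiconformal self-maps of $\widehat\C$ fixing $0,1,\infty$. By the standard normal family theorem for three-point normalized quasiconformal maps, they are uniformly quasisymmetric; in particular, $\phi_n^A(\partial\D)$ and $\phi_n^B(\partial\D)$ lie in a fixed annulus $\{r\le|w|\le R\}$, and since $\Omega_n^A=\phi_n^A(\D)$ is the bounded component containing $0$, the inclusions follow (likewise for $\Omega_n^B$). The subtlety is arranging the three-point normalization while preserving the dilation bound, which is exactly the content of the construction in Lemma~\ref{lem:cqd}.

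With the size bounds in hand, the remainder is a short Schwarz-type calculation. Since $M_n$ sends the round disk $B(0,r)\subset\Omega_n^A$ into $\Omega_n^B\subset B(0,R)$ while fixing $0$, the Schwarz lemma gives $|M_n'(0)|\le R/r$, and applying the same reasoning to $M_n^{-1}$ yields $|M_n'(0)|\ge r/R$. Parameterizing the Möbius map as $M_n(z)=\alpha z/(\gamma z+1)$ with $\alpha=M_n'(0)$, we therefore have $|\alpha|\asymp 1$. The relation $M_n(1)=\alpha/(\gamma+1)\in\partial\Omega_n^B\subset\{r\le|w|\le R\}$ then forces $|\gamma+1|$, and hence $|\gamma|$, to be uniformly bounded above. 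After the $\SL_2$-normalization $\det=1$, the representing matrices have uniformly bounded entries, so $M_n$ lies in a compact subset of $\PSL_2(\C)$, completing the proof.
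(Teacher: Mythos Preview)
There is a genuine gap in your first step: the uniform size bounds $B(0,r)\subset\Omega_n^A,\Omega_n^B\subset B(0,R)$ do not follow from Definition~\ref{defn:res} together with the uniform quasidisk property. The third rescaling condition, $A_{U,n}(\infty)\in\widehat\C\setminus\overline{U(f_n)}$, places no quantitative constraint on how close $A_{U,n}(\infty)$ may be to $\partial U(f_n)$. Concretely, take $U(f_n)=\D$ and $\alpha_n=0$ for all $n$; then $A_{U,n}=\mathrm{id}$ and the M\"obius map $B_{U,n}$ sending $(0,1,\infty)$ to $(0,1,1+\tfrac{1}{n})$ are both valid rescalings, yet $\Omega_n^B=B_{U,n}^{-1}(\D)$ is the round disk of center $\tfrac{n}{2n+1}$ and radius $\tfrac{n+1}{2n+1}$, so $0$ lies within $\tfrac{1}{2n+1}$ of $\partial\Omega_n^B$ and no uniform inner radius exists. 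In fact $M_n=B_{U,n}^{-1}$ sends $(0,1,1+\tfrac{1}{n})$ to $(0,1,\infty)$ and hence diverges in $\PSL_2(\C)$, so the statement as literally written (quantifying over all rescalings satisfying Definition~\ref{defn:res}) cannot hold. The normalized maps $\Psi_n^A,\Psi_n^B$ you invoke from Lemma~\ref{lem:cqd}, with $\Psi_n(\D)=U(f_n)$ matching all three prescribed values, do exist as quasiconformal maps, but \emph{not} with a dilatation bound uniform in $n$: a cross-ratio of the four points $0,1,\infty,-1$ is sent under $\Psi_n^B$ to a cross-ratio involving $0,1,1+\tfrac{1}{n}$, which degenerates.

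The result in~\cite{Luo21b} that the paper cites presumably carries an implicit separation hypothesis --- e.g.\ that $A_{U,n}(\infty)$ stays a definite spherical distance from $\overline{U(f_n)}$, or is chosen inside some other valuable domain $V(f_n)$. Under any such hypothesis the three image points $A_{U,n}(0),A_{U,n}(1),A_{U,n}(\infty)$ remain in ``general position'' uniformly, the normalized maps $\phi_n^A,\phi_n^B$ genuinely lie in a compact family of quasiconformal maps fixing $0,1,\infty$, and then your Schwarz-lemma computation in the second step goes through cleanly. So the architecture of your argument is sound; what is missing is to identify and impose the separation condition that makes the first step valid.
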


Let us now fix rescaling $A_{U,n}$ for $U \in \mathcal{V}$.
\begin{lem}\label{lem:UC}
Let $U\in \mathcal{V}$.
Let $A_{U, n}, A_{f_*(U), n}$ be rescalings for $U$ and $f(U)$.
Then after passing to a subsequence,
$$
A_{f_*(U), n}^{-1} \circ f_n \circ A_{U,n}
$$ 
converges (away from finitely many points) to a non-constant map.
\end{lem}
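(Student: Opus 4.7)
The plan is to view $g_n := A_{f_*(U),n}^{-1} \circ f_n \circ A_{U,n}$ as a degree-$d$ rational map of $\widehat\C$ whose restriction to the rescaled domain $\Omega_n := A_{U,n}^{-1}(U(f_n))$ is a proper holomorphic map of fixed degree onto $\Omega_n' := A_{f_*(U),n}^{-1}(f_*(U)(f_n))$. Since $A_{U,n}(0) = \alpha_n$ is the non-repelling periodic point in $U(f_n)$ and $f_n(\alpha_n)$ is the corresponding non-repelling periodic point in $f_*(U)(f_n)$, we immediately get $g_n(0)=0$. By Lemma~\ref{lem:cqd}, after passing to a subsequence, both $\Omega_n$ and $\Omega_n'$ converge in the Hausdorff topology to bounded quasi-disks $\Omega, \Omega'$ containing $0$, with $1 \in \partial \Omega \cap \partial \Omega'$, and with $\infty$ at positive distance from $\overline{\Omega} \cup \overline{\Omega'}$; here the uniform quasi-disk bounds of Theorem~\ref{thm:utd} are crucial to prevent degeneration of these limits.

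Next, I would upgrade Hausdorff convergence to Carath\'eodory convergence of the associated Riemann maps $\phi_n: \D \to \Omega_n$ and $\psi_n: \D \to \Omega_n'$ normalized by $\phi_n(0)=0$, $\phi_n'(0)>0$ (similarly for $\psi_n$), so that $\phi_n \to \phi$ and $\psi_n \to \psi$ uniformly on compacts of $\D$. The map $B_n := \psi_n^{-1} \circ g_n|_{\Omega_n} \circ \phi_n: \D \to \D$ is then a Blaschke product of degree $d_U$ fixing $0$, where $d_U$ is the combinatorially determined (hence $n$-independent) degree of $f_n: U(f_n) \to f_*(U)(f_n)$; in the disjoint-type setting $d_U = 1$ for Siegel disks and for non-critical valuable-attracting components, and $d_U = 2$ for the unique critical valuable-attracting component in each cycle. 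Since Blaschke products of fixed degree form a compact family, a further subsequence gives $B_n \to B$, a non-constant Blaschke product of degree $d_U$. Unwinding, $g_n = \psi_n \circ B_n \circ \phi_n^{-1}$ converges uniformly on compact subsets of $\Omega$ to $\psi \circ B \circ \phi^{-1}: \Omega \to \Omega'$, a non-constant proper holomorphic map of degree $d_U$.

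To finish, I will invoke the standard compactness dichotomy for sequences of degree-$d$ rational maps: after a final subsequence, $g_n$ converges uniformly on compact subsets of $\widehat\C \setminus E$, for some finite set $E$, to a rational map $R$ of degree at most $d$. Since $R$ must agree on the open set $\Omega \setminus E$ with the non-constant holomorphic map produced in the previous step, we conclude that $R$ is non-constant, which is exactly the desired statement. The only delicate point in the argument is guaranteeing that the rescaled domains do not collapse to points or slits in the limit; this is precisely what the uniform geometric control of Theorem~\ref{thm:utd} provides, and without it the Blaschke-product compactification would fail.
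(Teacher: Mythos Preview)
Your approach is essentially the same as the paper's: the paper applies the compactness dichotomy for degree-$d$ rational maps, invokes Lemma~\ref{lem:cqd} to get non-degenerate limit disks, and then cites \cite[Theorem~5.6]{McM94} (limits of proper maps between Carath\'eodory-convergent disks are non-constant); your Blaschke-product argument is exactly an explicit proof of that citation. So the overall strategy is correct and matches the paper.

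There is, however, a small gap in your properness claim. The assertion that $g_n|_{\Omega_n}\colon \Omega_n\to\Omega_n'$ is proper \emph{onto} fails at the wrap-around step of each cycle. For valuable-attracting domains with $j=p-1$, forward invariance only gives $f_n(\widehat D_{i,p-1})=f_n^{p}(\widehat D_{i,0})\subsetneq \widehat D_{i,0}$, so $B_n$ is a univalent self-map of $\D$ fixing $0$, not a finite Blaschke product. For pseudo-Siegel disks with $j=p-1$ the situation is worse: pseudo-Siegel disks are only \emph{almost} invariant, so $f_n^{p}(\widehat Z_{i,0})$ need not even be contained in $\widehat Z_{i,0}$, and $\psi_n^{-1}\circ g_n\circ\phi_n$ is not a well-defined self-map of $\D$. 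Your compactness-of-Blaschke-products step therefore does not apply as stated.

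The fix is short. In the wrap-around cases $g_n|_{\Omega_n}$ is still univalent with $g_n(0)=0$, and by Koebe its derivative at $0$ is comparable to $\diam\bigl(f_n(U(f_n))\bigr)/\diam\bigl(f_*(U)(f_n)\bigr)$ after rescaling. For attracting domains this ratio is bounded below by the fixed multiplier $|a_i|$ of the slice $\mathfrak A$. For pseudo-Siegel disks, the almost-invariance in~\S\ref{sss:compart:wZ} (the annuli $A_I$ of modulus $\ge\delta$ control the symmetric difference $f_n^{p}(\widehat Z_{i,0})\triangle\widehat Z_{i,0}$) gives $\diam\bigl(f_n^{p}(\widehat Z_{i,0})\bigr)\asymp\diam(\widehat Z_{i,0})$, hence $|g_n'(0)|\asymp 1$. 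Either way the limit is non-constant. Alternatively, one may simply observe that the paper's citation to McMullen is applied at the same level of precision and is justified by the same almost-invariance.
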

\begin{proof}
Note that $A_{f_*(U), n}^{-1} \circ f_n \circ A_{U,n}$ is a sequence of rational maps, so after passing to a subsequent, it converges (away from finitely many points) to a rational map with degree $\leq d$.
By Lemma \ref{lem:cqd}, after passing to a subsequence, $A_{U, n}^{-1}(U(f_n))$ and $A_{f_*(U), n}^{-1}(f_*U(f_n))$ converge to $U_\infty$ and $(f_*U)_\infty$.
By \cite[Theorem 5.6]{McM94}, $A_{f_*(U), n}^{-1} \circ f_n \circ A_{U,n}$ cannot converge to a constant map on $U_\infty$, and the lemma follows.
\end{proof}

After passing to a subsequence, we may assume for different $U, V \in \mathcal{V}$, $A_{U,n}^{-1}\circ A_{V,n}$ converges to either
\begin{itemize}
\item a M\"obius transformation; or
\item a constant map.
\end{itemize}
This defines an equivalence relation on $\mathcal{V}$: $U\sim V$ if and only if $A_U^{-1}\circ A_V$ converges to a M\"obius transformation.
It follows from the same proof of \cite[Lemma 4.7]{Luo21b} that if $U \sim V$, then $f_*(U) \sim f_*(V)$.

Let $\Pi:= \mathcal{V}/\sim$ be the set of equivalence classes.
By the previous remark, we have an induced map
$$
F: \Pi \longrightarrow \Pi.
$$

\begin{defn}\label{defn:rld}
For each equivalence class $a\in \Pi$, we choose a representative $U \in a$, and define the {\em rescaling} at $a$ by
$$
A_{a,n}:= A_{U,n} \in \PSL_2(\C).
$$
\end{defn}

\subsection*{Construction of the tree of Riemann spheres $(\RT, \widehat\C^\RV)$}
Recall that we identify the hyperbolic 3-space $\Hyp^3$ as the unit ball in $\R^3$ and $\widehat\C$ as the conformal boundary of $\Hyp^3$.
Denote ${\bf 0} \in \Hyp^3$ as the center of the unit ball.
We denote $x_{a,n} = A_{a, n}({\bf 0}) \in \Hyp^3$ and $\Pi_n = \{x_{a, n} \in \Hyp^3: a\in \Pi\}$.
Note that by our construction, $A_{a,n}^{-1} \circ A_{b,n} \to \infty$ in $\PSL_2(\C)$, so
$$
d_{\Hyp^3}(x_{a,n}, x_{b,n}) \to \infty \text{ if } a\neq b.
$$

Thus, the hyperbolic polyhedra $\chull( \Pi_n)$ is degenerating.
One can construct a sequence of trees $\RT_n$ as the spine for $\chull( \Pi_n)$ capturing the degenerations of the polyhedra.
We summarize some properties for $\RT_n$ and refer the readers to \cite[\S 3 and \S 4]{Luo21b} for more details.
\begin{itemize}
    \item The vertex set $\RV_n$ for $\RT_n$ is a finite set consisting of $\Pi_n$ and branched points of $\RT_n$;
    \item Each edge of $\RT_n$ is a hyperbolic geodesic segment whose length goes to $\infty$ as $n\to\infty$;
    \item There exists a uniform lower bound on the angle between two adjacent edges of $\RT_n$;
    \item The finite tree $\RT_n\subseteq \chull( \Pi_n)$ and any point $x\in \chull( \Pi_n)$ is within uniform bounded distance from $\RT_n$.
\end{itemize}
Since there are a bounded number of endpoints for $\RT_n$, after passing to a subsequence, we assume that $\RT_n$ are isomorphic as finite trees.
Denote this isomorphic class of finite trees as $(\RT, \RV)$, and we have a marking for each $n$ 
$$
\Psi_n: (\RT, \RV) \longrightarrow (\RT_n, \RV_n).
$$

We remark that $\Pi \subseteq \RV$, and any point $a \in \RV - \Pi$ is a branch point.
We extend the definition of rescalings for $\RV - \Pi$.
Let $a\in \RV-\Pi$, a sequence $A_{a,n} \in \PSL_2(\C) \cong \Isom(\Hyp^3)$ is defined to be a {\em resacling} at $a$ if 
$$
A_{a,n}({\bf 0}) = \Psi_n(a).
$$
Note that different choices of rescalings at $a$ are differed by pre-composing with a rotation that fixes ${\bf 0}$, which form a compact group.

Denote $\overline{\B} = \Hyp^3 \cup \widehat \C$, and $\overline{\B}_a = \Hyp^3_a \cup \widehat \C_a$ for $a\in \RV$.
We define a sequence $z_n \in \overline{\B}$ converges to $z\in \overline{\B}_a$ {\em in $a$-coordinate} or {\em with respect to the rescaling at $a$}, denoted by $z_n \to_a z$ or $z = \lim_a z_n$ if
$$
\lim_{n\to\infty} A_{a,n}^{-1}(z_n) = z.
$$

By construction, $A_{a,n}^{-1} \circ A_{b,n}$ converges to a constant map $x_b$ for $a\neq b \in \RV$.
Thus, we can associate the point $x_b \in \widehat \C_a$ to $b$.
It is interpreted that the Riemann sphere $\widehat \C_b$ converges to $x_b$ in the rescaling coordinate $\widehat \C_a$.
We denote 
$$
\Xi_a:= \bigcup_{b\neq a} x_b \subseteq \widehat\C_a
$$
as the {\em singular set} at $a$ and $\Xi := \bigcup_{a\in \RV} \Xi_a \subseteq \widehat \C^\RV$ as the {\em singular set}.

It follows from the construction that $\Psi_n(b) \to_a x_b \in \widehat\C_a$ if $a \neq b \in \RV$.
Since the angle 
$\angle \Psi_n(a) \Psi_n(b) \Psi_n(c)$ 
is uniformly bounded below from $0$ for any distinct triple $a, b, c\in \RV$,
the singular set $\Xi_a$ is in correspondence with the tangent space $T_a \RT$ at $a$.
We denote this correspondence by
$$
\xi_a: T_a\RT \longrightarrow \Xi_a.
$$

\subsection*{Construction of the rescaling rational maps}
The following lemma allows us to construct rescaling rational maps.
\begin{lem}\label{lem: extmap}
Let $a\in \RV$, after passing to a subsequence, there exists a unique $b\in \RV$ so that
$$
A_{b,n}^{-1} \circ f_n \circ A_{a,n}
$$
converges to a rational map $R_a = R_{a\to b}$ of degree at least $1$.

Moreover, the holes of $R_a$ are contained in $\Xi_a$.
\end{lem}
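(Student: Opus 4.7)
The plan is to follow the template of the analogous argument in \cite[\S 4]{Luo21b}, splitting into two cases according to whether $a\in \Pi$ or $a\in\RV\setminus\Pi$, and using Theorem~\ref{thm:utd} as a substitute for the quasi-post-critically-finite hypothesis employed there.

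If $a\in \Pi$, I would pick a representative $U\in a\subseteq \mathcal V$, set $b\in\Pi$ to be the equivalence class of $f_*(U)$, and apply Lemma~\ref{lem:UC} directly: after passing to a subsequence, $A_{b,n}^{-1}\circ f_n\circ A_{a,n}$ converges to a non-constant rational map $R_a$. If $a\in\RV\setminus\Pi$ is a branch point, I would first normalize $f_n\circ A_{a,n}$ by post-composing with $B_n\in\PSL_2(\C)$ chosen so that the composition sends a fixed triple of points to another fixed triple. This normalized sequence has degree at most $d$, so after passing to a subsequence it converges (away from at most $2d-2$ holes) to a non-constant rational map. It then remains to exhibit $b\in\RV$ with $B_n = A_{b,n}C_n$ for some bounded sequence $C_n\in\PSL_2(\C)$. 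For this, I would show that the orbit $B_n^{-1}(\mathbf 0)\in\Hyp^3$ remains at bounded hyperbolic distance from $\chull(\Pi_n)$, using the uniform quasi-conformality of valuable-attracting domains and pseudo-Siegel disks (Theorem~\ref{thm:utd}) to control the geometry of the image regions $f_n(A_{a,n}(\widehat\C))$. The projection of $B_n^{-1}(\mathbf 0)$ onto $\RT_n$ is then within bounded distance of a unique vertex $\Psi_n(b)$, which furnishes the required $b$.

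Uniqueness of $b$ is automatic: if both $b$ and $b'$ yielded non-constant limits then $A_{b,n}^{-1}\circ A_{b',n}$ would remain bounded in $\PSL_2(\C)$, equivalently $d_{\Hyp^3}(\Psi_n(b),\Psi_n(b'))$ would be bounded, contradicting the fact that pairwise distances in $\RV_n$ diverge whenever $b\neq b'$. For the statement about holes, I would verify that if $z\in\widehat\C_a$ is not in $\Xi_a$, i.e.\ $z$ does not correspond to any tangent direction of $\RT$ at $a$, then every sequence $z_n\to_a z$ lies eventually in a neighborhood of $A_{a,n}(\widehat\C)$ that is disjoint from $A_{c,n}(\widehat\C)$ for all $c\neq a$; on such neighborhoods $A_{b,n}^{-1}\circ f_n\circ A_{a,n}$ converges uniformly on compact subsets, so $R_a$ extends continuously to $z$ and $z$ is not a hole. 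Consequently every hole must lie in $\Xi_a$.

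The main obstacle is the branch-point case: producing the receiving vertex $b\in\RV$ requires knowing that $B_n^{-1}(\mathbf 0)$ does not escape $\chull(\Pi_n)$. In \cite{Luo21b}, this confinement is engineered by quasi-post-critically-finite tracking of critical orbits; in our setting the substitute is Theorem~\ref{thm:utd}, since bounded arc degeneration together with uniform quasi-conformality of the non-repelling Fatou pieces pins down the image geometry enough to guarantee the required boundedness. Once this replacement is in place, the remaining bookkeeping---subsequence extraction, and matching tangent directions at $a$ with elements of $\Xi_a$---follows \cite[Lemmas 4.7--4.10]{Luo21b} almost verbatim.
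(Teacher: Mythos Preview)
Your proposal is correct and follows essentially the same approach as the paper. The paper's own proof is extremely terse: for $a\in\Pi$ it invokes Lemma~\ref{lem:UC} exactly as you do, and for branch points $a\in\RV\setminus\Pi$ it simply says ``the proof is the same as \cite[Lemma 4.12]{Luo21b}'', which is precisely the argument you have sketched (normalizing $f_n\circ A_{a,n}$, extracting a convergent subsequence, and locating the image vertex $b$ using the spine geometry of $\chull(\Pi_n)$).
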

\begin{proof}
    If $a\in\Pi \subseteq \RV$, i.e., if $a$ is represented by some $U \in \mathcal{V}$, then Lemma \ref{lem: extmap} follows immediately from Lemma \ref{lem:UC}.

    Otherwise, the proof is the same as \cite[Lemma 4.12]{Luo21b}.
\end{proof}

The above lemma allows us to define $F: \RV \longrightarrow \RV$ by setting $F(a)$ as the unique vertex $b$ in Lemma \ref{lem: extmap} extending the map $F: \Pi\subseteq \RV \longrightarrow \Pi\subseteq \RV$.

We define the map $F: \RT \longrightarrow \RT$ by extending continuously on any edge $[a,b]$ to the arc $[F(a), F(b)]$.

\subsection*{Modulus estimate for dynamics on edges}
Let $E = [a,b]$ be an edge of $\RT$.
In the following, we shall associate it with annuli $\mathcal{A}_{E,n}$ and define the local degree of $E$.

Let $x_b \in \Xi_a \subseteq \widehat\C_a$ and $x_a\in \Xi_b \subseteq \widehat\C_b$ be the points associated to $b$ and $a$ respectively.
Choose a small closed curve $C_a \subseteq \widehat\C_a$ around $x_b$.
We assume $C_a$ bounds no holes nor critical points of $R_a$ other than possibly $x_b$ and $R_a: C_a \longrightarrow R_a(C_a)$ is a covering map of degree $\deg_{\xi_a(v_b)}(R_a)$.
Similarly, we define $C_b \subseteq \widehat\C_b$ around $x_a$.

Since $A_{F(a), n}^{-1} \circ f_n \circ A_{a, n}$ converges uniformly to $R_a$ in a neighborhood of $C_a$, we can find a sequence of closed curves $C_{a,n}$ such that
\begin{itemize}
\item $f_n: C_{a,n} \longrightarrow f_n(C_{a,n})$ is a covering of degree $\deg_{\xi_a(v_b)}(R_a)$;
\item $A_{a,n}^{-1}(C_{a,n})$ converges in Hausdorff topology to $C_a$.
\end{itemize}
Similarly, let $C_{b,n}$ be the corresponding closed curves for $C_b$.
Note that $C_{a, n}$ and $C_{b,n}$ are disjoint for sufficiently large $n$ and bounds an annulus $\mathcal{A}_{E,n}\subseteq \widehat\C$.
We call $\mathcal{A}_{E,n}$ a sequence of annuli associated to $E$.

We claim that $\mathcal{A}_{E,n}$ contains no critical points of $f_n$ for sufficiently large $n$.
Indeed, otherwise, after passing to a subsequence, let $c_n \in \mathcal{A}_{E,n}$, and let $c\in \Pi$ corresponds to the sequence $(c_n)$.
Consider the projection 
$$
\proj_{[\Psi_n(a), \Psi_n(b)]}(\Psi_n(c))
$$ 
of $\Psi_n(c) \in \Hyp^3$ onto the geodesics $[\Psi_n(a), \Psi_n(b)]$.
Since we assume $C_a$ bounds no holes nor critical points other than possibly $\xi_a(v_b)$ and similarly for $C_b$, 
$$
d_{\Hyp^3}(\proj_{[\Psi_n(a), \Psi_n(b)]}(\Psi_n(c)), \partial [\Psi_n(a), \Psi_n(b)]) \to \infty.
$$
This contradicts that $[a, b]$ is an edge of $\RT$.

Therefore, $f_n: \mathcal{A}_{E,n} \longrightarrow f_n(\mathcal{A}_{E,n})$ is a covering map.
Thus, $\deg_{x_b}(R_a)= \deg_{x_a}(R_b)$, and we define the local degree at $E$
$$
\delta(E) := \deg_{x_b}(R_a)= \deg_{x_a}(R_b),
$$
as the degree of this covering map.

The proof of following modulus estimate can be found in \cite[Proposition 4.15]{Luo21b}.
\begin{prop}\label{prop: moduluse}
Let $\mathcal{A}_{E,n}$ associated to an edge $E = [a,b]$ of $\RT$.
There exists a constant $K$ such that
the modulus 
$$
|\Mod(\mathcal{A}_{E,n})-\frac{d_{\Hyp^3}(\Psi_n(a), \Psi_n(b))}{2\pi}| \leq K,
$$
and 
$$
|\Mod(f_n(\mathcal{A}_{E,n}))-\frac{d_{\Hyp^3}(\Psi_n(F(a)), \Psi_n(F(b)))}{2\pi}| \leq K.
$$
\end{prop}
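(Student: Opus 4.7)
The plan is to reduce both estimates to a single classical fact from hyperbolic geometry: for an annulus $A\subset \widehat\C$ whose boundary components $C_0, C_1$ are uniform quasi-circles of bounded geometry (in the sense of some uniform quasi-Möbius model), the modulus $\Mod(A)$ agrees, up to a universal additive error, with $\tfrac{1}{2\pi}$ times the hyperbolic distance in $\Hyp^3$ between naturally chosen ``apex'' points $h(C_i)\in\Hyp^3$. This is a standard consequence of the explicit formula $\Mod(\{r_1<|z|<r_2\})=\tfrac{1}{2\pi}\log(r_2/r_1)$ together with the identification of $\log(r_2/r_1)$ with the hyperbolic distance between the corresponding hemispheric apices, extended by the stability of both quantities under quasi-Möbius perturbation.

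First I would fix a Möbius-equivariant apex map $C\mapsto h(C)$ and establish the classical estimate
$$\big|\Mod(A)-\tfrac{1}{2\pi}d_{\Hyp^3}(h(C_0),h(C_1))\big|\le K_0.$$
Then I would verify its hypothesis for $\mathcal{A}_{E,n}$: by construction, $A_{a,n}^{-1}(C_{a,n})$ converges in Hausdorff topology to a fixed small Jordan curve $C_a\subset\widehat\C_a$ around $x_b$, so for $n$ large $A_{a,n}^{-1}(C_{a,n})$ is a uniform quasi-circle whose apex lies at bounded hyperbolic distance from $\mathbf 0$. Applying the isometry $A_{a,n}$ of $\Hyp^3$ yields $d_{\Hyp^3}(h(C_{a,n}),\Psi_n(a))=O(1)$, and a parallel argument gives $d_{\Hyp^3}(h(C_{b,n}),\Psi_n(b))=O(1)$. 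The triangle inequality then gives $d_{\Hyp^3}(h(C_{a,n}),h(C_{b,n}))=d_{\Hyp^3}(\Psi_n(a),\Psi_n(b))+O(1)$, and the first displayed bound follows.

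Next I would run the same argument for the image annulus. The uniform convergence $A_{F(a),n}^{-1}\circ f_n\circ A_{a,n}\to R_a$ on a neighborhood of $C_a$, together with the choice of $C_a$ as a curve along which $R_a$ is an unramified covering of degree $\delta(E)$ onto $R_a(C_a)$, implies that $A_{F(a),n}^{-1}(f_n(C_{a,n}))$ converges in Hausdorff topology to the Jordan curve $R_a(C_a)\subset\widehat\C_{F(a)}$, again of bounded geometry. The symmetric argument at $b$ then shows that $h(f_n(C_{a,n}))$ lies within $O(1)$ of $\Psi_n(F(a))$ and $h(f_n(C_{b,n}))$ within $O(1)$ of $\Psi_n(F(b))$, and the modulus--distance correspondence gives the second estimate.

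The main technical obstacle I anticipate is the classical estimate itself: one must commit to a canonical Möbius-equivariant definition of the apex map, verify its quasi-Möbius stability, and control the additive error $K_0$ uniformly in the quasi-circle constants of $C_0, C_1$. Once that building block is in hand, the rest is a direct translation of the construction of $\RT$ and of the rescalings $A_{a,n}$ into the language of hyperbolic distance, as in the analogous \cite[Proposition 4.15]{Luo21b}.
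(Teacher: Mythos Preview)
Your proposal is correct and follows essentially the same approach as the paper, which simply defers to \cite[Proposition 4.15]{Luo21b} without giving an independent proof; your sketch is precisely the standard argument behind that reference, reducing the estimate to the classical modulus--hyperbolic-distance correspondence for round annuli together with the uniform quasi-circle bounds on $C_{a,n}, C_{b,n}$ coming from their Hausdorff convergence in the rescaled coordinates.
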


The above modulus estimate gives the following corollaries.
\begin{cor}\label{cor:inje}
Let $E = [a,b]$ be an edge of $\RT$. Then
$$
d_{\Hyp^3}(\Psi_n(F(a)), \Psi_n(F(b))) = \delta(E) d_{\Hyp^3}(\Psi_n(a), \Psi_n(b)) + O(1).
$$
\end{cor}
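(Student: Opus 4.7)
The plan is to deduce Corollary \ref{cor:inje} as a direct computation from Proposition \ref{prop: moduluse} together with the behavior of the modulus under coverings. First, I would recall from the construction just preceding Proposition \ref{prop: moduluse} that the annulus $\mathcal{A}_{E,n}$ contains no critical points of $f_n$ for large $n$, so the map
\[
f_n \colon \mathcal{A}_{E,n} \longrightarrow f_n(\mathcal{A}_{E,n})
\]
is an unbranched covering. Its degree, by the definition of $\delta(E)$, equals $\delta(E) = \deg_{x_b}(R_a) = \deg_{x_a}(R_b)$, since the boundary curves $C_{a,n}, C_{b,n}$ of $\mathcal{A}_{E,n}$ are each mapped by $f_n$ as coverings of the corresponding degrees, and these agree.

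Next, I would invoke the standard multiplicativity of extremal moduli under unbranched coverings of annuli: if $A \to A'$ is an unbranched covering of degree $k$ between annuli, then $\Mod(A') = k \cdot \Mod(A)$. Applied to our situation this gives
\[
\Mod(f_n(\mathcal{A}_{E,n})) \;=\; \delta(E)\cdot \Mod(\mathcal{A}_{E,n}).
\]

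Finally, I substitute the two estimates from Proposition \ref{prop: moduluse}. From the first estimate,
\[
\Mod(\mathcal{A}_{E,n}) \;=\; \frac{d_{\Hyp^3}(\Psi_n(a), \Psi_n(b))}{2\pi} + O(1),
\]
and from the second,
\[
\Mod(f_n(\mathcal{A}_{E,n})) \;=\; \frac{d_{\Hyp^3}(\Psi_n(F(a)), \Psi_n(F(b)))}{2\pi} + O(1).
\]
Combining these with the multiplicativity relation and multiplying through by $2\pi$ yields
\[
d_{\Hyp^3}(\Psi_n(F(a)), \Psi_n(F(b))) \;=\; \delta(E)\, d_{\Hyp^3}(\Psi_n(a), \Psi_n(b)) + O(1),
\]
where the implicit constant absorbs $2\pi K$ and the contribution $2\pi\delta(E) K$ (which is bounded since $\delta(E) \le d$). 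This is the desired equality.

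The only step that requires any care is justifying that the covering degree is exactly $\delta(E)$ uniformly in $n$ rather than a priori a possibly larger number arising from critical points inside $\mathcal{A}_{E,n}$; this is precisely the content of the ``no critical points in $\mathcal{A}_{E,n}$'' observation established in the lines preceding Proposition \ref{prop: moduluse}, which in turn relies on the fact that $[a,b]$ is an edge of $\RT$ (so no intermediate vertex $c\in\Pi$ projects into the interior of $[\Psi_n(a),\Psi_n(b)]$ a bounded distance from the endpoints). Once this is in hand, no further work is needed — the corollary is a one-line consequence.
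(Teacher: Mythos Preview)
Your proposal is correct and matches the paper's approach. The paper presents this corollary with no explicit proof, simply writing ``The above modulus estimate gives the following corollaries,'' and your derivation via multiplicativity of modulus under the degree-$\delta(E)$ covering $f_n\colon \mathcal{A}_{E,n}\to f_n(\mathcal{A}_{E,n})$ combined with the two bounds of Proposition~\ref{prop: moduluse} is exactly the intended one-line argument.
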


Corollary \ref{cor:inje} implies that the map $F$ is injective on edges.
Thus, we can define the tangent map $DF_a: T_a \RT \longrightarrow T_{F(a)}\RT$.
Since $A_{b,n}^{-1} \circ f_n \circ A_{a,n}$ converges to a rational maps away from the singular set, we also have the following compatibility property.
\begin{cor}\label{cor:comtm}
Let $a \in \RT$. Then
$$
R_a \circ \xi_a = \xi_{F(a)} \circ DF_a.
$$
\end{cor}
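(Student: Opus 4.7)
The plan is to verify the identity $R_a\circ \xi_a = \xi_{F(a)}\circ DF_a$ pointwise, by fixing a direction $v=v_b\in T_a\RT$ pointing toward an adjacent vertex $b\in\RV$ and showing $R_a(x_b)=x_c$, where $c\in\RV$ is the first intermediate vertex along the path $F(E)\subseteq \RT$ for $E=[a,b]$, so that $DF_a(v_b)=v_c$ by definition of the tangent map. Since every tangent direction is of this form (after passing to the adjacent vertex along it), this will suffice.

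First, I would compute $R_a(x_b)$ via the rescaling limit. Take the curve $C_a\subseteq\widehat\C_a$ used in the construction of the annulus $\mathcal A_{E,n}$: a small loop around $x_b$ on which $R_a\colon C_a\to R_a(C_a)$ is a covering of degree $\delta(E)$. The curves $C_{a,n}\subseteq \widehat\C$ from the construction satisfy $A_{a,n}^{-1}(C_{a,n})\to C_a$ in Hausdorff topology, and since $A_{F(a),n}^{-1}\circ f_n\circ A_{a,n}\to R_a$ uniformly on a neighborhood of $C_a$ (which is disjoint from $\Xi_a$), the image curves satisfy
\[
A_{F(a),n}^{-1}\bigl(f_n(C_{a,n})\bigr)\longrightarrow R_a(C_a),
\]
which is a small loop enclosing the single point $R_a(x_b)\in\widehat\C_{F(a)}$.

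Second, I would identify the enclosed point geometrically using the annulus $f_n(\mathcal A_{E,n})$, of which $f_n(C_{a,n})$ is one boundary component. By Proposition~\ref{prop: moduluse},
\[
\Mod\bigl(f_n(\mathcal A_{E,n})\bigr) = \frac{d_{\Hyp^3}\bigl(\Psi_n(F(a)),\Psi_n(F(b))\bigr)}{2\pi}+O(1),
\]
so the two boundary components of $f_n(\mathcal A_{E,n})$ are separated in $\Hyp^3$ by a distance tending to $\infty$ along the geodesic path $F(E)$. Combined with the uniform lower bound on angles between adjacent edges of $\RT_n$, this forces the disk bounded by $f_n(C_{a,n})$ on the $F(b)$-side to contain $\Psi_n(c),\ldots,\Psi_n(F(b))$ for all large $n$; hence in $F(a)$-coordinates this disk shrinks to the point $x_c=\xi_{F(a)}(v_c)$. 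Combining with the first step, the loop $R_a(C_a)$ encloses $x_c$, so $R_a(x_b)=x_c$, completing the proof.

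The main obstacle is making the geometric statement in the second step rigorous: showing that the ``inner'' boundary $f_n(C_{a,n})$ of $f_n(\mathcal A_{E,n})$ really separates $\Psi_n(F(a))$ from $\Psi_n(c)$ (and not from some off-tree vertex or from the opposite tangent direction at $F(a)$). This is a standard hyperbolic-geometry argument once one has the large modulus bound and the angular lower bound, and is essentially identical to the argument in \cite[\S4]{Luo21b} for quasi-PCF degenerations; the adaptation is automatic since the bounds from Theorem~\ref{thm:utd} make the rescalings $A_{a,n}$ and the annuli $\mathcal A_{E,n}$ behave exactly as in the quasi-PCF setting.
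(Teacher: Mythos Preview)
The paper does not actually prove this corollary; it states it as following directly from the convergence of the rescaled maps, essentially deferring to the construction in \cite{Luo21b}. Your approach via the annuli $\mathcal A_{E,n}$ is the natural way to make this precise and is correct in outline.

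There is, however, an imprecision in your step 2: the disk bounded by $f_n(C_{a,n})$ on the $F(b)$-side does \emph{not} shrink in $F(a)$-coordinates --- its boundary $A_{F(a),n}^{-1}(f_n(C_{a,n}))$ converges to the fixed curve $R_a(C_a)$. The modulus estimate alone cannot tell you which of the two complementary disks of $R_a(C_a)$ is the $F(b)$-side. The missing observation is local: since $A_{F(a),n}^{-1}\circ f_n\circ A_{a,n}$ converges to $R_a$ uniformly on an annular neighborhood of $C_a$ (which avoids $\Xi_a$) and $R_a$ is an orientation-preserving $\delta(E)$-fold covering there, the $\mathcal A_{E,n}$-side of $C_{a,n}$ (the $x_b$-side) is carried by $f_n$ to the $R_a(x_b)$-side of $f_n(C_{a,n})\approx R_a(C_a)$. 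Now the other boundary $f_n(C_{b,n})$, which lies on that side, shrinks to $x_c$ in $F(a)$-coordinates (via $A_{F(b),n}^{-1}(f_n(C_{b,n}))\to R_b(C_b)$ and $A_{F(a),n}^{-1}\circ A_{F(b),n}\to x_c$). Hence $x_c$ lies inside the $R_a(x_b)$-disk bounded by $R_a(C_a)$; letting $C_a$ shrink gives $R_a(x_b)=x_c$. With this correction your argument goes through and in fact supplies more detail than the paper provides.
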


\begin{proof}[Proof of Theorem \ref{thm:gc}]
By our construction, $f_n$ converges to $(R, F)$ and $(R, F)$ is a degree $d$ rational map on $(\RT, \widehat\C^\RV)$.

For the moreover part, we note that if $[f_n]$ converges in $\M_{d, \fm}$, then all rescaling limits $A_{U, n}$ are equivalent for $U \in \mathcal{V}$, so $\RT$ consists of a single vertex.
On the other hand, if all rescaling limits $A_{U, n}$ are equivalent, then $A_{U,n}^{-1} \circ f_n \circ A_{U,n}$ converges to a degree $d$ rational map, so $[f_n]$ converges in $\M_{d, \fm}$.
\end{proof}

\subsection*{Matrix encoding}
Index the set of edges of $\RT$ by
$\{E_1,..., E_k\}$.
We define the following two matrices to encode the dynamics $F: \RT\longrightarrow \RT$.
\begin{itemize}
\item (Markov matrix): $M_{i,j} = \begin{cases} 1 &\mbox{if } E_i \subseteq F(E_j) \\ 
0 & \mbox{otherwise } \end{cases}$
\item (Degree matrix): $D_{i,j} = \begin{cases} \delta(E_i) &\mbox{if } i = j \\ 
0 & \mbox{otherwise } \end{cases}$
\end{itemize}
\begin{prop}\label{prop:mdto}
Let $M$ and $D$ be the Markov matrix and the degree matrix respectively.
If $\RT$ is not trivial, then there exists a non-negative vector $\vec{v} \neq \vec{0}$ so that
$$
M\vec v = D \vec v.
$$
\end{prop}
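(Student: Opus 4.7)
The plan is to exhibit $\vec v$ as the renormalized limit of the edge-length vector of $\RT_n$. Enumerate the edges of $\RT$ as $E_j=[a_j,b_j]$ for $j=1,\dots,k$, and set
\[\ell_{j,n}\ :=\ d_{\Hyp^3}\bigl(\Psi_n(a_j),\Psi_n(b_j)\bigr).\]
The spine construction recalled in \S\ref{sec:uthind} guarantees that every edge of $\RT_n$ has length tending to $\infty$, so $\ell_{j,n}\to\infty$ for each fixed $j$ and $\|\vec\ell_n\|_\infty\to\infty$.

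The key identity is obtained by computing the length of the arc $F(E_j)=[F(a_j),F(b_j)]\subset\RT_n$ in two different ways. On one hand, Corollary~\ref{cor:inje} yields
\[
d_{\Hyp^3}\bigl(\Psi_n(F(a_j)),\Psi_n(F(b_j))\bigr) \;=\; \delta(E_j)\,\ell_{j,n} \;+\; O(1).
\]
On the other hand, since $F$ is injective on edges, the arc $F(E_j)$ is the concatenation of exactly those edges $E_i$ with $M_{i,j}=1$; and the uniform lower bound on the angles between adjacent edges of $\RT_n$ lets us replace the $\Hyp^3$-distance between its endpoints by the tree-length of this concatenation, up to an additive $O(1)$. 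Combining the two expressions,
\[
\sum_i M_{i,j}\,\ell_{i,n} \;=\; \delta(E_j)\,\ell_{j,n} \;+\; O(1),\qquad j=1,\dots,k,
\]
which is precisely the relation $M\vec\ell_n = D\vec\ell_n$ up to a bounded additive error.

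To extract the limit, set $\vec u_n := \vec\ell_n/\|\vec\ell_n\|_\infty \in [0,1]^k$; then $\|\vec u_n\|_\infty = 1$. By compactness of the unit cube we may pass to a subsequence along which $\vec u_n \to \vec v$; then $\vec v\ge\vec 0$ and $\|\vec v\|_\infty = 1$, so in particular $\vec v \ne \vec 0$. Dividing the displayed system through by $\|\vec\ell_n\|_\infty\to\infty$ kills the $O(1)$ remainder in the limit and produces $M\vec v = D\vec v$, as claimed.

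The only nontrivial input is Corollary~\ref{cor:inje}, which rests on Proposition~\ref{prop: moduluse} and is already available. I do not foresee any essential obstacle: the whole argument is a renormalized-limit in the compact simplex of normalized length vectors. The hypothesis that $\RT$ is nontrivial is what makes this meaningful, since it ensures $k\geq 1$ and, together with the divergence of every individual edge length of $\RT_n$, forces $\|\vec\ell_n\|_\infty\to\infty$ so that the $O(1)$ error really does disappear in the normalized limit.
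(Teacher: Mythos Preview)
Your proposal is correct and follows essentially the same approach as the paper: define the edge-length vector, use Corollary~\ref{cor:inje} together with the uniform angle bound to obtain $M\vec\ell_n = D\vec\ell_n + O(1)$, then normalize by the maximum entry and pass to a subsequential limit. The paper's $\rho_n$ is exactly your $\|\vec\ell_n\|_\infty$, and the remaining details match.
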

\begin{proof}
Let $\vec{v}_n =\begin{bmatrix}
l(\Psi_n(E_1))\\
\vdots\\
l(\Psi_n(E_k))
\end{bmatrix}$, where $l(\Psi_n(E_i))$ is the hyperbolic length of the edge $\Psi_n(E_i)$.
Let $\rho_n = \max_{i=1,..., k} l_{\Hyp^3} (\Psi_n(E_i)) \to \infty$
After passing to a subsequence, we assume the limit $\vec{v} = \lim_{n\to\infty} \vec{v}_n/\rho_n$ exists.
Then $\vec{v}$ is non-negative and $\vec{v} \neq \vec{0}$.

If suffices to check $M\vec v = D \vec v$.
If $a, b \in \RV$ are connected by a sequence of edges $E_{i_1} \cup E_{i_2} \cup... E_{i_m}$, 
since the angles between different incident edges at a vertex of $\RT_n$ are uniformly bounded below from $0$, we have
$$
d_{\Hyp^3}(\Psi_n(a), \Psi_n(b)) = \sum_{j=1}^m l_{\Hyp^3}(\Psi_n(E_{i_j})) + O(1).
$$ 
Thus, by Corollary \ref{cor:inje}, if $F(E_i) = E_{i_1} \cup E_{i_2} \cup... E_{i_m}$, then 
$$
\delta(E_i) l_{\Hyp^3} (\Psi_n(E_i)) = \sum_{j=1}^m l_{\Hyp^3}(\Psi_n(E_{i_j})) + O(1).
$$
Dividing both sides by $\rho_n$ and taking limits, we conclude the result.
\end{proof}

\subsection{Thurston's obstruction}
Let $f: \widehat\C \longrightarrow \widehat\C$ be a rational map with post-critical set $P_f$.
Let $P_f \subseteq U$ be a forward invariant set, i.e., $f(U) \subseteq U$. 
A simple closed curve $\sigma$ on $\widehat \C - U$ is {\em essential} if it does not bound a disk in $\widehat C - U$, and a curve is {\em peripheral} if it encloses a single point of $U$.
Two simple curves are {\em parallel} if they are homotopic in $\C- U$.

A {\em curve system} $\Sigma = \{\sigma_i\}$ in $\widehat\C - U$ is a finite nonempty collection of disjoint simple closed curves, each essential and non-peripheral, and no two parallel.
A curve system determines a {\em transition matrix} $A(\Sigma) : \R^\Sigma \longrightarrow \R^{\Sigma}$ by the formula
$$
A_{\sigma\tau} = \sum_\alpha \frac{1}{\deg(f: \alpha \to \tau)}
$$
where the sum is taken over components $\alpha$ of $f^{-1}(\tau)$ isotopic to $\sigma$.

Let $\lambda(\Sigma)\geq 0$ denote the spectral radius of $M(\Sigma)$.
Since $A(\Sigma) \geq 0$, the Perron-Frobenius theorem guarantees that $\lambda(\Sigma)$ is an eigenvalue for $A(\Sigma)$ with a non-negative eigenvector.

The same proof of \cite[Theorem B.4]{McM94} gives
\begin{prop}\label{prop:mtc}
Let $[f] \in \mathfrak{A}$ be an eventually-golden-mean Siegel map.
Let $\mathcal{U}_f$ be the union of Siegel disks and valuable-attracting domains.
Let $\Sigma$ be a curve system in $\widehat \C - \mathcal{U}_f$, then $\lambda(\Sigma) < 1$. 
\end{prop}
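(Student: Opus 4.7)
The plan is to follow McMullen's argument \cite[Theorem B.4]{McM94} verbatim, using the fact that $\mathcal{U}_f$ serves as a substitute for the post-critical set $P_f$ in the classical setting. The two structural properties we need of $\mathcal{U}_f$ are: (i) forward invariance, and (ii) containment of $P_f$. For (i), Siegel disks are invariant as periodic cycles of rotating disks, and each valuable-attracting domain $\widehat D_{i,j}$ satisfies $f(\widehat D_{i,j})\subseteq \widehat D_{i,j+1}$ by Lemma~\ref{lem:kad}. For (ii), any critical point either lies in an attracting Fatou component $D$ (in which case Lemma~\ref{lem:kad} puts it in $\widehat D$, and subsequent iterates remain there by forward invariance) or lies on the boundary of a Siegel disk (whose forward orbit stays on $\partial Z_{i,j}\subset \overline{\mathcal U_f}$). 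Thus $P_f\subseteq \overline{\mathcal{U}_f}$, and in particular $f^{-1}(\mathcal{U}_f)\supseteq \mathcal{U}_f$, so pre-images of curves in $\widehat\C-\mathcal{U}_f$ again lie in $\widehat\C-\mathcal{U}_f$.

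The key analytic step is the modulus estimate via Gr\"otzsch's inequality. For each $\sigma\in \Sigma$, let $A(\sigma)$ be the maximal modulus annulus in $\widehat\C-\mathcal{U}_f$ isotopic to $\sigma$, and write $m_\sigma:=\Mod(A(\sigma))$. For each $\tau\in \Sigma$ and each component $\alpha$ of $f^{-1}(A(\tau))$ isotopic to $\sigma$, the restriction $f\colon \alpha\to A(\tau)$ is a covering of some degree $d_\alpha$, so $\Mod(\alpha) = m_\tau/d_\alpha$. Choosing the annuli $A(\tau)$ for varying $\tau\in\Sigma$ pairwise disjoint (possible since different elements of $\Sigma$ are non-isotopic and disjoint up to isotopy), the collection of all such lifts $\alpha$ isotopic to a fixed $\sigma$ forms a disjoint family of annuli in $\widehat\C-\mathcal{U}_f$ isotopic to $\sigma$, and Gr\"otzsch's inequality yields
\[
\sum_{\tau\in\Sigma}\Bigl(\sum_{\alpha \sim \sigma}\tfrac{1}{d_\alpha}\Bigr) m_\tau \;=\; \sum_{\tau}A_{\sigma\tau}\,m_\tau \;\le\; m_\sigma,
\]
i.e.\ $A(\Sigma)\vec m\le \vec m$ componentwise. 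Applying a Perron–Frobenius eigenvector of the nonnegative matrix $A(\Sigma)$ then forces $\lambda(\Sigma)\le 1$.

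To upgrade to strict inequality, we use that a genuine rational map cannot have its dynamics fully supported on the curve system. Concretely, equality in Gr\"otzsch forces the lifted annuli isotopic to $\sigma$ to fill $A(\sigma)$ up to a set of zero modulus, so that the characteristic foliations of the extremal annuli would assemble into an $f$-invariant meromorphic quadratic differential on $\widehat\C$ with poles only on $\mathcal{U}_f$. Since $f$ is not conjugate to a power map or Chebyshev-type map, and since $\mathcal{U}_f$ contains all critical values of $f$ (they lie in $\widehat D_{i,j}$ or on $\partial Z_{i,j}$), this invariant differential must have at least one critical point of $f$ as a zero of negative order, which is impossible. Therefore at least one Gr\"otzsch inequality is strict, and $\lambda(\Sigma)<1$.

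The main obstacle, and the one place where our setting diverges from the post-critically finite case of \cite{McM94}, is the verification that Siegel disk boundaries behave well enough for the Gr\"otzsch bound and its strict case to apply — in particular that critical points on $\partial Z_{i,j}$ do not create degenerate lifts that void the strict inequality. This is handled by noting that each Siegel boundary $\partial Z_{i,j}$ is a quasicircle (bounded type rotation number) and carries exactly one critical point of $f^p$, so critical values on Siegel boundaries contribute genuinely to the failure of the invariant-differential obstruction, exactly as postcritical points do in McMullen's argument.
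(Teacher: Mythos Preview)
Your approach—running McMullen's argument from \cite[Theorem~B.4]{McM94} with $\mathcal U_f$ in place of $P_f$—is precisely what the paper intends; it gives no separate proof and simply cites McMullen. Your verification that $\mathcal U_f$ is forward invariant with $P_f\subseteq\overline{\mathcal U_f}$, and the Gr\"otzsch/Perron--Frobenius deduction of $\lambda(\Sigma)\le 1$, are correct.

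The strict-inequality step, however, is garbled. The phrase ``zero of negative order'' is meaningless, and the surrounding sentence does not constitute an argument: you have not explained why an invariant differential would be forced to have such a singularity at a critical point, nor why that is impossible. More structurally, the extremal annuli $A(\sigma)$ need not cover $\widehat\C$, so their foliations do not automatically assemble into a \emph{global} meromorphic quadratic differential. What McMullen actually shows is that equality throughout forces $f$ to be a Latt\`es example; since $f$ here has a Siegel cycle and is therefore not postcritically finite, this is immediately excluded. You should invoke that conclusion directly rather than improvise a local obstruction at critical points.

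Your final paragraph is a red herring: the quasicircle regularity of $\partial Z_{i,j}$ plays no role. McMullen's argument needs only that $\overline{\mathcal U_f}$ is forward invariant, has finitely many components, contains $P_f$, and that $f$ is not Latt\`es.
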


\subsection*{Curve system for edges of $\RT$}
Let $\mathcal{U}_n$ be the union of Siegel disks and valuable-attracting domains for $f_n$, and let $\widehat{\mathcal{U}}_n$ be the union of open pseudo-Siegel disks and valuable-attracting domains for $f_n$.
Note that $\mathcal{U}_n$ is forward invariant, and $\mathcal{U}_n \subseteq \widehat{\mathcal{U}}_n$.
Let $\mathcal{A}_{E_,n}$ be the annulus associated to an edge $E$.

After passing to a subsequence, we may assume that for any $a\in \RV$, and any pseudo-Siegel disk or a valuable-attracting domain $U(f_n)$, the limit $A_{a,n}^{-1}(U(f_n))$ exists.
Since $U(f_n)$ are uniformly quasiconformal disks, the limit is either a point or a quasiconformal disk.
We say $U(f_n)$ is trivial for $a$ if the limit is a point, and non-trivial otherwise.

The following lemma is the crucial step that we use the geometric control of valuable-attracting domains and pseudo-Siegel disks.
\begin{prop}\label{prop:dss}
Let $a \in \RV$. 
The singular set $\Xi_a$ is disjoint from the closure of any non-trivial limits of pseudo-Siegel disks and valuable-attracting domains in $\widehat\C_a$.
\end{prop}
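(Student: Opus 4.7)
The plan is to argue by contradiction, using the \emph{a priori} arc degeneration bound $\mathcal{W}_{arc}(\widehat X_{f_n})\le \mathbf{K}$ from Theorem~\ref{thm:utd}. Suppose there exist $a\in \RV$, a pseudo-Siegel disk or valuable-attracting domain $U\in \mathcal V$ with a non-trivial limit $U_\infty:=\lim_a A_{a,n}^{-1}(U(f_n))\subset \widehat{\C}_a$, and a singular point $x_b=\xi_a(v)\in \Xi_a$ with $x_b\in \overline{U_\infty}$. Replacing the tangent direction $v$ along its edge if necessary, we may assume $b\in \RV$ is the vertex adjacent to $a$ in the direction $v$; let $E=[a,b]$. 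Consider the annulus $\mathcal{A}_{E,n}$ associated to $E$ in the discussion preceding Proposition~\ref{prop: moduluse}; its modulus equals $d_{\Hyp^3}(\Psi_n(a),\Psi_n(b))/2\pi+O(1)$ and therefore tends to infinity. The idea is to extract from the crossing of $U(f_n)$ through $\mathcal{A}_{E,n}$ a wide family of non-peripheral arcs in $\widehat X_{f_n}$, contradicting Theorem~\ref{thm:utd}.

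The first key step is to establish the crossing. By Lemma~\ref{lem:cqd}, $U_\infty$ is a $K$-quasiconformal closed disk normalized so that $\mathbf{0}\in U_\infty$ and $\mathbf{1}\in \partial U_\infty$. Since $x_b\in \partial U_\infty$, the disk $U_\infty$ has a substantial body away from $x_b$ yet possesses boundary points arbitrarily close to $x_b$. Choosing the auxiliary curves $C_a\subset \widehat\C_a$ small enough to separate the body of $U_\infty$ from $x_b$, and choosing $C_b\subset \widehat\C_b$ correspondingly, the Hausdorff convergence $A_{a,n}^{-1}(U(f_n))\to U_\infty$ combined with the uniformly bounded quasiconformal geometry of $U(f_n)$ forces, for all sufficiently large $n$, the intersection $V_n:=U(f_n)\cap \mathcal{A}_{E,n}$ to contain a connected ``bridge'' meeting both boundary circles $C_{a,n}$ and $C_{b,n}$.

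Slitting $\mathcal{A}_{E,n}$ along $V_n$ produces a topological rectangle $R_n$ whose vertical sides lie on $\partial U(f_n)$ and whose horizontal sides lie on $C_{a,n}\cup C_{b,n}$. Because $V_n$ inherits the bounded quasiconformal geometry of $U(f_n)$, the extremal width of the family of arcs in $R_n$ joining its two vertical sides is $\Mod(\mathcal{A}_{E,n})-O_K(1)\to\infty$. Other pseudo-Siegel disks or valuable-attracting domains meeting $\mathcal{A}_{E,n}$ are finite in number and each has bounded quasiconformal geometry, so removing their intersections costs only a bounded amount of modulus and yields a sub-rectangle $R_n'\subset R_n\cap \widehat X_{f_n}$ whose horizontal extremal width still tends to infinity. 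The horizontal arcs of $R_n'$ are non-peripheral in $\widehat X_{f_n}$: cutting along such an arc separates the surface into an $a$-side and a $b$-side, each of which meets at least one boundary component of $\widehat X_{f_n}$ other than $\partial U(f_n)$---namely a pseudo-Siegel disk or valuable-attracting domain at a leaf of the corresponding component of $\RT\setminus E$, using that all leaves of $\RT$ belong to $\Pi$. Hence neither side is a disk in $\widehat X_{f_n}$, so the arcs are non-peripheral, and summing over the family yields $\mathcal{W}_{arc}(\widehat X_{f_n})\to \infty$, contradicting Theorem~\ref{thm:utd}.

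The principal obstacle is the rigorous justification of the bridging step: one must control the rate of Hausdorff convergence of $A_{a,n}^{-1}(U(f_n))$ to $U_\infty$ near $x_b$ against the rate at which $A_{a,n}^{-1}(C_{b,n})$ shrinks to $x_b$, exploiting the flexibility in choosing $C_a, C_b$ together with the uniform quasiconformal bound on $U(f_n)$. A subtle degenerate case occurs when the equivalence class $a$ consists of the single representative $U$ and $a$ is a leaf of $\RT$, where the naive non-peripherality argument can fail on the $a$-side; this can be remedied by replacing the horizontal arcs of $R_n'$ by arcs joining $\partial U(f_n)$ to $\partial U'(f_n)$ for a representative $U'$ of a class in the subtree on the $b$-side of $E$, whose rescaled image in $\widehat\C_a$ also shrinks toward $x_b$, and again invoking the modular degeneration of the neck $\mathcal{A}_{E,n}$.
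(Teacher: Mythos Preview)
Your bridging step is not merely hard to justify---it is actually false. Since the equivalence class of $U$ is $a$, in $b$-coordinates the sets $A_{b,n}^{-1}(U(f_n))$ must shrink to the single point $x_a\in\widehat\C_b$: they are uniform quasi-disks, and a non-degenerate subsequential limit would force $A_{b,n}^{-1}A_{U,n}$ to remain bounded, giving $b\sim[U]=a$, a contradiction. Because $A_{b,n}^{-1}(C_{b,n})\to C_b$ and $x_a$ lies in the disk bounded by $C_b$, for all large $n$ the set $U(f_n)$ sits entirely on the $a$-side of $C_{b,n}$. So $U(f_n)$ enters $\mathcal A_{E,n}$ only through $C_{a,n}$ and never reaches $C_{b,n}$; there is no bridge, and the rectangle $R_n$ you describe does not exist. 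The ``flexibility in choosing $C_a,C_b$'' cannot help: however $C_b$ is moved, the argument above applies verbatim.

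The paper's proof avoids the annulus $\mathcal A_{E,n}$ entirely and works in $\widehat\C_a$. Because $x=x_b\in\Xi_a$, there exists $W\in\mathcal V$ with $A_{a,n}^{-1}(W(f_n))\to x$ (a representative of any leaf on the $b$-side of $E$ will do---exactly the ingredient you invoke). After reducing via the dynamics to the case where $a$ is $F$-periodic, a degree count shows $R_a$ has degree at least $2$, so $\widehat\C_a$ carries a second non-trivial limit; this secures non-peripherality on both sides. One then fixes a small arc $\gamma\subset\widehat\C_a\setminus U_\infty$ with endpoints on $\partial U_\infty$ enclosing $x$. For large $n$ the corresponding arc in $\widehat X_{f_n}$ with endpoints on $\partial U(f_n)$ is non-peripheral, and the extremal width of its homotopy class tends to infinity: since $U_\infty$ is a quasi-disk with $x\in\partial U_\infty$, locally near $x$ the complement is a quasi-half-plane, $W(f_n)$ is an island shrinking toward the boundary point $x$, and the family of concentric ``semicircles'' separating $W(f_n)$ from the second non-trivial limit has width of order $-\log\diam\big(A_{a,n}^{-1}(W(f_n))\big)\to\infty$, contradicting Theorem~\ref{thm:utd}. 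Your use of leaves of $\RT$ for non-peripherality is on target; what changes is that the wide arc family lives in $\widehat\C_a\setminus U_\infty$ near $x$, not inside the degenerating neck.
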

\begin{proof}
Let $U = \lim A_{a,n}^{-1}(U(f_n)) \subseteq \widehat\C_a$ be a non-trivial limit.
It is easy to see that the singular set is disjoint from $U$.
Now suppose $x \in \Xi_a \cap \partial U$.
Since $x$ is a singular point, there exists a sequence of pseudo-Siegel disk or a valuable-attracting domain $W(f_n)$ with $\lim A_{a,n}^{-1}(W(f_n)) = x$.
Without loss of generality, we assume $a$ is fixed.
Since there is a critical point on the boundary of U, $R_a$ has degree at least $2$. Therefore, $\widehat\C_a$ contains at least two non-trivial limit of pseudo-Siegel disks or a valuable-attracting domains.
Consider a small arc $\gamma \subseteq \widehat\C_a - U$ with $\partial \gamma \subseteq \partial U$ that encloses $x$.
Then the corresponding arc with end points in $\partial U(f_n)$ for $f_n$ is non-peripheral and its extremal width goes to infinity as $n\to\infty$.
This is a contradiction to Theorem \ref{thm:utd}.
\end{proof}

As a corollary, we have
\begin{cor}\label{cor:dj}
For sufficiently large $n$, the core curve $\sigma_{E,n}$ of $\mathcal{A}_{E_,n}$ is a curve in $\widehat \C - \widehat{\mathcal{U}}_n \subseteq \widehat \C - \mathcal{U}_n$.
\end{cor}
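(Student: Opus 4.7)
The plan is to argue edge-by-edge and domain-by-domain: since $\mathcal{V}$ is a finite set and $\RT$ has finitely many edges, it suffices to fix an edge $E = [a,b]$ and a domain $U \in \mathcal{V}$ and show that $\sigma_{E,n} \cap U(f_n) = \emptyset$ for all sufficiently large $n$. The key observation is that $U$ has a preferred rescaling coordinate, namely its equivalence class $c \in \Pi \subseteq \RV$. In that coordinate, Lemma~\ref{lem:cqd} together with the definition of $\sim$ produces a non-trivial limit
\[
U_\infty := \lim_{n\to\infty} A_{c,n}^{-1}(U(f_n)),
\]
which is a quasiconformal disk in $\widehat\C_c$, and Proposition~\ref{prop:dss} then guarantees $\overline{U_\infty} \cap \Xi_c = \emptyset$.

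The main work is to show that in this same $c$-coordinate the core curve $\sigma_{E,n}$ collapses to a single point of $\Xi_c$. If $c = a$ (and symmetrically if $c = b$), then $A_{a,n}^{-1}(C_{a,n}) \to C_a$ is a small loop around $x_b \in \Xi_a$, while
\[
A_{a,n}^{-1}(C_{b,n}) = \bigl(A_{a,n}^{-1} \circ A_{b,n}\bigr)\bigl(A_{b,n}^{-1}(C_{b,n})\bigr)
\]
collapses to $x_b$ because $A_{a,n}^{-1} \circ A_{b,n}$ converges uniformly on compact subsets of $\widehat\C_b \setminus \{x_a\}$ to the constant $x_b$, and $C_b$ can be chosen disjoint from $x_a$. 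Hence both boundary components of $\mathcal{A}_{E,n}$ fall into any prescribed neighborhood of $x_b$ in $a$-coordinate, forcing $\sigma_{E,n}$ to do the same. If instead $c \notin \{a,b\}$, then since $E$ is a single edge of $\RT$, both $a$ and $b$ lie in the same connected component of $\RT \setminus \{c\}$, giving a unique direction $v_d \in T_c \RT$ pointing toward both endpoints of $E$; the same uniform-convergence argument applied to $A_{c,n}^{-1} \circ A_{a,n}$ and $A_{c,n}^{-1} \circ A_{b,n}$ shows that $A_{c,n}^{-1}(C_{a,n})$ and $A_{c,n}^{-1}(C_{b,n})$ both collapse to $x_d := \xi_c(v_d) \in \Xi_c$, so once again $A_{c,n}^{-1}(\sigma_{E,n})$ collapses to $x_d$.

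Combining the two ingredients, Proposition~\ref{prop:dss} supplies a neighborhood $W$ of the collapse point $x_\ast \in \Xi_c$ with $\overline{W} \cap \overline{U_\infty} = \emptyset$; for $n$ large, $A_{c,n}^{-1}(\sigma_{E,n}) \subset W$ while $A_{c,n}^{-1}(U(f_n))$ is Hausdorff-close to $U_\infty$, so these sets are disjoint and hence $\sigma_{E,n} \cap U(f_n) = \emptyset$. Taking a maximum $n$-threshold over the finitely many pairs $(E, U)$ yields the conclusion. The main subtle step is the collapse analysis, which rests on two inputs already encoded in the construction of $(\RT, \widehat\C^\RV)$: the uniform lower bound on angles between adjacent edges of $\RT_n$, which ensures that distinct directions at $c$ land in distinct points of $\Xi_c$, and the standard degeneration $A_{c,n}^{-1} \circ A_{a,n} \to \xi_c(v_a)$ compactly on $\widehat\C_a \setminus \{x_c\}$.
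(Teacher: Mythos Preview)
Your argument is sound in outline, and since the paper offers no proof beyond declaring the statement a corollary of Proposition~\ref{prop:dss}, your write-up is already more detailed than the original. There is, however, a real gap in Case~2. From the collapse of both boundary curves $A_{c,n}^{-1}(C_{a,n})$ and $A_{c,n}^{-1}(C_{b,n})$ to the single point $x_d$ you conclude that $A_{c,n}^{-1}(\sigma_{E,n})$ collapses as well. This does not follow: two disjoint simple closed curves, each a tiny loop near $x_d$, bound three complementary regions in $\widehat\C_c$, and the annular one could be either the small region trapped between nested loops or the large complement of two side-by-side disks. In the latter configuration the core curve certainly does not collapse to $x_d$, and nothing you have written excludes it (the modulus of such a complement also tends to infinity as the loops shrink, so Proposition~\ref{prop: moduluse} does not help). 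In Case~1 this issue does not arise because one boundary converges to the fixed curve $C_a$ while the other shrinks to the interior point $x_b$, which forces the nested configuration; in Case~2 both boundaries degenerate and the topology is not pinned down by your argument.

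The easy fix is to locate the entire annulus, not just its boundary, using one of the endpoint coordinates as in Case~1. Say $c$ lies on the $a$-side of $E$, so that $a$ separates $c$ from $b$ in $\RT$ and hence $x_c^{(a)}\neq x_b$ in $\Xi_a$. Your Case~1 reasoning already shows that $A_{a,n}^{-1}(\mathcal A_{E,n})$ is eventually contained in the small closed disk $\overline{D_a}$ about $x_b$; choosing $C_a$ small enough that $x_c^{(a)}\notin \overline{D_a}$, the map $A_{c,n}^{-1}\circ A_{a,n}$ converges to the constant $x_d$ uniformly on the compact set $\overline{D_a}\subset \widehat\C_a\setminus\{x_c^{(a)}\}$, and applying it gives $A_{c,n}^{-1}(\mathcal A_{E,n})\to x_d$ as a set. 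This closes the gap.
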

Let 
$$
\Sigma_n =\{\sigma_{E,n}: E \text{ is an edge of } \RT\}.
$$
Then $\Sigma_n$ is a curve system in $\widehat\C - \mathcal{U}_n$ for all sufficiently large $n$.

\begin{lem}\label{lem:lto}
If $E \subseteq F(E')$, then for sufficiently large $n$, $\sigma_{E, n}$ has a lift of degree $\delta(E')$ homotopic to $\sigma_{E',n}$ in $\widehat\C-\mathcal{U}_n$.
\end{lem}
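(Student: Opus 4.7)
The plan is to realize $\tilde\sigma_n$ as the connected component of $f_n^{-1}(\sigma_{E,n})$ that lies inside $\mathcal A_{E',n}$. Since $F$ is injective on edges (Corollary~\ref{cor:inje}) and $E\subseteq F(E')$, the restriction $f_n\colon \mathcal A_{E',n}\to f_n(\mathcal A_{E',n})$ is a cyclic cover of annuli of degree $\delta(E')$. The main claim to establish is that, for all sufficiently large $n$, the curve $\sigma_{E,n}$ lies in $f_n(\mathcal A_{E',n})$ as an essential simple closed curve; once this is in hand, an essential curve in an annulus has winding number $1$ around the core and so pulls back under a cyclic cover to a single loop covering with degree equal to the degree of the cover. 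Thus $\tilde\sigma_n$ is automatically a simple closed curve, essential in $\mathcal A_{E',n}$, and therefore homotopic to the core $\sigma_{E',n}$ inside $\mathcal A_{E',n}$.

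To verify the essentialness, write $E=[c,d]$ with $c,d$ consecutive along the arc $F(E')\subseteq \RT$, $c$ nearer $F(a')$. In the $F(a')$-rescaling, Corollary~\ref{cor:comtm} gives $A_{F(a'),n}^{-1}(f_n(C_{a',n}))\to R_{a'}(C_{a'})$, a nondegenerate curve encircling the singular point $p := \xi_{F(a')}(DF_{a'}(v_{b'}))$, while $\Psi_n(c)$ and $\Psi_n(d)$ both converge to $p$ in the $F(a')$-coordinate (since they lie beyond $F(a')$ along $F(E')$), so $A_{F(a'),n}^{-1}(C_{c,n})$, $A_{F(a'),n}^{-1}(C_{d,n})$, and in particular $A_{F(a'),n}^{-1}(\sigma_{E,n})$, all shrink to $p$. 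A symmetric picture at $F(b')$ handles $f_n(C_{b',n})$. By choosing the fixed limit curves used to define $C_{F(a'),n}$ and $C_{F(b'),n}$ small enough (we are free to shrink them at the outset), we ensure that $C_{c,n}$ is nested strictly inside $f_n(C_{a',n})$ and $C_{d,n}$ inside $f_n(C_{b',n})$ for large $n$, placing $\sigma_{E,n}$ strictly between the two boundary components of $f_n(\mathcal A_{E',n})$ and separating them.

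Finally, the homotopy $\tilde\sigma_n\simeq \sigma_{E',n}$ must take place in $\widehat\C-\mathcal U_n$. Forward invariance $f_n(\mathcal U_n)\subseteq \mathcal U_n$ gives $\tilde\sigma_n\subseteq \widehat\C-\mathcal U_n$ for free, so it suffices to show $\mathcal A_{E',n}\cap\mathcal U_n=\emptyset$ for $n$ large. Each $U(f_n)\in \mathcal U_n$ is indexed by some $c\in\Pi$ lying on the $a'$-side or $b'$-side of $E'$ in $\RT$; by analyzing the rescaling limit of $U(f_n)$ at $a'$ (and symmetrically at $b'$), one sees that $U(f_n)$ is trapped, for large $n$, inside one of the two complementary disks of $\mathcal A_{E',n}$ in $\widehat\C$, and hence outside the annulus. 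The delicate case — and the step where the geometric bounds from Theorem~\ref{thm:utd} enter via Proposition~\ref{prop:dss} — is when the rescaling limit $A_{a',n}^{-1}(U(f_n))$ is a nontrivial quasiconformal disk: Proposition~\ref{prop:dss} guarantees that this limit is disjoint from the singular set $\Xi_{a'}$, so after shrinking $C_{a'}$ if necessary, $U(f_n)$ still stays on one definite side of $C_{a',n}$ for large $n$. Ruling out a nontrivial rescaling limit leaking into $\mathcal A_{E',n}$ is the main obstacle, and it is precisely what the disjointness in Proposition~\ref{prop:dss} provides.
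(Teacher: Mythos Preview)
Your proof is correct and follows essentially the same approach as the paper's: both arguments use the covering $f_n\colon \mathcal{A}_{E',n}\to f_n(\mathcal{A}_{E',n})$ to produce the lift, and both invoke Proposition~\ref{prop:dss} to ensure the resulting homotopy lives in $\widehat\C-\mathcal{U}_n$. The paper's version is terser --- it modifies $\mathcal{A}_{E,n}$ so that $\mathcal{A}_{E,n}\subseteq f_n(\mathcal{A}_{E',n})$ essentially, takes an essential $\gamma_n\subseteq\mathcal{A}_{E,n}$, and then appeals to Proposition~\ref{prop:dss} in one line --- whereas you spell out explicitly why $\sigma_{E,n}$ is essential in $f_n(\mathcal{A}_{E',n})$ and why $\mathcal{A}_{E',n}\cap\mathcal{U}_n=\emptyset$, which is a welcome elaboration of the same idea.
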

\begin{proof}
    Let $\mathcal{A}_{E,n}$ and $\mathcal{A}_{E',n}$ be annuli associated to $E$ and $E'$ respectively.
Modify the boundaries of $\mathcal{A}_{E,n}$ if necessary, we may assume $\mathcal{A}_{E,n} \subseteq f_n(\mathcal{A}_{E',n})$, where the inclusion induces an isomorphism on the fundamental group.
So for sufficiently large $n$, there exists an essential simple closed curve $\gamma_n \subseteq \mathcal{A}_{E,n}$ that has a degree $\delta(E)$ lift $\gamma_n' \subseteq \mathcal{A}_{E',n}$.
By Proposition \ref{prop:dss}, these $\gamma_n$ and $\gamma_n'$ are homotopic to core curves $\sigma_{E, n}$ and $\sigma_{E', n}$ in $\widehat\C-\mathcal{U}_n$, and the lemma follows.
\end{proof}

Combining Proposition \ref{prop:mdto} and Lemma \ref{lem:lto}, we have
\begin{prop}\label{prop:srg1}
Let $\Sigma_n$ be the curve system associated with the edges of $\RT$ in $\widehat\C - \mathcal{U}_n$.
If $\RT$ is not trivial, i.e. it contains more than one vertex, then for sufficiently large $n$, the spectral radius $\lambda(\Sigma_n) \geq 1$.
\end{prop}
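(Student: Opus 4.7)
The plan is to produce the spectral radius lower bound by comparing the Thurston transition matrix $A(\Sigma_n)$ to the tree-dynamics matrix $D^{-1}M$, componentwise, and then invoking the monotonicity of the Perron--Frobenius spectral radius for non-negative matrices. Since all the ingredients (the curve system, the tree dynamics, and the preimage lifting statement) have already been prepared, this is really a short matrix-comparison argument.

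First I would apply Proposition~\ref{prop:mdto}: because $\RT$ is nontrivial, there is a non-negative vector $\vec v \neq \vec 0$ with $M\vec v = D\vec v$, i.e.\ $(D^{-1}M)\vec v = \vec v$. Since $D$ is diagonal with positive entries $\delta(E_i)$ (each $\delta(E_i)\geq 1$), the matrix $D^{-1}M$ has non-negative entries and admits a non-negative eigenvector with eigenvalue $1$. By the Perron--Frobenius theorem, its spectral radius satisfies $\lambda(D^{-1}M) \geq 1$.

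Next I would compare $A(\Sigma_n)$ to $D^{-1}M$ (up to the transposition dictated by the ``image vs.\ preimage'' conventions built into the two matrices). By construction the $(i,j)$-entry of $D^{-1}M$ equals $1/\delta(E_i)$ exactly when $E_i \subseteq F(E_j)$ and is zero otherwise. Corollary~\ref{cor:dj} guarantees that, for large $n$, the curves $\sigma_{E,n}$ form a bona fide curve system in $\widehat\C - \mathcal{U}_n$, and Lemma~\ref{lem:lto} states that whenever $E \subseteq F(E')$ the curve $\sigma_{E,n}$ possesses a component $\alpha$ of $f_n^{-1}(\sigma_{E,n})$ that is isotopic to $\sigma_{E',n}$ in $\widehat\C - \mathcal{U}_n$ with $\deg(f_n\colon \alpha \to \sigma_{E,n}) = \delta(E')$. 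That single component already contributes $1/\delta(E')$ to the corresponding Thurston entry $A(\Sigma_n)_{\sigma_{E',n},\sigma_{E,n}}$; any further preimages only increase the entry. All other entries of $A(\Sigma_n)$ are non-negative. Hence $A(\Sigma_n)$ dominates the corresponding non-negative matrix (equal to $D^{-1}M$ up to transpose) entrywise.

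Finally, the standard comparison principle for spectral radii of non-negative matrices, together with the invariance of spectral radius under transposition, yields
\[
\lambda(A(\Sigma_n)) \;\geq\; \lambda(D^{-1}M) \;\geq\; 1
\]
for all sufficiently large $n$, which is the desired conclusion. The only subtlety I anticipate is keeping the index/transpose bookkeeping straight so that the componentwise inequality is literally true as stated; once that is verified, no further estimates are required. There is no analytic obstacle here because all the geometric difficulty has already been absorbed into the construction of the $\mathcal{A}_{E,n}$ and into Proposition~\ref{prop:dss}, which ensured in particular that the core curves $\sigma_{E,n}$ actually lie in the complement of the non-repelling dynamics $\mathcal{U}_n$.
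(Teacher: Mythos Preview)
Your proposal is correct and matches the approach the paper signals (the paper simply writes ``Combining Proposition~\ref{prop:mdto} and Lemma~\ref{lem:lto}'' in lieu of a written-out proof). The only bookkeeping point worth making precise is that the entrywise comparison is with $D^{-1}M^T$ rather than $(D^{-1}M)^T$, but since $D$ is diagonal these are conjugate and share the spectral radius of $D^{-1}M$, so your conclusion stands.
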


\begin{proof}[Proof of Theorem \ref{thm:uthind}]
Suppose for contradiction that Theorem \ref{thm:uthind} does not hold.
Then there exists a sequence $[f_n] \in \mathfrak{A}$ with $\mathcal{W}_{loop}(X_{f_n}) \to \infty$.
After passing to a subsequence, $[f_n]$ converges to a degree $d$ rational map $(F,R)$ on $(\RT, \widehat\C^\RV)$ by Theorem \ref{thm:gc}.
Since $[f_n] \to \infty$ in $\mathcal{M}_{d,\fm}$, the tree $\RT$ is not trivial.
By Proposition \ref{prop:srg1}, the curve system $\Sigma_n$ has spectral radius $\lambda(\Sigma_n) \geq 1$ for all sufficiently large $n$.
This is a contradiction to Proposition \ref{prop:mtc}.
\end{proof}

The same proof also gives Theorem \ref{thm:ubd} and Theorem \ref{thm:A}.
\begin{proof}[Proof of Theorem \ref{thm:ubd}]
Since the arc degeneration is uniformly bounded, $f_n$ converges to a degree $d$ rational map $(F, R)$ on $(\RT, \widehat\C^\RV)$ by Theorem \ref{thm:gc2}.
Suppose for contradiction that $[f_n]$ diverges, then the tree $\RT$ is non-trivial.
Therefore, $\mathcal{W}_{loop}(X_{f_n}) \to \infty$ which is a contradiction.
Since the degeneration is uniformly bounded, the psuedo-Siegel disks and valuable-attracting domains do not collide.
So $[f]$ has $2d-2$ non-repelling cycles.
\end{proof}
\begin{proof}[Proof of Theorem \ref{thm:A}]
It suffices to show that the marked hyperbolic component $\mathcal{H}$ is bounded.
Note that $\mathcal{H}$ is identified with $\D_1 \times ... \times \D_{2d-2}$.
It suffices to realize the multiplier $(\lambda_1,..., \lambda_{2d-2}) \in \partial (\D_1 \times ... \times \D_{2d-2})$ by a map $[f] \in \partial \mathcal{H}$.
Let $[f_n] \in \partial \mathcal{H}$ be a sequence of eventually-golden-mean maps with the corresponding multipliers $(\lambda_{1,n},..., \lambda_{2d-2,n})$ converging to $(\lambda_1,..., \lambda_{2d-2})$ strongly (see Definition \ref{defn:strong}).

By Theorem \ref{thm:pop}, the pulled-off constant is uniformly bounded in this case.
By Theorem \ref{thm:uthind}, $[f_n]$ has uniformly bounded degeneration, so $[f_n] \to [f] \in \mathcal{M}_d$ by Theorem \ref{thm:ubd}.
By construction, the corresponding multiplier profile of $f$ is $(\lambda_1,..., \lambda_{2d-2})$, and the theorem follows.
\end{proof}

\appendix

\section{Degenerations of Riemann surfaces}\label{subsec:dr}
In this section, we introduce some terminologies to study degenerations of Riemann surfaces using extremal length and extremal width.
There is a wealth of sources containing background material on this topic (see \cite{A73}, \cite{Kah06} or \cite[Appendix 4]{KL09}).
We will briefly summarize the necessary minimum.

\subsection{Arcs and simple closed curves}
Let $X$ be a compact Riemann surface with boundary.
An {\em arc} $\gamma$ of $X$ is a continuous map
$$
h: [0,1] \longrightarrow X
$$
with $h(0), h(1) \in \partial X$.
We shall not differentiate the continuous map with its image $\gamma$ in $X$.

We say two arcs $\gamma_0, \gamma_1$ are homotopic, denoted by $\gamma_0 \sim \gamma_1$, if there exists a continuous path in the space of all arcs that connects $\gamma_0$ and $\gamma_1$.
This means that there exists a continuous map 
$$
H:[0,1] \times [0,1] \longrightarrow X
$$
with $H(t,0) = \gamma_0(t)$, $H(t,1) = \gamma_1(t)$, $H(0, s), H(1, s) \in \partial X$.

We remark that this is different from homotopy relative to $\partial X$, as we allow the homotopy to slide points on the boundary $\partial X$.

An arc $\gamma$ is said to be {\em peripheral} if it is a homotopic to an arc that is contained in a boundary component of $X$.
Note that each component of $\partial X$ is a circle and an arc is peripheral if and only if it is homotopic to a point.

Similarly, a {\em closed curve} $\alpha$ of the Riemann surface $X$ is a continuous map
$$
h: \mathbb{S}^1 \longrightarrow X.
$$
We do not differentiate the continuous map with its image $\alpha$ in $X$.
Two closed curves are {\em homotopic} if the two continuous maps are homotopic. 
We denote this by $\alpha_0 \sim \alpha_1$.
It is said to be {\em simple} if $h$ is an embedding.

For simplicity, we refer to both arc and closed curve as curves.

\subsection{Extremal length and extremal width}
Let $\mathcal{F}$ be a family of curves on $X$.
Given a (measurable) conformal metric $\rho = \rho(z)|dz|$ on $X$, let
$$
L(\mathcal{F}, \rho):= \inf_{\gamma \in \mathcal{F}} L(\gamma, \rho),
$$
where $L(\gamma, \rho)$ stands for the $\rho$-length of $\gamma$. The extremal length of $\mathcal{F}$ is
$$
\mathcal{L}_X(\mathcal{F}):= \sup_\rho \frac{L(\mathcal{F}, \rho)^2}{A(X, \rho)},
$$
where $A(U, \rho)$ is the area of $X$ with respect to the measure $\rho^2$, and the supremum is taken over all $\rho$ subject to the condition $0 < A(X, \rho) < \infty$.
The extremal width of $\mathcal{F}$ is defined as the inverse of the extremal length:
$$
\mathcal{W}_X(\mathcal{F}) = \frac{1}{\mathcal{L}_X(\mathcal{F})}.
$$

\subsubsection{Series law and parallel law}
One of the key properties of the extremal width is that it behaves like resistance in an electric circuit.

We say a family of curves $\mathcal{F}$ {\em overflows} another family of curves $\mathcal{G}$ if every curve $\gamma \in \mathcal{F}$ contains a subcurve $\gamma' \in \mathcal{G}$.
By definition, if $\mathcal{F}$ overflows $\mathcal{G}$, then
$$
\mathcal{W}_X(\mathcal{F}) \leq \mathcal{W}_X(\mathcal{G}).
$$

We say $\mathcal{F}$ {\em disjointly overflows} two families $\mathcal{G}_1, \mathcal{G}_2$ if any curve $\gamma \in \mathcal{F}$ contains the disjoint union $\gamma_1 \sqcup \gamma_2$ of two curves $\gamma_i \in \mathcal{G}_i$ (see Figure \ref{fig:SPL}).
If $\mathcal{F}$ disjointly overflows $\mathcal{G}_1, \mathcal{G}_2$, then the Gr\"otzsch inequality states that 
\begin{align}\label{eq:sl}
\mathcal{W}_X(\mathcal{F}) \leq \mathcal{W}_X(\mathcal{G}_1) \bigoplus \mathcal{W}_X(\mathcal{G}_2),
\end{align}
where $x\bigoplus y = \frac{1}{\frac{1}{x}+\frac{1}{y}}$ is the harmonic sum.
We shall refer to Equation \ref{eq:sl} the series law.

\begin{figure}[ht]
  \centering
  \resizebox{\linewidth}{!}{
    \def\svgwidth{\columnwidth}
\begingroup%
  \makeatletter%
  \providecommand\color[2][]{%
    \errmessage{(Inkscape) Color is used for the text in Inkscape, but the package 'color.sty' is not loaded}%
    \renewcommand\color[2][]{}%
  }%
  \providecommand\transparent[1]{%
    \errmessage{(Inkscape) Transparency is used (non-zero) for the text in Inkscape, but the package 'transparent.sty' is not loaded}%
    \renewcommand\transparent[1]{}%
  }%
  \providecommand\rotatebox[2]{#2}%
  \newcommand*\fsize{\dimexpr\f@size pt\relax}%
  \newcommand*\lineheight[1]{\fontsize{\fsize}{#1\fsize}\selectfont}%
  \ifx\svgwidth\undefined%
    \setlength{\unitlength}{841.88976378bp}%
    \ifx\svgscale\undefined%
      \relax%
    \else%
      \setlength{\unitlength}{\unitlength * \real{\svgscale}}%
    \fi%
  \else%
    \setlength{\unitlength}{\svgwidth}%
  \fi%
  \global\let\svgwidth\undefined%
  \global\let\svgscale\undefined%
  \makeatother%
  \begin{picture}(1,1.01010101)%
    \lineheight{1}%
    \setlength\tabcolsep{0pt}%
    \put(0,0){\includegraphics[width=\unitlength,page=1]{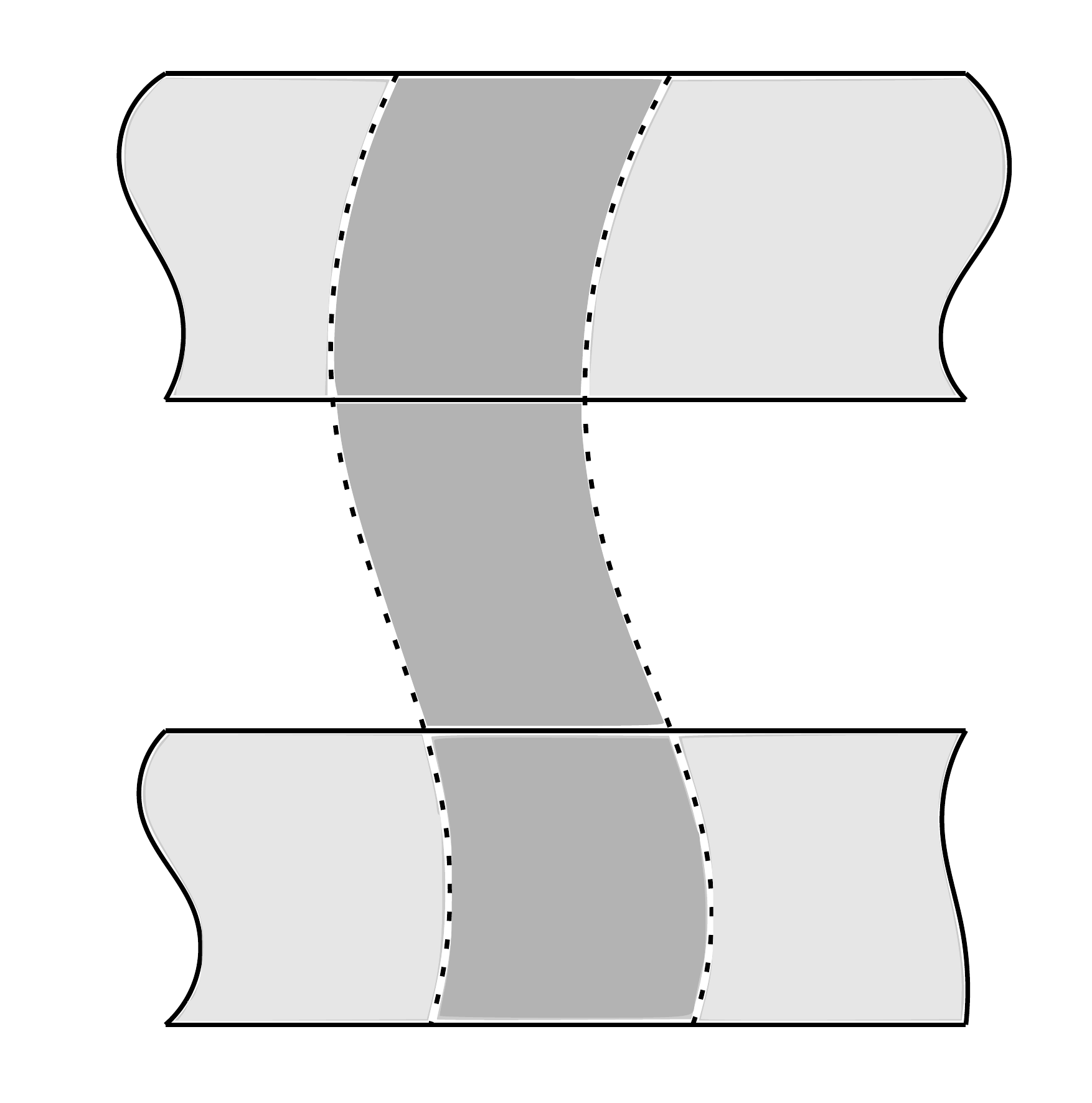}}%
    \put(0.64395938,0.78495272){\color[rgb]{0,0,0}\makebox(0,0)[lt]{\lineheight{1.25}\smash{\begin{tabular}[t]{l}{\Huge $\mathcal{G}_1$}\end{tabular}}}}%
    \put(0.7168426,0.20705165){\color[rgb]{0,0,0}\makebox(0,0)[lt]{\lineheight{1.25}\smash{\begin{tabular}[t]{l}{\Huge$\mathcal{G}_2$}\end{tabular}}}}%
    \put(0.39104495,0.49976043){\color[rgb]{0,0,0}\makebox(0,0)[lt]{\lineheight{1.25}\smash{\begin{tabular}[t]{l}{\Huge$\mathcal{F}$}\end{tabular}}}}%
  \end{picture}%
\endgroup%

    \def\svgwidth{\columnwidth}
\begingroup%
  \makeatletter%
  \providecommand\color[2][]{%
    \errmessage{(Inkscape) Color is used for the text in Inkscape, but the package 'color.sty' is not loaded}%
    \renewcommand\color[2][]{}%
  }%
  \providecommand\transparent[1]{%
    \errmessage{(Inkscape) Transparency is used (non-zero) for the text in Inkscape, but the package 'transparent.sty' is not loaded}%
    \renewcommand\transparent[1]{}%
  }%
  \providecommand\rotatebox[2]{#2}%
  \newcommand*\fsize{\dimexpr\f@size pt\relax}%
  \newcommand*\lineheight[1]{\fontsize{\fsize}{#1\fsize}\selectfont}%
  \ifx\svgwidth\undefined%
    \setlength{\unitlength}{841.88976378bp}%
    \ifx\svgscale\undefined%
      \relax%
    \else%
      \setlength{\unitlength}{\unitlength * \real{\svgscale}}%
    \fi%
  \else%
    \setlength{\unitlength}{\svgwidth}%
  \fi%
  \global\let\svgwidth\undefined%
  \global\let\svgscale\undefined%
  \makeatother%
  \begin{picture}(1,1.01010101)%
    \lineheight{1}%
    \setlength\tabcolsep{0pt}%
    \put(0,0){\includegraphics[width=\unitlength,page=1]{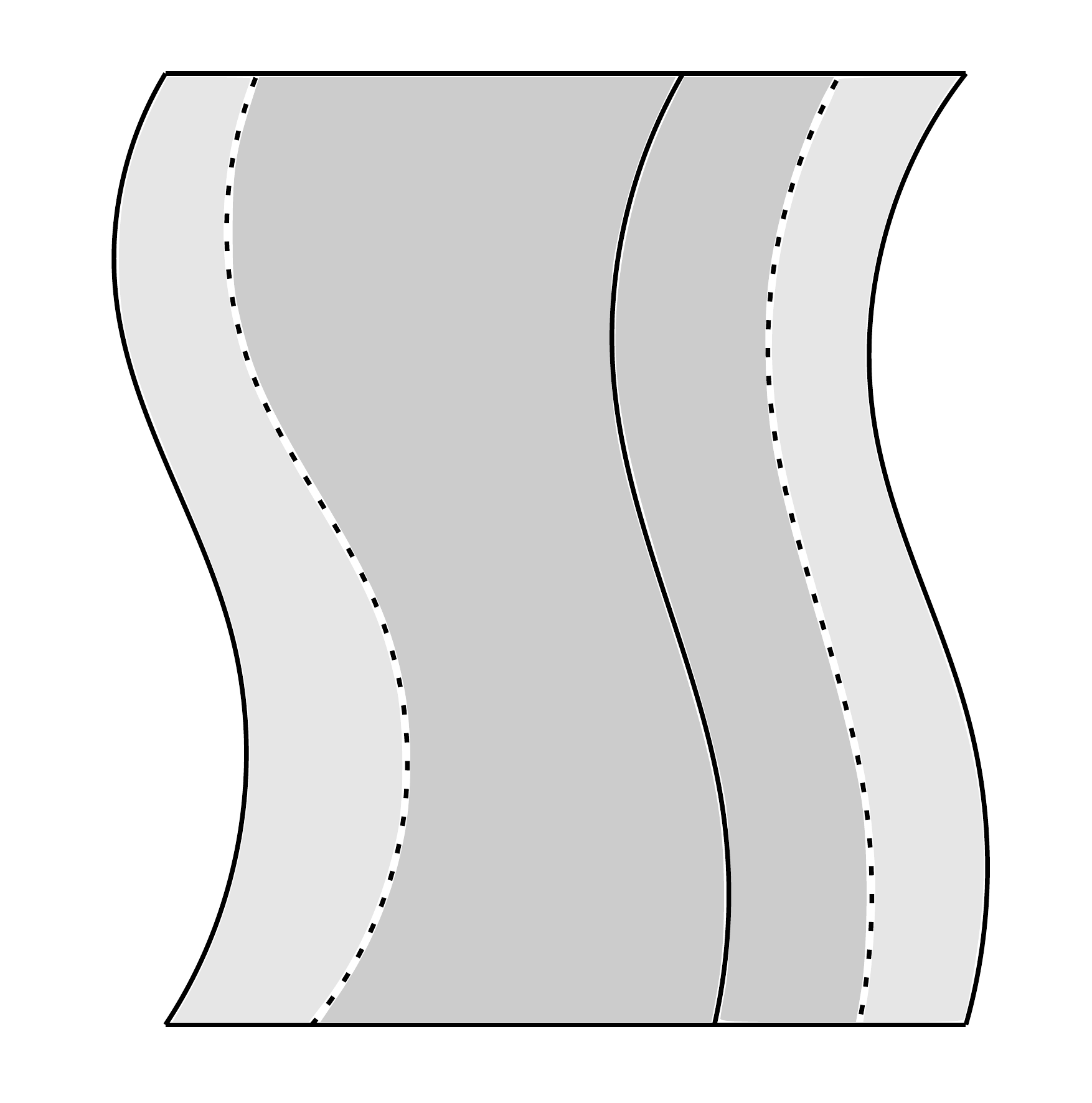}}%
    \put(0.4090225,0.63354788){\color[rgb]{0,0,0}\makebox(0,0)[lt]{\lineheight{1.25}\smash{\begin{tabular}[t]{l}{\Huge$\mathcal{F}$}\end{tabular}}}}%
    \put(0.33067029,0.33829378){\color[rgb]{0,0,0}\makebox(0,0)[lt]{\lineheight{1.25}\smash{\begin{tabular}[t]{l}{\Huge $\mathcal{G}_1$}\end{tabular}}}}%
    \put(0.734,0.26193494){\color[rgb]{0,0,0}\makebox(0,0)[lt]{\lineheight{1.25}\smash{\begin{tabular}[t]{l}{\Huge$\mathcal{G}_2$}\end{tabular}}}}%
  \end{picture}%
\endgroup%

  }
  \caption{An illustration of the series law on the left and parallel law on the right.}
  \label{fig:SPL}
\end{figure}

On the other hand, if $\mathcal{F} \subseteq \mathcal{G}_1 \cup \mathcal{G}_2$, i.e., every curve in $\mathcal{F}$ is either a curve in $\mathcal{G}_1$ or a curve in $\mathcal{G}_2$ (see Figure \ref{fig:SPL}), then
\begin{align}\label{eq:pl}
\mathcal{W}_X(\mathcal{F}) \leq \mathcal{W}_X(\mathcal{G}_1) + \mathcal{W}_X(\mathcal{G}_2).
\end{align}
We shall refer to Equation \ref{eq:pl} the parallel law.

\subsubsection{Extremal width between two sets}
Let $I, J \subseteq X$ be subsets of $X$.
By an arc connecting $I$ and $J$ in $X$, we mean an arc parameterized by a continuous map $\gamma:[0,1] \longrightarrow X$ with $\gamma(0) \in I$, $\gamma(1) \in J$ and $\gamma((0,1)) \subseteq X - (I\cup J)$.

We use
$\mathcal{W}_X(I, J)$
to denote the conformal widths of the family of arcs connecting $I$ and $J$ in $X$.
When the underlying Riemann surface $X = \widehat\C$, we will sometimes omit the subindex, and simply write
$$
\mathcal{W}(I, J) := \mathcal{W}_{\widehat\C}(I, J).
$$

\subsection{Euclidean rectangles and (topological) rectangles}
A \emph{Euclidean rectangle} is a rectangle $E_x\coloneqq[0,x]\times [0,1] \subset \C$, where:
\begin{itemize}
\item $(0,0), (x,0), (x,1), (0,1)$ are four vertices of $E_x$,
\item $\partial^h E_x=[0,x]\times\{0,1\}$ is the horizontal boundary of $E_x$,
\item $\partial^{h,0} E_x\coloneqq [0,x]\times\{0\}$ is the \emph{base} of $E_x$,
\item $\partial^{h,1} E_x\coloneqq [0,x]\times\{1\}$ is the \emph{roof} of $E_x$,
\item $\partial^v E_x=\{0,x\}\times [0,1]$ is the \emph{vertical} boundary of $E_x$,
\item $\partial^{v,\ell} E_x\coloneqq \{0\}\times [0,1], \sp  \partial^{v,\rho} E_x\coloneqq \{x\}\times [0,1]$ is the \emph{left} and \emph{right vertical} boundaries of $E_x$; 
\item $\Fam(E_x)\coloneqq \{\{t\}\times[0,1]\mid t\in [0,x]\}$ is the \emph{vertical foliation} of $E_x$,
\item $\Fam^\full(E_x) := \{\gamma: [0,1]\to E_x \mid \gamma(0)\in \partial ^{h,0}E_x,\ \gamma(1)\in \partial ^{h,1}E_x \}$ is the \emph{full family of curves} in $E_x$; 
\item $\Width(E_x)=\Width(\Fam(E_x))=\Width(\Fam^\full(E_x))=x$ is the \emph{width} of $E_x$,
\item $\mod (E_x)=1/\Width(E_x)=1/x$ the extremal length of $E_x$.
\end{itemize}

By a \emph{(topological) rectangle} in a Riemann surface we mean a closed Jordan disk $\mathcal{R}$ together with a conformal map $g: \mathcal{R} \longrightarrow E_x$.
We call the preimage $\partial^{h, 0}\mathcal{R}$ of $[0,x] \times\{0\}$ the {\em base}, and the preimage $\partial^{h, 1}\mathcal{R}$ of $[0,x] \times \{1\}$ the {\em roof}.
We denote the horizontal boundaries by
$$
\partial^h \mathcal{R} := \partial^{h, 0}\mathcal{R} \cup \partial^{h, 1}\mathcal{R}.
$$
Similarly, we denote the vertical boundaries by
$$
\partial^v \mathcal{R} := \partial^{v, 0}\mathcal{R} \cup \partial^{v, 1}\mathcal{R}.
$$
The width of a rectangle $R$ is 
$$
\mathcal{W}(\mathcal{R}) := \mathcal{W}_\mathcal{R}(\partial^{h, 0}\mathcal{R}, \partial^{h, 1}\mathcal{R}) = x.
$$
A {\em $K$-buffer} of a rectangle $\mathcal{R}$ is the image $g([0,K] \times [0,1] \cup [x-K,x]\times [0,1])$.

The collection of vertical arcs 
$$
\mathcal{F}_{v, \mathcal{R}} :=\{g(\{t\} \times [0,1]): t\in [0,x]\}
$$
is called the {\em vertical foliation} of the rectangle $\mathcal{R}$. Similarly, the {\em horizontal foliation} of $\mathcal{R}$ is the collection
$$
\mathcal{F}_{h, \mathcal{R}} :=\{g([0,x] \times \{t\}): t\in [0,1]\}.
$$
Abusing the notations, when we say remove $K$-buffers for the vertical foliation, we mean the foliation
$$
\mathcal{F} :=\{g(\{t\} \times [0,1]): t\in [K,x-K]\}.
$$

A \emph{genuine subrectangle} of $E_x$ is any rectangle of the form $E'=[x_1,x_2]\times [0,1]$, where $0\le x_1<x_2\le x$; it is identified with the standard Euclidean rectangle $[0,x_2-x_1]\times [0,1]$ via $z\mapsto z- x_1$. A genuine subrectangle of a topological rectangle is defined accordingly. 

\subsection{Arc and loop degenerations}\label{subsec:ald}
Let $\gamma$ be a non-peripheral arc of $X$, and $\mathcal{F}(\gamma)$ be the family of arcs homotopic to $\gamma$.
We define the degeneration for $\gamma$ as the extremal width
$$
\mathcal{W}(\gamma) := \mathcal{W}(\mathcal{F}(\gamma)).
$$
Since majority of wide rectangles do not intersect (see, for example, \cite[\S 3]{Kah06}), there are only finitely many homotopy classes of non-peripheral arcs $\gamma$ with $\mathcal{W}(\gamma) \geq 2$.
In fact, this number is bounded by the topological complexity of $X$.
We define the arc degeneration for $X$ as
$$
\mathcal{W}_{arc}(X) = \sum_{\gamma: \mathcal{W}(\gamma) \geq 2} \mathcal{W}(\gamma).
$$
Similarly, if $Z$ is a component of $\partial X$, we defined 
$$
\mathcal{W}^{loc}_{arc}(Z) = \sum_{\gamma \in \Gamma_1: \mathcal{W}(\gamma) \geq 2} \mathcal{W}(\gamma) + 2\sum_{\gamma\in \Gamma_2: \mathcal{W}(\gamma) \geq 2} \mathcal{W}(\gamma)
$$ 
where $\Gamma_1$ (or $\Gamma_2$) contains homotopy classes of non-peripheral arcs with exactly one endpoint (or two endpoints) on $Z$.
Note that by definition,
$$
2\mathcal{W}_{arc}(X) = \sum_Z \mathcal{W}^{loc}_{arc}(Z),
$$
where the sum is over all boundary components of $X$.

Similarly, let $\alpha$ be a homotopically non-trivial simple closed curve, and let $\mathcal{G}$ be the family of simple closed curves isotopic to $\alpha$.
We remark here that the curve $\alpha$ is allowed to be homotopic to a boundary component of $X$.
We define the {\em degeneration} for $\alpha$ of $X$ as the extremal width 
$$
\mathcal{W}(\alpha) := \mathcal{W}(\mathcal{G}).
$$
We define the {\em loop degeneration} for $X$ as
$$
\mathcal{W}_{loop}(X) = \sum_{\alpha: \mathcal{W}(\alpha) \geq 2} \mathcal{W}(\alpha).
$$

\begin{figure}[ht]
  \centering
  \resizebox{0.8\linewidth}{!}{
    \def\svgwidth{\columnwidth}
\begingroup%
  \makeatletter%
  \providecommand\color[2][]{%
    \errmessage{(Inkscape) Color is used for the text in Inkscape, but the package 'color.sty' is not loaded}%
    \renewcommand\color[2][]{}%
  }%
  \providecommand\transparent[1]{%
    \errmessage{(Inkscape) Transparency is used (non-zero) for the text in Inkscape, but the package 'transparent.sty' is not loaded}%
    \renewcommand\transparent[1]{}%
  }%
  \providecommand\rotatebox[2]{#2}%
  \newcommand*\fsize{\dimexpr\f@size pt\relax}%
  \newcommand*\lineheight[1]{\fontsize{\fsize}{#1\fsize}\selectfont}%
  \ifx\svgwidth\undefined%
    \setlength{\unitlength}{751.18110236bp}%
    \ifx\svgscale\undefined%
      \relax%
    \else%
      \setlength{\unitlength}{\unitlength * \real{\svgscale}}%
    \fi%
  \else%
    \setlength{\unitlength}{\svgwidth}%
  \fi%
  \global\let\svgwidth\undefined%
  \global\let\svgscale\undefined%
  \makeatother%
  \begin{picture}(1,0.60377358)%
    \lineheight{1}%
    \setlength\tabcolsep{0pt}%
    \put(0,0){\includegraphics[width=\unitlength,page=1]{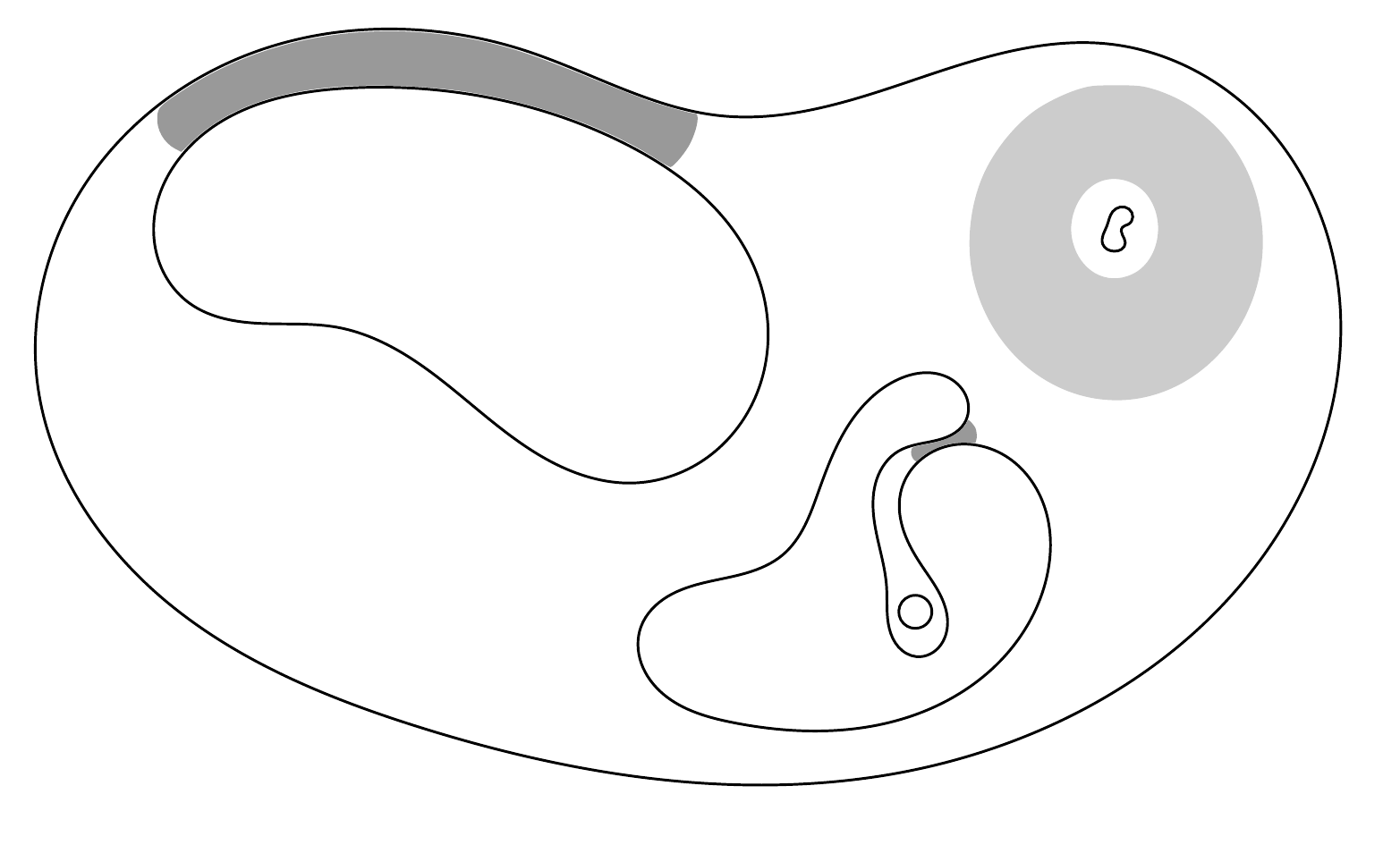}}%
    \put(0.20171284,0.20135645){\color[rgb]{0,0,0}\makebox(0,0)[lt]{\lineheight{1.25}\smash{\begin{tabular}[t]{l}{\Huge$X$}\end{tabular}}}}%
    \put(0,0){\includegraphics[width=\unitlength,page=2]{TTD.pdf}}%
  \end{picture}%
\endgroup%

  }
  \caption{An illustration of arc and loop degenerations. Here $X$ is a genus $0$ Riemann surface with 5 boundary components. Arc and loop degenerations are indicated by dark and light grey respectively.}
  \label{fig:TTD}
\end{figure}

\subsection{The Thin-Thick Decomposition}
\label{ss:TTD}
Here, we summarize a few variations of the fundamental fact that wide families of curves are supported within finitely many pairwise disjoint wide rectangles. We refer the readers to \cite[\S7.6]{Lyu} for more details.

Let $I, J \subset \partial X$ be two intervals. Let $\Fam_X(I,J)$ be the family of arcs connecting $I$ and $J$ in $X$. Then the \emph{Thin-Thick Decomposition} of $X$ rel the pair $I,J$ says that, up to $O_{\chi(X)}(1)$, we can replace the family $\Fam_X(I,J)$ by a finitely many disjoint rectangles. More precisely, there exist finitely many pairwise disjoint non-homotopic rectangles $\RR_1,..., \RR_s$ connecting $I$ and $J$, i.e,
\[ \partial^{h,0} \RR_i \subset I \sp\sp\text{ and }\sp\sp  \partial^{h,1} \RR_i \subset J,\]
 such that 
\begin{equation}
\label{eq:TTD}    
 \sum_{i=1}^s \Width(\RR_i)=\Width_X(I,J)-O_{\chi(X)}(1),
\end{equation}
where $\chi(X)$ is the Euler characteristic of $X$.
We remark that since the rectangle are disjoint and non-homotopic, $s$ is bounded by the topological complexity of the surface $X$.

The \emph{Thin-Thick Decomposition} of $X$ says that there are finitely many pairwise disjoint non-homotopic rectangles and annuli in $X$ 
\[ \mathcal T _X = (\RR_1,\RR_2,\dots, \RR_t, \mathcal{A}_1,\mathcal{A}_2,\dots \mathcal{A}_s), \sp\sp\sp \partial^h R\subset \partial X\] such that
\begin{itemize}
    \item families \[\Fam(\gamma), \sp \Fam_{arc}(X), \sp \Fam^{loc}_{arc}(Z)\] for the corresponding 
\[\Width(\gamma), \sp \Width_{arc}(X), \sp \Width^{loc}_{arc}(Z)\] 
introduced in~\S\ref{subsec:ald} are supported, up to $O_{\chi(X)}(1)$, within finitely rectangles from $\mathcal T _X$,
\item the family $\Fam_{loop}(X)$ (for $\Width_{loop}(X)$) is formed by the annuli from $\mathcal T _X$.
\end{itemize}

Given a component $Z$ of $\partial X$, the covering annuli $\mathbb A(X, Z)$ of $X$ with respect to $Z$ is obtained by opening up all loops except $Z$; see~\cite[\S 3.3.3]{DL23} for a more detailed description. Then the family $\Fam\big(\mathbb A(X, Z)\big)$ of curves in $\mathbb A(X, Z)$ connecting its boundary components is, up to $O_{\chi(X)}(1)$, supported in the univalent lifts $\RR_i^\tau \subset \mathbb A(X, Z), \tau \in \{0,1\}$ of the rectangles $\RR_i$ from $\mathcal T_X$ with
\begin{itemize}
    \item $\tau=0$ if $\partial^{h,0}\RR_i \subset Z$,
    \item  $\tau=1$ if $\partial^{h,1}\RR_i \subset Z$.
\end{itemize}
In particular, this means that 
\begin{align}
\label{eq:AC}
\Width \left( \mathbb A(X, Z)\right) &= \sum_{i, \tau} \Width(\RR_i^\tau) + O_{\chi(X)}(1) \\
\notag &= \sum_{\RR_i \in \mathcal T_X, \, \partial^{h,0}\RR_i \subset Z} \Width(\RR_i) + \sum_{\RR_i \in \mathcal T_X, \, \partial^{h,1}\RR_i \subset Z} \Width(\RR_i) + O_{\chi(X)}(1)\\
\notag &= \mathcal{W}^{loc}_{arc}(Z) + O_{\chi(X)}(1).
\end{align}

\section{Siegel $\psi^\bullet$-ql maps and psuedo-Siegel disks}\label{ap:psd}
In this appendix, we summarize pseudo-Siegel bounds from \cite{DL22, DLL25} adopted to $\psi^\bullet$-ql maps.

We recall that $\psi$-quadratic-like maps were introduced in~\cite{Kah06}. They generalize the notion of quadratic-like maps with the goal of explicitly relating the geometry of various renormalizations of quadratic polynomials. It is essential for the theory that the post-critical set of the map is $\iota$-proper (see~\S\ref{sss:qlb:Defn}). In~\cite{Kah06}, it is assumed that the filled-in Julia set itself is $\iota$-proper in the definition of $\psi$-ql maps. In this appendix, we will consider $\psi^\bullet$-ql Siegel maps  with the requirement that the closed Siegel disk (and its iterated preimages) is $\iota$-proper. We refer to~\cite{DL23} for a related notion of $\psi^\bullet$-ql ``bush'' maps. For technical reasons, we require in Item~\ref{dfn:psib-ql:2} that $\iota$ is a covering onto its image in the complement of the Siegel disk.

\subsection{$\psi^\bullet$-ql Siegel maps}
\label{sss:qlb:Defn}
A map $\iota\colon A\to B$ between open Riemann surfaces is called an \emph{immersion} if every $x\in X$ has a neighborhood $U_x$ such that $\iota\colon  U_x\to \iota(U_x)$ is a conformal isomorphism. Immersions arising in applications are compositions of covering maps and embeddings in various orders. A compact subset $S\Subset B$ is called \emph{$\iota$-proper} if $\iota\mid \iota^{-1}(S)\to S$ is a homeomorphism. In this case, we often \emph{identify} $S\simeq \iota^{-1}(S)$. 

We say that an immersion $\iota\colon A\to B$ is a \emph{covering embedding rel $S\subset B$} if 
\begin{itemize}
  \item $S$ is $\iota$-proper;
  \item $\iota\mid A\setminus \iota^{-1}(S)$ is a covering onto the image; i.e., 
  \[\iota \colon \ A\setminus \iota^{-1}(S)\longrightarrow \iota(A)\setminus S\]
is a covering map. 
\end{itemize}

\begin{defn}
\label{dfn:psi:Siegel:ql}
A {\em  pseudo${}^\bullet$-quadratic-like Siegel map} (``$\psi^\bullet$-ql Siegel map'') 
is a pair of holomorphic  maps
\begin{equation}
\label{eq:dfn:psib-ql}
F=(f,\iota)\colon \sp ( U, \overline Z_U)  \rightrightarrows (V, \overline Z ),\hspace{0.5cm}\text{ so }\sp  \overline Z_U \subseteq f^{-1}(\overline Z) \cap  \iota^{-1}(\overline Z) 
\end{equation} between two conformal  disks $U$, $V$
with the following properties:

\begin{enumerate}[label=\text{(\Roman*)},font=\normalfont,leftmargin=*]
\item\label{dfn:psib-ql:1}  $f\colon U\to V$ is a double branched covering with a unique critical point $c_0\in \partial Z_U$;

\item \label{dfn:psib-ql:2} $\overline Z$ is $\iota$-proper; in particular, $\iota \colon \overline Z_U\ \overset{\simeq}\longrightarrow \ \overline Z;$

\item \label{dfn:psib-ql:3}
 there exist neighborhoods $X_U\supset \overline Z _U$ and $X \supset \overline Z$ with the following property: $\iota: X_U \to X$ is a conformal isomorphism such that
\begin{equation}\label{eq:dfn:ql b domain}
f_X\coloneqq f\circ \big(\iota\mid X_U\big)^{-1} : X \to f(X_U)\eqqcolon Y 
\end{equation} 
 is a \emph{Siegel map}: $\overline Z \Subset X\cap Y$ is a closed qc Siegel disk around the fixed point $\alpha \in Z=\intr \overline Z$ with bounded-type rotation number;

\end{enumerate}
Define inductively $K_0\coloneqq \overline Z$, $K_{1, U}\coloneqq f^{-1}(\overline Z)$, $K_{1}\coloneqq\iota(K_{1, U})$,  and, for $n\ge 1$   
\begin{equation}
\label{eq:C:dfn:K_n}
\begin{matrix} K_{n,U}\coloneqq f^{-1}(K_{n-1})=f^{-1}\circ \big(\iota \circ f^{-1}\big)^{n-1} (\overline Z), \vspace{0.1cm}\\ K_n \coloneqq \iota(K_{n,U})=\big(\iota \circ f^{-1}\big)^n (\overline Z);\hspace{1.2cm} \end{matrix}
\end{equation}
\begin{enumerate}[label=\text{(\Roman*)},font=\normalfont,leftmargin=*,start=4]
\item  \label{dfn:psib-ql:4} for all $n\ge 0$, the restriction
$\iota \colon \ K_{n,U} \overset{\simeq}{\longrightarrow} K_n $ is a homomorphism;

\item \label{dfn:psib-ql:5} $\iota\colon U\to V$ is a covering embedding rel $K_1$;
\end{enumerate}

We remark that $F\colon U\rightrightarrows V$ can be naturally iterated $F^n\colon U^n\rightrightarrows V$ as fiber product, see~\cite[\S 2]{DLL25}. We also require that
\begin{enumerate}[label=\text{(\Roman*)},font=\normalfont,leftmargin=*,start=6]
\item \label{dfn:psib-ql:6} for all $n\ge 1$, the iterates $\iota^n\colon U^n \to V$ are covering embedding rel $K_n$.
\end{enumerate}
\end{defn}

Since $\iota$ is a conformal isomorphism in a neighborhood of $\overline Z$, we will below identify 
\[\overline Z\simeq \overline Z_U\equiv \overline Z_F\sp\sp \text{ and write }\sp F\colon ( U, \overline Z)  \rightrightarrows (V, \overline Z) \sp\sp\text{ or }\sp F\colon U \rightrightarrows V.\]
Similarly, we identify $K_{n}\simeq K_{n,U}$.

The \emph{width} of $F$ is 
\[\Width^\bullet(F)\coloneqq \Width(V\setminus \overline Z).\]
 If $\Width^\bullet(F)\le K$, then $X_U\simeq ~X$ in Item~\ref{dfn:psib-ql:3} can be selected so that 
\begin{equation}
    \label{eq:compt of psi maps}\mod(X\setminus \overline Z)\ge \varepsilon(K).
 \end{equation} 
Therefore, $\psi^\bullet$-ql Siegel maps $f$ with $\Width^\bullet(f)\le K$ form a compact set.

\begin{example} Consider a quadratic rational map $g\in \partial_\egm\mathcal{H}_{z^2}$, where $H_{z^2}$ is the hyperbolic component of $z\mapsto z^2$, see~\S\ref{ss:intro:z^2}. Assume that $g$ has closed Siegel qc-disks $\overline Z_0, \overline Z_\infty$ at $0$ and $\infty$. We naturally obtain two $\psi^\bullet$-ql maps:
\[G_0= (g, \hookrightarrow )\  \colon\  (U_0, \overline Z_0)\rightrightarrows (V_0,  \overline Z_0), \sp\sp V_0=\wC \setminus \overline Z_\infty, \ U_0=g^{-1}(V_0),\]
\[G_\infty= (g, \hookrightarrow )\  \colon\  (U_\infty, \overline Z_\infty)\rightrightarrows (V_\infty,  \overline Z_\infty), \sp\sp V_\infty=\wC \setminus \overline Z_0, \ U_\infty=g^{-1}(V_\infty),\]
where the immersion $\iota=$``$\hookrightarrow$'' is an embedding. We have:
\[\Width^\bullet(G_0) =\Width^\bullet(G_\infty)=\Width\big(\wC\setminus [\overline Z_0\cup \overline Z_\infty]\big).\]

\end{example}

\subsection{$\psi^\bullet$-ql renormalization} \label{subsec:qlfromr} Consider a disjoint type hyperbolic component $\mathcal H$ and an eventually-golden mean map $[f]\in \partial_\egm \mathcal H$; see Definitions~\ref{dfn:HH_fm} and \ref{defn:EGM maps}. The construction below is an adaptation of $\psi$-ql renormalization from~\cite{Kah06}; see also~\cite[\S2.4]{DLL25} and \cite[\S3]{DL23}.

Consider a periodic Siegel disk $Z_{s}=Z_{i,j}$ of $f$ with period $p\ge 1$. We will now define a $\psi^\bullet$-ql map associated with $Z$. Write
\[X \coloneqq \widehat\C - \bigcup_{i,j} D_{i,j} - \bigcup_{i,j} Z_{i,j}\sp\sp \text{ and }\sp\sp
X'\coloneqq f^{-p} (X).\]
Since $X'\subset X$, we obtain a correspondence:
\begin{equation}
\label{eq: corr: X' X}
(f^p,\ \hookrightarrow)\colon \ X'\rightrightarrows X, 
\end{equation}
where $\hookrightarrow $ is a natural embedding. Consider the covering $\widetilde X\to X$ opening up all loops except $\partial Z$; in particular, $\widetilde X$ is an annulus. Similarly, the covering $\widetilde X'\to X'$ opens up all loops except (slightly thickened) $\partial (Z\cup Z')$, where $Z'$ is the unique preperiodic $f^p$-lift of $Z$ attached to $Z$. 

Then~\eqref{eq: corr: X' X} induces a correspondence 
\[  F=( f^p,\iota)\colon \widetilde X' \rightrightarrows \widetilde X,\] 
where $f^p$ is a $2:1$ covering map and $\iota$ is an immersion obtained by lifting ``$\hookrightarrow$''. (In fact, $\iota$ is a covering onto its image). Gluing $\widetilde X$ with $\overline Z$ and gluing $\widetilde X'$ with $\overline{Z\cup Z'}$, we obtain a $\psi^\bullet$-ql map 
\begin{equation}
\label{eq:psi bullet renorm}
     F=( f^p,\iota) \colon U\rightrightarrows V
\end{equation}

The Thin-Thick Decomposition in \S\ref{ss:TTD}, or more precisely, the Equation \ref{eq:AC} implies that
\[\Width^\bullet\big( F\big) = \Width^{loc}_{arc}(Z) + O(1),\]
where $\Width^{loc}_{arc}$ is defined in \S \ref{subsec:ald}. Moreover, the rectangles in the Thin-Thick Decomposition of $X$ adjacent to $Z$ lift univalently into the dynamical plane of $F$; their lifts are disjoint rectangles connecting $\partial V$ and $\partial Z$ with total width being $\Width(F)-O(1)$.

\subsection{A priori-bounds for  $\psi^\bullet$-ql Siegel maps}\label{subsec:aprioribounds}
\hide{Let $F$ be an eventually-golden-mean $\psi^\bullet$-ql map as in~\S\ref{sss:qlb:Defn}.
We say that a disk $D\supset Z$ is {\em peripheral rel $\overline Z$} if $D\subset V$. Similar to~\S\ref{sss:zW:conventions}, a set $S\subset V$ is {\em peripheral rel $\overline Z$} if $S$ is within a peripheral disk $D$.} 

The definition of pseudo-Sigel disks for $\psi^\bullet$-ql Siegel maps is the same as Definition~\ref{defn:pseudo-Siegel disks} (for maps in $\partial_\egm \mathcal H$) with no peripheral requirements as in ~\S\ref{sss:zW:conventions} -- every set in $\intr V$ is peripheral rel $\overline Z$. In fact, in this case, we require that the territory $\XX(\wZ^m)$ is a topological disk contained in the {\em costal zone} - the region in $V$ bounded by the core curve of $V\setminus Z$; see \cite[\S3.5]{DLL25} for more discussions. (Actually, such a requirement is satisfied automatically after removing $1$-buffers from outer protections.)

Let $D\supset Z$ be a peripheral disk.
We say a curve $\gamma$ in $V$ is {\em vertical} (rel $D$) if $\gamma$ connects $\partial D$ and $\partial V$ in $V$, and we say it is {\em peripheral} (rel $D$) if $\partial \gamma \subseteq \partial D$. 

Let $\lambda \geq 1$ and let $I$ be an interval on $\wZ^m$.
The families $\Fam^{+,ver}_{\wZ^m}(I), \Fam^{+,per}_{\lambda, \wZ^m}(I)$ and their corresponding widths $\Width^{+,ver}_{\wZ^m}(I), \Width^{+,per}_{\lambda, \wZ^m}(I)$ are defined accordingly as in~\S\ref{subsec:sfc}. We remark that here $+$ in the superscript means that the curves in the family have interior contained in in $V - \wZ^m$.

Let $\bK\gg 1$ be a sufficiently big threshold.
Let $K_F\coloneqq \Width^\bullet(F)$ be the width of $F$.
We define the {\em special transition level} $\bbm_F$ for $F$ with respect to $\bK$ as follows. 
\begin{itemize}
    \item If $K_F\le \bK$, we set $\bbm_F\coloneqq -2$;
    \item Otherwise, we set $\bbm_F$ to be the level satisfying 
\[ \frac{\bK}{\length_{\bbm_F}} < K_F \le \frac{\bK}{\length_{\bbm_F\ +1}} \sp\sp\sp  \begin{array}{c}\text{ or, }\\ \text{equivalently,}\end{array}\sp\sp\sp \sp   \begin{array}{c}
\length_{\bbm_F} K_F>\bK, \text{ and}\vspace{0.2cm}\\
\length_{\bbm_F\ +1} K_F\le \bK.
\end{array} \]
\end{itemize} 
We recall that $\length_{-1}=1$ and $\length_0=\dist(x, f(x))$.

The following theorem is proved in \cite[Theorem 1.3]{DLL25}.
\begin{theorem}\label{thm:apbs}
There exists an absolute constant $\bK\gg 1$ so that the following holds.

Consider an eventually-golden-mean $\psi^\bullet$-ql map $F$ (see~\S\ref{sss:qlb:Defn}) of width $K_F\coloneqq \Width^\bullet(F)$ and the special transition level $\bbm_F$. Then there is an increasing sequence of pseudo-Siegel disks $\wZ^m,\  m\ge -1$ such that for every grounded interval $J\subset \partial Z$ with $\length_{m+1}< |J|\le \length_{m}$ the following holds:
\begin{enumerate}[ label=(\Alph*)]
    \item\label{thm:apbs:A} if $m> \bbm_F$, then \[\Width_{\wZ^m}^{+,ver} (J^m)=O(1) \sp\sp\text{ and }\sp\sp \Width_{3,\wZ^m}^{+,per} (J^m)\asymp 1,\]
    \item\label{thm:apbs:B} if $m< \bbm_F$, then 
    \[\Width_{\wZ^m}^{+,ver} (J^m)\asymp |J|K_F \sp\sp\text{ and }\sp\sp \Width_{3,\wZ^m}^{+,per} (J^m)=O(1),\]
    \item \label{thm:apbs:C} if $m= \bbm_F$, then 
    \[\Width_{\wZ^m}^{+,ver} (J^m)= O(\length_m K_F)  \sp\sp\text{ and }\sp\sp \Width_{3,\wZ^m}^{+,per} (J^m)=O(\sqrt{\length_m K_F }).\]
\end{enumerate}

Moreover, $\wZ^{-1}$ is $M(K_F)$-qc disk; i.e.~the dilatation of $\wZ^{-1}$ is bounded in terms of $K_F$.

\end{theorem}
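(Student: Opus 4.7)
The plan is to construct the pseudo-Siegel disks $\wZ^m$ by downward induction on the level $m$, starting from $\wZ^m = \overline Z$ for all $m$ sufficiently large (specifically for $m$ above the special transition level $\bbm_F$), and at each step either leaving $\wZ^m = \wZ^{m+1}$ or enlarging it by filling in one parabolic fjord $\widehat{\mathfrak F}_I$ per interval $I \in \mathfrak D_m$, following Definition~\ref{defn:pseudo-Siegel disks}. The dichotomy is governed by the scale $\length_m K_F$: above the transition this scale is below the absolute threshold $\bK$ and no regularization is needed, so both widths in \ref{thm:apbs:A} are inherited from the trivial case $\wZ^m=\overline Z$; at or below the transition, $\length_m K_F \gtrsim \bK$ and one must add fjords with carefully sized outer protections $\XX_I$ to absorb the extra degeneration.

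The three main technical inputs are the Near-Degenerate Regime machinery of \cite{KL09} (Quasi-Additivity Law and Covering Lemma) adapted to the $\psi^\bullet$-ql iterates $F^{\qq_{m+1}}$, which transfers width estimates between different levels with bounded distortion; the explicit pseudo-Siegel construction from \cite[\S 5]{DL22} via hyperbolic geodesic dams in $V \setminus \overline Z$, which we lift into the $\psi^\bullet$-ql plane using the covering-embedding property \ref{dfn:psib-ql:5} and its iterated analogue \ref{dfn:psib-ql:6}; and the Siegel renormalization hyperbolicity from \cite{DLS20,DL23a}, which controls the asymptotic behavior of $F$ near its renormalization fixed point in the deep-level regime.

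The principal difficulty is the special transition level $m = \bbm_F$, where neither the deep-level nor the shallow-level asymptotic regime applies cleanly. Above the transition, \ref{thm:apbs:A} follows from the fact that $F$ is a small renormalization perturbation of its Siegel fixed point at scale $\length_m$, so both widths are $O(1)$. Below the transition, \ref{thm:apbs:B} follows from an iterated bounded-distortion pullback from level $\bbm_F + 1$ back down to level $m$, with the factor $|J| K_F$ accumulated linearly along the pullback (the linearity is exactly the content of the shallow-level Calibration Lemma~\ref{thm:calls}, now used in the $\psi^\bullet$-ql dynamical plane). At the transition itself, one interpolates between the two regimes using a Gr\"otzsch-type inequality applied to the rectangles realizing the peripheral family: the total width budget $K_F \length_m$ distributes between vertical and peripheral parts, and the extremal-length/extremal-width duality (area-times-length-squared) forces the $\sqrt{\length_m K_F}$ bound on the peripheral part in \ref{thm:apbs:C}.

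Finally, the quasiconformality of $\wZ^{-1}$ follows by combining the uniform lower bounds $\mod(A_I), \mod(A^{\text{inn}}_I) \geq \delta$ from the compatibility axioms of \S\ref{sss:compart:wZ} together with the uniform width bound $\Width(\XX_I)\geq \Delta$ from \S\ref{sss:Z vs wZ}, with the rigid-rotation structure of $\partial \overline Z$: each dam $\beta_I$ is a uniform quasi-arc, and the nested collection of such arcs glued along $\partial \overline Z$ assembles into a qc-disk whose dilatation $M(K_F)$ depends only on the width of $F$ (through the number of levels at which one actually regularizes, which is controlled by $\log K_F$).
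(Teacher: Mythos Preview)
The paper does not prove this theorem; it is quoted as \cite[Theorem 1.3]{DLL25}, with only an explicit geodesic construction of the $\wZ^m$ sketched in \S\ref{ss:good:wZ}. Still, your proposal contains a structural error worth flagging against that sketch.

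You have the regularization dichotomy backwards. In \S\ref{ss:good:wZ} the threshold $\bM_m$ equals a fixed constant $\bM$ for $m>\bbm_F$, a special value $H(\length_m K_F)$ at $m=\bbm_F$, and $\infty$ for $m<\bbm_F$. Thus regularization (adding fjords) occurs at near-parabolic levels \emph{above and at} the transition, while \emph{below} the transition one always sets $\wZ^m:=\wZ^{m+1}$, so in fact $\wZ^{-1}=\wZ^{\bbm_F}$. Your claim that $\wZ^m=\overline Z$ for all $m>\bbm_F$, and that Case~\ref{thm:apbs:A} is then inherited trivially, is wrong: above the transition the map can still have large continued-fraction coefficients $a_n$, producing near-parabolic levels where $\Width_{3,\overline Z}^{+,per}(J)$ would blow up unless fjords are filled. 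The entire point of the regularization is to force $\Width_{3,\wZ^m}^{+,per}(J^m)\asymp 1$ at those levels; it is not a device for ``absorbing extra degeneration'' at shallow scales.

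Two smaller points. First, invoking Theorem~\ref{thm:calls} here is circular: the Calibration Lemma is a statement about rational maps in $\partial_\egm\mathcal H$ and is \emph{applied} in the main text (see \S\ref{subsec:ugc}) together with Theorem~\ref{thm:apbs}; it is not an ingredient in its proof. The linear relation $\Width^{+,ver}_{\wZ^m}(J^m)\asymp |J|K_F$ in Case~\ref{thm:apbs:B} is instead obtained by the spreading-around mechanism inside the $\psi^\bullet$-ql dynamical plane together with the near-degenerate tools of \cite{DL22,KL09}. Second, your count ``number of regularized levels controlled by $\log K_F$'' for the qc bound is off for the same reason: regularization can occur at arbitrarily many near-parabolic levels above $\bbm_F$, and the qc constant $M(K_F)$ depends on $K_F$ through the depth of the transition level, not through a count of regularized levels.
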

We remark that in Cases~\ref{thm:apbs:B} and~\ref{thm:apbs:C}, we have
$ |J|K_F, \ \length_m K_F \  \ge\  1  $.

We also remark that in all three cases, we have the following bounds
\begin{align}
\label{eqn:VandP}
\begin{split}
    \Width_{\wZ^m}^{+,ver} (J^m)&= O(\length_m K_F +1) \vspace{0.3cm} \\ \Width_{3,\wZ^m}^{+,per} (J^m)&=O(\sqrt{\length_m K_F }+1).
\end{split}
\end{align}

\begin{rmk}
\label{rem:thm:info}   
In short, $\psi^\bullet$-formalism stated in Theorem~\ref{thm:apbs} takes care of all scales except the special transition scale $m=\bbm_F$.  Case \ref{thm:apbs:A} says that on deep scales, the geometry of $F$ is uniformly bounded, and the estimates are equivalent to that of quadratic polynomials. Case \ref{thm:apbs:B} says that on shallow scales, vertical degeneration dominates peripheral and is uniformly distributed at all intervals.

Theorem~\ref{thm:apbs} does not provide a satisfactory description of $\Width^{+,ver}$ and $\Width_{3}^{+,per}$ in Case \ref{thm:apbs:C}. In our paper, such information comes from the global analysis of pseudo-Core surface degenerations stated in Theorem~\ref{thm:ll} and Theorem~\ref{thm:calls}; see Remark~\ref{rem:to:CL}.
\end{rmk}

For an explicit construction of ``geodesic'' pseudo-Siegel disks satisfying Theorem~\ref{thm:apbs}, see~\S\ref{ss:good:wZ}.

\subsection{Localization of submergence}
 Let us say that a rectangle $\RR$ \emph{submerges} into a pseudo-bubble $\wZ_i$ if
\begin{itemize}
    \item $\partial^h \RR$ is disjoint from $\XX(\wZ_i)$; and
    \item every curve $\gamma\in \Fam(\RR)$ intersects $\wZ_i$.
\end{itemize}

\begin{figure}[htp]
    \centering
    \begin{tikzpicture}
    \node[anchor=south west,inner sep=0] at (0,0) {\includegraphics[width=0.8\textwidth]{./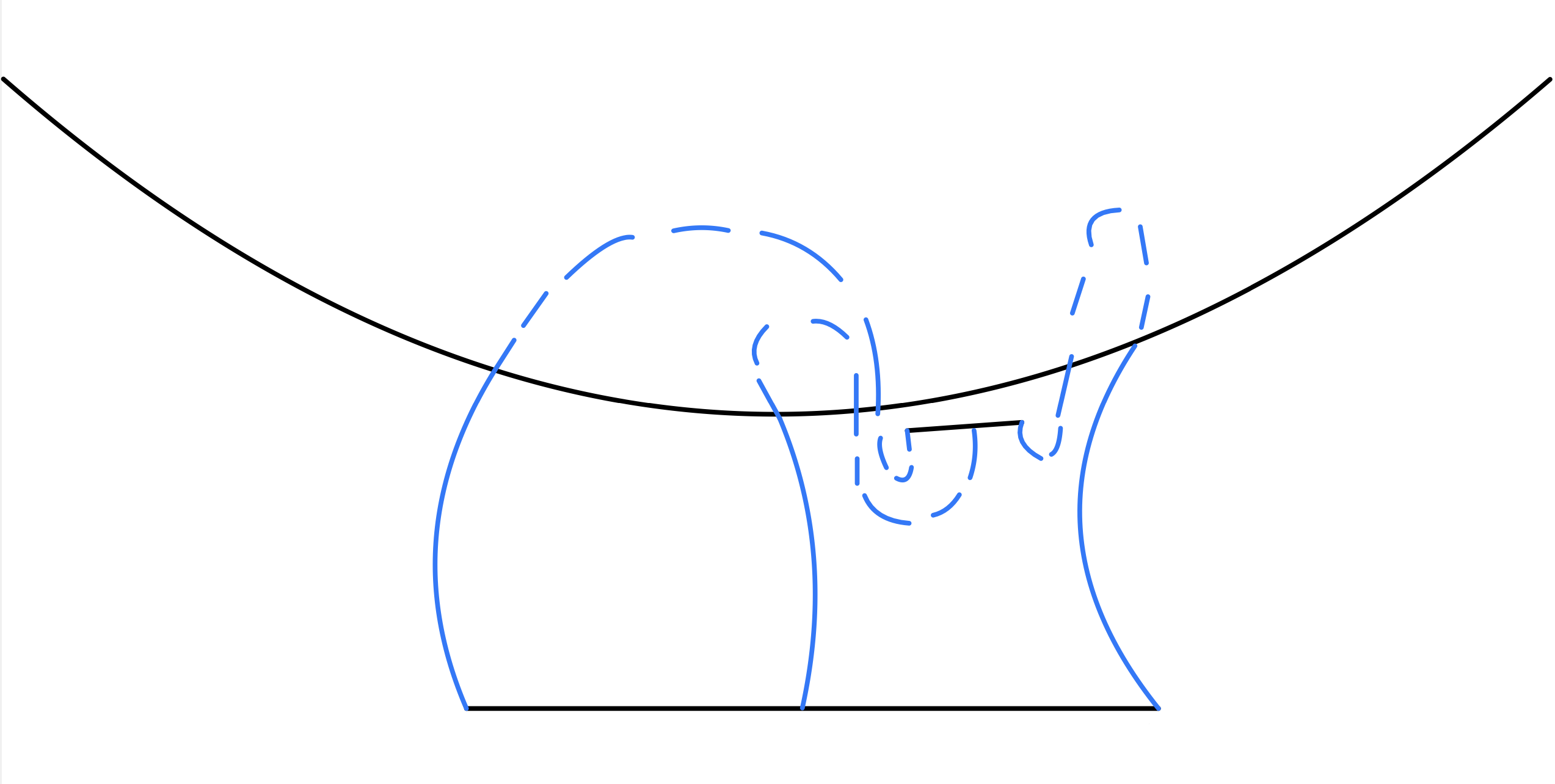}};
    \node at (4,1.2) {\begin{normalsize}$\textcolor{red}{A_0}$\end{normalsize}};
    \node at (6.3,1.2) {\begin{normalsize}$\textcolor{red}{A_1}$\end{normalsize}};
    \node at (2.5,1.5) {\begin{footnotesize}$\gamma_0$\end{footnotesize}};
    \node at (5.6,1.5) {\begin{footnotesize}$\gamma_1$\end{footnotesize}};
    \node at (7.4,1.5) {\begin{footnotesize}$\gamma_2$\end{footnotesize}};
    \node at (7.4,0.2) {\begin{footnotesize}$\partial^{0,h}\RR$\end{footnotesize}};
    \node at (9,3.4) {\begin{footnotesize}$\partial\wZ_i$\end{footnotesize}};
    \end{tikzpicture}
    \caption{An illustration of a rectangle submerging into a pseudo-bubble.}
    \label{fig:PH}
\end{figure}

\begin{lem}
\label{lem:rect through PB}
Assume that a rectangle $\RR$ with $\Width(\RR)=K$ submerges into a pseudo-bubble $\wZ_i$. Then for every $\lambda>2$, there is 
\begin{itemize}
   \item  a grounded interval $J\subset \partial \wZ_i$ with $|J|<\frac{1}{\lambda^2}$, and
   \item sublamination $\widetilde {\mathcal Q}\subset \Fam(\RR)$ that overflows a lamination $\mathcal Q$ outside of $\wZ_i$ with  $\Width({\mathcal Q})\succeq K- O(\ln \lambda)$
\end{itemize}
such that
\begin{itemize}
    \item either $\mathcal Q$ is a lamination from $J$ to $\partial^{h,1} \RR$;
    \item or $\mathcal Q \subset \Fam^+ (J, \partial \wZ_i\setminus [\lambda J]^c)$; i.e., $\mathcal Q$ is a lamination outside of $\wZ_i$ from $J$ to $\partial \wZ_i\setminus (\lambda J)$.
\end{itemize}
\end{lem}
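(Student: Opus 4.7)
My plan is to localize the last-exit points of curves in $\Fam(\RR)$ on $\partial \wZ_i$ and then split the localized sublamination into a ``direct'' part yielding the first case and a ``peripheral'' part yielding the second case. For each $\gamma \in \Fam(\RR)$, let $y(\gamma) \in \partial \wZ_i$ denote the last intersection of $\gamma$ with $\partial \wZ_i$, and let $\gamma^{out}$ be the terminal subcurve of $\gamma$ running from $y(\gamma)$ to the endpoint of $\gamma$ on $\partial^{h,1}\RR$; then $\gamma^{out} \subset \wC \setminus \intr(\wZ_i)$ meets $\partial \wZ_i$ only at $y(\gamma)$. The family $\mathcal G := \{\gamma^{out}\}$ is a lamination in $\wC \setminus \intr(\wZ_i)$, and since each $\gamma$ overflows $\gamma^{out}$, we have $\Width(\mathcal G) \geq \Width(\Fam(\RR)) = K$. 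Let $\gamma_0, \gamma_2$ be the two extreme curves of the foliation $\Fam(\RR)$ and set $y_k := y(\gamma_k)$; choose grounded intervals $J_k \ni y_k$ of combinatorial length less than $1/\lambda^2$, perturbing slightly (at a cost of $O(1)$ in width) if $y_k$ happens to sit inside $\XX(\wZ_i)$. Partition $\mathcal G = \mathcal G_0 \sqcup \mathcal G_2 \sqcup \mathcal G'$ according to whether $y(\gamma)$ lies in $J_0$, $J_2$, or elsewhere on $\partial \wZ_i$.

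The key step is to prove the localization estimate $\Width(\mathcal G') = O(\log \lambda)$. Every $\gamma^{out} \in \mathcal G'$ begins on $\partial \wZ_i \setminus (J_0 \cup J_2)$ and is disjoint from $\gamma_0^{out}$ and $\gamma_2^{out}$. Since the curves of $\Fam(\RR)$ are horizontally ordered by their positions on $\partial^{h,0}\RR$ and $\gamma_0^{out}, \gamma_2^{out}$ emanate from the extreme exit points $y_0, y_2$, each $\gamma^{out} \in \mathcal G'$ is forced to overflow an arc in a peripheral family in $\wC \setminus \intr(\wZ_i)$ joining the two components of $\partial \wZ_i \setminus (J_0 \cup J_2)$ through the thin collar surrounding $\wZ_i$. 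The extremal width of this peripheral family is controlled by the moduli of the outer collars $A_I$ built into the pseudo-Siegel disk (Property \ref{prop:wZ:main:D}) together with Lemma \ref{lem:C-Z to C-wZ}, yielding the desired $O(\log \lambda)$ bound. Consequently $\Width(\mathcal G_0) + \Width(\mathcal G_2) \geq K - O(\log \lambda)$, and after relabeling we may assume $\Width(\mathcal G_0) \succeq K - O(\log \lambda)$.

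To conclude, I split $\mathcal G_0$ into subfamilies $\mathcal G_0^{dir}$ and $\mathcal G_0^{per}$ according to whether $\gamma^{out}$ avoids a neighborhood of $\partial \wZ_i \setminus (\lambda J_0)$ or contains a sub-arc realizing a peripheral arc from $J_0$ to $\partial \wZ_i \setminus (\lambda J_0)$. Taking $J := J_0$, $\mathcal Q := \mathcal G_0^{dir}$, and $\widetilde{\mathcal Q}$ to be the corresponding subfamily of $\Fam(\RR)$ yields the first case of the lemma; on the other hand, truncating each curve of $\mathcal G_0^{per}$ at its first close approach to $\partial \wZ_i \setminus (\lambda J_0)$ produces a peripheral lamination from $J_0$ to $\partial \wZ_i \setminus (\lambda J_0)$, realizing the second case. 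Since $\Width(\mathcal G_0^{dir}) + \Width(\mathcal G_0^{per}) \geq \Width(\mathcal G_0) \succeq K - O(\log \lambda)$ by the parallel law, at least one of the two subfamilies retains width $\succeq K - O(\log \lambda)$.

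The main obstacle will be the localization bound $\Width(\mathcal G') = O(\log \lambda)$. The picture is clear -- curves in $\mathcal G'$ must pass through a thin region pinched between $J_0$ and $J_2$ -- but the technical delicacy is that the last-exit map $\gamma \mapsto y(\gamma)$ on $\Fam(\RR)$ is generally not continuous, as curves may tangentially meet $\partial \wZ_i$ and cause $y(\gamma)$ to jump. The sandwiching argument therefore must be executed carefully by exploiting the local rectangle structure of $\Fam(\RR)$ together with the robust peripheral width estimates guaranteed by the annular collars of $\wZ_i$.
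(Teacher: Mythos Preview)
Your localization step $\Width(\mathcal G')=O(\log\lambda)$ does not go through as written, and this is a genuine gap rather than just a missing detail. You claim that each $\gamma^{\mathrm{out}}\in\mathcal G'$ overflows an arc in the peripheral family joining the two components of $\partial\wZ_i\setminus(J_0\cup J_2)$, and that this peripheral family has width $O(\log\lambda)$. Neither assertion is correct. First, $\gamma^{\mathrm{out}}$ runs from a point of $\partial\wZ_i$ to $\partial^{h,1}\RR$; it does not return to $\partial\wZ_i$, so there is no peripheral sub-arc to extract. The disjointness of $\gamma^{\mathrm{out}}$ from $\gamma_0^{\mathrm{out}},\gamma_2^{\mathrm{out}}$ is far too weak a constraint: since the last-exit map $\gamma\mapsto y(\gamma)$ is discontinuous (as you note), the extreme exit points $y_0,y_2$ need not bracket $y(\gamma)$, and in any case the arc between $y_0$ and $y_2$ on $\partial\wZ_i$ can have combinatorial length close to $1$, so most exit points can land in $\mathcal G'$. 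Second, the width of peripheral arcs outside $\wZ_i$ between the two long complementary arcs of $J_0\cup J_2$ is exactly the unbounded quantity you are trying to control; the protective collars $A_I$ give only a \emph{lower} bound on moduli, not an upper bound on such widths.

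The paper's proof takes a different and more robust route. It uses three vertical curves (leftmost, middle, rightmost) and records their \emph{first} entry points $a_0,a_1,a_2$ into $\wZ_i$, not last exits. The initial segments $\gamma_j$ together with $\partial^{h,0}\RR$ and $\partial\wZ_i$ bound two regions $A_0,A_1$; the hypothesis that $\partial^{h,1}\RR$ is disjoint from $\XX(\wZ_i)$ forces at least one of them, say $A_0$, to miss $\partial^{h,1}\RR$ entirely. This purely topological step pins down the behavior of every vertical curve in the left half-rectangle $\RR'$: its first intersection with $\wZ_i$ lies in the arc $I=[a_0,a_1]$ and its last intersection lies in $I^c$. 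With that structural input in hand, the remaining analysis (producing $J$ with $|J|<1/\lambda^2$ and the dichotomy between the vertical and peripheral cases) is delegated to \cite[Lemma~6.9]{DL22}. Your attempt to shortcut this by working only with two extreme curves and last exits loses the crucial topological trapping that the middle curve and the first-entry regions provide.
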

\begin{proof}
    Let $\widetilde{\gamma_0}, \widetilde{\gamma_1}, \widetilde{\gamma_2}$ be the leftmost, middle and rightmost vertical arcs of the rectangle $\RR$.
    We orient these arcs so that they connects the lower boundary $\partial^{h,0}\RR$ to $\partial^{h,1}\RR$.
    Since $\RR$ submerges into $\wZ_i$, $\widetilde{\gamma_j}$ intersects $\wZ_i$.
    Let $a_j$ be the first time $\widetilde{\gamma_j}$ enters $\wZ_i$, and let $\gamma_j \subseteq \widetilde{\gamma_j}$ be the sub arc connecting $\partial^{h,0}\RR$ and $a_j$.
    Let $A_0, A_1$ be the region bounded by $\gamma_0, \gamma_{1}, \partial^{h,0}\RR, \partial \wZ_i$ and $\gamma_1, \gamma_{2}, \partial^{h,0}\RR, \partial \wZ_i$ as illustrated in Figure \ref{fig:PH}).
    Since $\partial^{h,1}\RR$ is disjoint from $\XX(\wZ_i)$, at least one of the regions $A_0, A_1$ is disjoint from $\partial^{h,1}\RR$.
    Without loss of generality, we may assume $A_0$ is disjoint from $\partial^{h,1}\RR$.
    Consider the left rectangle $\RR' \subseteq \RR$ bounded by $\widetilde{\gamma_0}, \widetilde{\gamma_1}, \partial^{h,0}\RR, \partial^{h,1}\RR$, and let $I$ be the interval on $\partial \wZ_i$ bounded by $a_0, a_1$.
    Then for every vertical $\gamma$ arc connecting $\partial^{h,0}\RR'$ to $\partial^{h,1}\RR'$, 
    \begin{itemize}
    \item the first intersection of $\gamma$ with $D$ is in $I$; and
    \item the last intersection of $\gamma$ with $D$ is in $I^c=\partial D\setminus I$.
    \end{itemize}
    With this reduction, we can directly apply \cite[Lemma 6.9] {DL22}, and the lemma follows.
\end{proof}

\subsubsection{From full to outer families}
We need the following submergence results in our main application.
\begin{lem}
\label{lem:rect loc}
Let $\wZ$ be a pseudo-Siegel disk.
Let $\RR$ be a rectangle with $\Width(\RR)=K$ such that the $I:= \partial^{h,0} \RR$ is a grounded interval on $\wZ$, and $\partial^{h,1} \RR$ is disjoint from $\XX(\wZ)$.
Then for every $\lambda>2$, there is either 
\begin{itemize}
    \item a genuine subrectangle  $\RR_1$ of $\RR$ with $\Width(\RR_1) \succeq K $ such that $\RR_1$ is outside of $\intr \wZ$; or   
    \item a grounded interval $J\subset \partial \wZ$ such that $\Width^{+, per}_\lambda(J) \succeq K - O(\ln \lambda)$; in particular,  $|J| < \frac{1}{\lambda}$ if $K\gg  \ln \lambda$.
\end{itemize}
\end{lem}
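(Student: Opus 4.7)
\textbf{Proof plan for Lemma \ref{lem:rect loc}.} The plan is to proceed by a dichotomy on the interaction between the vertical foliation $\Fam=\Fam(\RR)$ and $\wZ$. Write $\Fam=\Fam_{out}\sqcup\Fam_{in}$, where $\Fam_{out}$ consists of arcs $\gamma\in\Fam$ whose interior is disjoint from $\intr\wZ$, and $\Fam_{in}$ consists of arcs that re-enter $\intr\wZ$ after leaving the base $I$. Since $\Width(\Fam)=K$, at least one of $\Width(\Fam_{out})$, $\Width(\Fam_{in})$ is $\ge K/2$; the two cases will produce the two alternatives of the lemma.

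First, I would treat the case $\Width(\Fam_{out})\succeq K$. The goal here is to extract the genuine subrectangle of the first alternative. The key topological input is that $\partial\wZ$ is a Jordan curve which meets $\partial\RR$ along $I\subset\partial^{h,0}\RR$ plus at most transverse crossings of the vertical boundary $\partial^v\RR$, since the hypothesis $\partial^{h,1}\RR\cap\XX(\wZ)=\emptyset$ rules out crossings on the top. The parameter set $A=\{t\in[0,K]:\gamma_t\in\Fam_{out}\}$ is closed of measure $\ge K/2$, and using the protecting annuli $A_I\subset\XX(\wZ)$ of modulus $\ge\delta$ (see~\eqref{eq:dfn:A_I}) to control the number of wide crossings of $\partial\wZ$ through $\partial^v\RR$, one bounds the number of components of $A$ well enough to locate a subinterval $[x_1,x_2]\subset A$ of length $\succeq K$ via an averaging / Thin-Thick argument applied to $\Fam_{out}$. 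The corresponding genuine subrectangle $\RR_1=g([x_1,x_2]\times[0,1])\subset\RR$ is then a union of arcs in $\Fam_{out}$ and therefore lies in $\wC\setminus\intr\wZ$, which is the first alternative.

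Second, in the case $\Width(\Fam_{in})\succeq K$, the plan is to adapt the three-arc argument in the proof of Lemma~\ref{lem:rect through PB}. For each $\gamma\in\Fam_{in}$, let $a_\gamma\in\partial\wZ$ be the first re-entry of $\gamma$ into $\intr\wZ$ after leaving $I$; then the initial segment $\gamma|_{[0,a_\gamma]}$ is a peripheral arc in $\wC\setminus\intr\wZ$ joining $I$ to $a_\gamma$. After passing (via Thin-Thick) to a subfamily of width $\Width(\Fam_{in})-O(1)$ realized by a rectangle $\RR'\subset\RR$, I would pick the leftmost, middle and rightmost of these initial segments and localize exactly as in Lemma~\ref{lem:rect through PB}: their endpoints $a_0,a_1,a_2$ divide $\partial\wZ$ into intervals, and a pigeonhole step (using again $\partial^{h,1}\RR\cap\XX(\wZ)=\emptyset$) cuts $\RR'$ in half, after which the first and last intersections of the surviving vertical arcs with $\partial\wZ$ sit in disjoint subintervals. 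Iterating the bisection $O(\log\lambda)$ times shrinks the target interval below $1/\lambda$, producing the required grounded interval $J\subset\partial\wZ$ with $|J|<1/\lambda$ and a peripheral sub-lamination in $\wC\setminus\intr\wZ$ from $J$ to $\partial\wZ\setminus\lambda J$ of width $\succeq K-O(\ln\lambda)$.

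The main obstacle I expect is the genuine-subrectangle extraction in the first case: although $\Width(\Fam_{out})\succeq K$ automatically gives $\operatorname{meas}(A)\succeq K$, the set $A$ is a priori disconnected, and without further input there is no long subinterval of $A$. Resolving this requires using the Jordan structure of $\partial\wZ$ together with the bounded-modulus protecting annuli of $\XX(\wZ)$ to control the number of relevant components of $\RR\cap\intr\wZ$, so that one dominant component of $A$ carries a definite fraction of the total measure; the second case is more routine because it parallels Lemma~\ref{lem:rect through PB} with $a_\gamma$ playing the role of the first entry into the pseudo-bubble.
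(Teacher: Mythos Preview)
Your Case~2 argument has a real gap. The three-arc bisection from Lemma~\ref{lem:rect through PB} halves the \emph{width of the rectangle}, not the landing interval on $\partial\wZ$: after one step you only know that the first re-entry points lie in some $[a_0,a_1]$, with no control on $|[a_0,a_1]|$. Iterating $O(\log\lambda)$ times therefore yields a subfamily of width $K/2^{O(\log\lambda)}$, not $K-O(\ln\lambda)$, while the landing interval need not have dropped below $1/\lambda$ at all. The additive $O(\ln\lambda)$ loss in Lemma~\ref{lem:rect through PB} comes from a different mechanism: a single bisection followed by an appeal to \cite[Lemma~6.9]{DL22}, which localizes by repeatedly rescaling the \emph{interval} rather than halving the rectangle.

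Your Case~1 obstacle is also genuine, and the proposed fix is not convincing: the annuli $A_{I'}$ surround dams, not arbitrary crossings of $\partial\wZ$ with $\partial^v\RR$, so they do not bound the number of components of your set $A$. The paper avoids this entirely by arguing in the contrapositive (assume the first alternative fails, prove the second), so that no subrectangle ever has to be extracted. The submerging family is then split \emph{once}, according to whether the first re-entry lies in $(\lambda^3 I)^c$ or in $\lambda^3 I$. In the first subcase the initial segments are already peripheral arcs from $I$ to $\partial\wZ\setminus\lambda I$, so $J:=I$ works immediately; the second subcase is handled in one step by \cite[Corollary~6.2]{DL22} applied to the pair $I\cup L_\pm,\ (\lambda^3 I)^c$, where $L_\pm$ are the two components of $\lambda^3 I\setminus I$.
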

\begin{proof}
Assume that there are no genuine subrectangle $\RR_1$ of $\RR$ with $\Width(\RR_1)\succeq \Width(\RR)$. Then a substantial part of $\Fam(\RR)$ submerges into $\intr \wZ^m$ and we have two cases:
\begin{itemize}
    \item either a substantial part of $\Fam(\RR)$ first submerges into $\intr \wZ^m$ in $(\lambda^3 I)^c $;
    \item or a substantial part of $\Fam(\RR)$ first submerges into $\intr \wZ^m$ in $\lambda^3 I$.
\end{itemize}
In the first case, we take $J:= I$. The second case follows from ~\cite[Corrolary 6.2]{DL22} applied to either the pair $I\cup L_-,\ (\lambda^3I)^c$ or to the pair  $I\cup L_+,\ (\lambda^3I)^c$, where $L_-, L_+$ are two intervals in $(\lambda^3I)\setminus I$.
\end{proof}

\subsection{Geodesic pseudo-Siegel disks} \label{ss:good:wZ} In this subsection, we summarize an explicit construction of \emph{geodesic} pseudo-Siegel disks $\wZ$ satisfying Theorem~\ref{thm:apbs} for an eventually-golden-mean $\psi^\bullet$-ql map $F$. We refer the readers to \cite[\S7 and Theorem 7.5]{DLL25} for more discussions.

Let $\bbm_F$ be the transitional level of $F$ defined in~\S\ref{subsec:aprioribounds} with respect to a sufficiently big but universal constant $\bK$. There exist a sufficiently big $\bM\gg 1$ and an increasing function $H\colon \R_{>0}\to \R_{\ge \bM}$ such that the following holds. We set
\begin{align}
    \label{eq:dfn:bM} 
    \begin{split}
    \bM_m &\coloneqq \begin{cases}
    \bM, \,\, \text{ if $m>\bbm_F$,}\\
     H(\length _m K_F) \,\, \text{ if $m = \bbm_F$,}\\
    \infty, \,\, \text{ if $m < \bbm_F$.}
\end{cases}      
    \end{split}
\end{align}

 We say that a level $m$ is \emph{near-parabolic} if $\length_{m}> \bM_m \length_{m+1}$; otherwise $m$ is \emph{non-parabolic}. Since $F$ is eventually-golden-mean, all sufficiently deep levels $m\gg_F\  1$ are non-parabolic.
 In short, $\bM_m$ will be a combinatorial threshold for regularization: if $\frac{\length_{m}}{\length_{m+1}}\ge \bM_m$, then $\wZ^{m+1}$ is regularized into $\wZ^m$ at depth $e^{\sqrt{ \ln \bM_{m}}}$, see~\S\ref{sss:dfn:regular}; otherwise $\wZ^m:=\wZ^{m+1}$. 
 We remark that by our definition of $\bM_m$, if $m<\bbm_F$, then we always set $\wZ^m := \wZ^{m+1}$.

\hide{In short, $\bM_m$ will be a combinatorial threshold for regularization: if $\frac{\length_{m}}{\length_{m+1}}\ge \bM$ and $m> \bbm_F$, where $\bbm_F$ is the transition level defined in~\S\ref{subsec:aprioribounds}, then $\wZ^{m+1}$ is regularized into $\wZ^m$ at depth $\bM_{\sqrt{\ln}}$, see~\S\ref{sss:dfn:regular}; otherwise $\wZ^m:=\wZ^{m+1}$. If $m<\bbm_F$, then we always set $\wZ^m := \wZ^{m+1}$. The case $m=\bbm_F$ is discussed in~\S\ref{sss:dfn:regular:m_G}.}

\subsubsection{Construction of parabolic fjord $\widehat{\mathfrak F}_I$ and $S^\inn _I$}\label{sss:dfn:regular} Consider a near-parabolic level $m$. Let $I=[a,b]\in \mathfrak{D}_m$ be an interval in the $m$th diffeo-tiling of $\partial Z$. Choose $a', b'\in I$ with $a<a'<b'<b$ such that \[\dist(a,a')=\dist(b',b) =\left\lfloor e^{\sqrt{ \ln \bM_{m}}}\right\rfloor \  \length_{m+1} \]
and set $\beta_I$ to be the hyperbolic geodesic of $V\setminus \overline Z$ connecting $a',b'$. This defines the parabolic fjord $\widehat{\mathfrak F}_I$, see Figure~\ref{fig:CD}.
We remark that since the level $m$ is near-parabolic, we have
$$
|I| = \dist(a,b) \asymp \length_m \geq \bM_m \length_{m+1} \gg \left\lfloor e^{\sqrt{ \ln \bM_{m}}}\right\rfloor \  \length_{m+1}.
$$

To construct $S^\inn _I$, we choose a sufficiently big $\bbv\gg 1$ with the understanding that $e^{\sqrt{ \ln \bM_{m}}} \gg \bbv$.
Choose $a''\in [a,a']$ and $b''\in [b',b]$ such that
\begin{equation}
\label{eq:dfn:S^inn}
    \dist(a,a'')=\dist(b'',b) =\left\lfloor \frac{e^{\sqrt{ \ln \bM_{m}}}}{ \bbv}\right\rfloor \length_{m+1} \gg \length_{m+1},
\end{equation} 
this defines $S^\inn _I$, see Figure~\ref{fig:CD}.

\subsubsection{Construction of $A_I$ and $\upbullet \XX_I$}
Let $\mathfrak G_I$ be the rectangle from $[a,a'']$ to $[b'', b]$ bounded by hyperbolic geodesics in $V\setminus \overline Z$; i.e., 
\[ \partial^{h,0} \mathfrak G_I =[a,a''],\sp\sp \partial^{h,1} \mathfrak G_I =[b'',b]\]
and $\partial^{v}\mathfrak G_I $ is the pair of hyperbolic geodesics. The condition that $e^{\sqrt{ \ln \bM_{m}}} \gg \bbv$ implies that (see \cite[\S 4]{DL22})\[ \Width (\mathfrak G_I) \ \asymp\  \ln \left(e^{\sqrt{ \ln \bM_{m}}}/\bbv \right) \ \gg \ 1.\]
We can now select a required rectangle $\XX_I$ conformally deep in $\mathfrak G_I$ (i.e., conformally close to $\beta_I$) so that its width is of the size $\Delta \ll  \sqrt{ \ln \bM_{m}}$. We can also select $A_I$ separating $\XX_I$ from $S^\inn_I$. In particular, we can assume that the interval \[[x_a, x_b]\coloneqq \partial \upbullet \XX_I \cap I\sp\sp\sp\text{ with }\sp x_a\in[a,a''], \sp x_b\in [b'', b]\]
is defined similar to \eqref{eq:dfn:S^inn}:
\[     \dist(a, x_a)=\dist(x_b,b) =\left\lfloor \frac{e^{\sqrt{ \ln \bM_{m}}}}{ \bbw}\right\rfloor \length_{m+1}, \]
where $\bbw \gg \bbv\gg 1$ with the understanding that we still have $e^{\sqrt{ \ln \bM_{m}}}\gg \bbw$.

\subsubsection{Stability of ${\wZ^m}$}\label{subsubsect:stabilityConstruction}
Since $e^{\sqrt{ \ln \bM_{m}}}\gg \bbw$, we see that the construction guarantees that $\dist(\partial^h \XX_I, \partial I) = \left\lfloor \frac{e^{\sqrt{ \ln \bM_{m}}}}{ \bbw}\right\rfloor \length_{m+1} \gg \length_{m+1}$.
Thus, by the discussion in \S~\ref{subsec:stability}, we see that ${\wZ^m}$ can be assumed to be $T$-stable for arbitrarily large $T$ (see Remark~\ref{rmk:stablitiy}).


\end{document}